\documentclass[12pt]{amsart}

\usepackage[left=2.5cm,right=2.5cm,top=2.5cm, bottom=2.5cm]{geometry}

\usepackage{amsmath}
\usepackage{amsxtra}
\usepackage{amscd}
\usepackage{amsthm}
\usepackage{amsfonts}
\usepackage{amssymb}
\usepackage {pstricks}
\usepackage{pstricks,pst-node}
\usepackage{tikz}
\usepackage{tikz-cd}
\usepackage {mathrsfs} 
\usepackage[all,cmtip]{xy}
\usepackage{comment}

\usepackage{hyperref}



\newtheorem{thm}{Theorem}[section]

\newtheorem{lem}[thm]{Lemma}

\newtheorem{cor}[thm]{Corollary}
\newtheorem{prop}[thm]{Proposition}

\newtheorem{theorem}{Theorem}[section]
\newtheorem{lemma}[theorem]{Lemma}
\newtheorem{proposition}[theorem]{Proposition}
\newtheorem{corollary}[theorem]{Corollary}
  
\newtheorem{conjecture}[theorem]{Conjecture}

\theoremstyle{definition}
\newtheorem{example}[theorem]{Example}

\newtheorem{definition}[theorem]{Definition}

\theoremstyle{remark}
\newtheorem{remark}[theorem]{Remark}

\numberwithin{equation}{section}

\begin{document}
	\normalfont
	\newcommand{\thmref}[1]{Theorem~\ref{#1}}
	\newcommand{\secref}[1]{Section~\ref{#1}}
	\newcommand{\lemref}[1]{Lemma~\ref{#1}}
	\newcommand{\propref}[1]{Proposition~\ref{#1}}
	\newcommand{\corref}[1]{Corollary~\ref{#1}}
	\newcommand{\remref}[1]{Remark~\ref{#1}}
	\newcommand{\eqnref}[1]{(\ref{#1})}
	\newcommand{\exref}[1]{Example~\ref{#1}}
	\newcommand{\conjref}[1]{Conjecture~\ref{#1}}
	\newcommand{\rmkref}[1]{Remark~\ref{#1}}
	\newcommand{\defref}[1]{Definition~\ref{#1}}

	\newcommand{\nc}{\newcommand}
	
	\nc{\on}{\operatorname}
	
	\nc{\Z}{{\mathbb Z}}
	\nc{\bQ}{{\mathbb Q}}
	\nc{\C}{{\mathbb C}}
	\nc{\R}{{\mathbb R}}
	\nc{\bbP}{{\mathbb P}}
	\nc{\bF}{{\mathbb F}}
	
	\nc{\boldD}{{\mathbb D}}
	\nc{\oo}{{\mf O}}
	\nc{\N}{{\mathbb N}}
	\nc{\bib}{\bibitem}
	\nc{\pa}{\partial}
	\nc{\F}{{\mf F}}
	\nc{\CA}{{\mathcal A}}
	\nc{\cD}{{\mathcal D}}
	\nc{\CE}{{\mathcal E}}
	\nc{\CP}{{\mathcal P}}
	\nc{\CO}{{\mathcal O}}
	\nc{\CU}{{\mathcal U}}
	\nc{\Res}{\text{Res}}
	\nc{\Ind}{\text{Ind}}
	\nc{\Ker}{\text{Ker}}
	\nc{\id}{\text{id}}
	\nc{\ot}{\otimes}
	
	\nc{\be}{\begin{equation}}
		\nc{\ee}{\end{equation}}
	
	\nc{\rarr}{\rightarrow}
	\nc{\larr}{\longrightarrow}
	\nc{\al}{\alpha}
	\nc{\ri}{\rangle}
	\nc{\lef}{\langle}
	
	\nc{\W}{{\mc W}}
	\nc{\bt}{\bf t}
	\nc{\gam}{\ol{\gamma}}
	\nc{\Q}{\ol{Q}}
	\nc{\q}{\widetilde{Q}}
	\nc{\la}{\lambda}
	\nc{\ep}{\epsilon}
	\nc{\ve}{\varepsilon}
	
	\nc{\g}{\mf g}
	\nc{\h}{\mf h}
	\nc{\n}{\mf n}
	\nc{\bb}{\mf b}
	\nc{\G}{{\mf g}}

	\nc{\D}{\mc D}
	\nc{\cE}{\mc E}
	\nc{\CC}{\mc C}
	\nc{\CH}{\mc H}
	\nc{\CK}{\mc K}
	\nc{\CL}{\mc L}
	\nc{\CT}{\mc T}
	\nc{\CI}{\mc I}
	\nc{\CR}{\mc R}
	
	\nc{\UK}{{\mc U}_{\CA_q}}
	
	\nc{\CS}{\mc S}
	
	\nc{\CB}{\mc B}
	
	\nc{\Li}{{\mc L}}
	\nc{\La}{\Lambda}
	\nc{\is}{{\mathbf i}}
	\nc{\V}{\mf V}
	\nc{\bi}{\bibitem}
	\nc{\NS}{\mf N}
	\nc{\dt}{\mathord{\hbox{${\frac{d}{d t}}$}}}
	\nc{\E}{\mc E}
	\nc{\ba}{\tilde{\pa}}
	\nc{\half}{\frac{1}{2}}
	
	\def\smapdown#1{\big\downarrow\rlap{$\vcenter{\hbox{$\scriptstyle#1$}}$}}
	
	\nc{\mc}{\mathcal}
	\nc{\ov}{\overline}
	\nc{\mf}{\mathfrak}
	\nc{\ol}{\fracline}
	\nc{\el}{\ell}
	\nc{\etabf}{{\bf \eta}}
	\nc{\zetabf}{{\bf
			\zeta}}\nc{\x}{{\bf x}}
	\nc{\xibf}{{\bf \xi}} \nc{\y}{{\bf y}}
	\nc{\WW}{\mc W}
	\nc{\SW}{\mc S \mc W}
	\nc{\sd}{\mc S \mc D}
	\nc{\hsd}{\widehat{\mc S\mc D}}
	\nc{\parth}{\partial_{\theta}}
	\nc{\cwo}{\C[w]^{(1)}}
	\nc{\cwe}{\C[w]^{(0)}} \nc{\wt}{\widetilde}
	\nc{\gl}{\mf gl}
	\nc{\K}{\mf k}
	
	\newcommand{\U}{{\rm{U}}}
	\newcommand{\TL}{{\rm{TL}}}
	\newcommand{\Aut}{{\rm{Aut}}}
	\newcommand{\End}{{\rm{End}}}
	\newcommand{\Hom}{{\rm{Hom}}}
	\newcommand{\Rad}{{\rm{Rad}}}
	\newcommand{\Mod}{{\rm{Mod}}}
	\newcommand{\Ext}{{\rm{Ext}}}
	\newcommand{\Tor}{{\rm{Tor}}}
	\newcommand{\Lie}{{\rm{Lie}}}
	\newcommand{\Uq}{{{\mathcal U}_v}}
	\newcommand{\GL}{{\rm{GL}}}
	\newcommand{\Sym}{{\rm{Sym}}}
	\newcommand{\rk}{{\rm{rk}}}
	\newcommand{\tr}{{\rm{tr}}}
	\newcommand{\Rea}{{\rm{Re}}}
	\newcommand{\rank}{{\rm{rank}}}
	\newcommand{\im}{{\rm{Im}}}
	
	\advance\headheight by 2pt
	
	\nc{\fb}{{\mathfrak b}} \nc{\fg}{{\mathfrak g}}
	\nc{\tA}{\widetilde{A}}
	\nc{\fh}{{\mathfrak h}}  \nc{\fk}{{\mathfrak k}}
	
	\nc{\fl}{{\mathfrak l}} \nc{\fn}{{\mathfrak n}}
	
	\nc{\fp}{{\mathfrak p}} \nc{\fu}{u}
	
	
	\nc{\fS}{{\Sym}}
	
	\nc{\fsl}{{\mathfrak {sl}}} \nc{\fsp}{{\mathfrak {sp}}}
	\nc{\fso}{{\mathfrak {so}}} \nc{\fgl}{{\mathfrak {gl}}}

	\nc{\A}{\mc A} \nc{\cF}{{\mathcal F}}
	
	\nc{\cA}{{\mathcal A}} \nc{\cP}{{\mathcal P}} \nc{\cC}{{\mathcal C}}
	\nc{\cU}{{\mathcal U}} \nc{\cB}{{\mathcal B}}

	\def\lr{{\longrightarrow}}
	\def\ol{\overline}
	\def\inv{{^{-1}}}
	
	\def\xl{{\hbox{\lower 2pt\hbox{$\scriptstyle \mathfrak L$}}}}
	
	\nc{\bX}{{\mathbf X}} \nc{\bx}{{\mathbf x}} \nc{\bd}{{\mathbf d}}
	\nc{\bdim}{{\mathbf dim}} \nc{\bm}{{\mathbf m}}
	
	\title[]{Milnor fibre homology complexes}
	
	\author{Gus Lehrer and Yang Zhang}
	\address{School of Mathematics and Statistics,
The University of Sydney, Sydney, NSW 2006,  Australia}
	\email{gustav.lehrer@sydney.edu.au}

	\address{School of Mathematics and Physics,
The University of Queensland, St Lucia,
Brisbane, QLD 4072, Australia}
	\email{yang.zhang@uq.edu.au}
	\date {}
	\dedicatory{Dedicated to Claudio Procesi, good friend, Italian mathematician}
	
	\begin{abstract}
		Let $W$ be a finite Coxeter group.
		We give an algebraic presentation of what we refer to as ``the non-crossing algebra'', which is associated
		to the hyperplane complement of $W$ and to the cohomology of its Milnor fibre. This is used to produce 
		simpler and more general chain (and cochain) complexes which compute the integral homology and cohomology groups of 
		the Milnor fibre $F$ of $W$. In the process we define a new, larger algebra $\wt A$, which seems to be ``dual'' to the 
		Fomin-Kirillov algebra, and in low ranks is linearly isomorphic to it. There is also a mysterious connection between $\wt A$ 
		and the Orlik-Solomon algebra, in analogy with the fact that the Fomin-Kirillov algebra contains the coinvariant algebra
		of $W$. This analysis is applied to compute the multiplicities $\langle \rho, H^k(F,\C)\rangle_W$ and $\langle \rho, H^k(M,\C)\rangle_W$,
		where $M$ and $F$ are respectively the hyperplane complement and Milnor fibre associated to $W$ and $\rho$ is a representation of $W$.
	\end{abstract}
	\maketitle
	
	This work is an outgrowth of \cite{Zha22}, whose notation we follow, by and large. 
	In particular, $W$ is a finite Coxeter group, $M$ is its corresponding complexified hyperplane complement
	and $F$ is the corresponding (non-reduced) Milnor fibre, as defined in \cite[Def. 1]{DL16} or \cite{Zha22}. Our objective is to 
	construct tractable chain complexes which compute the
	homology of $M$ (which is known for all $W$) and that of $F$ (which is poorly understood, even in the case $W=\Sym_n$). 
	Our approach will indicate a connection with the algebra of Fomin-Kirillov \cite{FK99}. 
	
	In the first two sections, we recall two distinct definitions of the central character in our development, the ``non-crossing algebra'' $\CA$, and prove
	their equivalence. This provides us with many properties of the algebra $\CA$. In addition, we discuss a number of preliminaries we shall require later.
	We then introduce a duality theory, by defining a non-degenerate bilinear form on $\CA$, and this is
	applied to study the integral cohomology of $F$.
	
	Our main purpose here is to show how the algebra $\CA$ and its relatives play a crucial role in determining the cohomolgy of the Milnor fibre $F$. For example, we give several results similar to the following (see Corollary \ref{coro: cohomFW} below). Let $\omega=\sum_{t\in T}a_t\in\CA$; then $\omega^2=0$ in $\CA$, and we prove that both left and right multiplication  by $\omega$ on $\CA$ have the same kernel and image. Moreover we have the following isomorphism of graded abelian groups:
	\be\label{eq:homfw}
	H^*(F/W;\Z)[-1]\cong \frac{\CA\omega\cap\omega\CA}{\omega\CA\omega},
	\ee
	where $[-1]$ on the left means that the $\mathbb{Z}$-grading is shifted by $-1$.
	This result could be compared with those in \cite{DPSS99}, whose ultimate purpose is to compute the left side of the equation \eqref{eq:homfw}. 
	In principle, the stated result reduces the question to a mechanical computation in $\CA$, although in practice, this is not an easy computation.
	
	
	\section{Definitions, notation and preliminaries.}
	\subsection{The noncrossing partition lattice}
	Let $(W,S)$ be a finite Coxeter system of rank $n$ with a geometric representation on the Euclidean space $V:=\mathbb{R}^n$, and let $T=\bigcup_{w\in W}wSw^{-1}$ be the set of reflections of $W$. 
	Denote by $\ell_T(w)$ the number of reflections in a shortest expression for $w$ as a product of reflections, and define a partial order $\leq$ on $W$  by stipulating that $u\leq v$ if and only if $\ell_T(v)= \ell_T(u)+ \ell_{T}(u^{-1}v)$ for $u,v\in W$. Then $(W,\leq)$ is a graded poset whose unique minimal element is the identity $e$ and whose maximal elements are those having no fixed points in $\mathbb{R}^n$, sometimes known as elliptic elements.   
	
	Let $\gamma$ be any Coxeter element, i.e., product of all the simple reflections in some order. 
	\begin{definition}\label{def:l}
		Denote by $\mathcal L$ the closed interval $\mathcal{L}:=[e,\gamma]$ of the poset $(W,\leq)$.
	\end{definition}
	
	Brady and Watt proved that the closed interval $\mathcal{L}$ is a lattice, which we call the noncrossing partition (NCP) lattice \cite{BW08}. 
	The isomorphism type of the NCP lattice is independent of $\gamma$, as all Coxeter elements form a conjugacy class in $W$. 
	Although all Coxeter elements of $W$ are conjugate in $W$, we now define a specific Coxeter element in terms of the root system, which
	has properties we shall find useful later. 
	
	Associated with $W$ we have a set $\Phi$ of vectors in $V$, which form a root system (cf. \cite[Ch. VI, \S 1]{Bou02}), and $S$ determines a simple subsystem of $\Phi$, as well as the corresponding set $\Phi^+$  of positive roots. Write $\Pi=\{\alpha_i \mid  i \in [n]\}$ for  the given simple system. Without loss of generality, we may assume that $W$ is irreducible. Then  $\Pi$ can be written as the disjoint union $\Pi=\Pi_1 \cup \Pi_2$, 
	where $\Pi_1=\{\alpha_{i_1},\dots, \alpha_{i_l}\}$ and $\Pi_2=\{\alpha_{i_l+1}, \dots, \alpha_{i_n}\}$ where the $\alpha_{i_k}\in \Pi_1$ are mutually orthogonal as also are the $\alpha_{i_k}\in \Pi_2$ (see \cite{Ste59}).

	The set of positive roots of $\Phi$ is in bijection with the set of reflections of $W$. 
	Recall that $W$ acts faithfully on the Euclidean space $V:=\mathbb{R}^n$ whose inner product we denote by $(-,-)$. For any positive root $\alpha$ relative to $\Pi$, the corresponding reflection  is defined by
	$t_{\alpha}(x):= x - 2 
	\frac{(\alpha,x)}{(\alpha,\alpha)}\alpha$ for any $x\in \mathbb{R}^n$.
	Throughout, we use the following  Coxeter element
	\begin{equation*}\label{eq:Coxelmt}
		\gamma= (\prod_{\alpha\in \Pi_1} t_{\alpha})(\prod_{\alpha\in \Pi_2} t_{\alpha}),
	\end{equation*}
	unless otherwise stated. Note that the simple reflections $t_{\alpha}$ and $t_{\beta}$ commute whenever $\alpha, \beta\in \Pi_1$ or $\alpha, \beta\in \Pi_2$.

	Now we define a total order on the set of positive roots. Let $h$ be the Coxeter number, i.e. the order of $\gamma$. Then the number of positive roots is $nh/2$. 
	It is proved in \cite[Theorem 6.3]{Ste59} that the positive roots $\rho_k$ of $\Phi$ relative to $\Pi$ can be produced successively using the following formulae 
	\begin{equation}\label{stein}
		\rho_k= \begin{cases}
			\alpha_{i_k}, \quad  & 1\leq k\leq l,\\
			-\gamma(\alpha_{i_k}), \quad &l+1\leq k\leq n,\\
			\gamma(\rho_{k-n}), \quad &n+1\leq k\leq  \frac{nh}{2}. 
		\end{cases}
	\end{equation}
	This yields a total order $\preceq$ on the set $T$ of reflections
	\begin{equation}\label{eq: total ordering}
		t_{\rho_1}\prec t_{\rho_2}\prec \dots \prec t_{\rho_{nh/2}}.
	\end{equation}
	
	The  total order $\preceq$ on $T$ gives rise to an EL-labelling (see \cite{ABW07}) of $\mathcal{L}$. Denote by  $\mathcal{E}(\mathcal{L})$ the set of covering relations 
	$u\lessdot v$ of $\mathcal{L}$, that is, relations where there is no third element between $u$ and $v$. Then we have  a natural  edge labelling 
	\begin{equation*}\label{eq: edge-labelling}
		\lambda: \mathcal{E}(\mathcal{L}) \rightarrow T, \quad u\lessdot v \mapsto u^{-1}v. 
	\end{equation*}
	Let  ${\bf c}: x=w_0< w_1< \dots < w_k=y$ be a maximal chain of any closed interval $[x,y]$ of $\mathcal{L}$. We may identify ${\bf c}$ with its  labelling sequence $\lambda({\bf c}):=(w_0^{-1}w_1, \dots, w_{k-1}^{-1}w_k)$, where $w_{i-1}^{-1}w_i\in T$ for $1\leq i\leq k$.
	It has been proved that $\lambda$ is an EL-labelling \cite{ABW07,Bjo}, which means that for every  interval $[x,y]$ of $\mathcal{L}$ 
	\begin{enumerate}
		\item there is a unique increasing  maximal chain in $[x,y]$,  and
		\item this chain is lexicographically smallest among all maximal chains in $[x,y]$.
	\end{enumerate}
	As  $\mathcal{L}$ has an EL-labelling, it is  Cohen-Macaulay \cite[Theorem 2.3]{Bjo}, i.e. for any $u<v$ of $\mathcal{L}$ we have
	\[
	\widetilde{H}_i(u,v)=0, \quad \forall i \neq \ell_T(v)-\ell_T(u)-2,
	\]
	where $\wt H$ denotes reduced homology of the order complex of $(u,v)$. 
	
	If $W$ is a finite Coxeter group, then $\mathcal{L}$ is a direct product of the NCP lattices $\mathcal{L}(W_i)$ over the irreducible components $W_i$ of $W$.
	It is a result of \cite{Bjo} that the EL-labelling is preserved under the direct product. Moreover, any closed interval $[e,w]$ of $\mathcal{L}$ 
	has an EL-labelling given by the natural labelling $\lambda$ restricted to $[e,w]$.
	
	For any $w\in \mathcal{L}$, denote
	\[ {\rm Rex}_{T}(w):=\{(t_1,t_2,\dots, t_k)\,|\, w=t_1t_2\dots t_k \text{\, is $T$-reduced} \}.\]
   With respect to the total order \eqnref{eq: total ordering} on $T$, we define
	\begin{equation}\label{eq: decw}
		\mathcal{D}_{w}:=\{ (t_1,\dots t_{k}) \in {\rm Rex}_{T}(w) \mid   t_1\succ t_2\succ\dots\succ t_k \}.
	\end{equation}
	In words, $\mathcal{D}_{w}$ is the set of decreasing labelling sequences for the maximal chains $e<t_1<t_1t_2<\dots <t_1t_2\dots t_k=w$ of $[e,w]$. Note that any interval $[u,v]$ of $\mathcal{L}$ is isomorphic to $[e, u^{-1}v]$ as posets. The following result can be found in \cite[Proposition 2.2]{Zha22}.
	
	\begin{proposition}\label{prop: dimtop} 
		For any $w\in \mathcal{L}$ with $1\leq \ell_{T}(w)=k\leq n$, we have
		\[{\rm rank}\, \widetilde{H}_{k-2}(e,w)= (-1)^{k}\mu(w)=|\mathcal{D}_w|, \]
		where $\mu$ is the M\"obius function of $\mathcal{L}$.
	\end{proposition}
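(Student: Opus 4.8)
The plan is to derive all three equalities from the theory of lexicographically shellable posets, applied to the interval $[e,w]$. Recall from the discussion above that the restriction of the labelling $\lambda$ to $[e,w]$ is again an EL-labelling. The first step is to invoke the standard consequence of this (Bj\"orner; see \cite{Bjo}, and \cite{ABW07} in the present setting): an EL-labelling induces a shelling of the order complex $\Delta$ of the open interval $(e,w)$, a shelling order being obtained by listing the maximal chains of $[e,w]$ in increasing lexicographic order of their label sequences. In particular $\Delta$ is Cohen--Macaulay, which recovers the vanishing stated just before the Proposition; and since a maximal chain $e=w_0\lessdot w_1\lessdot\dots\lessdot w_k=w$ yields a simplex of $\Delta$ with $k-1$ vertices, hence of dimension $k-2$, the reduced homology $\widetilde H_i(e,w)$ vanishes for $i\neq k-2$ and $\widetilde H_{k-2}(e,w)$ is free abelian.

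Next I would read off the rank of this top homology group. The structure theorem for EL-shellings (Bj\"orner--Wachs) asserts that $\Delta$ is homotopy equivalent to a wedge of $(k-2)$-spheres, one for each \emph{falling} maximal chain of $[e,w]$, i.e.\ each maximal chain whose label sequence is strictly $\preceq$-decreasing. Now a maximal chain $e=w_0\lessdot w_1\lessdot\dots\lessdot w_k=w$ has label sequence $(t_1,\dots,t_k)$ with $t_i=w_{i-1}^{-1}w_i\in T$ and $w=t_1t_2\cdots t_k$; this expression is automatically $T$-reduced, since $[e,w]$ is graded by $\ell_T$, so the falling maximal chains are exactly the sequences belonging to $\mathcal D_w$. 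Thus $\operatorname{rank}\widetilde H_{k-2}(e,w)=|\mathcal D_w|$.

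To obtain the statement about the M\"obius function, I would then apply Philip Hall's theorem: $\mu(w)=\mu(e,w)$ equals the reduced Euler characteristic $\sum_{i\geq -1}(-1)^i\operatorname{rank}\widetilde H_i(e,w)$, which by the Cohen--Macaulay vanishing collapses to the single term $(-1)^{k-2}\operatorname{rank}\widetilde H_{k-2}(e,w)=(-1)^k\operatorname{rank}\widetilde H_{k-2}(e,w)$. Chaining this with the previous step gives $(-1)^k\mu(w)=|\mathcal D_w|=\operatorname{rank}\widetilde H_{k-2}(e,w)$, which is the assertion. (Alternatively, the equality $(-1)^k\mu(w)=|\mathcal D_w|$ is precisely the classical falling-chain formula for the M\"obius function of an EL-shellable poset and could simply be quoted.)

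The one step that carries real content is the appeal to the Bj\"orner--Wachs homotopy/homology description of a lexicographically shellable order complex, which I would cite rather than reprove. Everything else is bookkeeping: matching the notion of ``falling maximal chain'' with the combinatorially defined set $\mathcal D_w$, and converting homology ranks into the M\"obius function via Euler characteristics. The only point meriting an explicit sentence is that the $T$-reducedness built into the definition of $\mathcal D_w$ is not an extra restriction in this context, but is forced by $[e,w]$ being a graded interval of $\mathcal L$.
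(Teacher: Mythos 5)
The paper states this as a citation to \cite[Proposition~2.2]{Zha22} without reproducing a proof, so there is no in-paper argument to compare against; but your proof is correct and is the standard Bj\"orner--Wachs argument for EL-shellable graded posets. You correctly identify the two non-trivial bookkeeping points: that maximal chains of $[e,w]$ correspond exactly to $T$-reduced expressions of $w$ because $\mathcal L$ is graded by $\ell_T$ (so the falling chains are precisely $\mathcal D_w$), and that the Euler-characteristic/Philip-Hall step collapses to a single term by Cohen--Macaulayness, yielding the sign $(-1)^{k-2}=(-1)^k$. The $k=1$ case is also covered, since $\widetilde H_{-1}(\emptyset)\cong\mathbb Z$ and $\mathcal D_w=\{(w)\}$ when $w\in T$.
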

	
	Any interval $[e,w]$ of $\mathcal{L}$ has the following important interpretation, due to Bessis \cite[Lemma 1.4.3]{Bes03} (see also \cite[Proposition 2.6.11]{Arm09}).
	
	\begin{proposition}\label{prop: intncp}
		Let $\gamma\in W$ be a Coxeter element and ${\rm NC}(W,\gamma)$ the noncrossing partition lattice relative to $\gamma$. 
		For any $w\leq \gamma$, the interval $[e,w]$ of ${\rm NC}(W,\gamma)$  is isomorphic to ${\rm NC}(W^{\prime},w)$ for some parabolic subgroup $W^{\prime}$ of $W$. 
	\end{proposition}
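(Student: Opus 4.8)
The plan is to exhibit the parabolic subgroup $W'$ explicitly and then verify the two assertions packaged into the statement: that $w$ is a Coxeter element of $W'$, so that ${\rm NC}(W',w)$ is even defined, and that the interval $[e,w]$ of ${\rm NC}(W,\gamma)=\mathcal{L}$ coincides, as a poset, with ${\rm NC}(W',w)$. Throughout I write $V^u:=\{x\in V\mid ux=x\}$ for the fixed subspace of $u\in W$ and set $k:=\ell_T(w)$. The candidate is $W':=W_w$, the pointwise stabiliser in $W$ of $V^w$.

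First I would collect the ``soft'' structural facts, none of which uses $w\le\gamma$. By Steinberg's fixed-point theorem the pointwise stabiliser of any subspace of $V$ is generated by the reflections it contains, and every such stabiliser is $W$-conjugate to a standard parabolic $W_I$ ($I\subseteq S$); hence $W'$ is a parabolic subgroup, of rank $\on{codim}_V(V^w)$. By Carter's lemma $\ell_T(u)=\on{codim}_V(V^u)$ for every $u\in W$, so $\on{rank}(W')=k$; applying Carter's lemma again inside $W'$, which acts faithfully as a reflection group on $(V^w)^\perp$, shows that $\ell_T^{W'}$ is the restriction of $\ell_T$ to $W'$. I would also use the ``fixed-space lemma'': if $w=t_1\cdots t_k$ is a $T$-reduced expression, then $V^w=V^{t_1}\cap\cdots\cap V^{t_k}$ — the inclusion $\subseteq$ being a dimension count against $\dim V^w=n-k$, the reverse inclusion being immediate — and consequently every reflection occurring in some $T$-reduced expression for $w$ lies in $W_w=W'$.

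The heart of the proof is that $w$ is a Coxeter element of $W'$. The fixed-space lemma already gives $\ell_T^{W'}(w)=k=\on{rank}(W')$, and $w$ is elliptic in $W'$ because all of its fixed vectors lie in $V^w$, which meets $(V^w)^\perp$ trivially; but this is \emph{not} enough — for instance $-1\in W(B_2)$ is elliptic of reflection length $2=\on{rank}$ and yet is not a Coxeter element — and it is precisely here that $w\le\gamma$ must be used. I would use it through the principle that \emph{prefixes of $T$-reduced factorisations of a Coxeter element are parabolic Coxeter elements}: since $w\le\gamma$, the length-additive factorisation $\gamma=w\cdot(w^{-1}\gamma)$ refines to a $T$-reduced expression $\gamma=(t_1\cdots t_k)(t_{k+1}\cdots t_n)$ with $w=t_1\cdots t_k$, and one shows that $\langle t_1,\dots,t_k\rangle$ is a parabolic subgroup of rank $k$ in which $t_1\cdots t_k$ is a Coxeter element; combined with the fixed-space lemma and a rank count this forces $\langle t_1,\dots,t_k\rangle=W_w=W'$. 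The prefix principle is the genuinely nontrivial input: it follows from the transitivity of the Hurwitz (braid-group) action on $T$-reduced factorisations of $\gamma$ (Bessis, after a letter of Deligne), which reduces an arbitrary such factorisation to one whose letters form a simple system of $W$ — for which the principle is transparent — together with an analysis of how Hurwitz moves act on prefixes, or alternatively by induction on the rank of $W$; in any case this is in substance Bessis' \cite[Lemma~1.4.3]{Bes03}, which one may also simply cite.

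Finally I would identify the two intervals. For $u\in W$, the relation $u\le w$ in ${\rm NC}(W,\gamma)$ means $\ell_T(u)+\ell_T(u^{-1}w)=\ell_T(w)$; if this holds, a $T$-reduced expression for $u$ extends to one for $w$, so $u\in W'$ by the fixed-space lemma, and since $\ell_T^{W'}=\ell_T|_{W'}$ the same numerical identity reads $u\le w$ in ${\rm NC}(W',w)$; conversely, if $u\le w$ in ${\rm NC}(W',w)$ then $u\in W'\subseteq W$ and the defining identity is literally the same. Hence the two intervals coincide as sets, and in each the partial order is the restriction of $\ell_T$-divisibility, so they are equal as posets. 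The one substantial point is the previous paragraph — upgrading ``$w$ is an elliptic element of $W'$ of the correct reflection length'' to ``$w$ is a Coxeter element of $W'$'', which is exactly where the hypothesis $w\le\gamma$ is indispensable — whereas the remaining steps are routine bookkeeping with Steinberg's theorem, Carter's lemma, the fixed-space lemma, and the length-closedness of parabolic subgroups.
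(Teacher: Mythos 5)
Your proposal is mathematically sound, but it is worth noting that the paper does not prove this proposition at all: it is stated as a known result with a citation to Bessis \cite[Lemma 1.4.3]{Bes03} (see also \cite[Prop.\ 2.6.11]{Arm09}), so there is no in-paper argument to compare against. What you have written is the standard reconstruction of that result: take $W'$ to be the pointwise stabiliser of $\mathrm{Fix}(w)$, use Steinberg's theorem to see it is parabolic, Carter's lemma and the fixed-space lemma to control reflection lengths and to show $\ell_T^{W'}=\ell_T|_{W'}$, and then reduce everything to the one genuinely hard point, namely that $w$ is a Coxeter element of $W'$. You correctly isolate that point (the $-1\in W(B_2)$ example showing ellipticity plus full reflection length does not suffice is exactly the right cautionary example), and you are candid that the ``prefix principle'' you invoke there \emph{is} Bessis' Lemma 1.4.3 in substance, so the honest options are to cite it (as the paper does) or to rederive it via Hurwitz transitivity. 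If you take the second option, be aware that the sketch is thinnest precisely there: Hurwitz moves straddling the position $k$ change the prefix product, so the reduction to the standard factorisation $\gamma=s_1\cdots s_n$ requires either the observation that the prefix subgroup $\langle t_1,\dots,t_k\rangle$ depends only on $w$ (it equals $W_w$, which itself needs an argument beyond the easy inclusion $\langle t_1,\dots,t_k\rangle\subseteq W_w$) or an induction on rank; writing this out in full is essentially reproving Bessis' lemma. Given that the paper treats the proposition as citable background, citing \cite{Bes03} for that step, as you suggest, is the appropriate resolution, and the remaining bookkeeping in your write-up is correct.
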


	\subsection{The algebra \texorpdfstring{$\mathcal{A}$}{A}}
	We now give an algebraic definition of the noncrossing algebra in terms of generators and relations. 
	\begin{definition}\label{def: A}
		Let $\mathcal{L}:=[e,\gamma]$ be the noncrossing partition lattice associated  to a finite Coxeter group $W$ and a Coxeter element $\gamma\in W$. We define the   noncrossing algebra $\mathcal{A}=\mathcal{A}(W,\gamma)$  to be the graded algebra over $\mathbb{Z}$ generated by homogeneous elements $a_{t}, t\in T$ of degree $1$, subject to the following quadratic relations:
		\begin{enumerate}
			\item  $a_t^2= a_{t_1}a_{t_2}=0$ for any $t\in T$ 
			and $t_1, t_2 \in T$ with $t_1t_2 \not \leq \gamma$;
			
			\item  $\sum_{(t_1,t_2)\in {\rm Rex}_{T}(w)}a_{t_1} a_{t_2}=0$ for any  $w\in \mathcal{L}$ with $\ell_{T}(w)=2$.
		\end{enumerate}
	\end{definition}
	
	Different choices of the Coxeter element produce isomorphic noncrossing algebras, as the  Coxeter elements are all conjugate to each other and the absolute length is invariant under the conjugation. More precisely, if $\gamma^{\prime}=w\gamma w^{-1}$ for some $w\in W$, the map $a_{t}\mapsto a_{wtw^{-1}}$ extends to an algebra isomorphism between $\mathcal{A}(W,\gamma)$ and $\mathcal{A}(W,\gamma^{\prime})$.  
	
	\begin{example}\label{exam: dih} (Dihedral group)
		Let $I_2(m)$ be the diheral group defined by 
		\[ I_{2}(m):=\langle s_1, s_2 \mid s_1^2=s_2^2=(s_2s_1)^m=1\rangle, \quad m\geq 3.  \]
		Let $\gamma=s_1s_2$ be a Coxeter element and let $t_i=s_1(s_2s_1)^{i-1}$ be reflections for $1\leq i\leq m$. Then $\gamma=t_1t_m=t_2t_1=\dots =t_mt_{m-1}$ has $m$ reduced reflection factorisations. The noncrossing algebra $\mathcal{A}(I_2(m),\gamma)$ is generated by $a_{t_i}, 1\leq i\leq m$ subject to the following relations
		\begin{equation*}
			\begin{aligned}
				a_{t_1}a_{t_i}&=0, \quad 1\leq i\leq m-1,\\
				a_{t_i}a_{t_j}&=0, \quad 2\leq i \leq m, 1\leq j \neq i-1 \leq m, \\
				a_{t_1} a_{t_m}&+a_{t_2} a_{t_1}+\dots +a_{t_m} a_{t_{m-1}}=0. 
			\end{aligned}
		\end{equation*}
		
		Because it is relevant for the proof of Proposition \ref{prop: Abasis}, we set out how the above constructions
		apply to this case. Note first that, using the relation $\gamma t_i\gamma\inv=t_{i+2}$, where
		the index is taken modulo $m$,
		the construction \eqref{eq: total ordering} leads to the following ordering on the reflections $t_i$:
		\begin{equation*}\label{eq: to-dihedral}
			t_{1}(=s_1)\prec t_{2}\prec \dots \prec t_{m}(=s_2).
		\end{equation*}
		It follows, taking into account the reduced reflection factorisations of $\gamma$ given above, that the unique increasing factorisation is 
		$\gamma=t_1t_m=s_1s_2$, and all other factorisations are decreasing.
	\end{example}
	

	
	\begin{example} (Type $A_n$)
		Let $W={\rm Sym}_{n+1}$ be the symmetric group generated by the elementary transpositions $(i,i+1), 1\leq i\leq n$. Let $\gamma=(1,2,\dots, n+1)$ be a Coxeter element. We write $a_{ij}:=a_{(i,j)}$. Then $\mathcal{A}(W,\gamma)$ of type $A$ is the graded $\mathbb{Z}$-algebra  generated by $a_{ij}, 1\leq i<j\leq n+1$ subject to the following relations:
		\begin{equation*}
			\begin{aligned}
				a_{ij}^2&=0, &\quad& 1\leq i<j\leq n+1,\\
				a_{ik}a_{jl}&=0, &\quad& 1\leq i<j<k<l\leq n+1, \\
				a_{ij}a_{ik}=a_{jk}&a_{ij}=a_{ik}a_{jk}=0, &\quad& 1\leq i<j<k\leq n+1,\\
				a_{ij}a_{kl}+a_{kl}a_{ij}&=a_{il}a_{jk}+a_{jk}a_{il}=0,  &\quad& 1\leq i<j<k<l\leq n+1,\\
				a_{ij}a_{jk}+a_{jk}&a_{ik}+a_{ik}a_{ij}=0, &\quad& 1\leq i<j<k\leq n+1.
			\end{aligned} 
		\end{equation*}
	\end{example}

	\subsection{The algebra \texorpdfstring{$\mathcal{B}$}{B}}
	We next give what later will turn out to be a combinatorial definition of the noncrossing algebra above, as introduced in \cite{Zha22}.
	
	For each $k=0, \dots, n$, let  $ \mathcal{L}_k:=\{ w\in \mathcal{L} \mid \ell_{T}(w)=k \}$. Denote by $C_{k-1}(w)$ the abelian group which is freely spanned by all  sequences  of ${\rm Rex}_T(w)$. Let $B_k$ denote the $k$-string braid group,
	with standard generators $\sigma_i, 1\leq i\leq k-1$ subject to the relations $	\sigma_i\sigma_j=\sigma_j\sigma_i$ for $|i-j|\geq 2$, and 
	$\sigma_i\sigma_{i+1}\sigma_i=\sigma_{i+1}\sigma_i\sigma_{i+1}$ for $ 1\leq i\leq k-2$. 
	The Hurwitz action of $B_k$ on $C_{k-1}(w)$  is defined  by 
	\begin{equation*}\label{eq: Braidact}
		\sigma_i.(t_1, \dots, t_{i-1},t_i, t_{i+1}, \dots, t_k):=(t_1,\dots, t_{i-1}, t_{i+1}, t_{i}^{t_{i+1}},\dots, t_k), \quad 1\leq i\leq k-1.
	\end{equation*}
	
	Let ${\rm Sym}_k$ be the symmetric group on $k$ letters with standard generators $s_i=(i,i+1), 1\leq i\leq k-1$. There is  a set-theoretic lift map:
	\begin{equation*}
		\varphi:  {\rm Sym}_k \rightarrow B_k,\quad \pi=s_{i_1} \dots s_{i_p}\mapsto \underline{\pi}:= \sigma_{i_1}\dots \sigma_{i_p},
	\end{equation*}
	where $\pi=s_{i_1} \dots s_{i_p}\in {\rm Sym}_k$ is a reduced expression in the standard generators. This is independent of the choice of reduced expression of $\pi$. 
	For any ${\bf t}=(t_1,t_2,\dots,t_k)\in {\rm Rex}_{T}(w)$, we  define the following $\mathbb{Z}$-linear map using the above Hurwitz action:
	\begin{equation*}\label{eq: beta}
		\beta: C_{k-1}(w) \rightarrow C_{k-1}(w),\quad {\bf t}\mapsto \beta_{{\bf t}}:=\sum_{\pi\in {\rm Sym}_k} {\rm sgn}(\pi)\, \underline{\pi}.(t_1,t_2,\dots,t_k),
	\end{equation*} 
	where ${\rm sgn}$ is the usual sign character  of ${\rm Sym}_k$. The element $\beta_{{\bf t}}$ is viewed as an alternating sum of  maximal chains of the  interval $(e,w]$, with each sequence $(t_1,t_2,\dots, t_k)$ in the sum identified  with the following chain
	\begin{equation*}\label{eq: chain}
	   t_1<t_1t_2<\dots <t_1t_2\dots t_k=w.
	\end{equation*}
	
	\begin{definition}\label{def: AlgB}
		For each $w\in \mathcal{L}_k$, we define $\mathcal{B}_w$ to be the abelian group spanned by the elements $\beta_{{\bf t}}$ of ${\rm {Rex}}_T(w)$, that is, 
		\[ \mathcal{B}_{w}:={\rm Im}\, \beta=\sum_{{\bf t}\in {\rm Rex}_{T}(w)} \mathbb{Z}\beta_{{\bf t}} \subseteq C_{k-1}(w).\]
		In particular $\mathcal{B}_e:=\mathbb{Z}$. Further, we write $\mathcal{B}:= \bigoplus_{w\in \mathcal{L}}\mathcal{B}_w$. 
	\end{definition}
	
	We point out a close connection between $\mathcal{B}$ and the homology of  $\mathcal{L}$. 
	For any $w\in \mathcal{L}_k$, let 
	$\widetilde{H}_{k-2}(e,w)$ be the top reduced homology group of the open interval $(e,w)$. 
	Define  $C_{k-1}$ to be the  abelian group freely spanned by the basis $\bigcup_{w\in \mathcal{L}_k} {\rm Rex}_T(w)$. Then $C_{k-1}=\bigoplus_{w\in \mathcal{L}_k} C_{k-1}(w)$.
	Define the linear truncation  $d_{k-1}$ by
	\begin{equation}\label{eq: theta}
		d_{k-1}: C_{k-1}\rightarrow C_{k-2}, \quad (t_1,t_2,\dots,t_{k-1},t_k)\mapsto (t_1,t_2,\dots, t_{k-1}), \quad k\geq 2,    
	\end{equation}
	and  $d_0(t)=1, \forall t\in T$. We will write $d=d_k$ whenever no confusion arises. As $\mathcal{B}_w\subseteq C_{k-1}(w)$, we may restrict $d$ to  $\mathcal{B}_w$ and  define
	\begin{equation*}\label{eq: defz}
		z_{{\bf t}}:=d(\beta_{\bf t}), \quad \forall {\bf t}\in {\rm Rex}_T(w).
	\end{equation*}
	
	\begin{proposition}\label{prop: cycle} \cite[Proposition 3.5]{Zha22}
		Let $w\in \mathcal{L}_{k}$ with $k\geq 1$. Then  for any ${\bf t}\in {\rm Rex}_{T}(w)$, we have
		\[
		z_{{\bf t}}=\sum_{i=1}^k (-1)^{k-i} \beta_{{\bf t}(\hat{i})}\in \widetilde{H}_{k-2}(e,w),
		\]
		where ${\bf t}(\hat{i}):=(t_1,\dots, \hat{t}_i,\dots,t_k)$ is obtained by removing the $i$-th entry of ${\bf t}$.
	\end{proposition}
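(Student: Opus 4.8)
The plan is to prove the two assertions in turn: first the closed combinatorial formula for $z_{{\bf t}}$, then the fact that it is a cycle.

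\emph{The formula for $z_{{\bf t}}$.} I would obtain it from a coset decomposition of ${\rm Sym}_k$. Let $H=\langle s_1,\dots,s_{k-2}\rangle$ be the standard parabolic subgroup fixing $k$, and let $\rho_j:=s_{k-1}s_{k-2}\cdots s_j$ for $1\le j\le k$ (with $\rho_k:=e$) be the associated minimal–length right coset representatives of $H$ in ${\rm Sym}_k$; thus every $\pi\in{\rm Sym}_k$ is uniquely $\pi=h\rho_j$ with $h\in H$ and $\ell(\pi)=\ell(h)+\ell(\rho_j)$, so that $\underline\pi=\underline h\,\underline{\rho_j}$ and ${\rm sgn}(\pi)={\rm sgn}(h)(-1)^{k-j}$. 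In the Hurwitz action the braid generators $\sigma_1,\dots,\sigma_{k-2}$ occurring in $\underline h$ alter only the first $k-1$ entries of a word, hence commute with the truncation $d$; therefore
\[
z_{{\bf t}}=d(\beta_{{\bf t}})=\sum_{j=1}^{k}(-1)^{k-j}\sum_{h\in H}{\rm sgn}(h)\,\underline h.\,d(\underline{\rho_j}.{\bf t})=\sum_{j=1}^{k}(-1)^{k-j}\,\beta_{d(\underline{\rho_j}.{\bf t})},
\]
the last equality because $\sum_{h\in H}{\rm sgn}(h)\,\underline h.(-)$ is precisely the operator $\beta$ on words of length $k-1$. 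It then remains to compute $d(\underline{\rho_j}.{\bf t})$: applying $\sigma_j,\sigma_{j+1},\dots,\sigma_{k-1}$ in turn to ${\bf t}=(t_1,\dots,t_k)$ pushes the (progressively conjugated) letter $t_j$ one place to the right at each step, giving $\underline{\rho_j}.{\bf t}=(t_1,\dots,t_{j-1},t_{j+1},\dots,t_k,\,t_j^{\,t_{j+1}\cdots t_k})$, whence $d(\underline{\rho_j}.{\bf t})={\bf t}(\hat j)$. This is the asserted formula. Along the way one records that each ${\bf t}(\hat j)$ is $T$-reduced — the Hurwitz action preserves ${\rm Rex}_T(w)$, and deleting the last letter of a $T$-reduced word again yields one (else $\ell_T(w)<k$, a contradiction) — so each $\beta_{{\bf t}(\hat j)}$ is defined, for the rank-$(k-1)$ element $v_j:=t_1\cdots\widehat{t_j}\cdots t_k\lessdot w$.

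\emph{That $z_{{\bf t}}$ is a cycle.} I would work in the order complex of $(e,w)$, into which the order complex of each $(e,v_j)$ embeds as a subcomplex, identifying a $T$-reduced word $(s_1,\dots,s_m)$ of an element $v$ with the simplex $s_1<s_1s_2<\dots<s_1\cdots s_m$; then $\beta_{{\bf s}}$ and $z_{{\bf s}}=d(\beta_{{\bf s}})$ become chains. The key observation is that since every factorisation in the Hurwitz orbit of ${\bf s}$ has product $v$, every simplex occurring in $\beta_{{\bf s}}$ has $v$ as its top vertex; that is, $\beta_{{\bf s}}=z_{{\bf s}}\ast v$ (append the vertex $v$), so that $\partial\beta_{{\bf s}}=(\partial z_{{\bf s}})\ast v+(-1)^{m-1}z_{{\bf s}}$, where $\partial$ is the reduced simplicial boundary. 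I then argue by induction on $k$, the case $k=1$ being trivial (there $z_{{\bf t}}$ is the empty simplex). For the inductive step, each $v_j$ has rank $k-1$, so $\partial z_{{\bf t}(\hat j)}=0$ by the induction hypothesis; substituting the displayed identity (with ${\bf s}={\bf t}(\hat j)$, $v=v_j$, $m=k-1$) into $\partial z_{{\bf t}}=\sum_j(-1)^{k-j}\partial\beta_{{\bf t}(\hat j)}$ annihilates the cone terms $(\partial z_{{\bf t}(\hat j)})\ast v_j$ and leaves $\partial z_{{\bf t}}=(-1)^{k-2}\sum_j(-1)^{k-j}z_{{\bf t}(\hat j)}$. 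Finally I expand each $z_{{\bf t}(\hat j)}$ by the formula already proved (applied to $v_j$) as $\sum_m(-1)^{(k-1)-m}\beta_{{\bf t}(\hat j)(\hat m)}$ and observe that the double deletions cancel in pairs: for fixed $a<b$ the word ${\bf t}$ with entries $a$ and $b$ removed arises exactly from $(j,m)=(b,a)$ and from $(j,m)=(a,b-1)$, with opposite signs. Hence $\partial z_{{\bf t}}=0$, i.e.\ $z_{{\bf t}}\in\widetilde H_{k-2}(e,w)$.

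I expect the two Hurwitz–action computations and the final sign count to be routine bookkeeping. The one load-bearing point is the identity $\beta_{{\bf s}}=z_{{\bf s}}\ast v$: it rewrites $\partial\beta_{{\bf s}}$ as a cone term (killed by the inductive hypothesis on the smaller interval $(e,v)$) plus $\pm z_{{\bf s}}$, and this is what drives the induction — together with the observation, itself a by-product of the first part, that deleting letters from ${\bf t}$ always leaves a $T$-reduced word, so that everything stays inside the framework of $\mathcal B$.
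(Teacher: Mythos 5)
Your argument is correct, and since the paper only cites \cite{Zha22} for this proposition without reproducing a proof, there is no in-paper argument to compare against; what follows is an assessment of your proof on its own merits.

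Both halves hold up. For the formula, the parabolic coset decomposition ${\rm Sym}_k = \bigsqcup_{j=1}^{k}H\rho_j$ with $H=\langle s_1,\dots,s_{k-2}\rangle$ and $\rho_j=s_{k-1}\cdots s_j$ is exactly the right device: the length additivity $\ell(h\rho_j)=\ell(h)+\ell(\rho_j)$ gives $\underline{h\rho_j}=\underline{h}\,\underline{\rho_j}$ and ${\rm sgn}(h\rho_j)={\rm sgn}(h)(-1)^{k-j}$; the elements $\underline{h}$ with $h\in H$ only use $\sigma_1,\dots,\sigma_{k-2}$ and hence fix the last slot, so the truncation $d$ intertwines the two Hurwitz actions; and the computation $\underline{\rho_j}.{\bf t}=(t_1,\dots,t_{j-1},t_{j+1},\dots,t_k,\,t_j^{t_{j+1}\cdots t_k})$ (apply $\sigma_j$ first, then $\sigma_{j+1}$, etc.) gives $d(\underline{\rho_j}.{\bf t})={\bf t}(\hat{\jmath})$. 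The by-product that each ${\bf t}(\hat{\jmath})$ is $T$-reduced for the rank-$(k-1)$ element $v_j\lessdot w$ is correctly extracted (since deleting the last letter of the $T$-reduced word $\underline{\rho_j}.{\bf t}$ yields a $T$-reduced word). For the cycle claim, the identity $\beta_{\bf s}=z_{\bf s}\ast v$ (cone on the common top vertex $v$ of all chains in the Hurwitz orbit of ${\bf s}$), combined with $\partial(c\ast v)=(\partial c)\ast v+(-1)^{\dim+1}c$, the inductive vanishing $\partial z_{{\bf t}(\hat{\jmath})}=0$, and the final double-deletion sign cancellation, is a clean inductive proof that $\partial z_{\bf t}=0$; and since the order complex of $(e,w)$ has dimension $k-2$, cycles in top degree coincide with the reduced homology group, so $z_{\bf t}\in\widetilde H_{k-2}(e,w)$ follows. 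The sign bookkeeping you flag as routine is indeed routine and I verified it: the two occurrences of the doubly truncated word ${\bf t}$ with entries $a<b$ removed arise from $(j,m)=(b,a)$ and $(j,m)=(a,b-1)$, with signs $(-1)^{2k-1-a-b}$ and $(-1)^{2k-a-b}$ respectively, which cancel.
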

	
	\begin{theorem}\label{thm: homint} \cite[Theorem 4.5]{Zha22}
		For any $w\in \mathcal{L}$, let $\mathcal{B}_{w}$ and $\mathcal{D}_w$ be as defined in \defref{def: AlgB} and \eqref{eq: decw}, respectively. 
		\begin{enumerate}
			\item The elements  $\beta_{{\bf t}}, {\bf t}\in \mathcal{D}_w$ constitute a $\mathbb{Z}$-basis for $ \mathcal{B}_w$;
			\item  The elements $z_{{\bf t}}, {\bf t}\in \mathcal{D}_w$ are a $\mathbb{Z}$-basis for $ \widetilde{H}_{k-2}(e,w)$, where $k=\ell_T(w)\geq 1$; 
			\item The $\mathbb{Z}$-linear map 
			\begin{equation*}\label{eq: Bwiso}
				d: \mathcal{B}_w \rightarrow \widetilde{H}_{k-2}(e,w), \quad \beta_{{\bf t}} \mapsto z_{{\bf t}}, \, {\bf t}\in {\rm Rex}_{T}(w)
			\end{equation*}
			is an isomorphism of free abelian groups.
		\end{enumerate}
	\end{theorem}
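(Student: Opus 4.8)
The plan is to deduce all three statements from the single assertion that $\{\beta_{\bf t} : {\bf t}\in\mathcal{D}_w\}$ is a $\mathbb{Z}$-basis of $\widetilde{H}_{k-2}(e,w)$, where $k=\ell_{T}(w)\geq 1$. First a bookkeeping remark: since $[e,w]$ is graded of rank $k$, every $(k-2)$-chain of the order complex of $(e,w)$ is a maximal chain and there are no $(k-1)$-chains, so $\widetilde{H}_{k-2}(e,w)$ is exactly the group of $(k-2)$-cycles, and the truncation $d$ of \eqnref{eq: theta} restricts to an isomorphism of $C_{k-1}(w)$ onto the group $C$ of $(k-2)$-chains of $(e,w)$ (dropping the last entry is a bijection of ${\rm Rex}_{T}(w)$ onto $\bigsqcup_{w'\lessdot w}{\rm Rex}_{T}(w')$), under which $z_{\bf t}=d(\beta_{\bf t})$ becomes $\beta_{\bf t}$, \propref{prop: cycle} becomes the statement $\mathcal{B}_w\subseteq\widetilde{H}_{k-2}(e,w)\subseteq C$, and the lexicographic order on ${\rm Rex}_{T}(w)$ becomes the lexicographic label order on chains. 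Thus once the basis assertion is known, the elements $\beta_{\bf t}$ with ${\bf t}\in\mathcal{D}_w$ already lie in $\mathcal{B}_w$ and force $\mathcal{B}_w=\widetilde{H}_{k-2}(e,w)$; this gives (1), hence (3) (as $d|_{\mathcal{B}_w}$ is then the restriction of an isomorphism onto $\widetilde{H}_{k-2}(e,w)$), hence (2).

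To prove the basis assertion we aim to show that for ${\bf t}=(u_1,\dots,u_k)\in\mathcal{D}_w$ (so $u_1\succ\dots\succ u_k$) one has
\[
\beta_{\bf t}={\bf t}+\bigl(\text{a }\mathbb{Z}\text{-linear combination of }{\bf r}\in{\rm Rex}_{T}(w)\text{ with }{\bf r}<_{\mathrm{lex}}{\bf t}\bigr),
\]
the coefficient of ${\bf t}$ being $1$. Granting this, the matrix of $\{\beta_{\bf t}:{\bf t}\in\mathcal{D}_w\}$ in the basis ${\rm Rex}_{T}(w)$ of $C_{k-1}(w)$ has a $|\mathcal{D}_w|\times|\mathcal{D}_w|$ unitriangular submatrix (rows indexed by $\mathcal{D}_w$ in lexicographic order), so these $\beta_{\bf t}$ are $\mathbb{Z}$-linearly independent and span a direct summand $G$ of $C_{k-1}(w)$ of rank $|\mathcal{D}_w|$ — which, by \propref{prop: dimtop}, is the rank of $\widetilde{H}_{k-2}(e,w)$. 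Then $d(G)$ is a direct summand of $C$ contained (by \propref{prop: cycle}) in $\widetilde{H}_{k-2}(e,w)$ of full rank, so $\widetilde{H}_{k-2}(e,w)/d(G)$ is a finite subgroup of the torsion-free group $C/d(G)$, hence is zero; thus $\{z_{\bf t}:{\bf t}\in\mathcal{D}_w\}$, and therefore $\{\beta_{\bf t}:{\bf t}\in\mathcal{D}_w\}$, is a $\mathbb{Z}$-basis.

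The triangular shape of $\beta_{\bf t}=\sum_{\pi\in{\rm Sym}_{k}}{\rm sgn}(\pi)\,\underline{\pi}.{\bf t}$ is the crux. It rests on a feature of the Hurwitz action: a generator $\sigma_i$ moves the $(i{+}1)$-st entry into position $i$ unchanged, and conjugates only the entry that is pushed one place to the right. Consequently, in the wiring diagram of a reduced word for $\underline{\pi}$, the strand ending in position $1$ can never move to the right — otherwise it would cross, once and for all by reducedness, a strand that then remains to its left and would have to end in a position $<1$ — so that strand is never conjugated and the first entry of $\underline{\pi}.{\bf t}$ is exactly $u_{\pi^{-1}(1)}$. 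If $\pi(1)\neq 1$ this is some $u_j$ with $j\geq 2$, hence $\prec u_1$, giving $\underline{\pi}.{\bf t}<_{\mathrm{lex}}{\bf t}$; if $\pi(1)=1$ then $\underline{\pi}$ involves only $\sigma_2,\dots,\sigma_{k-1}$, fixes the first coordinate and acts on the strictly decreasing tail $(u_2,\dots,u_k)$, so one concludes by induction on $k$. Hence $\underline{\pi}.{\bf t}<_{\mathrm{lex}}{\bf t}$ for every $\pi\neq e$, which both bounds the support and shows the coefficient of ${\bf t}$ equals ${\rm sgn}(e)=1$.

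I expect the main obstacle to be this last step — the leading-term analysis of the Hurwitz action and the lexicographic bookkeeping that ties it to the EL-labelling; everything else is formal. Two variants are worth noting. One may instead invoke the Bj\"orner--Wachs homology basis of $\widetilde{H}_{k-2}(e,w)$ indexed by decreasing maximal chains, against which the $\beta_{\bf t}$ with ${\bf t}\in\mathcal{D}_w$ are unitriangular by the very same computation. Or one may obtain the spanning half of (1) by a straightening algorithm: from the ``deletion relations'' $\sum_{(v_1,v_2)\in {\rm Rex}_{T}(u_iu_{i+1})}\beta_{(u_1,\dots,u_{i-1},v_1,v_2,u_{i+2},\dots,u_k)}=0$ — which reduce, on decomposing ${\rm Sym}_{k}$ into cosets of $\langle s_i\rangle$, to the rank-two telescoping identity $\sum_{{\bf t}\in{\rm Rex}_{T}(v)}\beta_{\bf t}=0$ — together with the EL-property that an increasing rank-two factorisation is lexicographically least, one rewrites any $\beta_{\bf u}$ with ${\bf u}\notin\mathcal{D}_w$ as a $\mathbb{Z}$-combination of lexicographically larger $\beta$'s, a process that terminates on $\mathcal{D}_w$.
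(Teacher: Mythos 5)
The paper does not prove this theorem: it is imported verbatim from \cite[Theorem 4.5]{Zha22}, so there is no internal argument to compare yours against. Judged on its own terms, your proof is correct and self-contained modulo two results the paper does state and which legitimately precede the theorem, namely \propref{prop: dimtop} (${\rm rank}\,\widetilde H_{k-2}(e,w)=|\mathcal D_w|$, a consequence of EL-shellability independent of the $\beta_{\bf t}$) and \propref{prop: cycle} ($z_{\bf t}$ is a cycle). Your reductions all check out: since $[e,w]$ has rank $k$, the top reduced homology of $(e,w)$ is the full group of $(k-2)$-cycles inside the chain group $C$, and truncation identifies $C_{k-1}(w)$ with $C$ because $(t_1,\dots,t_k)\mapsto(t_1,\dots,t_{k-1})$ bijects ${\rm Rex}_T(w)$ with $\bigsqcup_{w'\lessdot w}{\rm Rex}_T(w')$. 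The crux — that for decreasing ${\bf t}$ one has $\underline{\pi}.{\bf t}<_{\mathrm{lex}}{\bf t}$ for all $\pi\neq e$ — is argued soundly: in the wiring diagram of a reduced word each pair of strands crosses at most once, so the strand terminating at position $1$ can never move right (else the strand it crossed would have to terminate strictly to its left), hence is never conjugated, and the first entry of $\underline{\pi}.{\bf t}$ is the \emph{unconjugated} $u_{\pi^{-1}(1)}$; combined with induction on $k$ in the case $\pi(1)=1$ this gives strict lexicographic descent, hence unitriangularity, hence that $G=\bigoplus_{{\bf t}\in\mathcal D_w}\Z\beta_{\bf t}$ is a direct summand of $C_{k-1}(w)$ of rank $|\mathcal D_w|$. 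The final step (a full-rank direct summand $d(G)\subseteq\widetilde H_{k-2}(e,w)\subseteq C$ must equal $\widetilde H_{k-2}(e,w)$ because $C/d(G)$ is torsion-free) is also correct. This triangularity-with-respect-to-the-lexicographic-order route is the natural one and is, as far as I can tell, essentially the strategy of the cited source; your two suggested variants (Bj\"orner--Wachs basis, straightening via the rank-two telescoping relation) are plausible but are not needed and are not fully verified, so the proof should be read as resting on the main line of argument only.
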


	We now define a multiplicative structure which makes $\mathcal{B}$ into a finite dimensional algebra. For any $\beta_{{\bf t}} \in \mathcal{B}_w$ and $\beta_{t}, t\in T$, define 
	\[
	\beta_{{\bf t}} \beta_{t}:=
	\begin{cases}
		\beta_{({\bf t},t)}, & \text{if } wt\leq \gamma \text{\, and \,}\ell_{T}(wt)>\ell_{T}(w) ,\\
		0,  &\text{otherwise,}
	\end{cases}
	\]
	where $({\bf t}, t)$ denotes  the concatenation of ${\bf t}$ and $t$. Clearly, we have $\beta_{{\bf t}}=\beta_{t_1}\beta_{t_2}\dots \beta_{t_k}$ for any $T$-reduced expression $w=t_1t_2\dots t_k\in \mathcal{L}$. Therefore, $\mathcal{B}$  is a finite-dimensional $\mathbb{Z}$-graded  algebra   generated by homogeneous elements $\beta_{t}, t\in T$ of degree $1$.

	\begin{proposition}\label{prop: quadrel} \cite[Proposition 5.6]{Zha22}
		We have the following quadratic relations in $\CB$:
		\begin{enumerate}
			\item $\beta_t^2= \beta_{t_1}\beta_{t_2}=0$   for all $t\in T$ and $t_1,t_2\in T$ with  $t_1t_2 \not \leq  \gamma$.
			\item  For any  $w\in \mathcal{L}_2$, we have 
			\begin{equation*}\label{eq: quadrel2}
				\sum_{(t_1,t_2)\in {\rm Rex}_{T}(w)} \beta_{t_1}\beta_{t_2}=0.
			\end{equation*}
		\end{enumerate}
	\end{proposition}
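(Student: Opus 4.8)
The plan is to verify both families of relations directly inside the concrete realisation $\CB_w\subseteq C_{k-1}(w)$, using only the multiplication rule $\beta_{{\bf t}}\beta_t=\beta_{({\bf t},t)}$ (with the convention that it is $0$ unless $wt\leq\gamma$ and $\ell_{T}(wt)>\ell_{T}(w)$) together with the definition of $\beta_{{\bf t}}$ as the signed sum over the Hurwitz action of ${\rm Sym}_k$.

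Part (1) I would read straight off the multiplication rule. For $\beta_t^2=\beta_t\beta_t$ one has $t\cdot t=e$ with $\ell_{T}(e)=0\not>1=\ell_{T}(t)$, so the product vanishes by definition. For $\beta_{t_1}\beta_{t_2}$ with $t_1t_2\not\leq\gamma$ the condition ``$t_1t_2\leq\gamma$'' fails, so again the product is $0$. No further argument is needed here.

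For part (2), fix $w\in\mathcal{L}_2$. If $(t_1,t_2)\in{\rm Rex}_{T}(w)$ then $t_1t_2=w\leq\gamma$ and $\ell_{T}(w)=2>1=\ell_{T}(t_1)$, so the multiplication rule gives $\beta_{t_1}\beta_{t_2}=\beta_{(t_1,t_2)}$, and therefore
\[
\sum_{(t_1,t_2)\in{\rm Rex}_{T}(w)}\beta_{t_1}\beta_{t_2}
=\sum_{{\bf t}\in{\rm Rex}_{T}(w)}\beta_{{\bf t}}
=\sum_{\pi\in{\rm Sym}_2}{\rm sgn}(\pi)\Big(\sum_{{\bf t}\in{\rm Rex}_{T}(w)}\underline{\pi}.{\bf t}\Big).
\]
The key observation is that for each $\pi\in{\rm Sym}_2$ the operator $\underline{\pi}\in\{1,\sigma_1\}\subseteq B_2$ permutes the finite set ${\rm Rex}_{T}(w)$ --- this is precisely the well-definedness of the $B_2$-action on $C_1(w)$, together with the invertibility of $\sigma_1$ in $B_2$ --- so the inner sum $\sum_{{\bf t}}\underline{\pi}.{\bf t}$ equals $\sum_{{\bf t}\in{\rm Rex}_{T}(w)}{\bf t}$ independently of $\pi$. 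Factoring out this common value leaves the scalar $\sum_{\pi\in{\rm Sym}_2}{\rm sgn}(\pi)=1-1=0$, which gives relation (2).

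I do not anticipate a serious obstacle: once the multiplication rule is unwound, (1) is a tautology and (2) is the antisymmetrisation (telescoping) cancellation above. The only point that requires care is the remark that each Hurwitz operator permutes ${\rm Rex}_{T}(w)$; if one wished to avoid invoking it one could instead expand $\beta_{(t_1,t_2)}=(t_1,t_2)-(t_2,t_1^{t_2})$ by hand, check that $t_2\cdot t_1^{t_2}=t_1t_2=w$ so that $(t_2,t_1^{t_2})\in{\rm Rex}_{T}(w)$, and match the two resulting sums term by term under the bijection ${\bf t}\mapsto\sigma_1.{\bf t}$ of ${\rm Rex}_{T}(w)$. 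Either route yields the stated relations.
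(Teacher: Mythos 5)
Your proof is correct. Part (1) really is a direct read-off of the multiplication rule: for $\beta_t^2$ the length condition $\ell_T(tt)=\ell_T(e)=0>\ell_T(t)=1$ fails, and for $\beta_{t_1}\beta_{t_2}$ with $t_1t_2\not\leq\gamma$ the lattice condition fails. Part (2) works as you say: $\sigma_1.(t_1,t_2)=(t_2,t_1^{t_2})$ with $t_2\cdot t_1^{t_2}=t_1t_2=w$, so $\sigma_1$ is a bijection of ${\rm Rex}_T(w)$, and the telescoping/antisymmetrisation argument then gives the relation. The paper itself does not reprove this (it cites \cite[Proposition 5.6]{Zha22}), so there is no in-text proof to compare against, but your two-step argument (reduce $\beta_{t_1}\beta_{t_2}$ to $\beta_{(t_1,t_2)}$, then exploit the $B_2$-action being a permutation action on ${\rm Rex}_T(w)$) is exactly the natural route and is what the definitions are designed to deliver.
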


	\section{An isomorphism between \texorpdfstring{$\mathcal{A}$}{A} and \texorpdfstring{$\mathcal{B}$}{B}}
	
	\subsection{The isomorphism}
	The main theorem of this section is the following.
	\begin{theorem}\label{thm: isothm}
		Let $W$ be a finite Coxeter group and let $T$ be the set of reflections of $W$.
		The assignment $a_{t} \mapsto \beta_{t}$ for all $t\in T$ extends to a graded algebra isomorphism
		$\mathcal{A}\cong \mathcal{B}$.
	\end{theorem}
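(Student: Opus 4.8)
The plan is to construct mutually inverse homomorphisms between $\CA$ and $\CB$, using the generator-and-relation presentation of $\CA$ in \defref{def: A} and the explicit model for $\CB$ provided by Definition~\ref{def: AlgB} together with Theorem~\ref{thm: homint}. One direction is essentially free: since $\CA$ is defined by generators $a_t$ and relations (1)--(2), and since Proposition~\ref{prop: quadrel} asserts that the elements $\beta_t$ satisfy exactly these same relations in $\CB$, the assignment $a_t \mapsto \beta_t$ extends to a well-defined graded algebra homomorphism $\psi\colon \CA \to \CB$. It is surjective because the $\beta_t$ generate $\CB$. So the entire content of the theorem is the injectivity of $\psi$, equivalently that $\CA$ is not ``too big''.

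To prove injectivity I would produce a spanning set of $\CA$ of the right size. First I would establish, purely from relations (1)--(2), that $\CA$ is spanned as a $\Z$-module by the monomials $a_{t_1}a_{t_2}\cdots a_{t_k}$ with $(t_1,\dots,t_k)\in {\rm Rex}_T(w)$ for some $w\in \mathcal{L}$; call such a monomial $a_{\bf t}$. Indeed relation (1) kills any monomial whose consecutive product leaves $\mathcal L$ or fails to increase absolute length, so only monomials indexed by $T$-reduced factorisations of elements of $\mathcal L$ survive. Next, the key reduction step: using relation (2) repeatedly I would show that, modulo lower-or-equal terms in a suitable ordering, every $a_{\bf t}$ can be rewritten in terms of the $a_{\bf t}$ with ${\bf t}\in \mathcal D_w$ (the decreasing factorisations of \eqref{eq: decw}). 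This is a straightening argument driven by the EL-labelling: relation (2) for a length-two element $w$ with reduced factorisations $(t,t')$ lets one express the ``wrong order'' product in terms of the others, and iterating this inside any longer monomial — picking the leftmost ascent and applying the corresponding quadratic relation — terminates because each application strictly decreases the monomial in the lexicographic order on label sequences, exactly as in the proof that $\lambda$ is an EL-labelling. The conclusion is that $\dim_\Z \CA_w \le |\mathcal D_w|$ for each $w\in\mathcal L$, where $\CA_w$ is the span of the $a_{\bf t}$ with ${\bf t}\in {\rm Rex}_T(w)$.

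On the other hand, Theorem~\ref{thm: homint}(1) says precisely that $\{\beta_{\bf t} : {\bf t}\in \mathcal D_w\}$ is a $\Z$-basis of $\CB_w$, so $\CB_w$ is free of rank $|\mathcal D_w|$. Since $\psi$ maps $a_{\bf t}\mapsto \beta_{\bf t}$ and carries a spanning set of $\CA_w$ of size $\le |\mathcal D_w|$ onto a basis of $\CB_w$ of size exactly $|\mathcal D_w|$, the restriction $\psi\colon \CA_w\to\CB_w$ must be an isomorphism: the spanning set of $\CA_w$ has to be a basis and $\psi$ sends it bijectively to a basis. Summing over $w\in\mathcal L$ — noting that both $\CA$ and $\CB$ are graded and the decomposition is by the element $w$ realised as the ordered product of the generators, which $\psi$ respects — gives that $\psi$ is an isomorphism of graded abelian groups, hence of graded algebras.

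I expect the main obstacle to be the straightening lemma in the second paragraph, i.e.\ verifying carefully that repeated use of the single quadratic relation (2), applied at ascents of a label sequence, actually suffices to reduce an arbitrary $a_{\bf t}$ to a $\Z$-combination of the decreasing ones, and that the process terminates. The subtlety is that relation (2) is a relation among the length-two factorisations of a \emph{fixed} $w\in\mathcal L_2$, and one must check that, inside a long monomial $a_{t_1}\cdots a_{t_k}$, swapping an adjacent ascending pair $t_it_{i+1}$ indeed only introduces terms indexed by genuine $T$-reduced factorisations of the same $w$ (so relation (1) does not interfere) and that these replacement terms are lexicographically smaller — this is where the specific Coxeter element and the Steinberg ordering \eqref{stein}, via the EL-property of $\lambda$, are used. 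Once that combinatorial step is in hand, the dimension count closes the argument immediately.
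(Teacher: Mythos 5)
Your overall strategy coincides with the paper's: build the surjection $\CA\to\CB$ from \propref{prop: quadrel}, show the decreasing monomials span $\CA_w$, and conclude by comparing with the basis of $\CB_w$ from \thmref{thm: homint}. However, there is a genuine gap at the first reduction. You assert that relation (1) of \defref{def: A} ``kills any monomial whose consecutive product leaves $\mathcal L$,'' so that $\CA$ is spanned by monomials indexed by $T$-reduced factorisations of elements of $\mathcal L$. Relation (1) only controls products of \emph{two} generators. For $k\geq 3$ one can easily have every consecutive pair $t_it_{i+1}$ a $T$-reduced length-two element of $\mathcal L$ while the full product $t_1t_2\cdots t_k$ is either not $T$-reduced or not $\leq\gamma$; the quadratic relations do not visibly annihilate such a monomial. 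Proving that they nonetheless do is precisely the Vanishing Lemma (\lemref{lem: vanishing}), which is the technical heart of the paper's proof: it requires the root-system characterisations of reducedness and of membership in $[e,\gamma]$ (\lemref{lem: Carter}, \lemref{lem: BW}, via the map $\vartheta=(\gamma-1)^{-1}$), the notion of a vanishing sequence, and an induction that repeatedly re-expands $a_{t_{\rho_1}}a_{t_{\rho_2}}$ using the rank-two relation. Without this, your spanning set for $\CA_k$ is strictly larger than $\bigcup_{w\in\mathcal L_k}{\rm Rex}_T(w)$ and the dimension count at the end does not close.

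Two smaller remarks. First, in your straightening step the direction of the lexicographic comparison is reversed: since the unique increasing factorisation of a rank-two element is the lexicographically \emph{smallest}, applying relation (2) at an ascent replaces a monomial by lexicographically \emph{larger} ones; termination still follows (a strictly increasing process on a finite set), but as written the justification is wrong. Second, your ``leftmost ascent'' rewriting is a legitimate alternative to the paper's argument in \propref{prop: Abasis}, which instead filters $\CA_\gamma$ by the last letter $t_{\rho_s}$ and uses the dihedral analysis of \exref{exam: dih} (inequality \eqref{eq: uorder}) to show the last letter strictly decreases; either organisation of the straightening works once the Vanishing Lemma is in place.
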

	
	The remainder of this section is devoted to proving \thmref{thm: isothm}.

	We begin with a sketch of the proof. 
	By \propref{prop: quadrel}, the assignment $a_{t}\mapsto \beta_{t}$ preserves the quadratic relations of $\mathcal{A}$. Since $\mathcal{B}$ is a finite-dimensional algebra generated by $\beta_{t}$, we obtain a surjective algebra homomorphism 
	\begin{equation}\label{eq: mapAB}
		\phi: \mathcal{A}\rightarrow \mathcal{B}, \quad a_{t}\mapsto \beta_{t}, \quad t\in T
	\end{equation}
	from $\mathcal{A}$ to $\mathcal{B}$. To see that this is indeed an isomorphism, we will prove that $\mathcal{A}$ is finite-dimensional and then show that ${\rm rank}\, \mathcal{A} = {\rm rank}\, \mathcal{B}$.  It will follow that $\mathcal{A}$ and $\mathcal{B}$ are isomorphic.
	
	To show $\mathcal{A}$ is finite dimensional, it is crucial to find  necessary and sufficient conditions to ensure that $a_{t_1}a_{t_2}\dots a_{t_k}=0$.  We need the following key lemma. 
	
	\begin{lemma}\label{lem: vanishing} (Vanishing property) Let $t_i\in T$  for $1\leq i\leq k$.
		\begin{enumerate}
			\item We have $a_{t_1}a_{t_2}\dots a_{t_k} =0$ if $t_1t_2\dots t_k$  is not $T$-reduced.
			\item Let $t_1t_2\dots t_k$  be a $T$-reduced expression. Then 
			\[ a_{t_1}a_{t_2}\dots a_{t_k} =0, \quad \text{if $t_1t_2\dots t_k \not \leq \gamma$.} \]
		\end{enumerate}
	\end{lemma}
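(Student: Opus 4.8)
The plan is to prove parts~(1) and~(2) simultaneously by induction on $k$. The base cases $k\le 2$ are immediate from relation~(1) of \defref{def: A}: a word of length at most two that is not $T$-reduced has $t_1=t_2$, so $a_{t_1}a_{t_2}=a_{t_1}^2=0$; and $a_{t_1}a_{t_2}=0$ as soon as $t_1t_2\not\le\gamma$.

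For the inductive step I would first record the routine reduction. If some \emph{proper} consecutive subword $t_it_{i+1}\cdots t_j$ of $t_1\cdots t_k$ is either not $T$-reduced, or is $T$-reduced but not $\le\gamma$, then its sub-product $a_{t_i}\cdots a_{t_j}$ (a product of fewer than $k$ generators) vanishes by the inductive hypothesis, and hence so does $a_{t_1}\cdots a_{t_k}$. So one may assume that \emph{every proper consecutive subword of $t_1\cdots t_k$ is $T$-reduced and lies in $\mathcal{L}$}. In particular $u:=t_1\cdots t_{k-1}\in\mathcal{L}$ with $\ell_T(u)=k-1$, every pair $t_jt_{j+1}$ lies in $\mathcal{L}_2$, and by \propref{prop: intncp} the interval $[e,u]$ is the noncrossing partition lattice of a parabolic subgroup with Coxeter element $u$, on which $\lambda$ restricts to an EL-labelling and the Hurwitz action on ${\rm Rex}_T(u)$ is transitive. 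In case~(1) the hypothesis now reads $\ell_T(ut_k)=k-2$, whence $t_k\le u$ and $u$ admits a $T$-reduced factorisation ending in $t_k$; in case~(2) it reads $\ell_T(ut_k)=k$ but $ut_k\not\le\gamma$.

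The main step is then to exploit the quadratic relations~(2) to ``straighten'' $a_{t_1}\cdots a_{t_k}$. Choosing an adjacent pair $t_jt_{j+1}=:w\in\mathcal{L}_2$ and using relation~(2) for $w$ to rewrite $a_{t_j}a_{t_{j+1}}=-\sum_{(s,s')}a_sa_{s'}$ (the sum over the remaining reduced factorisations of $w$) expresses $a_{t_1}\cdots a_{t_k}$ as a signed sum of monomials with the \emph{same} total product $t_1\cdots t_k$, hence again of the type treated by~(1) or~(2); and each such monomial is killed by the inductive hypothesis unless all of \emph{its} proper consecutive subwords are $T$-reduced and lie in $\mathcal{L}$. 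The task is to choose, at each stage, which adjacent pair to rewrite so that the surviving terms strictly decrease in some well-founded order and the process terminates; the normal monomials left over will be indexed by chains of $\mathcal{L}$, none of which can have the bad product $t_1\cdots t_k$, and therefore $a_{t_1}\cdots a_{t_k}=0$.

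The hard part will be exactly this termination. A single instance of relation~(2) only records that the \emph{sum} over all reduced factorisations of a rank-two element vanishes, so naive lexicographic straightening does not apply: already in type $A_3$ (with $\gamma=(1,2,3,4)$) one has $a_{12}a_{23}a_{13}=0$ even though every adjacent pair is admissible, the vanishing emerging only after combining the relation for the $3$-cycle $(123)$ with $a_{13}^2=0$ and $a_{12}a_{13}=0$ (the latter since $(12)(13)\not\le\gamma$). I would therefore run the induction on a pair $(k,\nu)$, the secondary parameter $\nu$ being built from Steinberg's total order on $T$ and the EL-shelling of $[e,u]$ (for instance from the descent set of the label sequence of the chain $e\lessdot t_1\lessdot\cdots\lessdot u$), arranged so that applying~(2) at a correctly chosen pivot pair yields only terms that are immediately annihilated by relation~(1), or by the inductive hypothesis on $k$, or are strictly smaller for $\nu$. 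Making the secondary parameter and the choice of pivot cooperate is the one genuine obstacle; granting it, both~(1) and~(2) follow.
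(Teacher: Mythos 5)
Your proposal correctly identifies the skeleton — induction on $k$, reduce to the case where every \emph{proper} consecutive subword is $T$-reduced and in $\mathcal{L}$, then exploit the rank-two relation on an adjacent pair — and you correctly diagnose the danger: the quadratic relation is not a rewriting rule, so naive straightening does not obviously terminate. Your type $A_3$ example $a_{12}a_{23}a_{13}=0$ is exactly the right test case. But you stop short of the argument: you defer the termination to an unspecified secondary parameter $\nu$ built from the Steinberg order/EL-shelling, and acknowledge you do not know how to make it work. That deferred step is the entire content of the lemma, so the proof is not complete.

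The idea you are missing is geometric. Set $\vartheta:=(\gamma-1)^{-1}$. Carter's and Brady--Watt's lemmas translate the two defects into the single condition \emph{there exist $i<j$ with $(\vartheta(\rho_i),\rho_j)\neq 0$}, where $\rho_i$ is the positive root of $t_i$. Taking $j-i$ minimal, one reduces to a \emph{vanishing sequence}: a consecutive block $(\rho_1,\dots,\rho_m)$ with $(\vartheta(\rho_i),\rho_j)=0$ for all $i<j$ except $(\vartheta(\rho_1),\rho_m)\neq 0$. This replaces your ad hoc ``all proper subwords good'' reduction by a sharp numerical criterion. Now apply the quadratic relation at the \emph{first} pair only: $w:=t_{\rho_1}t_{\rho_2}\in\mathcal{L}_2$ lies in a dihedral parabolic, and
\[
a_{t_{\rho_1}}a_{t_{\rho_2}}=-\sum_{\substack{w=t_{\tau_i}t_{\tau_j}\\ \tau_j\neq\rho_2}}a_{t_{\tau_i}}a_{t_{\tau_j}}.
\]
Every $\tau_j$ here lies in the $2$-plane $\langle\rho_1,\rho_2\rangle$, and writing $\tau_j=\lambda_1\rho_1+\lambda_2\rho_2$, the key computation is
\[
(\vartheta(\tau_j),\rho_m)=\lambda_1(\vartheta(\rho_1),\rho_m)+\lambda_2(\vartheta(\rho_2),\rho_m)=\lambda_1(\vartheta(\rho_1),\rho_m),
\]
which is nonzero precisely because $\tau_j\neq\rho_2$ forces $\lambda_1\neq0$. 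So $(\tau_j,\rho_3,\dots,\rho_m)$ is again a vanishing sequence, of length $m-1$, and each summand dies by induction. This gives termination \emph{immediately} by induction on the length of the vanishing sequence, with no secondary parameter needed. In your own example, the two surviving terms after rewriting are $a_{23}a_{13}^2$ and $a_{13}(a_{12}a_{13})$, with tails $(\rho_{13},\rho_{13})$ and $(\rho_{12},\rho_{13})$ both vanishing sequences of length $2$. So the obstacle you flagged is real, but the resolution is a short linear-algebra argument with $\vartheta$, not a more delicate combinatorial ordering.
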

	The proof of this lemma is postponed to \secref{sec: pfvanish}.
	
	\begin{lemma}\label{coro: nonzero}
		The element $a_{t_1}a_{t_2}\dots a_{t_k}\neq 0$  if and only if
		$t_1t_2\dots t_k$ is $T$-reduced and $t_1t_2\dots t_k\leq \gamma $.
	\end{lemma}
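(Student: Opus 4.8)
The plan is to deduce Lemma~\ref{coro: nonzero} from the already-established surjection $\phi\colon\mathcal A\to\mathcal B$ of \eqref{eq: mapAB} together with the Vanishing Property (\lemref{lem: vanishing}). The statement is a biconditional, so I would split it into the two implications and treat them separately.

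First, the ``only if'' direction is exactly the contrapositive of \lemref{lem: vanishing}: if $a_{t_1}a_{t_2}\cdots a_{t_k}\neq 0$, then by part~(1) of that lemma $t_1t_2\cdots t_k$ must be $T$-reduced, and then by part~(2) we must have $t_1t_2\cdots t_k\leq\gamma$. So this direction is immediate once \lemref{lem: vanishing} is granted (its proof being deferred to \secref{sec: pfvanish}).

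For the ``if'' direction, suppose $w:=t_1t_2\cdots t_k$ is $T$-reduced and $w\leq\gamma$, so $w\in\mathcal L_k$ and $(t_1,\dots,t_k)\in{\rm Rex}_T(w)$. I would apply $\phi$: we have $\phi(a_{t_1}a_{t_2}\cdots a_{t_k})=\beta_{t_1}\beta_{t_2}\cdots\beta_{t_k}$. By the multiplicative structure on $\mathcal B$ recalled just before \propref{prop: quadrel}, each partial product $\beta_{t_1}\cdots\beta_{t_j}$ equals $\beta_{(t_1,\dots,t_j)}$ because $t_1\cdots t_j$ is $T$-reduced (being an initial segment of a $T$-reduced word) and lies below $\gamma$ (being below $w\leq\gamma$), so the length condition $\ell_T(t_1\cdots t_j)=j>\ell_T(t_1\cdots t_{j-1})$ holds at each step; hence $\beta_{t_1}\cdots\beta_{t_k}=\beta_{(t_1,\dots,t_k)}=\beta_{\bf t}$. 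Now $\beta_{\bf t}\neq 0$ in $\mathcal B_w$: indeed by \thmref{thm: homint}(1) the set $\{\beta_{\bf s}:{\bf s}\in\mathcal D_w\}$ is a $\mathbb Z$-basis of $\mathcal B_w$, and under the Hurwitz/$B_k$ action every ${\rm Rex}_T(w)$-sequence is connected to a decreasing one (this is what underlies \thmref{thm: homint}), so $\beta_{\bf t}$ is, up to sign, a nonzero basis element—in particular $\beta_{\bf t}\neq 0$. Since $\phi$ sends $a_{t_1}\cdots a_{t_k}$ to the nonzero element $\beta_{\bf t}$, we conclude $a_{t_1}\cdots a_{t_k}\neq 0$ in $\mathcal A$.

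The only delicate point is justifying $\beta_{\bf t}\neq 0$ for an arbitrary ${\bf t}\in{\rm Rex}_T(w)$, not merely for ${\bf t}\in\mathcal D_w$; I expect this to be the main (and really the only) obstacle, and it is resolved by invoking the fact from \cite{Zha22} that the Hurwitz action of $B_k$ is transitive on ${\rm Rex}_T(w)$ for $w\in\mathcal L$, so that $\beta_{\bf t}=\pm\beta_{\bf s}$ for some ${\bf s}\in\mathcal D_w$, together with \thmref{thm: homint}(1). Everything else is a direct unwinding of the definition of the product on $\mathcal B$ and of $\phi$, and requires no computation. I would remark at the end that this lemma, combined with \lemref{lem: vanishing}, shows that $\mathcal A$ is finite-dimensional with $\mathbb Z$-spanning set $\{a_{t_1}\cdots a_{t_k}:(t_1,\dots,t_k)\in{\rm Rex}_T(w),\ w\in\mathcal L\}$, which is the next step toward \thmref{thm: isothm}.
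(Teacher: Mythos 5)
Your proposal is correct and follows essentially the same route as the paper: the ``only if'' direction is quoted from \lemref{lem: vanishing}, and the ``if'' direction applies $\phi$ and reduces to the nonvanishing of $\beta_{\bf t}$ in $\mathcal B$. The only difference is that the paper simply asserts $\beta_{\bf t}\neq 0$ ``by our construction'' (deferring to \cite{Zha22}), whereas you supply a Hurwitz-orbit argument; be aware that the identity $\beta_{\bf t}=\pm\beta_{\bf s}$ for ${\bf s}$ in the same Hurwitz orbit is not immediate from the definition (the set-theoretic lift $\pi\mapsto\underline{\pi}$ is not multiplicative), so that step still ultimately rests on the results imported from \cite{Zha22}.
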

	\begin{proof}
		The ``only if" part is evident from \lemref{lem: vanishing}. For the ``if" part, note that $$\phi(a_{t_1}a_{t_2}\dots a_{t_k})= \beta_{t_1}\beta_{t_2}\dots \beta_{t_k}=\beta_{{\bf t}},$$ where ${\bf t}=(t_1,\dots, t_k)$ is the labelling sequence of the chain $e< t_1 < t_1t_2 < \dots < t_1t_2\dots t_k$ of $\mathcal{L}$. By our construction $\beta_{{\bf t}}\neq 0$ and hence $a_{t_1}a_{t_2}\dots a_{t_k}\neq 0$.
	\end{proof}
	
	Recall that the algebra $\mathcal{A}$ has a natural $\mathbb{Z}$-grading with ${\rm deg}(a_{t})=1$ for any $t\in T$. Let $\mathcal{A}_k$ denote the $k$-th graded component of $\mathcal{A}$. For any $w\in \mathcal{L}_k$, 
	let $\CA_w$  be the abelian subgroup of $\mathcal{A}_k$ given by
	\[ \mathcal{A}_{w}:= {\rm Span}_{\mathbb{Z}}\{ a_{{\bf t}}:=a_{t_1}a_{t_2}\dots a_{t_k}\mid {\bf t}\in {\rm Rex}_{T}(w) \}  \]
	By convention we set $\mathcal{A}_e: =\mathbb{Z}$.

	\begin{lemma} \label{lem: property}
		Maintain the notation above. We have:
		\begin{enumerate}
			\item $\mathcal{A}_k=0$ for $k>n =\ell_{T}(\gamma)$.
			\item $\mathcal{A}_k=\bigoplus_{w\in \mathcal{L}_{k}} \mathcal{A}_w $ for $0\leq k\leq n=\ell_{T}(\gamma)$.
		\end{enumerate}
	\end{lemma}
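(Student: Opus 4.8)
The plan is to prove both parts simultaneously by exploiting the vanishing property (Lemma~\ref{lem: vanishing}) together with the quadratic relations defining $\mathcal{A}$. First I would observe that $\mathcal{A}_k$ is spanned by all monomials $a_{t_1}a_{t_2}\cdots a_{t_k}$ with $t_i\in T$; I need to show that every such monomial either vanishes or lies in $\mathcal{A}_w$ for a unique $w\in\mathcal{L}_k$. By Lemma~\ref{lem: vanishing}(1), any monomial with $t_1t_2\cdots t_k$ not $T$-reduced is zero, and by Lemma~\ref{lem: vanishing}(2), any monomial with $t_1t_2\cdots t_k\not\leq\gamma$ is zero. Hence the only surviving spanning monomials $a_{\bf t}$ have ${\bf t}\in{\rm Rex}_T(w)$ for some $w\in\mathcal{L}$ with $\ell_T(w)=k$. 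This already gives part (1): if $k>n=\ell_T(\gamma)$, then no element of $\mathcal{L}$ has absolute length $k$ (since all elements of $\mathcal{L}=[e,\gamma]$ have length at most $\ell_T(\gamma)=n$), so every spanning monomial of $\mathcal{A}_k$ is zero and $\mathcal{A}_k=0$.

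For part (2), the containment $\sum_{w\in\mathcal{L}_k}\mathcal{A}_w \supseteq \mathcal{A}_k$ follows from the spanning statement just established (the sum on the left is a subgroup containing all nonzero spanning monomials), and $\mathcal{A}_k\supseteq \sum_{w}\mathcal{A}_w$ is immediate since each $\mathcal{A}_w$ is by definition spanned by degree-$k$ monomials. So the content of (2) is that the sum is \emph{direct}, i.e. $\mathcal{A}_w\cap\sum_{w'\neq w}\mathcal{A}_{w'}=0$. The natural way to see this is via the surjection $\phi:\mathcal{A}\to\mathcal{B}$ of \eqref{eq: mapAB}. Indeed $\phi(a_{\bf t})=\beta_{\bf t}\in\mathcal{B}_w$ for ${\bf t}\in{\rm Rex}_T(w)$, so $\phi(\mathcal{A}_w)\subseteq\mathcal{B}_w$, and since $\mathcal{B}=\bigoplus_{w\in\mathcal{L}}\mathcal{B}_w$ is a direct sum (Definition~\ref{def: AlgB}), the restriction of $\phi$ separates the subspaces $\mathcal{A}_w$: if $\sum_w x_w=0$ with $x_w\in\mathcal{A}_w$, then $\sum_w\phi(x_w)=0$ with $\phi(x_w)\in\mathcal{B}_w$ forces each $\phi(x_w)=0$. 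This reduces directness of $\sum_w\mathcal{A}_w$ to showing $\phi|_{\mathcal{A}_w}$ is injective.

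The main obstacle is precisely this injectivity of $\phi$ on each $\mathcal{A}_w$ --- but I expect this is \emph{not} actually needed for the present lemma, and a cleaner route avoids it. The honest observation is that directness of $\mathcal{A}_k=\bigoplus_w\mathcal{A}_w$ can instead be obtained by constructing a grading on $\mathcal{A}$ refined by $\mathcal{L}$: one checks that the quadratic relations (1) and (2) in Definition~\ref{def: A} are homogeneous not just for the $\mathbb{Z}$-grading but for the finer grading in which the monomial $a_{t_1}\cdots a_{t_k}$ is assigned the element $t_1t_2\cdots t_k\in W$ whenever this product is $T$-reduced and $\leq\gamma$ (and the relation is declared to live in no graded piece otherwise, consistent with the monomial being zero). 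Relation (1) sets to zero monomials outside this index set, and relation (2) is a sum of monomials $a_{t_1}a_{t_2}$ all having the same product $w=t_1t_2$, hence lies in a single graded piece $\mathcal{A}_w$. Therefore $\mathcal{A}$ carries a $\mathcal{L}$-grading $\mathcal{A}=\bigoplus_{w\in\mathcal{L}}\mathcal{A}_w$ refining the $\mathbb{Z}$-grading, which gives part (2) directly, and re-derives part (1) since $\mathcal{L}$ has no elements of length $>n$. I would present the argument this way, using $\phi$ only as a sanity check; the one point requiring a little care is verifying that relation (2) genuinely has all its terms supported on the same $w$, which is clear since $(t_1,t_2)\in{\rm Rex}_T(w)$ means $t_1t_2=w$ by definition.
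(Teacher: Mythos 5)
Your argument is correct, and for the one substantive point of the lemma --- the directness of the sum in part (2) --- you take a genuinely different route from the paper. The paper argues that $\phi(\mathcal{A}_w)=\mathcal{B}_w$ and that $\bigoplus_{w}\mathcal{B}_w$ is direct by construction, and concludes directness of $\sum_w\mathcal{A}_w$ from this; as you correctly observe, that inference by itself only shows that any cross-relation among the $\mathcal{A}_w$ lies in $\ker\phi$, so it silently relies on injectivity of $\phi|_{\mathcal{A}_w}$, which at this stage of the paper has not been established (it emerges only from Proposition~\ref{prop: Abasis} and Theorem~\ref{thm: homint}). Your alternative --- grade the free algebra on the $a_t$ by $W$ with $\deg(a_t)=t$, check that relations (1) and (2) of Definition~\ref{def: A} are homogeneous for this grading (each is either a single monomial or a sum of monomials $a_{t_1}a_{t_2}$ with the same product $t_1t_2=w$), and then cut the resulting $W$-grading of $\mathcal{A}$ down to $\mathcal{L}_k$ using the vanishing lemma --- is self-contained and avoids the issue entirely; it is in fact the same device the paper later uses to put a $W$-grading on $\widetilde{\mathcal{A}}$ in Section~5. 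The only point to tidy up is your description of the grading: a grading must assign a degree to \emph{every} monomial, so you should give $a_{t_1}\cdots a_{t_k}$ the $W$-degree $t_1\cdots t_k$ unconditionally (reduced or not) and only afterwards invoke Lemma~\ref{lem: vanishing} to identify the degree-$w$, length-$k$ piece with $\mathcal{A}_w$ for $w\in\mathcal{L}_k$ and with $0$ otherwise; the phrase ``declared to live in no graded piece otherwise'' is not a legitimate way to define a grading, though your parenthetical shows you understand why it is harmless here.
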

	\begin{proof}
		As the maximal rank of the poset $(W,\leq)$ is $n$, any expression $t_{1}\dots t_k$ with $k>n$ is not $T$-reduced. It follows from \lemref{coro: nonzero} that $\mathcal{A}_k=0$ for $k>n$ and $\mathcal{A}_k= \sum_{w\in \mathcal{L}_k} \mathcal{A}_w$ for $0\leq k\leq n$. The surjective homomorphism  \eqref{eq: mapAB} satisfies $\phi(\mathcal{A}_w)=\mathcal{B}_w$ and hence $\phi(\sum_{w\in \mathcal{L}_k} \mathcal{A}_w)= \bigoplus_{w\in \mathcal{L}_k} \mathcal{B}_w$. Therefore, the sum $\mathcal{A}_k= \sum_{w\in \mathcal{L}_k} \mathcal{A}_w$ is a direct sum. 
	\end{proof}
	
	\begin{proposition}\label{prop: Abasis}
		For any $w\in \mathcal{L}$, let $\mathcal{D}_w$ be as in \eqref{eq: decw}.  Then  the  elements $a_{{\bf t}}, {\bf t}\in \mathcal{D}_w $ form a $\mathbb{Z}$-basis for $\mathcal{A}_w$.
	\end{proposition}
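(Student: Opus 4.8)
The plan is to establish two things: that the family $\{a_{\bf t}\mid{\bf t}\in\mathcal D_w\}$ is $\mathbb{Z}$-linearly independent, and that it spans $\mathcal A_w$. Independence is a formality: the surjection $\phi$ of \eqnref{eq: mapAB} sends $a_{\bf t}\mapsto\beta_{\bf t}$, and by \thmref{thm: homint}(1) the $\beta_{\bf t}$ with ${\bf t}\in\mathcal D_w$ already form a $\mathbb{Z}$-basis of $\mathcal B_w$, so any relation $\sum_{{\bf t}\in\mathcal D_w}c_{\bf t}a_{\bf t}=0$ maps to $\sum c_{\bf t}\beta_{\bf t}=0$ and forces all $c_{\bf t}=0$. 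The content is thus the spanning statement, which I would prove by a straightening algorithm that rewrites an arbitrary generator $a_{\bf t}$ (${\bf t}\in{\rm Rex}_T(w)$, and these span $\mathcal A_w$ by definition) as a $\mathbb{Z}$-combination of the $a_{\bf s}$ with ${\bf s}\in\mathcal D_w$.

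The first ingredient is a ``subword'' observation: if $w=t_1\cdots t_k\le\gamma$ is $T$-reduced then every contiguous subword $t_i\cdots t_j$ is $T$-reduced and lies in $\mathcal L$; in particular, for $k\ge2$ and $1\le i\le k-1$, $u:=t_it_{i+1}\in\mathcal L_2$. This is a short $\ell_T$-computation: the prefix $a=t_1\cdots t_{i-1}$ satisfies $a\le w\le\gamma$, and comparing $\ell_T(a^{-1}w)=\ell_T(w)-\ell_T(a)$, $\ell_T(w^{-1}\gamma)=\ell_T(\gamma)-\ell_T(w)$ and $\ell_T(a^{-1}\gamma)=\ell_T(\gamma)-\ell_T(a)$ yields $a^{-1}w\le a^{-1}\gamma\le\gamma$, and $t_it_{i+1}$ is a length-two prefix of the $T$-reduced word $a^{-1}w=t_i\cdots t_k$. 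The point is that relation~(2) of \defref{def: A} is then applicable to $u$. The second ingredient: $[e,u]$ carries the EL-labelling $\lambda$ restricted to it, so $u$ has a \emph{unique} increasing reduced factorisation, which is moreover lexicographically smallest among all reduced factorisations of $u$; this is precisely the rank-two (dihedral) computation in \exref{exam: dih}.

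For the straightening I would order ${\rm Rex}_T(w)$ lexicographically using the total order $\preceq$ of \eqnref{eq: total ordering}. Given ${\bf t}=(t_1,\dots,t_k)\in{\rm Rex}_T(w)$ that is not decreasing, let $i$ be the position of the leftmost ascent, so $t_1\succ\cdots\succ t_i\prec t_{i+1}$; set $u=t_it_{i+1}\in\mathcal L_2$. Since $(t_i,t_{i+1})$ is an increasing reduced factorisation of $u$, it is \emph{the} one, and relation~(2) for $u$ gives
\[
a_{t_i}a_{t_{i+1}}=-\sum_{\substack{(s_1,s_2)\in{\rm Rex}_T(u)\\(s_1,s_2)\neq(t_i,t_{i+1})}}a_{s_1}a_{s_2},
\]
where every $(s_1,s_2)$ occurring has $s_1\succ s_2$ and, being lexicographically larger than $(t_i,t_{i+1})$, also $s_1\succ t_i$. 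Multiplying on the left by $a_{t_1}\cdots a_{t_{i-1}}$ and on the right by $a_{t_{i+2}}\cdots a_{t_k}$ expresses $a_{\bf t}$ as a signed sum of the $a_{{\bf t}'}$, where ${\bf t}'=(t_1,\dots,t_{i-1},s_1,s_2,t_{i+2},\dots,t_k)$ is a reflection factorisation of $w$ with $k$ factors. By \lemref{lem: vanishing}(1), the non-$T$-reduced ${\bf t}'$ contribute $0$ and can be dropped; each remaining ${\bf t}'$ lies in ${\rm Rex}_T(w)$ and satisfies ${\bf t}'\succ{\bf t}$, since it agrees with ${\bf t}$ in positions $1,\dots,i-1$ and has the strictly larger entry $s_1$ in position $i$. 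Because ${\rm Rex}_T(w)$ is finite and each rewrite strictly raises the lexicographic rank, iterating terminates; the surviving tuples, namely those with no ascent, are exactly the elements of $\mathcal D_w$. Hence $\{a_{\bf t}\mid{\bf t}\in\mathcal D_w\}$ spans $\mathcal A_w$, and with independence it is a $\mathbb{Z}$-basis; in particular ${\rm rank}\,\mathcal A_w=|\mathcal D_w|={\rm rank}\,\mathcal B_w$.

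The main obstacle I anticipate is making the straightening airtight: confirming that the length-two subwords really lie in $\mathcal L_2$ (so that relation~(2) may be invoked), and pinning down the monovariant — the lexicographic order on ${\rm Rex}_T(w)$, sharpened by $s_1\succ t_i$ — that forces termination at decreasing tuples rather than looping. Discarding non-reduced terms via \lemref{lem: vanishing}(1) is what confines the process to the finite set ${\rm Rex}_T(w)$; the rest is formal, given \thmref{thm: homint} and \lemref{lem: vanishing}.
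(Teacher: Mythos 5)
Your proof is correct, and the spanning argument takes a genuinely different route from the paper's. The independence step is the same (apply $\phi$ and invoke \thmref{thm: homint}). For spanning, the paper first reduces to $w=\gamma$ via \propref{prop: intncp}, then runs a double induction: an outer induction on $n=\ell_T(\gamma)$, so that the prefix $a_{t_1}\cdots a_{t_{n-1}}$ may be assumed already rewritten as a combination of decreasing words, and an inner induction along a filtration $V_{\rho_1}\subseteq V_{\rho_1}+V_{\rho_2}\subseteq\cdots\subseteq\mathcal A_\gamma$ organised by the last factor $a_{t_{\rho_s}}$; the dihedral relation then pushes a term whose final two letters ascend into lower filtration pieces --- in effect a straightening at the \emph{rightmost} ascent, once the prefix has been sorted. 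You instead straighten at the \emph{leftmost} ascent and obtain termination directly from the lexicographic order on the finite set ${\rm Rex}_T(w)$: each rewrite either kills a non-reduced tuple by \lemref{lem: vanishing}(1), or replaces one tuple by finitely many strictly lex-larger ones, so the process halts at decreasing tuples. This dispenses with the reduction to $w=\gamma$, the outer induction on rank, and the filtration, at no extra cost. Both proofs rest on the same local computation: in a rank-two interval $[e,u]\subseteq\mathcal L$, the restricted EL-labelling (cf.\ \exref{exam: dih}) has a unique increasing maximal chain, which is also lex-smallest, so relation~(2) of \defref{def: A} rewrites the increasing pair $a_{t_i}a_{t_{i+1}}$ into decreasing pairs $a_{s_1}a_{s_2}$ with $s_1\succ t_i$. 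Everything else is bookkeeping, and yours is leaner; one small advantage of the paper's organisation is that the reduction to $w=\gamma$ is exactly the mechanism that later becomes \propref{prop: Apro}(2), so the two arguments reinforce each other, whereas your version proves spanning in one self-contained pass.
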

	\begin{proof}
		
		Since $\phi(a_{{\bf t}})=\beta_{{\bf t}}$, 
		the set $\{a_{{\bf t}}\mid {\bf t}\in \mathcal{D}_w \}$ is $\mathbb{Z}$-linearly independent by \thmref{thm: homint}. By \propref{prop: intncp},  without loss of generality, we
		may assume $w=\gamma$. It remains to show that  every element of $\mathcal{A}_{\gamma}$ is a $\mathbb{Z}$-linear combination of the decreasing elements $a_{{\bf t}}, {\bf t}\in \mathcal{D}_\gamma$.
		
		We use  induction on $\ell_{T}(\gamma)=n$. If $n=1$, then there is nothing to prove.  By part (2) of \lemref{lem: property} we have $\mathcal{A}_{n-1}=\sum_{w\in \mathcal{L}_{n-1}}\mathcal{A}_w$. For $n>1$,  we may by induction assume  that  any $a_{t_1} a_{t_2}\dots a_{t_{n-1}} \in \mathcal{A}_{n-1}$ can be expressed as a $\mathbb{Z}$-linear combination of the  elements $a_{{\bf t}}$ for ${\bf t}\in \bigcup_{w\in \mathcal{L}_{n-1}} \mathcal{D}_w$.
		
		Consider the following filtration of  $\mathcal{A}_n=\mathcal{A}_{\gamma}$. Recall that   $T $ is totally ordered as in  \eqref{eq: total ordering}. For each reflection $t_{\rho_i}\in T$,  define to be  $V_{\rho_i}$ be the abelian subgroup of $\mathcal{A}_{\gamma}$ spanned by the elements $a_{t_1}\dots a_{t_{n-1}} a_{t_{\rho_i}}$ for all $(t_1,\dots, t_{n-1})\in {\rm Rex}_{T}(\gamma t_{\rho_i})$. Then we have a filtration
		\begin{equation*}\label{eq: filtration}
			0\subseteq V_{\rho_1} \subseteq\dots \subseteq  \sum_{i=1}^s V_{\rho_i}  \subseteq \dots \subseteq \sum_{i=1}^{hn/2} V_{\rho_i}=\mathcal{A}_{\gamma}.
		\end{equation*}
		
		We use induction on $s$ to show that for each $s$ with $1\leq s\leq \frac{hn}{2}$, $ \sum_{i=1}^s V_{\rho_i}$ is spanned by the elements $a_{{\bf t}}, {\bf t}\in \mathcal{D}_\gamma$. If $s=1$, then  by the minimality of $t_{\rho_1}$ in $T$ and the induction hypothesis on $n$, any element in $V_{\rho_1}$ can be written as a $\mathbb{Z}$-linear combination of decreasing elements $a_{t_1}\dots a_{t_{n-1}} a_{t_{\rho_1}}$ for all $(t_1,\dots, t_{n-1}, t_{\rho_1})\in \mathcal{D}_{\gamma}$.
		
		Now assume $s>1$.  For any $a\in V_{\rho_s}$, by the induction hypothesis on $n$ there exist $\lambda_{{\bf t}}\in \mathbb{Z}$ such that
		\[ a=\sum_{{\bf t}\in \mathcal{D}_{\gamma t_{\rho_s}}} \lambda_{{\bf t}} a_{{\bf t}} a_{t_{\rho_s}}=\sum_{{\bf t}\in \mathcal{D}_{\gamma t_{\rho_s}}^{\succ}} \lambda_{{\bf t}} a_{{\bf t}} a_{t_{\rho_s}} +\sum_{{\bf t}\in \mathcal{D}_{\gamma t_{\rho_s}}^{\prec }} \lambda_{{\bf t}} a_{{\bf t}} a_{t_{\rho_s}},\]
		where
		\[ \begin{aligned}
			\mathcal{D}_{\gamma t_{\rho_s}}^{\succ}&=\{ (t_1,\dots t_{n-1})\in \mathcal{D}_{\gamma t_{\rho_s}} \mid  t_{n-1}\succ t_{\rho_s} \},\\
			\mathcal{D}_{\gamma t_{\rho_s}}^{\prec}&=\{ (t_1,\dots t_{n-1})\in \mathcal{D}_{\gamma t_{\rho_s}} \mid  t_{n-1}\prec t_{\rho_s} \} .
		\end{aligned}\]
		Note that $a_{{\bf t}}a_{t_{\rho_s}}$ is a decreasing element for any ${\bf t}\in \mathcal{D}_{\gamma t_{\rho_s}}^{\succ}$. We claim that
		\begin{equation}\label{eq: cont}
			a_{{\bf t}}a_{t_{\rho_s}} \in V_{\rho_1}+V_{\rho_2}+\dots +V_{\rho_{s-1}}, \quad  \forall {\bf t}\in   \mathcal{D}_{\gamma t_{\rho_s}}^{\prec}.
		\end{equation}
		Given \eqref{eq: cont},  by the induction hypothesis on $s$,  any element $a\in V_{\rho_s}$ is a $\mathbb{Z}$-linear combination of decreasing elements $a_{{\bf t}}, {\bf t}\in \mathcal{D}_{\gamma}$. Therefore,  
		$ \sum_{i=1}^s V_{\rho_i}$ is spanned by the elements $a_{{\bf t}}, {\bf t}\in \mathcal{D}_\gamma$ for any positive integer $s$. In particular,  $\mathcal{A}_{\gamma}=\sum_{i=1}^{hn/2} V_{\rho_i}$ is spanned by the decreasing elements $a_{{\bf t}}, {\bf t}\in \mathcal{D}_\gamma$.
		
		
		It remains to prove \eqref{eq: cont}. Take any ${\bf t}=(t_1, \dots, t_{n-2}, t_{n-1})\in \mathcal{D}_{\gamma t_{\rho_s}}^{\prec}$ with $t_{n-1}\prec t_{\rho_s}$. Let $u= t_{n-1}t_{\rho_s}$. Then  there exists a poset isomorphism between  $[e, u]$ and  $[t_1\dots t_{n-2}, \gamma]$ which sends $x\in [e, u]$ to $t_1\dots t_{n-2}x\in  [t_1\dots t_{n-2}, \gamma]$. In particular, this isomorphism preserves the EL-labelling.

		By \propref{prop: intncp}, the interval $[e,u]$ is the noncrossing partition lattice of a dihedral group $I_{2}(m)$ for some integer $m\geq 2$. Let    
		$t_{\tau_1}\prec t_{\tau_2}\prec \dots \prec t_{\tau_m}$
		be the reflections of $[e,u]$ with the total order inherited from \eqref{eq: total ordering}.  
		By the discussion in Example \ref{exam: dih}  the unique increasing maximal chain of $[e,u]$ is labelled by $(t_{n-1},t_{\rho_s})$ and
		we have  $t_{\tau_1}=t_{n-1}$ and $t_{\tau_m}=t_{\rho_s} $. The other maximal chains in $[e,u]$ are all decreasing. Further, using the defining relation of $\mathcal{A}$, we have
		\begin{equation}\label{eq: arel}
			a_{t_{n-1}}a_{t_{\rho_s}}=a_{t_{\tau_1}}a_{t_{\tau_m}}= -\sum_{t_{\tau_i}\succ t_{\tau_j}} a_{t_{\tau_i}} a_{t_{\tau_j}}, 
		\end{equation}
		where the sum is over all decreasing labelling sequences of maximal chains in $[e,u]$. For any pair $t_{\tau_i}\succ t_{\tau_j}$ we have 
		\begin{equation}\label{eq: uorder}
			t_{\tau_j}\preceq t_{\tau_{m-1}}\preceq t_{\rho_{s-1}} \prec t_{\rho_s}=t_{\tau_m}.
		\end{equation}
		Combining \eqref{eq: arel} and \eqref{eq: uorder}, we obtain
		\[ a_{{\bf t}}a_{t_{\rho_s}}=a_{t_1}\dots a_{t_{n-2}}(a_{t_{n-1}}a_{t_{\rho_s}})=-\sum_{t_{\tau_i}\succ t_{\tau_j}} a_{t_1}\dots a_{t_{n-2}} a_{t_{\tau_i}} a_{t_{\tau_j}} \in \sum_{i=1}^{s-1}V_{\rho_i}. \]
		The statement \eqref{eq: cont} follows, and the proof of Proposition \ref{prop: Abasis} is complete.
	\end{proof}
	
	The following is an immediate consequence of \lemref{lem: property} and \propref{prop: Abasis}.
	\begin{corollary}\label{coro: basis}
		The set $\{a_{{\bf t}}\mid {\bf t}\in \bigcup_{w\in \mathcal{L}} \mathcal{D}_w,\; \bf t\text{ decreasing} \}$  is a $(\Z)$-basis of $\mathcal{A}$. 
	\end{corollary}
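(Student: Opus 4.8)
The plan is to deduce Corollary~\ref{coro: basis} directly by assembling the structural facts already in place, so the argument is genuinely short. First I would recall that $\mathcal A$ is $\Z$-graded with $\deg(a_t)=1$, and that by \lemref{lem: property}(1) the only nonzero graded components are $\mathcal A_0,\mathcal A_1,\dots,\mathcal A_n$, while by \lemref{lem: property}(2) each such component decomposes as the \emph{direct} sum $\mathcal A_k=\bigoplus_{w\in\mathcal L_k}\mathcal A_w$. Hence as a graded abelian group
\[
\mathcal A \;=\; \bigoplus_{k=0}^{n}\mathcal A_k \;=\; \bigoplus_{w\in\mathcal L}\mathcal A_w,
\]
with the convention $\mathcal A_e=\Z$. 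Therefore it suffices to exhibit a $\Z$-basis of each summand $\mathcal A_w$, and the natural candidate is exactly $\{a_{\mathbf t}\mid \mathbf t\in\mathcal D_w\}$, the set of decreasing $T$-reduced factorisations of $w$.

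The second step is to invoke \propref{prop: Abasis}, which already states precisely that $\{a_{\mathbf t}\mid\mathbf t\in\mathcal D_w\}$ is a $\Z$-basis of $\mathcal A_w$ for every $w\in\mathcal L$. Taking the union over all $w\in\mathcal L$ and using the direct sum decomposition above, the disjoint union $\bigcup_{w\in\mathcal L}\{a_{\mathbf t}\mid\mathbf t\in\mathcal D_w\}$ is a $\Z$-basis of $\mathcal A$. One small point to address is that the elements $a_{\mathbf t}$ for $\mathbf t$ ranging over distinct $\mathcal D_w$ are genuinely distinct and the families do not overlap: this is immediate because $a_{\mathbf t}\in\mathcal A_w$ and the $\mathcal A_w$ meet only in $0$, so an element lying in two different $\mathcal D_w$'s would force a nontrivial relation across the direct sum. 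Thus the union is a basis indexed by $\bigcup_{w\in\mathcal L}\mathcal D_w$, which is what the corollary asserts.

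I do not anticipate a serious obstacle here, since the corollary is purely a bookkeeping consequence of \lemref{lem: property} and \propref{prop: Abasis}; the only thing to be careful about is phrasing the indexing set so that ``$\mathbf t$ decreasing'' is understood to mean $\mathbf t\in\mathcal D_w$ for the unique $w$ with $\ell_T(w)=|\mathbf t|$ and $a_{\mathbf t}\neq0$ (equivalently, by \lemref{coro: nonzero}, $t_1t_2\cdots t_k$ is $T$-reduced, lies below $\gamma$, and has strictly decreasing factors). If one wanted to make the write-up fully self-contained one could also remark that \propref{prop: Abasis} already contains both halves of the basis claim (linear independence via $\phi$ and \thmref{thm: homint}, and spanning via the double induction on $n$ and on the filtration length $s$), so no new computation is needed at this stage. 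In short: combine the vanishing lemma to kill degrees $>n$, use the direct-sum decomposition of each $\mathcal A_k$ into the $\mathcal A_w$, and paste together the per-$w$ bases from \propref{prop: Abasis}.
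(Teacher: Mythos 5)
Your argument is correct and is exactly the paper's: the corollary is stated there as an immediate consequence of Lemma~\ref{lem: property} (vanishing above degree $n$ and the direct sum $\mathcal A_k=\bigoplus_{w\in\mathcal L_k}\mathcal A_w$) together with Proposition~\ref{prop: Abasis} (the per-$w$ bases $\{a_{\mathbf t}\mid\mathbf t\in\mathcal D_w\}$). Your extra remarks about disjointness of the indexing sets are harmless bookkeeping and do not change the route.
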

	
	We are now in a position  to prove \thmref{thm: isothm}.
	\begin{proof}[Proof of Theorem~\ref{thm: isothm}]
		The map $\phi: \mathcal{A}\rightarrow \mathcal{B}$ defined by $\phi(a_{\bf t})=\beta_{\bf t}$ extends to a surjective algebra homomorphism. By \corref{coro: basis} the algebra $\mathcal{A}$ has a basis consisting of decreasing elements $a_{{\bf t}}$ with ${\bf t}\in \bigcup_{w\in \mathcal{L}} \mathcal{D}_w$. It follows from \thmref{thm: homint} that  $\phi$ is injective and hence  $\mathcal{A}$ is isomorphic to $\mathcal{B}$. 
	\end{proof}
	
	\begin{proposition}\label{prop: Apro}
		The algebra $\mathcal{A}$ enjoys the following properties.
		\begin{enumerate}
			\item   Let  $\mathcal{L}$ and $\mathcal{L}^{\prime}$ be two noncrossing partition lattices. Then as free abelian groups,   
			$$  \mathcal{A}(\mathcal{L}\times \mathcal{L}^{\prime}) \cong \mathcal{A}(\mathcal{L})\otimes \mathcal{A}(\mathcal{L}^{\prime}).$$
			\item    Let $\mathcal{L}_w=[e,w]$ be a closed interval in $\mathcal{L}$. Then the inclusion $i: \mathcal{L}_w \hookrightarrow \mathcal{L}$ of posets induces an injective homomorphism $i_{\mathcal{A}}: \mathcal{A}(\mathcal{L}_w)\rightarrow \mathcal{A}(\mathcal{L}) $ of algebras. In particular,
			\[ \mathcal{A}(\mathcal{L}_w)_u= \mathcal{A}(\mathcal{L})_u, \quad \forall u\leq w.   \]
		\end{enumerate} 
	\end{proposition}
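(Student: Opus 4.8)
I prove part (2) first and then deduce (1). Note that the relations in \defref{def: A} refer only to the poset $\mathcal{L}$ --- its atoms $T$, which products $t_1t_2$ satisfy $t_1t_2\le\gamma$ with $\ell_T(t_1t_2)=2$, and its rank-two intervals --- so $\mathcal{A}$ depends only on $\mathcal{L}$; together with \propref{prop: intncp} and the fact that a product of NCP lattices is again an NCP lattice, this makes $\mathcal{A}(\mathcal{L}_w)$ and $\mathcal{A}(\mathcal{L}\times\mathcal{L}')$ meaningful. For part (2), write $T_w=\{t\in T\mid t\le w\}$, the atom set of $\mathcal{L}_w=[e,w]$. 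I would first check that $a_t\mapsto a_t$ ($t\in T_w$) respects the defining relations of $\mathcal{A}(\mathcal{L}_w)$, hence extends to a graded algebra homomorphism $i_{\mathcal{A}}\colon\mathcal{A}(\mathcal{L}_w)\to\mathcal{A}(\mathcal{L})$. The relations $a_t^2=0$ are shared; relation (2) transfers because any $T$-reduced factorisation of an element $\le w$ uses only reflections $\le w$, so ${\rm Rex}_T(w')={\rm Rex}_{T_w}(w')$ for all $w'\le w$; and for relation (1) the only delicate case is distinct $t_1,t_2\in T_w$ with $t_1t_2\not\le w$, where by \lemref{coro: nonzero} one must rule out that $t_1t_2$ is $T$-reduced and $\le\gamma$ --- but then, since $t_1\le t_1t_2$, $t_2\le t_1t_2$ and $\ell_T(t_1t_2)=2$, the element $t_1t_2$ is an upper bound of $\{t_1,t_2\}$ of rank two lying in the lattice $\mathcal{L}$, so $t_1t_2=t_1\vee t_2\le w$, a contradiction. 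Thus $i_{\mathcal{A}}$ is a well-defined graded homomorphism with $i_{\mathcal{A}}(\mathcal{A}(\mathcal{L}_w)_u)\subseteq\mathcal{A}(\mathcal{L})_u$ for each $u\le w$.

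For injectivity, fix $u\le w$ with $k=\ell_T(u)$. Applying \thmref{thm: isothm} and \thmref{thm: homint}(3) to $\mathcal{L}_w$ and to $\mathcal{L}$ gives isomorphisms $\mathcal{A}(\mathcal{L}_w)_u\cong\widetilde{H}_{k-2}(e,u)$ and $\mathcal{A}(\mathcal{L})_u\cong\widetilde{H}_{k-2}(e,u)$, each sending $a_{{\bf t}}\mapsto z_{{\bf t}}$. The target is literally the same abelian group, since the open interval $(e,u)$ is the same subposet of $\mathcal{L}_w$ and of $\mathcal{L}$ and, by \propref{prop: cycle}, $z_{{\bf t}}$ is given by the same intrinsic formula. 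Hence $i_{\mathcal{A}}$ restricted to $\mathcal{A}(\mathcal{L}_w)_u$ corresponds to the identity of $\widetilde{H}_{k-2}(e,u)$, so it is an isomorphism onto $\mathcal{A}(\mathcal{L})_u$; summing over $u\le w$ yields that $i_{\mathcal{A}}$ is injective and that $\mathcal{A}(\mathcal{L}_w)_u=\mathcal{A}(\mathcal{L})_u$ for all $u\le w$, proving (2). This route deliberately goes through the intrinsic homology rather than through the decreasing bases $\mathcal{D}_u$, because the EL-labelling total order on $T_w$ need not be the restriction of that on $T$.

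For part (1), applying (2) to the intervals $\mathcal{L}_{(\gamma,e)}=\mathcal{L}\times\{e\}\cong\mathcal{L}$ and $\mathcal{L}_{(e,\gamma')}=\{e\}\times\mathcal{L}'\cong\mathcal{L}'$ of $\mathcal{L}\times\mathcal{L}'$ gives algebra embeddings $j\colon\mathcal{A}(\mathcal{L})\hookrightarrow\mathcal{A}(\mathcal{L}\times\mathcal{L}')$ and $j'\colon\mathcal{A}(\mathcal{L}')\hookrightarrow\mathcal{A}(\mathcal{L}\times\mathcal{L}')$. The bilinear map $(x,y)\mapsto j(x)j'(y)$ induces a graded homomorphism of abelian groups $\mu\colon\mathcal{A}(\mathcal{L})\otimes_{\mathbb{Z}}\mathcal{A}(\mathcal{L}')\to\mathcal{A}(\mathcal{L}\times\mathcal{L}')$, which is surjective because $j,j'$ have generating images and the cross relation $a_sa_t=-a_ta_s$ --- an instance of relation (2) for the rank-two element $st=(s,t)\le(\gamma,\gamma')$ with $s\in T$, $t\in T'$, whose only reduced factorisations are $(s,t)$ and $(t,s)$ --- lets one rewrite any monomial in the $a_s,a_t$ as $\pm j(a_{{\bf s}})j'(a_{{\bf t}})$. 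Finally I would check $\mu$ is injective by matching ranks in each degree: by \propref{prop: dimtop}, ${\rm rank}\,\mathcal{A}(\mathcal{L})_k=\sum_{u\in\mathcal{L}_k}(-1)^k\mu_{\mathcal{L}}(e,u)$, and since the M\"obius function is multiplicative and rank is additive over a product of posets, ${\rm rank}\,\mathcal{A}(\mathcal{L}\times\mathcal{L}')_k=\sum_{i+j=k}{\rm rank}\,\mathcal{A}(\mathcal{L})_i\cdot{\rm rank}\,\mathcal{A}(\mathcal{L}')_j={\rm rank}\,(\mathcal{A}(\mathcal{L})\otimes_{\mathbb{Z}}\mathcal{A}(\mathcal{L}'))_k$. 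A surjection between free abelian groups of equal finite rank in each degree is an isomorphism, so $\mu$ proves (1). (In fact $\mu$ is an algebra isomorphism onto the super tensor product of $\mathcal{A}(\mathcal{L})$ and $\mathcal{A}(\mathcal{L}')$, but only the abelian-group statement is claimed.)

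The main obstacle is the well-definedness step in part (2) --- the length/lattice argument forcing $t_1t_2\le w$ from $t_1,t_2\le w$ and $t_1t_2\le\gamma$ --- together with the care needed to recognise the two copies of $\widetilde{H}_{k-2}(e,u)$ as the same object, so that $i_{\mathcal{A}}$ is the identity on graded pieces. Once these are in place, what remains is routine bookkeeping with the bases of \thmref{thm: homint} and \corref{coro: basis} and with the standard multiplicativity of $\mu$ over products.
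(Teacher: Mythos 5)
Your proof is correct, and in a few places it is more careful than the paper's, which is quite terse. The overall architecture is the same --- build the homomorphism $i_{\mathcal{A}}$ generator by generator, then show it is injective, then use the two embeddings to get the tensor decomposition --- but you fill in or replace several steps. (i) For the well-definedness of $i_{\mathcal{A}}$, the paper simply asserts that the relations are preserved; you isolate the one non-obvious case (distinct $t_1,t_2 \le w$ with $t_1t_2 \not\le w$) and settle it by the lattice argument that $t_1\vee t_2 = t_1t_2$ when $t_1t_2$ has rank two, forcing $t_1t_2 \le w$, which is exactly the right observation. (ii) For injectivity of $i_{\mathcal{A}}$, the paper uses the decreasing bases $\mathcal{D}_u$ of Proposition~\ref{prop: Abasis} together with the fact that $T_w$ carries the total order inherited from $T$; you instead pass through the intrinsic isomorphisms $\CA_u \cong \mathcal{B}_u \cong \widetilde H_{k-2}(e,u)$ of Theorems~\ref{thm: isothm} and~\ref{thm: homint}, noting that the target homology group and the cycles $z_{\bf t}$ are the same whether computed in $\mathcal{L}_w$ or in $\mathcal{L}$. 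This sidesteps any question of whether a Steinberg order on $T_w$ agrees with the restriction of the one on $T$, at the modest cost of invoking the $\mathcal{A}\cong\mathcal{B}$ machinery; the paper's route is shorter once one grants that the restricted labelling is an EL-labelling on every subinterval (which it records earlier). (iii) For part (1), the paper defines the map $f$ from the two embeddings and asserts it is an isomorphism; you make this precise by first deducing (2)$\Rightarrow$embeddings, then proving surjectivity via the supercommutation relation $a_s a_t = -a_t a_s$ for $s\in T$, $t\in T'$, and finally proving injectivity by matching graded ranks using Proposition~\ref{prop: dimtop} and multiplicativity of the M\"obius function. That rank count is a clean way to close the gap the paper leaves implicit. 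One small addendum worth recording in your write-up: in your case analysis for relation (1) you tacitly use that distinct reflections $t_1\neq t_2$ always give $\ell_T(t_1t_2)=2$ (so ``$T$-reduced'' is automatic), which follows from $\mathrm{Fix}(t_1t_2)=H_1\cap H_2$ having codimension two; it would be worth saying so explicitly.
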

	\begin{proof}
		For part (a), let $T$ and $T^{\prime}$ be the set of reflections of $\mathcal{L}$ and $\mathcal{L}^{\prime}$ respectively.   The algebra $\mathcal{A}(\mathcal{L}\times \mathcal{L}^{\prime})$ is generated by  $a_{t}, t\in T\cup T^{\prime}$. By the defining relation  we have  $a_{t} a_{t^{\prime}}=-a_{t^{\prime}} a_{t}$ for any $t \in T$ and $t^{\prime}\in T^{\prime}$. We have two natural embeddings $i_1: \mathcal{A}(\mathcal{L})\rightarrow \mathcal{A}(\mathcal{L}\times \mathcal{L}^{\prime})$ and $i_2: \mathcal{A}(\mathcal{L}^{\prime})\rightarrow \mathcal{A}(\mathcal{L}\times \mathcal{L}^{\prime})$, inducing the algebra isomorphism
		\[ f:\; \mathcal{A}(\mathcal{L}) \otimes \mathcal{A}(\mathcal{L}^{\prime}) \rightarrow \mathcal{A}(\mathcal{L}\times \mathcal{L}^{\prime}), \]
		such that $f(a_{t}\otimes a_{t^{\prime}})= i_1(a_{t})i_2(a_{t^{\prime}})$ and $i_1(a_{t})i_2(a_{t^{\prime}})=-i_2(a_{t^{\prime}})i_1(a_{t})$ for any $t\in T$ and $t^{\prime}\in T$.

		We turn to the proof of part (b). The set $T_w$ of reflections in $\mathcal{L}_w$ inherits the total order from the totally ordered set $T$ of reflections in $\mathcal{L}$.  Since $\mathcal{L}_w \subset \mathcal{L}$, the induced map $i_{\mathcal{A}}: \mathcal{A}(\mathcal{L}_w) \rightarrow \mathcal{A}(\mathcal{L})$ defined by $a_{t}\rightarrow a_{t}, t\in T_{w}$ preserves the defining relations and hence is an algebraic homomorphism. Moreover,  the induced map $i_{\mathcal{A}}$ preserves the decreasing basis elements and thus $i_{\mathcal{A}}$ is injective. 
	\end{proof}

	\subsection{The vanishing lemma}\label{sec: pfvanish}
	In this subsection we prove the vanishing property stated in \lemref{lem: vanishing}.
	\subsubsection{$T$-reduced expressions and root systems} 
	
	Consider the following two sets:
	\begin{equation}\label{eq: T}
		\begin{aligned}
			\mathcal{T}_1:&=\{ (t_1,t_2,\dots,t_k) \mid \text{$k\in \mathbb{N}$ and $t_1t_2\dots t_k$ is not $T$-reduced} \},\\
			\mathcal{T}_2:&=\{ (t_1,t_2,\dots,t_k) \mid \text{$k\in \mathbb{N}$ and $t_1t_2\dots t_k$ is $T$-reduced and $t_1t_2\dots t_k\not \leq \gamma$} \}.
		\end{aligned} 
	\end{equation}
	Then \lemref{lem: vanishing} can be restated as:
	\begin{equation}\label{eq: T1andT2}
		a_{{\bf t}}=0, \quad \text{for any ${\bf t}\in  \mathcal{T}_1\cup \mathcal{T}_2$.}
	\end{equation}
	To prove this,  we need a geometric characterisation for  $\mathcal{T}_1\cup \mathcal{T}_2$ in terms of the root system. 
 
   Let $\varrho: W\rightarrow  {\rm GL}(V)$ be the geometric  representation of $W$ with $V=\mathbb{R}^n$. Denote by $\Phi^+$ the set of positive roots of $W$,
	as determined by $S$ (see the remarks preceding \eqref{stein}).  
	Define
	\[ {\rm Fix}(w):={\rm Ker}\, (\varrho(w)-{\rm Id})\subseteq V \]
	to be the  vector subspace fixed by $w\in W$.  By \cite[Lemma 2]{Car72}, we have 
	\begin{equation}\label{eq: lencodim}
		\ell_{T}(w)={\rm codim}\,  {\rm Fix}(w)=n- {\rm dim}\,{\rm Fix}(w), \quad \forall w\in W. 
	\end{equation}
	

	Since the Coxeter element $\gamma$ fixes no vector in $V$, the linear map $\gamma-1$ is an automorphism of $V$. We  define the linear map 
	\begin{equation*}\label{eq: varthe}
		\vartheta:=(\gamma-1)^{-1}: V \rightarrow V.
	\end{equation*}
	This map satisfies the following properties which come from the proof of \cite[Lemma 2]{Car72}; see also  \cite[Corollary 4.2]{BW08}.
	
	\begin{lemma}\label{lem: thetaprop}
		Let $\rho\in \Phi^+$ be a positive root and let $\vartheta$ be as defined above.
		\begin{enumerate}
			\item $(\vartheta(\rho), \rho)= -\frac{1}{2}(\rho, \rho)$;
			\item $ \vartheta(\rho)\in {\rm Fix}(t_{\rho}\gamma)$. 
		\end{enumerate}
	\end{lemma}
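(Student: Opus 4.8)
The plan is to extract both statements directly from Carter's computation of $\mathrm{Fix}(w)$ for $w$ a product of reflections, specialized to $w=\gamma$. First I would recall the standard fact (see \cite[Lemma 2]{Car72} or \cite[Cor.~4.2]{BW08}) that if $w=t_{\beta_1}t_{\beta_2}\cdots t_{\beta_k}$ is a $T$-reduced expression, then the vectors $\beta_1,\dots,\beta_k$ are linearly independent and $\mathrm{Fix}(w)$ is the orthogonal complement of their span; equivalently $V=\mathrm{Fix}(w)\oplus\mathrm{Mov}(w)$ where $\mathrm{Mov}(w)=\mathrm{Im}(w-1)=\mathrm{span}(\beta_1,\dots,\beta_k)$. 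For $w=\gamma$ we have $\mathrm{Fix}(\gamma)=0$, so $\gamma-1$ is invertible and $\vartheta=(\gamma-1)^{-1}$ is well defined.

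For part (1), the key identity is the following elementary one: for any reflection $t_\rho$ and any vector $v\in V$, writing $t_\rho(v)=v-\tfrac{2(\rho,v)}{(\rho,\rho)}\rho$, one computes
\[
\bigl((t_\rho-1)(v),\,\rho\bigr)=-2(\rho,v),
\qquad
\bigl((t_\rho-1)(v),\,(t_\rho-1)(v)\bigr)=\tfrac{2(\rho,v)^2}{(\rho,\rho)}\bigl(\tfrac{2}{(\rho,\rho)}(\rho,\rho)\bigr)\cdot\tfrac12,
\]
but the cleanest route is: since $\gamma$ is orthogonal, $(\gamma x,\gamma y)=(x,y)$ for all $x,y$. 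Set $x=\vartheta(\rho)$, so $(\gamma-1)(x)=\rho$, i.e. $\gamma x=x+\rho$. Then
\[
(x,x)=(\gamma x,\gamma x)=(x+\rho,x+\rho)=(x,x)+2(x,\rho)+(\rho,\rho),
\]
which immediately gives $(\vartheta(\rho),\rho)=(x,\rho)=-\tfrac12(\rho,\rho)$. This is the whole of part (1); no case analysis is needed.

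For part (2), I want to show $t_\rho\gamma$ fixes $\vartheta(\rho)$. Again write $x=\vartheta(\rho)$ and $\gamma x=x+\rho$. Then
\[
t_\rho\gamma(x)=t_\rho(x+\rho)=t_\rho(x)+t_\rho(\rho)=t_\rho(x)-\rho
= x-\tfrac{2(\rho,x)}{(\rho,\rho)}\rho-\rho.
\]
By part (1), $\dfrac{2(\rho,x)}{(\rho,\rho)}=\dfrac{2\cdot(-\tfrac12(\rho,\rho))}{(\rho,\rho)}=-1$, so $t_\rho\gamma(x)=x+\rho-\rho=x$. Hence $\vartheta(\rho)\in\mathrm{Fix}(t_\rho\gamma)$, as claimed. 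I do not anticipate a genuine obstacle here: both parts are short orthogonality computations, and the only input needed beyond the definitions is that $\gamma$ acts orthogonally and that $\gamma-1$ is invertible; the mild subtlety is simply to observe that part (1) is exactly the numerical fact that makes the reflection coefficient in part (2) equal to $-1$, so the two statements should be proved in this order.
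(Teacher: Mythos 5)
Your proof is correct and is essentially identical to the paper's argument: both derive $(\vartheta(\rho),\rho)=-\tfrac12(\rho,\rho)$ from orthogonality of $\gamma$ applied to $\gamma(\vartheta(\rho))=\vartheta(\rho)+\rho$, and then observe that this value forces $t_\rho(\vartheta(\rho))=\vartheta(\rho)+\rho=\gamma(\vartheta(\rho))$, which is the content of part (2). The first displayed computation you wrote before ``the cleanest route'' is a false start and can be deleted, but the actual argument you settle on matches the paper's.
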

	\begin{proof}
		We have $(\gamma-1)(\vartheta(\rho))=\rho$, which implies that $\gamma(\vartheta(\rho))=\vartheta(\rho)+\rho$. Since Coxeter group action preserves the inner product of $V$, we have  $(\gamma(\vartheta(\rho)), \gamma(\vartheta(\rho)))=(\vartheta(\rho), \vartheta(\rho))$ and hence
		$ (\vartheta(\rho), \rho)= -\frac{1}{2}(\rho, \rho)$. 
		It follows that $t_{\rho}(\vartheta(\rho))=\vartheta(\rho)+\rho$ and hence $\gamma(\vartheta(\rho))=t_{\rho}(\vartheta(\rho))$. This leads to
		$\vartheta(\rho)\in {\rm Fix}(t_{\rho}\gamma)$. 
	\end{proof}

	
	
	
	The following lemma characterises the reduced $T$-expressions.
	
	\begin{lemma}\label{lem: Carter} \cite[Lemma 3]{Car72}
		Let $\rho_1,\rho_2,\dots, \rho_k\in \Phi^{+}$. Then the expression  $t_{\rho_1}t_{\rho_2}\dots t_{\rho_k}$ is $T$-reduced if and only if $\rho_1,\rho_2,\dots,\rho_k$ are linearly independent.  
	\end{lemma}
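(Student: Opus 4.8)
The plan is to work with the $W$-action on $V=\R^n$ via $\varrho$ and to keep track of fixed spaces, the organising principle being \eqnref{eq: lencodim}, which says $\ell_T(w)={\rm codim}\,{\rm Fix}(w)=\dim{\rm Fix}(w)^{\perp}$ for all $w\in W$. First I would record two elementary facts about the orthogonal operator $\varrho(w)$. (i) The ``moved space'' ${\rm Im}(\varrho(w)-{\rm Id})$ is orthogonal to ${\rm Fix}(w)$: for $y\in{\rm Fix}(w)$ and $x\in V$ one has $(y,(\varrho(w)-{\rm Id})x)=(\varrho(w)^{-1}y,x)-(y,x)=(y,x)-(y,x)=0$, using orthogonality of $\varrho(w)$ and $\varrho(w)^{-1}y=y$; by a dimension count this gives ${\rm Im}(\varrho(w)-{\rm Id})={\rm Fix}(w)^{\perp}$. (ii) If $w=t_{\rho_1}t_{\rho_2}\cdots t_{\rho_k}$ and $U:={\rm span}_{\R}(\rho_1,\dots,\rho_k)$, then ${\rm Fix}(w)^{\perp}\subseteq U$: any $x\perp U$ satisfies $(\rho_i,x)=0$ for all $i$, hence $t_{\rho_i}(x)=x$ for all $i$, so $x\in{\rm Fix}(w)$; taking orthogonal complements and using $U^{\perp\perp}=U$ gives the claim.

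With these in hand the ``only if'' direction is immediate: if $t_{\rho_1}\cdots t_{\rho_k}$ is $T$-reduced then $k=\ell_T(w)=\dim{\rm Fix}(w)^{\perp}\le\dim U\le k$, so $\dim U=k$, i.e. the $\rho_i$ are linearly independent.

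For the converse I would induct on $k$, the cases $k\le 1$ being trivial. Assuming $\rho_1,\dots,\rho_k$ linearly independent, write $w=t_{\rho_1}w'$ with $w'=t_{\rho_2}\cdots t_{\rho_k}$ and $U':={\rm span}_{\R}(\rho_2,\dots,\rho_k)$; by induction $\ell_T(w')=k-1$, i.e. $\dim{\rm Fix}(w')=n-k+1$. The key claim is ${\rm Fix}(w)={\rm Fix}(w')\cap\rho_1^{\perp}$. The inclusion ``$\supseteq$'' is clear since $t_{\rho_1}$ fixes $\rho_1^{\perp}$ pointwise. For ``$\subseteq$'', given $x\in{\rm Fix}(w)$, from $t_{\rho_1}w'(x)=x$ one gets $w'(x)-x=t_{\rho_1}(x)-x\in\R\rho_1$, while also $w'(x)-x\in{\rm Im}(\varrho(w')-{\rm Id})={\rm Fix}(w')^{\perp}\subseteq U'$ by facts (i) and (ii); since $\R\rho_1\cap U'=0$ by linear independence, $w'(x)=x$ and then $t_{\rho_1}(x)=x$, so $x\in{\rm Fix}(w')\cap\rho_1^{\perp}$. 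Finally, $\rho_1\notin U'$ forces $\rho_1\notin{\rm Fix}(w')^{\perp}$, i.e. ${\rm Fix}(w')\not\subseteq\rho_1^{\perp}$, so intersecting with the hyperplane $\rho_1^{\perp}$ drops the dimension by exactly one: $\dim{\rm Fix}(w)=n-k$, that is $\ell_T(w)=k$, so $t_{\rho_1}\cdots t_{\rho_k}$ is $T$-reduced.

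The genuinely delicate point is the lower bound $\ell_T(w)\ge k$ in the converse — a product of $k$ reflections can a priori have far shorter reflection length — and the argument above pinpoints where linear independence is indispensable: it yields both $\R\rho_1\cap U'=0$ (so that ${\rm Fix}(w)={\rm Fix}(w')\cap\rho_1^{\perp}$ exactly) and $\rho_1\notin{\rm Fix}(w')^{\perp}$ (so that this intersection really drops the dimension). I do not expect any further obstacle; the rest is the orthogonal-geometry bookkeeping of the first paragraph, and the same induction incidentally delivers the sharper statement ${\rm Fix}(t_{\rho_1}\cdots t_{\rho_k})=\rho_1^{\perp}\cap\cdots\cap\rho_k^{\perp}$ whenever the $\rho_i$ are independent.
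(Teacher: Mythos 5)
Your proof is correct. The paper itself gives no argument here — it cites Carter and remarks only that the lemma "follows directly from" $\ell_T(w)=\dim(\im(w-1))$ — and your write-up is precisely a complete version of that argument: you identify $\im(\varrho(w)-{\rm Id})$ with ${\rm Fix}(w)^\perp$, get the "only if" direction from the containment ${\rm Fix}(w)^\perp\subseteq{\rm span}(\rho_1,\dots,\rho_k)$, and handle the genuinely nontrivial lower bound $\ell_T(w)\ge k$ in the converse by the induction showing ${\rm Fix}(w)={\rm Fix}(w')\cap\rho_1^\perp$ with the dimension dropping by exactly one; all steps check out.
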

	
	This follows directly from the fact that for $w\in W$, $\ell_T(w)=\dim(\im(w-1))$ (cf. \cite[(1.2)]{HL99}).
	The following lemma characterises the $T$-reduced expressions of elements occurring in the lattice $\mathcal{L}$.
	
	\begin{lemma}\label{lem: BW} \cite[Lemma 4.8]{BW08}
		Let $t_{\rho_1}t_{\rho_2}\dots t_{\rho_k}$ be a $T$-reduced expression. Then the following are equivalent:
		\begin{enumerate}
			\item $t_{\rho_1}t_{\rho_2}\dots t_{\rho_k}\leq \gamma$;
			\item $(\vartheta(\rho_i),\rho_j)=0$ whenever $1\leq i<j\leq k$.
		\end{enumerate}
	\end{lemma}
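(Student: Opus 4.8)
The plan is to prove both implications at once, by reducing the relation ``$w\leq\gamma$'' --- for $w=t_{\rho_1}\cdots t_{\rho_k}$ a fixed $T$-reduced expression --- to a single linear identity involving $\vartheta$, and then reading condition~(2) off that identity. Write $w_i:=t_{\rho_1}\cdots t_{\rho_i}$ (so $w_0={\rm id}$, $w_k=w$), $v_i:=w_{i-1}(\rho_i)$, and ${\rm Mov}(u):=\im(u-1)$, so that ${\rm Mov}(u)={\rm Fix}(u)^{\perp}$ since $W$ acts orthogonally. I shall use: (a) $\ell_T(u)={\rm codim}\,{\rm Fix}(u)=\dim{\rm Mov}(u)$, which is \eqref{eq: lencodim}; (b) ${\rm Mov}(w)={\rm span}_{\R}(\rho_1,\dots,\rho_k)$, the inclusion $\subseteq$ holding because ${\rm Fix}(w)\supseteq\bigcap_i{\rm Fix}(t_{\rho_i})=\bigcap_i\rho_i^{\perp}$, with equality then following from \lemref{lem: Carter} and a dimension count; (c) the operator identity $u_1\cdots u_m-1=\sum_{i=1}^m u_1\cdots u_{i-1}(u_i-1)$, which, upon inserting $(t_{\rho_i}-1)(x)=-\tfrac{2(\rho_i,x)}{(\rho_i,\rho_i)}\rho_i$, yields
\begin{equation*}
(w-1)(x)=-\sum_{i=1}^{k}\frac{2(\rho_i,x)}{(\rho_i,\rho_i)}\,v_i\quad(x\in V),
\qquad
\rho_j=v_j+\sum_{i<j}\frac{2(\rho_i,\rho_j)}{(\rho_i,\rho_i)}\,v_i\quad(1\leq j\leq k),
\end{equation*}
the second formula showing that $\{v_i\}$ and $\{\rho_i\}$ differ by a unitriangular change of basis, so both are $\R$-bases of ${\rm Mov}(w)$; and (d) the ``skew'' identity $(\vartheta x,y)+(x,\vartheta y)=-(x,y)$ for all $x,y\in V$, obtained by polarising $(\vartheta x,x)=-\tfrac12(x,x)$ --- the latter being what the proof of \lemref{lem: thetaprop}(1) establishes, since that argument uses only orthogonality of $\gamma$ and the relation $\gamma(\vartheta x)=\vartheta x+x$, and so holds for every $x\in V$, not just for roots.

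The first real step is the geometric criterion: \emph{$w\leq\gamma$ if and only if $(w-1)\vartheta(v)=v$ for all $v\in{\rm Mov}(w)$.} Indeed, $x\in{\rm Fix}(w^{-1}\gamma)$ forces $(\gamma-1)(x)=(w-1)(x)\in{\rm Mov}(w)$, hence $x\in\vartheta({\rm Mov}(w))$; thus ${\rm Fix}(w^{-1}\gamma)\subseteq\vartheta({\rm Mov}(w))$, a subspace of dimension $k$. By the definition of $\leq$ together with fact~(a), $w\leq\gamma\iff\ell_T(w^{-1}\gamma)=n-k\iff\dim{\rm Fix}(w^{-1}\gamma)=k\iff{\rm Fix}(w^{-1}\gamma)=\vartheta({\rm Mov}(w))$; and, the inclusion being automatic, this equality is equivalent to $\vartheta(v)\in{\rm Fix}(w^{-1}\gamma)$ for every $v\in{\rm Mov}(w)$, i.e. $\gamma(\vartheta v)=w(\vartheta v)$. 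Since $\gamma(\vartheta v)=v+\vartheta v$, this reads $v=(w-1)\vartheta(v)$, as claimed.

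The second step unwinds this. As $\{\rho_j\}$ is a basis of ${\rm Mov}(w)$ and $v\mapsto(w-1)\vartheta(v)-v$ is linear, the criterion holds iff $(w-1)\vartheta(\rho_j)=\rho_j$ for every $j$. Applying the first formula of~(c) to $x=\vartheta(\rho_j)$ gives $(w-1)\vartheta(\rho_j)=-\sum_{l}\tfrac{2(\rho_l,\vartheta\rho_j)}{(\rho_l,\rho_l)}v_l$, while the second formula gives $\rho_j=v_j+\sum_{l<j}\tfrac{2(\rho_l,\rho_j)}{(\rho_l,\rho_l)}v_l$. Comparing coefficients in the basis $\{v_l\}$, for each fixed $j$ the criterion becomes the three families of equations: $(\rho_l,\vartheta\rho_j)=0$ when $l>j$; $(\rho_j,\vartheta\rho_j)=-\tfrac12(\rho_j,\rho_j)$; and $(\rho_l,\vartheta\rho_j)=-(\rho_l,\rho_j)$ when $l<j$. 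The middle equation always holds, by \lemref{lem: thetaprop}(1), and the skew identity~(d) shows the first and third families are equivalent to one another. Hence $w\leq\gamma$ iff $(\rho_l,\vartheta\rho_j)=0$ for all $l>j$ which, after relabelling and using the symmetry of $(-,-)$, is exactly condition~(2). This completes the argument.

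I expect the only delicate point to be the geometric criterion of the first step: one must check that the automatic inclusion ${\rm Fix}(w^{-1}\gamma)\subseteq\vartheta({\rm Mov}(w))$ is forced to be an equality precisely when $\ell_T(w)+\ell_T(w^{-1}\gamma)=n$, and then translate membership in ${\rm Fix}(w^{-1}\gamma)$ back into the pointwise identity $(w-1)\vartheta(v)=v$; the rest is bookkeeping with the two telescoping sums and the near self-adjointness of $\vartheta$. A virtue of this route is that it never needs an explicit reduced factorisation of $\gamma$ extending $t_{\rho_1}\cdots t_{\rho_k}$, nor the transitivity of the Hurwitz action.
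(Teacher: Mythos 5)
Your argument is correct, and it is worth noting at the outset that the paper offers no proof of this statement at all: it is quoted verbatim from Brady--Watt \cite[Lemma 4.8]{BW08}, so you are supplying a complete argument where the paper supplies only a citation. Your route is essentially a self-contained reconstruction of the Brady--Watt theory of moved spaces: the telescoping identity in~(c) and the resulting unitriangular relation between $\{v_i\}$ and $\{\rho_i\}$ are exactly the mechanism by which reduced $T$-factorisations are related to decompositions of $\mathrm{Mov}(w)=\im(w-1)$, and your ``geometric criterion'' ($w\leq\gamma$ iff $(w-1)\vartheta(v)=v$ for all $v\in\mathrm{Mov}(w)$) is the analogue of the criterion the paper alludes to via \cite[Corollary 4.2]{BW08}. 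I checked the individual steps and they all hold: the polarised identity $(\vartheta x,y)+(x,\vartheta y)=-(x,y)$ is valid because the proof of \lemref{lem: thetaprop}(1) indeed uses only orthogonality of $\gamma$ and $\gamma(\vartheta x)=\vartheta x+x$, hence applies to every $x\in V$; the inclusion $\mathrm{Fix}(w^{-1}\gamma)\subseteq\vartheta(\mathrm{Mov}(w))$ together with $\ell_T(\gamma)=n$ (since $\gamma$ is fixed-point-free) correctly converts $w\leq\gamma$ into the pointwise identity; and the coefficient comparison is legitimate because both sides of $(w-1)\vartheta(\rho_j)=\rho_j$ lie in $\mathrm{Mov}(w)$ and $\{v_l\}$ is a basis of that space. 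One small expository point: in the final step you assert that the skew identity shows ``the first and third families are equivalent to one another''; what you actually need, and what your computation gives, is that each family separately is equivalent to condition~(2) --- the family $\{(\rho_l,\vartheta\rho_j)=0:\ l>j\}$ after merely renaming indices, and the family $\{(\rho_l,\vartheta\rho_j)=-(\rho_l,\rho_j):\ l<j\}$ via the skew identity --- so the whole system collapses to condition~(2) as claimed.
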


	\subsubsection{Proof of the vanishing lemma}
	
	To prove the equivalent statement \eqref{eq: T1andT2} of \lemref{lem: vanishing}, we need a description of the set $\mathcal{T}_1\cup \mathcal{T}_2$ in terms of positive roots.
	
	\begin{proposition}\label{prop: T}
		Let $\rho_1,\rho_2,\dots, \rho_k\in \Phi^{+}$, and let $\mathcal{T}_1$ and $\mathcal{T}_2$  be  as  in \eqref{eq: T}. Then we have $(t_{\rho_1}, t_{\rho_2},\dots ,t_{\rho_k})\in \mathcal{T}_1\cup \mathcal{T}_2$ if and only if there exists a pair $i<j$ such that $(\vartheta(\rho_i), \rho_j)\neq 0$. 
	\end{proposition}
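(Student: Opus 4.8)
The plan is to prove the contrapositive. Since $\mathcal{T}_1$ consists of the tuples for which $t_{\rho_1}t_{\rho_2}\cdots t_{\rho_k}$ is not $T$-reduced, and $\mathcal{T}_2$ of those for which it is $T$-reduced but $t_{\rho_1}\cdots t_{\rho_k}\not\le\gamma$, a tuple lies outside $\mathcal{T}_1\cup\mathcal{T}_2$ precisely when $t_{\rho_1}\cdots t_{\rho_k}$ is $T$-reduced \emph{and} $t_{\rho_1}\cdots t_{\rho_k}\le\gamma$. So I must show that this happens if and only if $(\vartheta(\rho_i),\rho_j)=0$ for all $1\le i<j\le k$. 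One implication is immediate: if $t_{\rho_1}\cdots t_{\rho_k}$ is $T$-reduced and lies below $\gamma$, then condition (2) of \lemref{lem: BW} yields the orthogonality relations directly.

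For the converse, assume $(\vartheta(\rho_i),\rho_j)=0$ whenever $i<j$. The first step is to upgrade this to linear independence of $\rho_1,\dots,\rho_k$, which is the one genuinely new ingredient. Consider the $k\times k$ matrix $M=\big((\vartheta(\rho_i),\rho_j)\big)_{1\le i,j\le k}$, with $i$ indexing rows. By hypothesis $M$ is lower triangular, and by \lemref{lem: thetaprop}(1) its diagonal entries are $(\vartheta(\rho_i),\rho_i)=-\tfrac12(\rho_i,\rho_i)\ne 0$, since each $\rho_i$ is a nonzero vector and $(-,-)$ is positive definite; hence $M$ is invertible. If $\sum_i c_i\rho_i=0$, applying the linear map $\vartheta$ gives $\sum_i c_i\vartheta(\rho_i)=0$, and pairing with $\rho_j$ for each $j$ gives the linear system $\sum_i c_i M_{ij}=0$, $1\le j\le k$, which has only the trivial solution because $M$ is invertible (equivalently, one reads off $c_k=0,\ c_{k-1}=0,\dots$ in turn from $j=k,k-1,\dots$). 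Thus $\rho_1,\dots,\rho_k$ are linearly independent, so by \lemref{lem: Carter} the product $t_{\rho_1}\cdots t_{\rho_k}$ is $T$-reduced; and now \lemref{lem: BW}, applied in the direction (2)$\Rightarrow$(1), shows $t_{\rho_1}\cdots t_{\rho_k}\le\gamma$. Hence the tuple lies outside $\mathcal{T}_1\cup\mathcal{T}_2$, completing the contrapositive.

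I do not expect a serious obstacle: the proposition is largely a repackaging of \lemref{lem: Carter} and \lemref{lem: BW}, with the triangularity observation doing the only real work. The one point requiring care is the logical order — \lemref{lem: BW} presupposes a $T$-reduced expression, so linear independence must be derived first (from the invertibility of $M$), and only then may one invoke \lemref{lem: BW} to conclude $t_{\rho_1}\cdots t_{\rho_k}\le\gamma$.
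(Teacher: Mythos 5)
Your proof is correct and follows essentially the same route as the paper: both reduce the statement to the biconditional ``$T$-reduced and $\le\gamma$ $\iff$ all $(\vartheta(\rho_i),\rho_j)=0$ for $i<j$'', both use the lower-triangularity of the matrix $\bigl((\vartheta(\rho_i),\rho_j)\bigr)$ together with Lemma~\ref{lem: thetaprop}(1) to get linear independence, and both then invoke Lemma~\ref{lem: Carter} and Lemma~\ref{lem: BW} in the same order. The only difference is presentational (the paper phrases one direction as a proof by contradiction rather than a clean contrapositive), and your explicit remark about deriving linear independence before applying Lemma~\ref{lem: BW} is exactly the right point of care.
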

	\begin{proof}
		Let  ${\bf t}=(t_{\rho_1}, t_{\rho_2},\dots, t_{\rho_k})\in \mathcal{T}_1\cup \mathcal{T}_2$ and assume for contradiction that we have $(\vartheta(\rho_i), \rho_j)= 0$ for any $1\leq i<j\leq k$.   Since the  matrix $((\vartheta(\rho_i), \rho_j))_{k\times k}$ is   non-singular by part (1) of \lemref{lem: thetaprop}, the positive roots  $\rho_i, 1\leq i\leq k$ are linearly independent. It follows from \lemref{lem: Carter} that  $t_{\rho_1}t_{\rho_2}\dots t_{\rho_k}$ is $T$-reduced and hence ${\bf t}\notin \mathcal{T}_1$, which implies that ${\bf t}\in \mathcal{T}_2$.  However, if $t_{\rho_1}t_{\rho_2}\dots t_{\rho_k}$ is $T$-reduced and   $(\vartheta(\rho_i), \rho_j)= 0$ for any $1\leq i<j\leq k$, then by \lemref{lem: BW}  we have $t_{\rho_1}t_{\rho_2}\dots t_{\rho_k}\leq \gamma$, which implies that ${\bf t}\notin \mathcal{T}_2$. This contradicts ${\bf t}\in \mathcal{T}_1\cup \mathcal{T}_2$. This proves the "only if" part.
		
	 For the converse, assume that there exist $i<j$ such that  $(\vartheta(\rho_i), \rho_j)\neq 0$. If $t_{\rho_1}t_{\rho_2} \dots t_{\rho_k}$ is not $T$-reduced, then $(t_{\rho_1}, t_{\rho_2},\dots t_{\rho_k})\in \mathcal{T}_1$. Otherwise, by \lemref{lem: BW} $t_{\rho_1}t_{\rho_2}\dots t_{\rho_k}\not \preceq \gamma$, and hence $(t_{\rho_1}, t_{\rho_2},\dots t_{\rho_k})\in  \mathcal{T}_2$. This completes the proof.
	\end{proof}

	We  now make  the following definition.
	
	\begin{definition}\label{def: betavan}
		A sequence of positive roots $(\rho_1, \rho_2,\dots, \rho_k)$ is called a \emph{vanishing sequence} if the inner product $(\vartheta(\rho_i), \rho_j)=0$ for all $1\leq i<j\leq k$ except that $(\vartheta(\rho_1), \rho_k)\neq 0$. 
	\end{definition}
	
	With this definition, we can refine \propref{prop: T} as follows.
	
	\begin{proposition}\label{prop: vanish}
		Let $\rho_1,\rho_2,\dots, \rho_k\in \Phi^{+}$. Then ${\bf t}=(t_{\rho_1}, t_{\rho_2},\dots t_{\rho_k})\in \mathcal{T}_1\cup \mathcal{T}_2$ if and only if there exists a pair $i<j$ such that $(\rho_i, \rho_{i+1} \dots ,\rho_j)$ is a vanishing sequence.
	\end{proposition}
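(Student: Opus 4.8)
The plan is to deduce \propref{prop: vanish} directly from \propref{prop: T} by a minimality argument; no new geometry is needed, since \propref{prop: T} already encodes all the input from Lemmas \ref{lem: Carter} and \ref{lem: BW} on $T$-reducedness and the relation $\le\gamma$.

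The ``if'' direction will be immediate: if $(\rho_i,\rho_{i+1},\dots,\rho_j)$ is a vanishing sequence, then by \defref{def: betavan} we have $(\vartheta(\rho_i),\rho_j)\neq 0$ with $i<j$, so $(i,j)$ is a pair of the kind required by \propref{prop: T}, whence ${\bf t}\in\mathcal{T}_1\cup\mathcal{T}_2$.

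For the converse I would argue as follows. Suppose ${\bf t}=(t_{\rho_1},\dots,t_{\rho_k})\in\mathcal{T}_1\cup\mathcal{T}_2$. By \propref{prop: T} the set
\[ P:=\{(i,j)\mid 1\leq i<j\leq k,\ (\vartheta(\rho_i),\rho_j)\neq 0\} \]
is non-empty. Choose $(a,b)\in P$ with $b-a$ as small as possible. First note $b-a\geq 1$, so the block $(\rho_a,\rho_{a+1},\dots,\rho_b)$ has length at least $2$ and \defref{def: betavan} genuinely applies to it. By the choice of $(a,b)$ we have $(\vartheta(\rho_a),\rho_b)\neq 0$. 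Now if $a\leq i<j\leq b$ and $(i,j)\neq(a,b)$, then either $a<i$ or $j<b$, so $j-i<b-a$; by minimality of $b-a$ the pair $(i,j)$ cannot lie in $P$, i.e. $(\vartheta(\rho_i),\rho_j)=0$. Re-indexing $(\rho_a,\dots,\rho_b)$ as $(\sigma_1,\dots,\sigma_{b-a+1})$ with $\sigma_m=\rho_{a+m-1}$, these two facts are precisely the conditions in \defref{def: betavan}, so $(\rho_a,\dots,\rho_b)$ is a vanishing sequence and the pair $i=a$, $j=b$ works.

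I expect essentially no obstacle here: all the substance has already been carried out in \propref{prop: T}, and the present refinement is the purely combinatorial observation that one nonzero inner product somewhere in the sequence forces a ``shortest'' such block, within which all intermediate products automatically vanish. The only points requiring a little care are the index bookkeeping in the re-indexing step and the remark that $b-a\geq 1$, which is what guarantees the extracted block is a bona fide sequence of length $\geq 2$ to which \defref{def: betavan} can be applied.
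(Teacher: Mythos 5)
Your proposal is correct and takes essentially the same approach as the paper: both directions follow from Proposition~\ref{prop: T}, and the substantive (``only if'') direction is exactly the paper's minimality argument — pick the pair $(i,j)$ with $j-i$ minimal and $(\vartheta(\rho_i),\rho_j)\neq 0$, then observe that all intermediate inner products vanish by minimality, which is precisely the definition of a vanishing sequence. The only difference is that you spell out the (immediate) ``if'' direction and the re-indexing, which the paper leaves implicit.
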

	\begin{proof}
		By \propref{prop: T}, we may choose a pair $i<j$ for which $j-i$ is minimal such that $(\vartheta(\rho_i), \rho_j)\neq 0$. It follows from the minimality of $j-i$ that $(\vartheta(\rho_s), \rho_t)=0$ for all $i\leq s<t\leq j$ except for $s=i, t=j$. Therefore,  $(\rho_i,\rho_{i+1}, \dots, \rho_j)$ is a  vanishing sequence.  
	\end{proof}
	
	Note that for any $\rho \in \Phi^+$, the sequence $(\rho, \rho)$ is a vanishing sequence as the inner product $(\vartheta(\rho),\rho)\neq 0$ by part (1) of \lemref{lem: thetaprop}. If $t_{\rho_1}t_{\rho_2}$ is $T$-reduced and $t_{\rho_1}t_{\rho_2}\not \leq\gamma$, then by \lemref{lem: BW} the inner product  $(\vartheta(\rho_1),\rho_2)\neq 0$ and therefore the sequence $(\rho_1,\rho_2)$ is a vanishing sequence. It follows from the defining relations of $\mathcal{A}$ that $a_{t_{\rho_1}}a_{t_{\rho_2}}=a_{t_{\rho}}^2=0$.

	In general, we will prove  that $a_{t_{\rho_1}}\dots a_{t_{\rho_k}}=0$ for any vanishing sequence $(\rho_1,\rho_2,\dots , \rho_k)$. Before proving this, we need the following observations.
	
	\begin{lemma}\label{lem: vanprop}
		Let $(\rho_1, \rho_2,\dots, \rho_k)$ be a vanishing sequence with $k>2$. Then
		\begin{enumerate}
			\item  $t_{\rho_1}t_{\rho_2}\cdots t_{\rho_{k-1}}\leq \gamma$  and $t_{\rho_2}t_{\rho_3}\cdots t_{\rho_{k}}\leq \gamma$ are both $T$-reduced;
			\item   $t_{\rho_i}t_{\rho_j}\leq \gamma$ is $T$-reduced for all $i<j$ except that $i=1, j=k$;
			\item Let $w=t_{\rho_{1}}t_{\rho_2}\leq \gamma$ and let $\Phi^+(w)=\{\tau_1, \tau_2,\dots,\tau_m\}$ be the set of all positive roots for which $t_{\tau_i}\leq w$. Then $(\tau_i, \rho_3,\dots, \rho_k)$ is a vanishing sequence for any $\tau_i\neq \rho_2$. 
		\end{enumerate} 
	\end{lemma}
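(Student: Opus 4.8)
My plan is to deduce all three parts directly from the linear‑algebraic criteria of Lemmas~\ref{lem: Carter} and~\ref{lem: BW}, fed by the sign computation of \lemref{lem: thetaprop}(1). For brevity write $b_{ij}:=(\vartheta(\rho_i),\rho_j)$. The hypothesis that $(\rho_1,\dots,\rho_k)$ is a vanishing sequence (\defref{def: betavan}) says precisely that $b_{ij}=0$ for all $1\le i<j\le k$ with $(i,j)\neq(1,k)$, while $b_{1k}\neq 0$; moreover $b_{ii}=-\tfrac{1}{2}(\rho_i,\rho_i)\neq 0$ for every $i$ by \lemref{lem: thetaprop}(1). For part (1), restrict the matrix $(b_{ij})$ to rows and columns $\{1,\dots,k-1\}$: since the single exceptional index $(1,k)$ lies outside this range, the submatrix is lower triangular with nonzero diagonal, hence invertible, so $\rho_1,\dots,\rho_{k-1}$ are linearly independent and \lemref{lem: Carter} makes $t_{\rho_1}\cdots t_{\rho_{k-1}}$ $T$-reduced; as all $b_{ij}$ with $1\le i<j\le k-1$ vanish, \lemref{lem: BW} gives $t_{\rho_1}\cdots t_{\rho_{k-1}}\le\gamma$. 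Running the same argument on the submatrix indexed by $\{2,\dots,k\}$ (which also avoids $(1,k)$) handles $t_{\rho_2}\cdots t_{\rho_k}$. Part (2) is the same observation applied to the $2\times 2$ submatrix on a pair $\{i,j\}$: whenever $(i,j)\neq(1,k)$ it is lower triangular with nonzero diagonal, so $\rho_i,\rho_j$ are independent, $t_{\rho_i}t_{\rho_j}$ is $T$-reduced, and since $b_{ij}=0$ it lies below $\gamma$.

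For part (3), first note that by part (2) (using $k>2$, so $(1,2)\neq(1,k)$) the element $w:=t_{\rho_1}t_{\rho_2}$ is $T$-reduced and $w\le\gamma$, so by \propref{prop: intncp} the interval $[e,w]$ is the noncrossing partition lattice of a rank-two parabolic, i.e.\ a dihedral group, as in Example~\ref{exam: dih}. The key step is the claim that every positive root $\tau$ with $t_\tau\le w$ lies in ${\rm span}\{\rho_1,\rho_2\}$. Indeed, $t_\tau\le w$ forces $\ell_T(t_\tau w)=1$, hence $w=t_\tau t_\sigma$ for some $\sigma\in\Phi^+$, and this factorisation is $T$-reduced. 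For any $T$-reduced factorisation $w=t_{\eta_1}t_{\eta_2}$, every vector orthogonal to $\eta_1$ and $\eta_2$ is fixed by $w$, so ${\rm span}\{\eta_1,\eta_2\}^{\perp}\subseteq{\rm Fix}(w)$; since $\eta_1,\eta_2$ are independent by \lemref{lem: Carter} and $\dim{\rm Fix}(w)=n-2$ by \eqnref{eq: lencodim}, this forces ${\rm Fix}(w)^{\perp}={\rm span}\{\eta_1,\eta_2\}$. Applying this to both $\{\rho_1,\rho_2\}$ and $\{\tau,\sigma\}$ gives $\tau\in{\rm span}\{\tau,\sigma\}={\rm Fix}(w)^{\perp}={\rm span}\{\rho_1,\rho_2\}$.

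Now write $\tau=a\rho_1+b\rho_2$. For $3\le j\le k$ one computes $(\vartheta(\tau),\rho_j)=a\,b_{1j}+b\,b_{2j}$; since $b_{2j}=0$ for all $j\ge 3$ and $b_{1j}=0$ for $3\le j\le k-1$ while $b_{1k}\neq 0$, this inner product vanishes for $3\le j\le k-1$ and equals $a\,b_{1k}$ for $j=k$. It remains to see $a\neq 0$: if $a=0$ then $\tau\in{\rm span}\{\rho_2\}$, and since $\Phi^+\to T$ is a bijection, a positive root proportional to $\rho_2$ must equal $\rho_2$, contradicting $\tau\neq\rho_2$. Hence $(\vartheta(\tau),\rho_k)\neq 0$, while the equalities $(\vartheta(\rho_s),\rho_t)=0$ for $3\le s<t\le k$ are inherited verbatim from the original vanishing sequence; by \defref{def: betavan} this shows $(\tau,\rho_3,\dots,\rho_k)$ is a vanishing sequence.

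The only genuinely delicate point, and the one I would present most carefully, is the identification ${\rm Fix}(w)^{\perp}={\rm span}\{\rho_1,\rho_2\}$ and the consequent containment $\tau\in{\rm span}\{\rho_1,\rho_2\}$; everything else is bookkeeping with triangular matrices and the defining property of a vanishing sequence. If one prefers to avoid the dimension count, the dihedral description of $[e,w]$ from \propref{prop: intncp} and Example~\ref{exam: dih} packages the same fact cleanly, since there the roots $\tau$ with $t_\tau\le w$ are exactly the positive roots of a rank-two subsystem lying in the plane ${\rm span}\{\rho_1,\rho_2\}$.
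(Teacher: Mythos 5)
Your proof is correct and follows essentially the same route as the paper: lower–triangular submatrices of $(\vartheta(\rho_i),\rho_j)$ together with \lemref{lem: Carter} and \lemref{lem: BW} for parts (1) and (2), and for part (3) the identification ${\rm Fix}(w)^{\perp}={\rm span}\{\rho_1,\rho_2\}$ followed by the linear computation of $(\vartheta(\tau),\rho_j)$ and the observation that $a=0$ iff $\tau=\rho_2$. The only difference is cosmetic: you spell out the dimension count behind $\tau\in{\rm span}\{\rho_1,\rho_2\}$ a bit more explicitly than the paper does.
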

	\begin{proof}
		The  matrix $((\vartheta(\rho_i), \rho_j))_{(k-1)\times (k-1)}$ with $1\leq i,j \leq k-1$ is lower triangular with nonzero diagonal entries, thereby $\rho_i, 1\leq i\leq k-1$ are linearly independent. It follows from \lemref{lem: Carter} and  \lemref{lem: BW} that $t_{\rho_1}t_{\rho_2}\cdots t_{\rho_{k-1}}$ is $T$-reduced and  $t_{\rho_1}t_{\rho_2}\cdots t_{\rho_{k-1}}\leq \gamma$. Similarly, one can prove that $t_{\rho_2}t_{\rho_3}\cdots t_{\rho_{k}}\leq \gamma$. This completes the proof of part (1), from which part (2) follows.

		For part (3), let $V_1={\rm Fix}(w)\subseteq V$ and let $V_1^{\perp}$ be the orthogonal subspace such that $V=V_1\oplus V_1^{\perp}$. Then by \eqref{eq: lencodim} ${\rm dim} V_1=n-2$ and ${\rm dim} V_1^{\perp}=2$. Since $w$ fixes every vector in $V_1$, so is any expression $t_{\tau_i}t_{\tau_j}$ of $w$. Therefore, those positive roots $\tau_i\in \Phi^{+}(w)$ are in $V_1^{\perp}$.
		Since $t_{\rho_1}t_{\rho_2}$ is $T$-reduced, it follows from  \lemref{lem: Carter} that $\rho_1, \rho_2\in V_{1}^{\perp}$ are linearly independent and hence make up a basis for $V_1^{\perp}$. 
		
		For any $\tau_i\in \Phi^+(w)\subseteq V_{1}^{\perp}$ we have $\tau_i=\lambda_1 \rho_1+ \lambda_2 \rho_2$ for some $\lambda_1, \lambda_2\in \mathbb{R}$.  Note that $\lambda_1=0$ if and only if $\tau_i=\rho_2$. Then we have 
		\[
		\begin{aligned}
			(\vartheta(\tau_i), \rho_j)&= \lambda_1(\vartheta(\rho_1), \rho_j)+ \lambda_2(\vartheta(\rho_2), \rho_j)=0, \quad 1\leq i\leq n, 1\leq j\leq k-1,\\
			((\vartheta(\tau_i), \rho_k))&=\lambda_1(\vartheta(\rho_1), \rho_k)+ \lambda_2(\vartheta(\rho_2), \rho_k)=\lambda_1 (\vartheta(\rho_1), \rho_k).
		\end{aligned}
		\]
		Since $(\vartheta(\rho_1), \rho_k)\neq 0$,  we have $((\vartheta(\tau_i), \rho_k))\neq 0$ if and only if $\lambda_1\neq 0$ if and only if $\tau_i\neq \rho_2$. Therefore the sequence $(\tau_i, \rho_3,\dots, \rho_k)$ is a vanishing sequence for any $\tau_i\neq \rho_2$. 
	\end{proof}
	

	\begin{lemma}\label{lem: vanish}
		Let  $(\rho_1, \rho_2,\dots, \rho_k)$ be a vanishing sequence with $k\geq 2$. Then
		\[a_{t_{\rho_1}}a_{t_{\rho_2}}\cdots a_{t_{\rho_k}}=0. \] 
	\end{lemma}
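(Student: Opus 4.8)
The plan is to argue by induction on the length $k$ of the vanishing sequence $(\rho_1,\dots,\rho_k)$. The base case $k=2$ is exactly the observation recorded just before the lemma: if $(\rho_1,\rho_2)$ is vanishing then either $\rho_1,\rho_2$ are linearly dependent, forcing $t_{\rho_1}=t_{\rho_2}$ and hence $a_{t_{\rho_1}}a_{t_{\rho_2}}=a_{t_{\rho_1}}^2=0$, or they are linearly independent, in which case $t_{\rho_1}t_{\rho_2}$ is $T$-reduced by \lemref{lem: Carter} and $t_{\rho_1}t_{\rho_2}\not\leq\gamma$ by \lemref{lem: BW} (since $(\vartheta(\rho_1),\rho_2)\neq 0$), so that $a_{t_{\rho_1}}a_{t_{\rho_2}}=0$; both vanishings come from the defining relations of $\mathcal{A}$ in \defref{def: A}(1).

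For the inductive step, fix $k\geq 3$, assume the statement for all vanishing sequences of length $<k$, and let $(\rho_1,\dots,\rho_k)$ be a vanishing sequence. Set $w=t_{\rho_1}t_{\rho_2}$. Since the index pair $(1,2)$ is not $(1,k)$, parts (1)--(2) of \lemref{lem: vanprop} give that $w\leq\gamma$ is $T$-reduced, i.e. $w\in\mathcal{L}_2$, so the quadratic relation \defref{def: A}(2) applies:
\[
\sum_{(s_1,s_2)\in {\rm Rex}_{T}(w)} a_{s_1}a_{s_2}=0 .
\]
In an interval of $T$-length $2$ the first factor of a factorisation is determined by the second ($s_1=ws_2$), so $(t_{\rho_1},t_{\rho_2})$ is the unique element of ${\rm Rex}_{T}(w)$ whose second entry is $t_{\rho_2}$; moreover every such second entry has the form $t_\tau$ with $\tau\in\Phi^{+}(w)$. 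Isolating the $(t_{\rho_1},t_{\rho_2})$ term therefore gives
\[
a_{t_{\rho_1}}a_{t_{\rho_2}}=-\sum_{\substack{(s_1,t_{\tau})\in {\rm Rex}_{T}(w)\\ \tau\in\Phi^{+}(w),\ \tau\neq\rho_2}} a_{s_1}a_{t_{\tau}} .
\]

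I would then right-multiply this identity by $a_{t_{\rho_3}}\cdots a_{t_{\rho_k}}$. For each $\tau\in\Phi^{+}(w)$ with $\tau\neq\rho_2$, part (3) of \lemref{lem: vanprop} tells us that $(\tau,\rho_3,\dots,\rho_k)$ is again a vanishing sequence, now of length $k-1\geq 2$, so by the inductive hypothesis $a_{t_{\tau}}a_{t_{\rho_3}}\cdots a_{t_{\rho_k}}=0$; hence each summand becomes $a_{s_1}\bigl(a_{t_{\tau}}a_{t_{\rho_3}}\cdots a_{t_{\rho_k}}\bigr)=0$, and we conclude $a_{t_{\rho_1}}a_{t_{\rho_2}}\cdots a_{t_{\rho_k}}=0$, completing the induction.

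The entire weight of the argument sits in \lemref{lem: vanprop}, above all in part (3), which certifies that collapsing the first two roots of a vanishing sequence into any reflection below their product (other than $t_{\rho_2}$) produces another vanishing sequence; once that is granted, the rest is formal and uses only the single quadratic relation \defref{def: A}(2). The points that need care — and which I expect to be the only mild obstacles — are purely bookkeeping: verifying $w\in\mathcal{L}_2$ so that relation (2) may legitimately be invoked (this is precisely why the excluded index pair in \lemref{lem: vanprop} is $i=1,j=k$, the one pair we do not use here), and checking that the ``diagonal'' term $(t_{\rho_1},t_{\rho_2})$ is the unique factorisation of $w$ with second factor $t_{\rho_2}$, so that it separates cleanly from the rest of the sum.
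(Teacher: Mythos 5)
Your proof is correct and follows essentially the same route as the paper's: induction on $k$, with the inductive step isolating the term $a_{t_{\rho_1}}a_{t_{\rho_2}}$ via the quadratic relation in $\mathcal{L}_2$ and invoking \lemref{lem: vanprop}(3) to kill the remaining products. The only cosmetic difference is that you handle the base case directly by the two dichotomies (dependent vs. independent roots) rather than routing through \propref{prop: T}, and you spell out the uniqueness of the factorisation of $w$ with second factor $t_{\rho_2}$, which the paper leaves implicit.
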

	\begin{proof}
		We use induction on $k$. For the base case $k=2$, since $(\rho_1,\rho_2)$ is a vanishing sequence we have $(\vartheta(\rho_1), \rho_2)\neq 0$.  Then by \propref{prop: T}, $(t_{\rho_1},t_{\rho_2})\in \mathcal{T}_1\cup \mathcal{T}_2$. If $(t_{\rho_1},t_{\rho_2})\in \mathcal{T}_1$, then $t_{\rho_1}t_{\rho_2}$ is not $T$-reduced. This implies that $\rho_1=\rho_2$ and hence $a_{t_{\rho_1}}^2=0$. If $(t_{\rho_1},t_{\rho_2})\in \mathcal{T}_2$, then we have $t_{\rho_1}t_{\rho_2}\not\leq  \gamma$ and hence $a_{t_{\rho_1}}a_{t_{\rho_2}}=0$.
		
		Now let $(\rho_1, \rho_2,\dots, \rho_k)$ be vanishing sequence with $k>2$. Using part (2) of \lemref{lem: vanprop} we have  $w=t_{\rho_1}t_{\rho_2}\leq \gamma$. Then by \propref{prop: intncp} the interval $[e, w]$ is isomorphic to the noncrossing partition lattice of a dihedral group $I_{2}(m)$ for some $m\geq 2$. Suppose that $\Phi^{+}(w)=\{ \tau_1, \tau_2,\dots ,\tau_m \}$ is set of all positive roots for which $t_{\tau_i}\leq w$. 
		By the defining relation, we have 
		\[ a_{t_{\rho_1}} a_{t_{\rho_2}}=-\sum_{w=t_{\tau_i}t_{\tau_j}, \; \tau_j\neq \rho_2 }  a_{t_{\tau_i}}a_{t_{\tau_j}}, \]
		where the sum is over all $T$-reduced expressions $t_{\tau_i}t_{\tau_j}$ of $w$ with $\tau_j\neq \rho_2$. Using the above relation, we obtain
		\[ a_{t_{\rho_1}}a_{t_{\rho_2}}a_{t_{\rho_3}}\cdots a_{t_{\rho_n}}=-\sum_{w=t_{\tau_i}t_{\tau_j}, \; \tau_j\neq \rho_2 }  a_{t_{\tau_i}}a_{t_{\tau_j}} a_{t_{\rho_3}}\cdots a_{t_{\rho_n}}. \]
		It follows from part (3) of \lemref{lem: vanprop} that $(\tau_j, \rho_3,\dots, \rho_n)$ is a vanishing sequence for any $\tau_j \neq \rho_2$, and hence by induction hypothesis $a_{t_{\tau_j}}a_{t_{\rho_3}}\cdots a_{t_{\rho_n}}=0$. Therefore,  we have $a_{t_{\rho_1}}a_{t_{\rho_2}}\cdots a_{t_{\rho_n}}=0$
	\end{proof}
	
	We are now in a position to prove \lemref{lem: vanishing}.

	\begin{proof}[Proof of \lemref{lem: vanishing}]
		By \propref{prop: vanish}, ${\bf t}=(t_{\rho_1}, t_{\rho_2}, \dots, t_{\rho_k})\in \mathcal{T}_1\cup \mathcal{T}_2$ if and only if  there exists $i<j$ such that $(\rho_i, \rho_{i+1} \dots ,\rho_j)$ is a vanishing sequence. Using \lemref{lem: vanish}, in this case we have 
		$a_{{\bf t}}=a_{t_{\rho_1}} \dots (a_{t_{\rho_i}}a_{t_{\rho_{i+1}}}\dots a_{t_{\rho_j}})\dots a_{t_{\rho_k}}=0$.
	\end{proof}

	
	\section{Chain complexes for Milnor fibres and hyperplane arrangements}
	We now begin our discussion of the chain complexes whose homology realise that of the Milnor fibre 
	and of the hyperplane complement associated with $W$. For any $u,v\in W$, write $u^w:=w\inv uw$.
	
	\subsection{Some acyclic chain complexes}
	Recall that  $\mathcal{B}=\bigoplus_{k=0}^{n}\mathcal{B}_k$ is a $\mathbb{Z}$-graded algebra generated by the $\beta_{t}$ for $t\in T$.  For any ${\bf t}=(t_1, t_2, \dots, t_k)\in {\rm Rex}_{T}(w)$ with $w\in \mathcal{L}$, i.e.,  such that $w=t_1t_2\dots, t_k\in \mathcal{L}$ is a $T$-reduced expression, we have $\beta_{{\bf t}}= \beta_{t_1}\beta_{t_2}\dots \beta_{t_k}$. Recalling  the $\mathbb{Z}$-linear map $d$  from \eqref{eq: theta} and \propref{prop: cycle}, we have
	\begin{equation}
		d_k: \mathcal{B}_{k}\rightarrow \mathcal{B}_{k-1}, \quad \beta_{{\bf t}} \mapsto z_{{\bf t}}=\sum_{i=1}^{k}(-1)^{k-i}\beta_{{\bf t}(\hat{i})} , \quad
	\end{equation}
	In particular, $d_1(\beta_t)=1$ for any $t\in T$. The following results can be found in \cite[Lemma 5.9, Proposition 5.11]{Zha22}\label{prop: dprop}.

	\begin{proposition}\label{prop: propB}
		The following properties hold for $(\CB,d)$. 
		\begin{enumerate}
			\item  We have $d^2=0$, whence we have the following chain complex $(\mathcal{B}, d)$:
			\begin{equation*}\label{eq: chaincomp}
				\xymatrix{
					0  \ar[r] &  \mathcal{B}_{n} \ar[r]^-{d_{n}} &  \mathcal{B}_{n-1}\ar[r]^-{d_{n-1}} &  \cdots  \ar[r]^-{d_1} &  \mathcal{B}_{0}\ar[r] & 0.
				}
			\end{equation*}

			\item The chain complex $(\mathcal{B}, d)$ is acyclic. More precisely, let  $\mathcal{L}_{[k]}:=\{ w\in \mathcal{L} | 1\leq \ell_{T}(w)\leq k\}$ be a rank-selected subposet of $\mathcal{L}$. If $1\leq k\leq n-1$, then
			\begin{equation*}
				{\rm Im}\,d_k=\widetilde{H}_{k-2}(\mathcal{L}_{[k-1]}),\quad   {\rm Ker}\, d_k = \widetilde{H}_{k-1}(\mathcal{L}_{[k]}).
			\end{equation*}
			Otherwise, ${\rm Im}\,d_n=\widetilde{H}_{n-2}(\mathcal{L}_{[n-1]})$ and  ${\rm Ker}\, d_n =0$. 
			
			\item  (Leibniz rule)
			For each $i=1,2,\dots, k-1$, we have
			\[ d(\beta_{\mathbf{t}})=(-1)^{k-i}(d\beta_{(t_1,\dots,t_i)})\beta_{(t_{i+1},\dots,t_k)}+\beta_{(t_1\dots, t_i)} (d\beta_{(t_{i+1},\dots,t_k)}) \]
			for any ${\bf t}=(t_1,t_2,\dots, t_k)\in {\rm Rex}_{T}(w)$, where $w\in \mathcal{L}_k$  with $2\leq k\leq n$.
		\end{enumerate}
	\end{proposition}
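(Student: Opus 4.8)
The plan is to deduce parts (1) and (3) formally from the explicit description of $d$ on $\mathcal{B}$ provided by \propref{prop: cycle}, and to obtain part (2) by realising $(\mathcal{B},d)$ as a subcomplex of the simplicial chain complex of the order complex of $\mathcal{L}_{[n]}=\mathcal{L}\setminus\{e\}$ and then running the long exact sequence of a pair, together with the rank-selection property of Cohen--Macaulay posets.

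First I would record the key identification. For ${\bf t}=(t_1,\dots,t_k)\in{\rm Rex}_T(w)$ with $\ell_T(w)=k$, write the maximal chain $t_1<t_1t_2<\dots<t_1\cdots t_k=w$ of $(e,w]$ as the join $[\,t_1<\dots<t_1\cdots t_{k-1}\,]*w$ obtained by appending the top vertex $w$; then, as simplicial chains in $\Delta(\mathcal{L}_{[n]})$, one has $\beta_{\bf t}=z_{\bf t}*w$ with $z_{\bf t}=d(\beta_{\bf t})$. By \propref{prop: cycle}, $z_{\bf t}$ is a $(k-2)$-cycle supported on $\Delta((e,w))$, so the simplicial boundary satisfies $\partial\beta_{\bf t}=(-1)^{k-1}z_{\bf t}=(-1)^{k-1}d_k(\beta_{\bf t})$. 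In particular $\partial$ carries $\mathcal{B}_k$ into $\mathcal{B}_{k-1}$, so $(\mathcal{B}_\bullet,\partial)$ is a subcomplex of the simplicial chain complex and $d=\pm\partial$ on it, whence $d^2=\pm\partial^2=0$: this is part (1) (alternatively, $d^2=0$ follows from the standard cancellation in the double sum $d_{k-1}d_k\beta_{\bf t}=\sum_{i,j}\pm\beta_{{\bf t}(\hat i)(\hat j)}$). Feeding \thmref{thm: homint}(2) into $\beta_{\bf t}=z_{\bf t}*w$ upgrades this to $\mathcal{B}_w=\widetilde{H}_{k-2}(e,w)*w$, hence $\mathcal{B}_k=\bigoplus_{w\in\mathcal{L}_k}\widetilde{H}_{k-2}(e,w)*w\subseteq C_{k-1}(\Delta(\mathcal{L}_{[k]}))$. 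For part (3) I would substitute $\beta_{\bf t}=\beta_{(t_1,\dots,t_i)}\beta_{(t_{i+1},\dots,t_k)}$ into $d_k(\beta_{\bf t})=\sum_{j=1}^k(-1)^{k-j}\beta_{{\bf t}(\hat j)}$, split the sum at $j=i$, and recognise the two partial sums as $(-1)^{k-i}(d\beta_{(t_1,\dots,t_i)})\beta_{(t_{i+1},\dots,t_k)}$ and $\beta_{(t_1,\dots,t_i)}(d\beta_{(t_{i+1},\dots,t_k)})$ after extracting the sign $(-1)^{k-i}$ from the reindexing.

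For part (2), fix $1\le k\le n$ and use the long exact sequence of the pair $(\Delta(\mathcal{L}_{[k]}),\Delta(\mathcal{L}_{[k-1]}))$; note $\mathcal{L}_{[k]}$ is pure of rank $k$, so $\dim\Delta(\mathcal{L}_{[k]})=k-1$ and $\dim\Delta(\mathcal{L}_{[k-1]})=k-2$. A simplex of $\Delta(\mathcal{L}_{[k]})$ fails to lie in $\Delta(\mathcal{L}_{[k-1]})$ precisely when its top element $w$ has rank $k$; deleting $w$ identifies the relative chain complex with the direct sum, over $w\in\mathcal{L}_k$, of the augmented chain complex of $\Delta((e,w))$ shifted up by one. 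Hence the Cohen--Macaulay property of $\mathcal{L}$ forces $\widetilde{H}_j(\mathcal{L}_{[k]},\mathcal{L}_{[k-1]})=0$ for $j\neq k-1$, while \thmref{thm: homint} gives $\widetilde{H}_{k-1}(\mathcal{L}_{[k]},\mathcal{L}_{[k-1]})\cong\bigoplus_{w\in\mathcal{L}_k}\widetilde{H}_{k-2}(e,w)\cong\mathcal{B}_k$, the class of the relative cycle $\beta_{\bf t}$ going to $\beta_{\bf t}$. Since $\Delta(\mathcal{L}_{[k-1]})$ is $(k-2)$-dimensional, $\widetilde{H}_{k-2}(\mathcal{L}_{[k-1]})$ is the group of $(k-2)$-cycles; using $\partial\beta_{\bf t}=\pm d_k(\beta_{\bf t})$ and the fact that each $z_{\bf t}$ is already such a cycle, the connecting homomorphism of the pair becomes $\pm d_k\colon\mathcal{B}_k\to\widetilde{H}_{k-2}(\mathcal{L}_{[k-1]})$. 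With $\widetilde{H}_{k-1}(\mathcal{L}_{[k-1]})=0$ for dimension reasons, the long exact sequence collapses to
\[
0\to\widetilde{H}_{k-1}(\mathcal{L}_{[k]})\to\mathcal{B}_k\xrightarrow{\,d_k\,}\widetilde{H}_{k-2}(\mathcal{L}_{[k-1]})\to\widetilde{H}_{k-2}(\mathcal{L}_{[k]})\to 0 ,
\]
and it only remains to see $\widetilde{H}_{k-2}(\mathcal{L}_{[k]})=0$. For $k\le n-1$, $\mathcal{L}_{[k]}$ is a rank-selected subposet of the Cohen--Macaulay poset $\mathcal{L}$, hence itself Cohen--Macaulay, so its homology is concentrated in top degree $k-1\neq k-2$; for $k=n$, $\mathcal{L}_{[n]}=\mathcal{L}\setminus\{e\}$ has the maximum $\gamma$ and is therefore contractible, yielding $\widetilde{H}_{n-2}(\mathcal{L}_{[n]})=0$ and also $\widetilde{H}_{n-1}(\mathcal{L}_{[n]})=0={\rm Ker}\,d_n$. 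Reading off the sequence gives ${\rm Ker}\,d_k=\widetilde{H}_{k-1}(\mathcal{L}_{[k]})$ and ${\rm Im}\,d_k=\widetilde{H}_{k-2}(\mathcal{L}_{[k-1]})$, and substituting $k+1$ for $k$ in the latter shows ${\rm Im}\,d_{k+1}={\rm Ker}\,d_k$, i.e.\ acyclicity.

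The main obstacle is the bookkeeping in the middle step: one has to fix the join/boundary sign conventions so that $\partial\beta_{\bf t}$ really is $\pm d_k(\beta_{\bf t})$ (so that $\mathcal{B}_\bullet$ is genuinely a subcomplex and $d=\pm\partial$), and then identify the connecting homomorphism of the pair \emph{on the nose} with $\pm d_k$, not merely with some map having the same kernel and image. Once $(\mathcal{B},d)$ is recognised as a subcomplex of the order complex of $\mathcal{L}\setminus\{e\}$ that computes the relevant relative homology, the remainder is the standard rank-selection argument.
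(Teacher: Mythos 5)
The paper does not reprove this proposition; it is quoted verbatim from \cite[Lemma 5.9, Proposition 5.11]{Zha22}, so there is no internal proof to compare against. Your argument is correct, self-contained, and uses exactly the ingredients the statement itself telegraphs (the identification $\beta_{\bf t}=z_{\bf t}*w$ via \propref{prop: cycle}, Cohen--Macaulay-ness and rank selection of $\mathcal L$, and the long exact sequence of the pair $(\Delta(\mathcal L_{[k]}),\Delta(\mathcal L_{[k-1]}))$), so it is very likely close in spirit to the cited proof. Three of the bookkeeping points you flagged are worth stating explicitly, since each could fail silently. First, closure of $\mathcal B_\bullet$ under the simplicial boundary $\partial$ needs that each $(t_1,\dots,\hat t_i,\dots,t_k)$ is again a $T$-reduced factorisation of an element of $\mathcal L$; this is Carter's linear-independence criterion (\lemref{lem: Carter}) together with $t_1\cdots\hat t_i\cdots t_k\le w\le\gamma$. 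Second, you need $\mathcal B_k$ to be \emph{equal} to $\widetilde H_{k-1}(\mathcal L_{[k]},\mathcal L_{[k-1]})=Z^{\rm rel}_{k-1}$ (not merely map into it): under the standard identification of the relative chain complex with $\bigoplus_{w\in\mathcal L_k}\widetilde C_{\bullet-1}(\Delta((e,w)))$ (delete the apex $w$), the class of $\beta_{\bf t}$ is precisely $z_{\bf t}$, so the inclusion $\mathcal B_k\hookrightarrow Z^{\rm rel}_{k-1}$ is exactly the isomorphism $d$ of \thmref{thm: homint}(3); without invoking that theorem you would only get ${\rm Ker}\,d_k\supseteq\widetilde H_{k-1}(\mathcal L_{[k]})$. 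Third, for acyclicity you implicitly use that ${\rm Im}\,d_{k+1}$ and ${\rm Ker}\,d_k$ are identified as the \emph{same} subgroup of $\mathcal B_k$, namely the group of top simplicial cycles of $\Delta(\mathcal L_{[k]})$ — this follows from the two long exact sequences once the sign $(-1)^{k-1}$ relating $\partial$ and $d_k$ is pinned down (degree-dependent but irrelevant to kernels and images). With those caveats made explicit the argument is complete.
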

	
	In view of \thmref{thm: isothm}, the map $\phi:\CA\lr\CB$ given by $\phi(a_{t_1}a_{t_2}\dots a_{t_k})=\beta_{t_1}\beta_{t_2}\dots\beta_{t_k}$ 
	for any  $T$-reduced expression   $w=t_1t_2\dots, t_k\in \mathcal{L}$, is a graded isomorphism from $\CA$  to $\mathcal{B}$. 
	By abuse of notation, for each $k=1,\dots n$ we define the $\mathbb{Z}$-linear map
	\begin{equation}\label{eq: d}
		d: \mathcal{A}_{k}\rightarrow \mathcal{A}_{k-1}, \quad a_{t_1}a_{t_2}\dots a_{t_k} \mapsto \sum_{i=1}^{k}(-1)^{k-i} a_{t_1}\dots \hat{a}_{t_i}\dots a_{t_k}.
	\end{equation}
	Then the properties described in \propref{prop: dprop} for $(\CB,d)$ hold {\it mutatis mutandem} for $(\mathcal{A},d)$. In particular, $(\mathcal{A},d)$ is an acyclic complex.

	We proceed next to give another acyclic complex induced by the ``Kreweras complement'' of the NCP lattice. Now the noncrossing partition lattice $\mathcal{L}$ is self-dual, i.e., $\mathcal{L}\cong \mathcal{L}^{op}$, where  $\mathcal{L}^{{\rm op}}$ is the set $\mathcal{L}$ with the reverse partial order $<_{{\rm op}}$. This isomorphism may be realised explicitly by the Kreweras complement $K$, defined by
		\begin{equation*}\label{eq: Kre}
		K:  \mathcal{L}\longrightarrow  \mathcal{L}^{{\rm op}}, \quad w\mapsto \gamma w^{-1}.
	\end{equation*}
	
	The Kreweras map induces an automorphism of  $\mathcal{A}=\mathcal{A}(\mathcal{L})$ as a graded algebra. Recall from \lemref{coro: nonzero} that an element $a_{t_1}a_{t_2}\dots a_{t_k}$ of $\mathcal{A}$ is nonzero if and only if $e< t_1< t_1t_2< \dots < t_1t_2\dots t_k$ is a chain of $\mathcal{L}$. The latter is mapped by $K$ to the  chain $\gamma <_{{\rm op}} \gamma t_1<_{{\rm op}} \gamma t_2t_1 <_{{\rm op}}
	\dots <_{{\rm op}}   \gamma t_k \dots t_1 $
	of $\mathcal{L}^{{\rm op}}$, and this corresponds to a nonzero element $a_{t_1} a_{t_2^{t_1}}\dots  a_{t_k^{t_{k-1}\dots t_{1}}}$ of $\mathcal{A}(\mathcal{L}^{{\rm op}})$. Note further that $\mathcal{A}(\mathcal{L}^{{\rm op}})$ is isomorphic to the opposite algebra  $ \mathcal{A}(\mathcal{L})^{{\rm op}}$. Therefore, we have the following composite of linear maps:

	\[
	\kappa: \\ \mathcal{A}(\mathcal{L}) \overset{\bar{\kappa}} \rightarrow \mathcal{A}(\mathcal{L}^{{\rm op}})\cong \mathcal{A}(\mathcal{L})^{{\rm op}} \overset{\theta} \rightarrow \mathcal{A}(\mathcal{L}),
	\]
	where $\bar{\kappa}$ is induced as above by $K$, so that  $\bar{\kappa}(a_{t_1}a_{t_2}\dots a_{t_k})=a_{t_1} a_{t_2^{t_1}}\dots  a_{t_k^{t_{k-1}\dots t_{1}}}$ and $\theta$ is the anti-isomorphism given by $\theta(a_{t_1} a_{t_2}\dots a_{t_k})= a_{t_k} \dots a_{t_2}a_{t_1}$. Hence the linear automorphism $\kappa$ of $\mathcal{A}$ is defined explicitly by 
	\begin{equation}\label{eq: kap}
		\kappa (a_{t_1}a_{t_2}\dots a_{t_k})=  a_{t_k^{t_{k-1}\dots t_{1}}} \dots a_{t_2^{t_1}} a_{t_1}.
	\end{equation}
	It is clear that $\kappa$ preserves the defining relations of $\mathcal{A}$. 
	
	In addition to the differential \eqref{eq: d}, for each $k=1,\dots, n$ we define the following $\mathbb{Z}$-linear map
	\begin{equation}\label{eq: del}
		\delta: \mathcal{A}_{k}\rightarrow \mathcal{A}_{k-1}, \quad a_{t_1}a_{t_2}\dots a_{t_k} \mapsto \sum_{i=1}^{k}(-1)^{i-1} a_{t_1^{t_i}}\dots a_{t_{i-1}^{t_i}} \hat{a}_{t_i}\dots a_{t_k}.
	\end{equation}
	
	The properties of $\delta$ are summarised in the next result.
	
	\begin{proposition}\label{prop: delta} Let $\delta$ and $\kappa$ be as defined above. 
		\begin{enumerate}
			\item (Leibniz rule)
			For each $k=2,\dots, n$, we have
			\[ \delta(a_{t_1}a_{t_2}\dots a_{t_k})=\delta(a_{t_1}a_{t_2}\dots a_{t_{k-1}})a_{t_k}+ (-1)^{k-1}a_{t_1^{t_k}}a_{t_2^{t_k}}\dots a_{t_{k-1}^{t_k}}.  \]
			for any ${\bf t}=(t_1,t_2,\dots, t_k)\in {\rm Rex}_{T}(w)$ with $w\in \mathcal{L}_k$. 
			
			\item For each integer $k=1,\dots, n$, the following diagram commutes:
			\begin{center}
				\begin{tikzcd}
					\mathcal{A}_k \arrow[r, "d"] \arrow[d, "\kappa"]
					&  \mathcal{A}_{k-1} \arrow[d, "\kappa"] \\
					\mathcal{A}_k  \arrow[r,  "\delta" ]
					&  \mathcal{A}_{k-1} .\end{tikzcd}
			\end{center}
			Therefore, the complex $(\mathcal{A}, \delta)$ is acyclic.
			
			\item We have $d\delta=\delta d$.
		\end{enumerate}
	\end{proposition}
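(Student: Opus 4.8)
The plan is to treat the three parts in order. Part~(1) is a direct expansion of \eqref{eq: del}: separating off the $i=k$ summand of $\delta(a_{t_1}\dots a_{t_k})$, the summand for each $1\le i\le k-1$ is verbatim the $i$-th summand of $\delta(a_{t_1}\dots a_{t_{k-1}})\,a_{t_k}$ — the reflection $t_i$ being deleted lies among the first $k-1$ entries, and appending $a_{t_k}$ alters none of the entries to its left — while the $i=k$ summand is $(-1)^{k-1}a_{t_1^{t_k}}\dots a_{t_{k-1}^{t_k}}$. Summing gives the stated Leibniz identity, with no real obstacle.

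For part~(2) I would compute $\kappa d$ and $\delta\kappa$ on a spanning element $a_{\bf t}$ with ${\bf t}=(t_1,\dots,t_k)\in{\rm Rex}_{T}(w)$, $w\in\mathcal{L}_k$. Abbreviate $\tau_1:=t_1$ and $\tau_j:=t_j^{t_{j-1}\cdots t_1}$ for $2\le j\le k$, so that \eqref{eq: kap} reads $\kappa(a_{\bf t})=a_{\tau_k}a_{\tau_{k-1}}\dots a_{\tau_1}$. On the one hand,
\[
\kappa d(a_{\bf t})=\sum_{i=1}^{k}(-1)^{k-i}\,\kappa\!\big(a_{t_1}\dots \hat{a}_{t_i}\dots a_{t_k}\big),
\]
and the crucial point is that, for the sequence obtained from ${\bf t}$ by deleting its $i$-th entry, the nested conjugating prefixes created by $\kappa$ satisfy
\[
t_{l+1}^{(t_l\cdots t_{i+1})(t_{i-1}\cdots t_1)}=\tau_{l+1}^{\tau_i}\qquad\text{for all }l\ge i .
\]
This is a one-line verification using only that reflections are involutions: after conjugating by $t_l\cdots t_{i+1}$, the only discrepancy with $\tau_{l+1}=t_{l+1}^{t_l\cdots t_1}$ is the missing central factor $t_i$, which is exactly absorbed by conjugation by $\tau_i=t_i^{t_{i-1}\cdots t_1}$. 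Consequently $\kappa\!\big(a_{t_1}\dots \hat{a}_{t_i}\dots a_{t_k}\big)=a_{\tau_k^{\tau_i}}\dots a_{\tau_{i+1}^{\tau_i}}\,a_{\tau_{i-1}}\dots a_{\tau_1}$. On the other hand, applying \eqref{eq: del} to the sequence $(\tau_k,\dots,\tau_1)$, the summand of $\delta\kappa(a_{\bf t})=\delta(a_{\tau_k}\dots a_{\tau_1})$ in which $\tau_j$ is deleted equals $(-1)^{k-j}a_{\tau_k^{\tau_j}}\dots a_{\tau_{j+1}^{\tau_j}}\,a_{\tau_{j-1}}\dots a_{\tau_1}$. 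Matching $i$ with $j$ gives $\kappa d=\delta\kappa$. Since $\kappa$ is a linear automorphism of $\mathcal{A}$, the map $\kappa^{-1}$ is therefore an isomorphism of chain complexes $(\mathcal{A},\delta)\cong(\mathcal{A},d)$; in particular $\delta^2=0$, and since $(\mathcal{A},d)$ is acyclic (as recorded just above), so is $(\mathcal{A},\delta)$.

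For part~(3) I would prove $d\delta=\delta d$ on $\mathcal{A}_k$ by induction on $k$, the cases $k\le 1$ being vacuous since then $\mathcal{A}_{k-2}=0$. Write a spanning element of $\mathcal{A}_k$ as $x\,a_{t_k}$ with $x\in\mathcal{A}_{k-1}$. Using the Leibniz rule of part~(1), the Leibniz rule of Proposition~\ref{prop: propB}(3) in the form $d(y\,a_t)=-(dy)\,a_t+y$ (valid since $da_t=1$), and the elementary fact that $d$ commutes with the entrywise conjugation $y\mapsto y^{t_k}$ (both act only on the underlying label sequence), a short bookkeeping of signs — with the few low-degree cases checked by hand — reduces the two composites to
\[
d\delta(x\,a_{t_k})=-(d\delta x)\,a_{t_k}+\delta x+(-1)^{k-1}(dx)^{t_k},\qquad \delta d(x\,a_{t_k})=-(\delta d x)\,a_{t_k}+\delta x+(-1)^{k-1}(dx)^{t_k}.
\]
Subtracting, $(d\delta-\delta d)(x\,a_{t_k})=-\big((d\delta-\delta d)x\big)a_{t_k}$, which vanishes by the inductive hypothesis applied to $x\in\mathcal{A}_{k-1}$; hence $d\delta=\delta d$ in all degrees.

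I expect the main obstacle to be the conjugation computation at the heart of part~(2): carefully following the nested prefixes through the deletion map and establishing the identity $t_{l+1}^{(t_l\cdots t_{i+1})(t_{i-1}\cdots t_1)}=\tau_{l+1}^{\tau_i}$ carries essentially all the content of the commuting square, after which everything in parts~(2) and~(3) is formal bookkeeping. A secondary point requiring some care in part~(3) is checking that the conjugation operation $y\mapsto y^{t_k}$ behaves well on the specific elements to which it is applied — i.e. that it does not create spurious terms — which follows from the structure of $\mathcal{L}$ via Proposition~\ref{prop: intncp} and Lemma~\ref{coro: nonzero}.
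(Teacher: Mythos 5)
Your proof is correct, and it follows the same route the paper takes, which is to say the paper simply asserts that all three parts follow by ``straightforward'' or ``direct'' calculation and gives no further details; you have supplied exactly the computations being glossed over. The one genuinely substantive point, the nested-conjugation identity $t_{l+1}^{(t_l\cdots t_{i+1})(t_{i-1}\cdots t_1)}=\tau_{l+1}^{\tau_i}$ that makes $\kappa d=\delta\kappa$ work out term by term, is stated and verified correctly (one can check it directly: writing $u=t_{i-1}\cdots t_1$, $v=t_l\cdots t_i$, so $\tau_{l+1}=t_{l+1}^{vu}$ and $\tau_i=t_i^u$, one has $\tau_i\tau_{l+1}\tau_i=u^{-1}t_iv^{-1}t_{l+1}vt_iu$ and $t_iv^{-1}=t_{i+1}\cdots t_l$, $vt_i=t_l\cdots t_{i+1}$). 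Your inductive treatment of part~(3) via the two Leibniz rules and the observation $d(y^{t_k})=(dy)^{t_k}$ is also sound; as you note, the entrywise conjugation is legitimate here because each monomial appearing in $\delta(x a_{t_k})$ is the label sequence of an actual chain (equivalently because $\delta=\kappa d\kappa^{-1}$ is manifestly well-defined), so no spurious zero versus nonzero discrepancies arise.
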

	\begin{proof}
		Part (1) follows directly from the definition \eqnref{eq: del}.  The  diagram commutes by  straightforward calculation. As $\kappa$ is an automorphism and $(\mathcal{A},d)$ is acyclic, the acyclicity of $(\mathcal{A},\delta)$ follows. It is easily verified directly that $d$ and $\delta$ commute with each other. 
	\end{proof}

	\begin{remark}
		Note that $d$ and $\delta$ do not preserve the defining relations of the algebra $\mathcal{A}$. For instance, we have $a_{t_1}a_{t_2}=0$ for any pair of reflections $t_1, t_2$ satisfying $t_1t_2\not \leq \gamma$. However, $d(a_{t_1}a_{t_2})=-a_{t_2}+a_{t_1}\neq 0$ and  $\delta (a_{t_1}a_{t_2})=-a_{t_2}+a_{t_2t_1t_2}\neq 0$. 
	\end{remark}
	
	\begin{remark}\label{rmk: welldef}
		It follows from \lemref{coro: nonzero} that $\mathcal{A}_{k}$ as a free abelian group is spanned by nonzero elements $a_{t_1}a_{t_2}\dots a_{t_k}$, where $t_1t_2\dots t_k=w\in \mathcal{L}$ is a $T$-reduced expression.  By the defining relations of $\mathcal{A}$, all linear relations among nonzero elements $a_{t_1}\dots a_{t_k}$ are generated by the quadratic relation $   \sum_{(t_1,t_2)\in {\rm Rex}_T(w)} a_{t_1} a_{t_2}=0$ for any $w\in \mathcal{L}_2$. It is easily verified that $d(\sum_{(t_1,t_2)\in {\rm Rex}_T(w)} a_{t_1} a_{t_2})=0$ for any $t\in T$, and similarly this holds for $\delta$. Therefore, the linear maps $d$ and $\delta$ are well-defined. 
	\end{remark}
	
	\subsection{Complexes for the Milnor fibre and hyperplane complement}\label{sec: comp}
	
	Let $\mathcal{H}$ be the set of (complexified) reflecting hyperplanes of $W$ and write $M=M_W:=V_\C\setminus \cup_{H\in\mathcal{H}}H$ for the corresponding hyperplane complement.
	For $H\in\mathcal{H}$, let $\ell_H\in V_\C^*$ be a corresponding linear form, so that $H=\ker(\ell_H)$. Let $Q=\prod_{H\in\mathcal{H}}\ell_H^2$; it is well known that the $\ell_H$
	may be chosen so that $Q$ is $W$-invariant. The Milnor fibre $F:=Q\inv(1)=\{v\in V_\C\mid Q(v)=1\}$.  Evidently $W$ acts on both $M$ and $F$, so that we may speak of the orbit spaces $M/W$ and $F/W$.

\subsubsection{Chain complexes for $M$ and $F$}	Recall from \cite{Zha22} the following chain complexes which compute the integral homology of the hyperplane complement and of the Milnor fibre. We use the algebra $\mathcal{A}$ instead of $\mathcal{B}$ in the original complexes. 
	
	\begin{theorem}\cite[Theorem 7.2]{Zha22}\label{thm: homoM}
		The integral homology of the hyperplane complement $M$ is isomorphic to the homology of the following chain  complex of abelian groups:
		\be\label{eq:ccm}
		\begin{tikzcd}
			0 \arrow[r]&  \mathbb{Z}W\otimes  \mathcal{A}_n \arrow[r,"\partial_{n}"]  & \cdots \arrow[r] &   \mathbb{Z}W\otimes \mathcal{A}_1  \arrow[r,"\partial_1"]&  \mathbb{Z}W\otimes \mathcal{A}_0 \arrow[r] & 0,
		\end{tikzcd}
		\ee
		where the boundary maps are given by 
		\[ 
		\partial_k(w\otimes a_{t_1}a_{t_2}\dots a_{t_k} )= \sum_{i=1}^k(-1)^{i-1} wt_i\otimes a_{t_1^{t_i}}\dots a_{t_{i-1}^{t_i}} \hat{a}_{t_i}\dots a_{t_k}
		- \sum_{i=1}^k (-1)^{i-1} w\otimes a_{t_1}\dots \hat{a}_{t_i}\dots a_{t_k} 
		\]
		for any $w\in W$ and $(t_1, t_2, \dots, t_k)\in \bigcup_{u\in \mathcal{L}_k} {\rm Rex}_T(u)$.
	\end{theorem}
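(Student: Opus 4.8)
The plan is to deduce \thmref{thm: homoM} directly from \cite[Theorem 7.2]{Zha22} by transport of structure along the isomorphism $\phi\colon\mathcal{A}\xrightarrow{\ \sim\ }\mathcal{B}$ of \thmref{thm: isothm}. In \cite{Zha22} the complex \eqnref{eq:ccm} is established with chain groups $\mathbb{Z}W\otimes\mathcal{B}_k$ in place of $\mathbb{Z}W\otimes\mathcal{A}_k$, the boundary map being written in terms of the generators $\beta_t$ by exactly the formula displayed above in terms of the $a_t$. Since $\phi$ is a graded algebra isomorphism with $\phi(a_t)=\beta_t$ (\thmref{thm: isothm}), all that is required is to check that the base change $\id_{\mathbb{Z}W}\otimes\phi$ is a morphism of chain complexes, i.e.\ that it intertwines the two boundary operators. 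This is immediate once one observes that $\partial_k$ is the difference of two $\mathbb{Z}$-linear maps whose definitions make no reference to the distinction between $\mathcal{A}$ and $\mathcal{B}$: the first, $w\otimes a_{\mathbf t}\mapsto\sum_i(-1)^{i-1}wt_i\otimes a_{t_1^{t_i}}\cdots a_{t_{i-1}^{t_i}}\hat a_{t_i}\cdots a_{t_k}$, is obtained from the map $\delta$ of \eqnref{eq: del} by replacing the coefficient $w$ in the $i$-th summand by $wt_i$; the second, $w\otimes a_{\mathbf t}\mapsto\sum_i(-1)^{i-1}w\otimes a_{t_1}\cdots\hat a_{t_i}\cdots a_{t_k}$, agrees up to a degree-dependent sign with $\id_{\mathbb{Z}W}\otimes d$ for the differential $d$ of \eqnref{eq: d} (equivalently of \propref{prop: cycle}). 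Both $d$ and $\delta$ are given by the same formulas on $\mathcal{A}$ and on $\mathcal{B}$ and $\phi$ merely relabels indices, so $\id_{\mathbb{Z}W}\otimes\phi$ is an isomorphism of complexes from \eqnref{eq:ccm} onto the complex of \cite[Theorem 7.2]{Zha22}; the latter has homology $H_*(M;\mathbb{Z})$, and hence so does \eqnref{eq:ccm}.

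For completeness I sketch the shape of the underlying argument in \cite{Zha22}. One begins with a $W$-equivariant free cellular model for $M$ --- for instance a Salvetti-type complex of the complexified Coxeter arrangement --- whose cellular chains form a bounded complex of free $\mathbb{Z}W$-modules with explicit, but intricate, incidence numbers. The heart of the matter is to replace this by a quasi-isomorphic, far more usable complex with chain groups $\mathbb{Z}W\otimes\mathcal{B}_k$. This reduction is controlled by the EL-labelling \eqnref{eq: edge-labelling} of the noncrossing partition lattice $\mathcal{L}$ and by its Cohen--Macaulay property: for each $w\in\mathcal{L}_k$ the reduced homology $\widetilde H_{*}(e,w)$ is concentrated in degree $k-2$ and has rank $|\mathcal{D}_w|$ (\propref{prop: dimtop}, \thmref{thm: homint}), so the ``local'' data over each interval of $\mathcal{L}$ collapses onto $\bigoplus_{w\in\mathcal{L}_k}\widetilde H_{k-2}(e,w)\cong\mathcal{B}_k$. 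Computing the resulting differential yields exactly the two pieces of $\partial_k$: the untwisted piece records incidences internal to a single interval $[e,w]$ (this is the differential $d$), while the piece with $w$ replaced by $wt_i$ records how the local contributions at different flats are glued across the walls separating chambers, and is built from the Kreweras-conjugate $\delta=\kappa d\kappa^{-1}$ of \eqnref{eq: del} (see \propref{prop: delta}(2)), twisted by the coefficient change $w\mapsto wt_i$. The relation $\partial_k^2=0$ then unwinds to $d^2=\delta^2=0$ together with $d\delta=\delta d$, which are \propref{prop: propB} and \propref{prop: delta}.

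The genuine obstacle lies in this reduction step: identifying the reduced complex with $\mathbb{Z}W\otimes\mathcal{B}_\bullet$ and evaluating its differential in the closed form $\partial_k$ cannot be done by a soft argument, and requires the chain-level combinatorics of \cite{Zha22} (an explicit acyclic matching, or an explicit contracting homotopy, on the cellular model, organised by the total order \eqnref{eq: total ordering} on $T$). It is precisely the requirement that the whole reduction be $W$-equivariant that forces the appearance of the conjugated generators $t_i^{t_j}$, and with them the operator $\delta$; discarding the $W$-action one is left only with $(\mathcal{A},d)$, which is acyclic (\propref{prop: propB}) and hence carries none of the homology of $M$ on its own. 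For the present paper, however, nothing of this needs to be repeated: as explained in the first paragraph, \thmref{thm: homoM} is \cite[Theorem 7.2]{Zha22} read off through the isomorphism $\phi$.
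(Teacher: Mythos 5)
Your proposal is correct and is essentially the paper's own argument: the theorem is imported from \cite[Theorem 7.2]{Zha22} (stated there with $\mathcal B$) and rewritten over $\mathcal A$ via the graded algebra isomorphism $\phi$ of \thmref{thm: isothm}, with $\id_{\mathbb ZW}\otimes\phi$ manifestly a chain isomorphism because the boundary formula only references the generators $a_t\leftrightarrow\beta_t$. Your identification of the two pieces of $\partial_k$ with (twisted) $\delta$- and $d$-type maps matches the paper's later decomposition $\partial_k=\partial'_k+(-1)^k\partial''_k$ in \lemref{lem: pars}.
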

	
	\begin{theorem}\cite[Theorem 6.3]{Zha22}\label{thm: fullcom}
		The integral homology of the  Milnor fibre $F_Q=Q^{-1}(1)$ is isomorphic to the homology of the following chain complex
		\be\label{eq:ccf}
		\begin{tikzcd}
			0 \arrow[r]&  \mathbb{Z}W\otimes d(\mathcal{A}_n) \arrow[r,"\partial_{n-1}"]  & \cdots \arrow[r] & \mathbb{Z}W\otimes  d(\mathcal{A}_2)  \arrow[r,"\partial_1"]& \mathbb{Z}W\otimes d(\mathcal{A}_1) \arrow[r] & 0,
		\end{tikzcd}
		\ee
		where the boundary maps  are given by 
		\begin{equation*}
			\partial_{k-1}(w\otimes d(a_{t_1}a_{t_2}\dots a_{t_k}) )= \sum_{i=1}^{k} (-1)^{i-1} wt_i \otimes d(a_{t_1^{t_i}}\dots a_{t_{i-1}^{t_i}} \hat{a}_{t_i}\dots a_{t_k})
		\end{equation*}
		for any $w\in W$,  $(t_1,\dots ,t_k)\in \bigcup_{u\in \mathcal{L}_k} {\rm Rex}_{T}(u)$ with $2\leq k\leq n$. 
	\end{theorem}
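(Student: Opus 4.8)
My plan is to reduce to \cite[Theorem 6.3]{Zha22}, which is the present statement with the combinatorial algebra $\mathcal B$ in place of $\mathcal A$, and transport it along the isomorphism $\phi$ of \thmref{thm: isothm}. This step is routine: $\phi$ sends $a_t\mapsto\beta_t$, and the differential $d$ of \eqref{eq: d} on $\mathcal A$ and the differential $d$ on $\mathcal B$ (see \propref{prop: cycle}) are given by the same alternating-sum formula, so $\phi\circ d=d\circ\phi$; hence $\phi$ carries $d(\mathcal A_k)$ isomorphically onto $d(\mathcal B_k)$, and, being a $W$-equivariant algebra map, it intertwines the boundary maps $\partial_{k-1}$ of the two complexes. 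Thus $\id_{\mathbb Z W}\otimes\phi$ is an isomorphism of chain complexes and the theorem follows from the cited result. What remains is to explain why the cited result holds; I would argue as follows, taking \thmref{thm: homoM} (the complex for $M$) as the input.

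The geometric input is that $Q\colon M\to\mathbb C^*$ is a locally trivial fibre bundle with fibre $F$ — the Milnor fibration — so that $M$ is homotopy equivalent to the mapping torus of the geometric monodromy $h\colon F\to F$, $h(v)=\zeta v$, where $\zeta$ is a primitive root of unity of order $\deg Q$. Since $h$ is the restriction to $F$ of the scalar action of $\zeta\in\mathbb C^*$ on $M$ and $W$ commutes with this action, the mapping-torus structure yields a short exact sequence of $\mathbb Z W$-chain complexes
\[
0\longrightarrow C_\bullet(F)\longrightarrow C_\bullet(M)\longrightarrow C_{\bullet-1}(F)\longrightarrow 0
\]
whose connecting homomorphism is $1-h_\#$. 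The plan is to produce \emph{this} sequence algebraically out of the complex $(\mathbb Z W\otimes\mathcal A_\bullet,\partial)$ of \thmref{thm: homoM}, and then to read off $C_\bullet(F)$ as the complex in the statement.

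Concretely, I would write the boundary of \thmref{thm: homoM} as $\partial=\partial'-(\id_{\mathbb Z W}\otimes d)$ (up to a degree-dependent sign on the second term), where $\partial'(w\otimes a_{t_1}\cdots a_{t_k})=\sum_i(-1)^{i-1}wt_i\otimes a_{t_1^{t_i}}\cdots\hat a_{t_i}\cdots a_{t_k}$ is the $W$-twisting part, built from $\delta$ of \eqref{eq: del}. The first thing to check is that $\Psi_k\colon\mathbb Z W\otimes\mathcal A_k\to\mathbb Z W\otimes d(\mathcal A_k)$, $w\otimes a\mapsto w\otimes d(a)$, is a morphism of complexes from $(\mathbb Z W\otimes\mathcal A_\bullet,\partial)$ onto $(\mathbb Z W\otimes d(\mathcal A_\bullet),\partial_{\bullet-1})$ with the differential $\partial_{k-1}$ of the statement: indeed $(\id\otimes d)\circ\partial=(\id\otimes d)\circ\partial'$ since $(\id\otimes d)^2=0$, and applying $\id\otimes d$ to $\partial(w\otimes a_{t_1}\cdots a_{t_k})$ and using the definition of $\delta$ reproduces that exact formula; the identities $d^2=0$, $d\delta=\delta d$ (\propref{prop: delta}(3)), and the Leibniz rules (\propref{prop: propB}(3), \propref{prop: delta}(1)) handle the remaining verifications. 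The second observation is that $(\mathcal A_\bullet,d)$ is acyclic (\propref{prop: propB}(2), valid for $\mathcal A$), so $\Ker(\id\otimes d)=\im(\id\otimes d)$; therefore, as subcomplexes of $(\mathbb Z W\otimes\mathcal A_\bullet,\partial)$ with the restricted differential, $\Ker\Psi$ and $\im\Psi$ coincide, and each is, up to a shift in homological degree, a copy of $(\mathbb Z W\otimes d(\mathcal A_\bullet),\partial_{\bullet-1})$. This produces a short exact sequence of complexes
\[
0\longrightarrow\bigl(\mathbb Z W\otimes d(\mathcal A_{\bullet+1})\bigr)\longrightarrow\bigl(\mathbb Z W\otimes\mathcal A_\bullet\bigr)\longrightarrow\bigl(\mathbb Z W\otimes d(\mathcal A_\bullet)\bigr)\longrightarrow 0,
\]
in which the two outer complexes are the complex of the statement (one of them shifted by one degree). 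I would then identify this sequence with the mapping-torus sequence above; since the middle term computes $H_\bullet(M)$ by \thmref{thm: homoM}, that comparison would force the outer terms to compute $H_\bullet(F)$, which is the assertion.

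The hard part is the last comparison. Knowing only that the middle terms of the two short exact sequences are quasi-isomorphic does not by itself force the outer terms to agree; one must match the sequences as extensions — equivalently, identify the connecting chain map produced by $\Psi$ with $1-h_\#$. This amounts to understanding how the $W$-twisting in $\partial'$ encodes the geometric monodromy, which is exactly where the Kreweras complement $K$ and the induced automorphism $\kappa$ (together with the commuting square of \propref{prop: delta}(2)) enter. The alternative, and the route actually taken in \cite{Zha22}, is to build a $W$-equivariant cellular model of $F$ directly, as an appropriate cover of a dual-braid $K(\pi,1)$ for $M/W$ (cf. \cite{Bes03}); in that approach the acyclicity of $(\mathcal A,d)$ reappears as the step that collapses the dual-braid cells down to the smaller free modules $\mathbb Z W\otimes d(\mathcal A_k)$, and the formula for $\partial_{k-1}$ drops out of the cellular boundary. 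Carrying out that construction $W$-equivariantly is where I would expect the real work to lie.
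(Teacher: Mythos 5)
Your first paragraph is exactly what the paper does: the theorem is stated as a citation to \cite[Theorem~6.3]{Zha22}, and the only new content in this paper is the substitution of $\mathcal A$ for $\mathcal B$ via the isomorphism $\phi$ of Theorem~\ref{thm: isothm} (which commutes with $d$ because both differentials are the same alternating-sum truncation, so $\id_{\mathbb Z W}\otimes\phi$ is a chain isomorphism). The remainder of your proposal --- the mapping-torus short-exact-sequence reconstruction of the underlying result --- is not part of the paper's argument (the paper simply cites \cite{Zha22} and does not reprove it), and you correctly flag that your sketch has a genuine gap at the ``last comparison'' step and that \cite{Zha22} in fact proceeds via a $W$-equivariant dual-braid cellular model of $F$ rather than via the extension argument you outline.
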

	
	\subsubsection{$W$-action on these complexes}\label{sub:wact}
	Note that $W$ acts on the above chain complexes as follows. 
	For $x\in W$, $x. w\ot a_{t_1}a_{t_2}\dots a_{t_k}=(xw)\ot a_{t_1}a_{t_2}\dots a_{t_k}$ in the case of \eqref{eq:ccm}, while in the case of \eqref{eq:ccf},
	$x. w\ot d(a_{t_1}a_{t_2}\dots a_{t_k})=(xw)\ot d(a_{t_1}a_{t_2}\dots a_{t_k})$. It is evident that this $W$-action respects the boundary homomorphisms.
	Thus these chain complexes are both left $W$-modules.
	
	Now it is well known that $W$ acts freely on $M$ and $F$. The quotient spaces $M/W$ and $F/W$ are both $K(\pi,1)$-spaces. In particular, the fundamental group $\pi_1(M/W)=A(W)$, the Artin group of $W$. Therefore, we have $H_k(A(W); \mathbb{Z})= H_{k}(M/W; \mathbb{Z})$, and this may be computed using $\mathcal{A}$ as follows. 
	
	\begin{theorem}\cite[Theorem 7.5]{Zha22}\label{thm: homoMquo}
		The integral homology of  $M/W$ or the Artin group $A(W)$ is isomorphic to the homology of the following chain  complex:
		\[ \begin{tikzcd}
			0 \arrow[r]&    \mathcal{A}_n \arrow[r,"\partial_{n}"]  & \mathcal{A}_{n-1}\arrow[r]& \cdots \arrow[r] &    \mathcal{A}_1  \arrow[r,"\partial_1"]&   \mathcal{A}_0 \arrow[r] & 0,
		\end{tikzcd} \]
		where the boundary maps are given by $\partial_k= \delta_k +(-1)^kd_k$, i.e., 
		\begin{equation}\label{eq: HomArtin}
			\partial_k(a_{t_1}a_{t_2}\dots a_{t_k} )= \sum_{i=1}^k(-1)^{i-1} a_{t_1^{t_i}}\dots a_{t_{i-1}^{t_i}} \hat{a}_{t_i}\dots a_{t_k}
			- \sum_{i=1}^k (-1)^{i-1}  a_{t_1}\dots \hat{a}_{t_i}\dots a_{t_k}
		\end{equation}
		for $(t_1, t_2, \dots, t_k)\in \bigcup_{u\in \mathcal{L}_k} {\rm Rex}_T(u)$.
	\end{theorem}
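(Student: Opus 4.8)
The plan is to deduce this from the $W$-equivariant complex of Theorem~\ref{thm: homoM} (equation \eqref{eq:ccm}) by taking coinvariants, i.e., by applying the functor $\Z\otimes_{\Z W}(-)$. Since $W$ acts freely on $M$, the quotient map $M\to M/W$ is a regular covering with deck group $W$, and the complex \eqref{eq:ccm} computing $H_*(M;\Z)$ is in fact a complex of free left $\Z W$-modules (each term is $\Z W\otimes\mathcal{A}_k$ with $W$ permuting the $\Z W$-factor by left multiplication). First I would invoke the standard fact from covering space theory (or group homology of an aspherical space): for a free $W$-CW-complex with cellular chain complex $C_*$ of free $\Z W$-modules computing $H_*(M;\Z)$, the complex $\Z\otimes_{\Z W}C_*$ computes $H_*(M/W;\Z)$. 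Combined with $M/W$ being a $K(A(W),1)$, this gives $H_*(M/W;\Z)\cong H_*(A(W);\Z)$ automatically.

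The computational core is then just to identify $\Z\otimes_{\Z W}\bigl(\Z W\otimes\mathcal{A}_k\bigr)$ with $\mathcal{A}_k$ and to track what the boundary map $\partial_k$ of \eqref{eq:ccm} becomes under this identification. The isomorphism $\Z\otimes_{\Z W}(\Z W\otimes\mathcal{A}_k)\cong\mathcal{A}_k$ sends $1\otimes(w\otimes a_{\mathbf t})$ to $a_{\mathbf t}$ (independently of $w$, since any $w$ can be absorbed into the $\Z W$-factor). Applying $\Z\otimes_{\Z W}(-)$ to the formula for $\partial_k$ in Theorem~\ref{thm: homoM}, every occurrence of $wt_i$ and of $w$ in the left tensor factor is simply sent to the basepoint, so
\[
\partial_k(a_{t_1}a_{t_2}\dots a_{t_k})=\sum_{i=1}^k(-1)^{i-1}a_{t_1^{t_i}}\dots a_{t_{i-1}^{t_i}}\hat a_{t_i}\dots a_{t_k}-\sum_{i=1}^k(-1)^{i-1}a_{t_1}\dots\hat a_{t_i}\dots a_{t_k},
\]
which is exactly \eqref{eq: HomArtin}; in the notation of \eqref{eq: d} and \eqref{eq: del} this reads $\partial_k=\delta_k+(-1)^kd_k$ after matching signs (the second sum is $\sum(-1)^{i-1}a_{t_1}\dots\hat a_{t_i}\dots a_{t_k}=(-1)^{k}\sum(-1)^{k-i}(\cdots)=(-1)^k d_k$). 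One must also check well-definedness: the expressions $a_{\mathbf t}$ satisfy the quadratic relations of $\mathcal{A}$, and by Remark~\ref{rmk: welldef} both $d$ and $\delta$ respect these relations, so $\partial_k$ descends to $\mathcal{A}_k$; this was already noted to hold for the $W$-equivariant version, and coinvariants preserve it.

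The main obstacle, such as it is, is purely bookkeeping: verifying that passing to coinvariants of \eqref{eq:ccm} is legitimately computing $H_*(M/W;\Z)$ rather than some derived object — i.e., that no higher $\mathrm{Tor}$ terms intrude. This is guaranteed precisely because the terms $\Z W\otimes\mathcal{A}_k$ are \emph{free} $\Z W$-modules (equivalently, because the $W$-action on $M$ is free), so $\Z\otimes_{\Z W}(-)$ is exact on this complex and there is no hyperhomology spectral sequence to worry about. Alternatively, one can avoid this entirely and give a direct proof mirroring \cite[Theorem 7.5]{Zha22}: replace $\mathcal B$ by $\mathcal A$ throughout using the isomorphism $\phi$ of Theorem~\ref{thm: isothm}, which identifies the two boundary maps since $\phi$ is a graded algebra isomorphism carrying $a_{\mathbf t}$ to $\beta_{\mathbf t}$ and hence intertwines the $d$'s (and the conjugation action defining $\delta$) on the nose. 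Either route is routine; the substantive content is entirely contained in the already-established results \cite[Theorems 7.2, 7.5]{Zha22} and Theorem~\ref{thm: isothm}.
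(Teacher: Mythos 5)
The paper does not actually prove this theorem: it is imported wholesale from \cite[Theorem 7.5]{Zha22}, with the only in-paper content being the substitution of $\CA$ for $\CB$ via Theorem~\ref{thm: isothm}. The only independent derivation the paper offers is in Section~\ref{applmult}, and it is weaker: there $H_k(M/W;\C)\cong H_k(M;\C)^W\cong H_k(\CC(1_W))$ is obtained from the transfer theorem and Theorem~\ref{thm: mult}, which recovers the statement only over $\C$ (the paper explicitly notes the integral theorem is stronger). Your route --- coinvariants $\Z\otimes_{\Z W}(-)$ of the equivariant complex \eqref{eq:ccm} --- is therefore genuinely different from anything argued in this paper, and it is the natural way to get the integral statement; it is essentially the strategy of \cite{Zha22} itself.

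However, as written there is a real gap, and your ``main obstacle'' paragraph addresses the wrong issue. Theorem~\ref{thm: homoM}, as stated, asserts only an abstract isomorphism $H_*(M;\Z)\cong H_*(\CC)$ of graded abelian groups. To conclude that $\Z\otimes_{\Z W}\CC$ computes $H_*(M/W;\Z)$ you need much more: that $\CC$ is $\Z W$-chain homotopy equivalent to the equivariant (cellular or singular) chain complex of $M$, i.e.\ that it is, up to equivariant homotopy, the chain complex of a free $W$-CW model of $M$. Freeness of the chain groups $\Z W\otimes\CA_k$ guarantees that a $\Z W$-quasi-isomorphism to $C_*(M)$ would automatically be a chain homotopy equivalence and hence descend to coinvariants; it does \emph{not} manufacture such a quasi-isomorphism from a bare isomorphism of homology. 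Two bounded complexes of free $\Z W$-modules with isomorphic homology need not have isomorphic coinvariant homology. So the step ``\eqref{eq:ccm} computes $H_*(M;\Z)$ and has free terms, hence its coinvariants compute $H_*(M/W;\Z)$'' is invalid as a formal deduction from Theorem~\ref{thm: homoM}; to close it you must invoke the stronger, equivariant form of that result established in \cite{Zha22} (the complex there arises as the cellular chain complex of a $W$-equivariant CW model). With that input your argument is correct, but then you are no longer deducing the theorem from the statements in this paper alone.

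Two small points. The well-definedness check you delegate to Remark~\ref{rmk: welldef} is fine and is the same verification the paper relies on. In your sign bookkeeping, $\sum_i(-1)^{i-1}a_{t_1}\dots\hat a_{t_i}\dots a_{t_k}$ equals $(-1)^{k+1}d_k$, not $(-1)^kd_k$; the leading minus sign then yields $+(-1)^kd_k$, so your displayed formula and the identity $\partial_k=\delta_k+(-1)^kd_k$ are both correct, but the intermediate parenthetical is off by a sign.
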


	\begin{theorem}\cite[Theorem 6.7]{Zha22} \label{thm: disfibre}  
		The integral homology of $F/W$ is isomorphic to the homology of the following chain complex :
		\begin{equation*}
			\xymatrix{
				0  \ar[r] & d(\mathcal{A}_n)  \ar[r]^-{\partial _{n-1}}  &  d(\mathcal{A}_{n-1}) \ar[r]& \cdots  \ar[r] &  d(\mathcal{A}_2) \ar[r]^-{\partial _1} & d(\mathcal{A}_1) \ar[r]&  0,
			}
		\end{equation*}
		where the boundary maps are given by $\partial_k=\delta_k$, i.e.,
		\begin{equation}\label{eq: bounddis}
			\partial _{k-1}(d(a_{t_1}a_{t_2}\dots a_{t_k}))= \sum_{i=1}^{k} (-1)^{i-1} d(a_{t_1^{t_i}}\dots a_{t_{i-1}^{t_i}} \hat{a}_{t_i}\dots a_{t_k})
		\end{equation}
		for any $(t_1,\dots ,t_k)\in \bigcup_{w\in \mathcal{L}_k} {\rm Rex}_{T}(w)$ and $2\leq k\leq n$. In particular, $\partial_1=0$.
	\end{theorem}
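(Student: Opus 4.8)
The plan is to obtain this complex from \thmref{thm: fullcom} by passing to $W$-coinvariants, in exact parallel with the way \thmref{thm: homoMquo} is obtained from \thmref{thm: homoM}. The geometric input is that $W$ acts \emph{freely} on $F$, so that $F\to F/W$ is a regular covering with deck group $W$; by the standard theory of covering spaces there is then a natural isomorphism of chain complexes $C^{\mathrm{sing}}_\bullet(F/W)\cong\mathbb{Z}\otimes_{\mathbb{Z}W}C^{\mathrm{sing}}_\bullet(F)$, since every singular simplex of $F/W$ lifts to $F$ (as $\Delta^m$ is simply connected) and its lifts form a single $W$-orbit. Consequently, for any bounded-below complex $C_\bullet$ of \emph{free} $\mathbb{Z}W$-modules that is $W$-equivariantly quasi-isomorphic to $C^{\mathrm{sing}}_\bullet(F)$, the complex of coinvariants $\mathbb{Z}\otimes_{\mathbb{Z}W}C_\bullet$ computes $H_*(F/W;\mathbb{Z})$: such a quasi-isomorphism is automatically a $\mathbb{Z}W$-chain homotopy equivalence, hence survives the additive functor $\mathbb{Z}\otimes_{\mathbb{Z}W}(-)$.

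The complex of \thmref{thm: fullcom} is precisely such a $C_\bullet$: its term $\mathbb{Z}W\otimes d(\mathcal{A}_k)$ is free over $\mathbb{Z}W$, with $W$ acting on the left tensor factor only (see \S\ref{sub:wact}), and it is constructed $W$-equivariantly in \cite{Zha22}. Applying $\mathbb{Z}\otimes_{\mathbb{Z}W}(-)$ together with the canonical identification $\mathbb{Z}\otimes_{\mathbb{Z}W}(\mathbb{Z}W\otimes d(\mathcal{A}_k))\xrightarrow{\ \sim\ }d(\mathcal{A}_k)$, $1\otimes(w\otimes x)\mapsto x$, the boundary map
\[
\partial_{k-1}\bigl(w\otimes d(a_{t_1}\cdots a_{t_k})\bigr)=\sum_{i=1}^k(-1)^{i-1}\,wt_i\otimes d\bigl(a_{t_1^{t_i}}\cdots a_{t_{i-1}^{t_i}}\hat a_{t_i}\cdots a_{t_k}\bigr)
\]
of \thmref{thm: fullcom} descends --- since in the coinvariants $g\otimes y$ is identified with $y$ for every $g\in W$ --- to the map
\[
d(a_{t_1}\cdots a_{t_k})\ \longmapsto\ \sum_{i=1}^k(-1)^{i-1}\,d\bigl(a_{t_1^{t_i}}\cdots\hat a_{t_i}\cdots a_{t_k}\bigr).
\]
By \eqnref{eq: del} the right-hand side equals $d\bigl(\delta(a_{t_1}\cdots a_{t_k})\bigr)=\delta\bigl(d(a_{t_1}\cdots a_{t_k})\bigr)$, the last equality by \propref{prop: delta}(3); so the induced differential is $\delta$ restricted to $d(\mathcal{A}_\bullet)$, and it lands in $d(\mathcal{A}_{k-1})$ since $\delta(d(\mathcal{A}_k))=d(\delta(\mathcal{A}_k))\subseteq d(\mathcal{A}_{k-1})$. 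This is the asserted complex with $\partial_k=\delta_k$. For the final sentence, $d(a_t)=1$ for every $t\in T$, so $\partial_1\bigl(d(a_{t_1}a_{t_2})\bigr)=d(a_{t_2})-d(a_{t_1^{t_2}})=1-1=0$.

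The one genuine obstacle is the first step. As quoted, \thmref{thm: fullcom} asserts only an additive isomorphism of homology groups, whereas the argument above needs the underlying complex of free $\mathbb{Z}W$-modules to be a $W$-equivariant model for $C^{\mathrm{sing}}_\bullet(F)$, that is, to be equivariantly quasi-isomorphic to the singular chains. This refinement is not formal, but it is what the construction of \cite{Zha22} yields, the complex there being built equivariantly (see \S\ref{sub:wact}). In practice one either invokes that equivariant form of \thmref{thm: fullcom}, or re-runs the construction of \cite{Zha22} after dividing by $W$; everything downstream is bookkeeping, formally identical to the passage from \thmref{thm: homoM} to \thmref{thm: homoMquo}.
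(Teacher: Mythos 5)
The paper does not actually prove this theorem: it imports it wholesale as \cite[Theorem 6.7]{Zha22}. What the paper does offer, in \S\ref{applmult}, is an independent derivation over $\C$: take $U=1_W$ in Theorem \ref{thm: mult}, use the transfer isomorphism $H_k(F;\C)^W\cong H_k(F/W;\C)$, and identify $\CK(1_W)$ with the complexification of the complex in the statement. That route cannot see torsion, and the paper says so explicitly in the analogous remark after recovering Theorem \ref{thm: homoMquo}. Your proposal takes a genuinely different route --- passing to $W$-coinvariants of the free $\Z W$-complex of Theorem \ref{thm: fullcom} --- which targets the integral statement directly, and is almost certainly what \cite{Zha22} does. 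Your bookkeeping is correct throughout: $\Z\otimes_{\Z W}(\Z W\otimes d(\CA_k))\cong d(\CA_k)$; the induced differential is $\sum_{i}(-1)^{i-1}d\bigl(a_{t_1^{t_i}}\cdots\hat a_{t_i}\cdots a_{t_k}\bigr)=d\delta=\delta d$, which lands in $d(\CA_{k-1})$; and $\partial_1=0$ because $d(a_t)=1$ for every $t$.

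The gap you flag at the end is real and is the crux. Theorem \ref{thm: fullcom} as stated asserts only that the homology groups of that complex are isomorphic to $H_*(F;\Z)$ as abelian groups. Your argument needs the free $\Z W$-complex of Theorem \ref{thm: fullcom} to be $\Z W$-chain homotopy equivalent to $C^{\mathrm{sing}}_\bullet(F)$. That would follow from a $W$-equivariant quasi-isomorphism, since both are bounded-below complexes of free $\Z W$-modules (the singular chains are $\Z W$-free because $W$ acts freely on $F$ and hence freely on singular simplices); but the equivariant quasi-isomorphism is not part of the \emph{statement} of Theorem \ref{thm: fullcom}, only of its construction in \cite{Zha22}. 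Since the theorem you are trying to prove is itself a citation to \cite{Zha22}, there is no way to give a self-contained proof from the material stated in this paper short of re-running that construction. You identify the dependence honestly, which is the right move, but it leaves the argument incomplete as a derivation from what is stated here; it is really a reduction of \cite[Thm.\ 6.7]{Zha22} to the equivariant form of \cite[Thm.\ 6.3]{Zha22}.
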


	\subsection{A new pair of complexes}
	Let us start with two new complexes defined over $\mathbb{C}$.
	The first, denoted $\CC$, is
	\be\label{eq:cxc}
	\CC:= 0\lr \C W\ot\CA_n\overset{\partial_{n}}{\lr}\dots\lr \C W\ot\CA_1\overset{\partial_1}{\lr}\C W\ot \CA_0\lr 0,
	\ee
	with the boundary homomorphisms $\partial_i$ defined as in \thmref{thm: homoM}.  The second, denoted $\CK$, is
	\be\label{eq:cxm}
	\CK:=0\lr \C W\ot d(\CA_n)\overset{\partial_{{n-1}}}{\lr}\dots \lr \C W\ot d(\CA_2)\overset{\partial_1}{\lr}\C W\ot d(\CA_1)\lr 0,
	\ee
	where the boundary homomorphisms are as defined in \thmref{thm: fullcom}. Then by \thmref{thm: homoM} and  \thmref{thm: fullcom}, the homology of the complex $\CC$ (resp. $\CK$) is the homology of the corresponding hyperplane complement $M=M_W$ (resp. Milnor fibre $F$) with complex coefficients.
	
	\subsubsection{Left, right and bi-$W$-modules} In the development below, we shall need to distinguish between left and 
	right $\C W$-modules. Let $\Mod_R(\C W)$ (resp. $\Mod_L(\C W)$) be the category of finite dimensional right (resp. left) $\C W$-modules.
	We shall use the categorical isomorphisms $\lambda_{RL}:U\mapsto U_L$ from $\Mod_R(\C W)\lr \Mod_L(\C W)$, and $\lambda_{LR}: X\mapsto X_R$ from 
	$\Mod_L(\C W)\lr \Mod_R(\C W)$ ($U\in\Mod_R(\C W), X\in\Mod_L(\C W)$) where, for $u\in U_L$ and $w\in W$,  $w.u:=u.w\inv$, and for $x\in X_R$,
	$x.w:=w\inv.x$. 
	
	In addition to the above categories, we shall need the category $\Mod_{LR}(\C W)$ of finite dimensional $\C W$-bimodules. If 
	$Y\in\Mod_{LR}(\C W)$, then for $w_1,w_2\in W$ and $y\in Y$, we have $(w_1y)w_2=w_1(yw_2)$. Evidently if $U\in\Mod_R(\C W)$, $X\in\Mod_L(\C W)$,
	then $X\ot_\C U$ is naturally a $W$-bimodule. Using this, we have the following formulation of a standard decomposition of a finite group algebra.
	
	\begin{lemma}\label{lem:cwdec}
		Maintaining the above notation, we have the following isomorphism in the category of $W$-bimodules.
		\[
		\C W\cong \oplus_{U\in Irr(\Mod_R(\C W))}(U_L^*\ot_\C U).
		\]
		Here the sum is over the simple modules $U\in\Mod_R(\C W)$ and $U_L^*$ denotes the contragredient of $U_L$ (defined above).
	\end{lemma}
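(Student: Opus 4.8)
The plan is to reduce this to the classical Wedderburn/Artin-Wedderburn decomposition of the semisimple algebra $\C W$, being careful to track left versus right module structures. First I would recall that, as a $\C W$-bimodule, $\C W$ decomposes (by semisimplicity) as $\C W \cong \bigoplus_i B_i$, where the $B_i$ are the two-sided ideals corresponding to the distinct irreducible characters; each $B_i$ is isomorphic as an algebra to a matrix algebra $\mathrm{Mat}_{d_i}(\C)$, and as a bimodule $B_i \cong V_i^* \otimes_\C V_i$ where $V_i$ is the corresponding simple left module, with $\C W$ acting on the left of the $V_i$ factor and, via $V_i^*$, on the right. The only real content is a bookkeeping step: translating ``simple left module $V_i$, acting by left multiplication on one factor and by the dual on the other'' into the stated formulation indexed by simple \emph{right} modules $U$, with the first factor being the contragredient $U_L^*$ of the left module $U_L := \lambda_{RL}(U)$ associated to $U$.

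The key steps, in order: (1) invoke semisimplicity of $\C W$ and write $\C W \cong \bigoplus_i (\mathrm{end space of }V_i)$ as algebras, hence $\cong \bigoplus_i \End_\C(V_i)$ as bimodules where $V_i$ ranges over a complete set of simple left $\C W$-modules; (2) identify $\End_\C(V_i) \cong V_i \otimes_\C V_i^*$ as a $\C W$-bimodule, with left action on the $V_i$ factor and right action on the $V_i^*$ factor; (3) use the categorical isomorphism $\lambda_{LR}: \Mod_L(\C W) \to \Mod_R(\C W)$ to reindex the sum over simple \emph{right} modules $U$, so that $V_i$ corresponds to $U_L$ for a unique simple right module $U$ (with $(U_L)_R = U$); then the bimodule $\End_\C(V_i)$ becomes $U_L \otimes_\C U_L^*$; (4) finally, swap the two tensor factors and rename $U_L^* \otimes_\C U_L \cong U_L^* \otimes_\C U$ — the identification $U_L \cong U$ as vector spaces, with the right action on the underlying space of $U_L$ being exactly the original right action of $U$ up to the inversion built into $\lambda_{RL}$, is precisely what makes the right-hand side of the statement a genuine $\C W$-bimodule in the stated way. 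A short verification that the bimodule actions on both sides agree completes the argument.

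I expect the main obstacle to be purely notational rather than mathematical: the statement deliberately mixes the left and right module conventions set up just before the lemma (the functors $\lambda_{RL}$, $\lambda_{LR}$ with their $w \mapsto w^{-1}$ twist), and one must check that, after all the identifications, the left $W$-action on $\C W$ corresponds to the left action on the $U_L^*$ factor and the right $W$-action corresponds to the right action on the $U$ factor, with the $w^{-1}$ twists cancelling correctly. Concretely this amounts to checking the formula $w_1 \cdot (f \otimes u) \cdot w_2 = (w_1 \cdot f) \otimes (u \cdot w_2)$ against the regular bimodule structure on $\C W$ under the isomorphism sending $f \otimes u \in U_L^* \otimes_\C U$ to the element $x \mapsto f(x \cdot u^{\sharp})$ or similar; once the conventions are pinned down this is a one-line check, and the rest follows from standard semisimple algebra theory.
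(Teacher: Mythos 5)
Your plan---invoke the Wedderburn decomposition $\C W\cong\bigoplus_V\End_\C(V)$ over simple left modules, identify $\End_\C(V)\cong V\otimes_\C V^\vee$ as a $\C W$-bimodule, and then translate into the right-module indexing of the statement---is essentially the route the paper also takes, except that the paper short-circuits the bookkeeping by computing the character of $U_L^*$, checking it equals that of $U$, and then citing the decomposition as standard.

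There is, however, a concrete misstep in your steps (3)--(4). In $\End_\C(V)\cong V\otimes_\C V^\vee$ the left $\C W$-action lives on the $V$-factor (via $w.(v\otimes\xi)=(wv)\otimes\xi$), and the right action lives on $V^\vee$ (via $(v\otimes\xi).w=v\otimes(\xi\circ\rho(w))$, i.e.\ $V^\vee$ is the right module $(V^*)_R$). To land on the statement's $U_L^*\otimes U$, with left action on the \emph{first} factor and right action on the \emph{second}, you should index by $V=U_L^*$, not $V=U_L$: then the left factor is $U_L^*$ and the right factor is $\bigl((U_L^*)^*\bigr)_R=(U_L)_R=U$, on the nose, with no further identifications. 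Your choice $V=U_L$ instead produces $U_L\otimes(U_L^*)_R$, in which the left action sits on $U_L$ and the right action on $(U_L^*)_R$; the ``swap'' you invoke in step (4) merely reorders the tensor factors and does not transfer the actions, so by itself it does not give the stated bimodule. One can still salvage this version by observing $U_L\cong U_L^*$ as left modules and $(U_L^*)_R\cong U$ as right modules, but each of those isomorphisms uses that the characters of $W$ are real-valued (true for finite Coxeter groups)---an extra fact that the cleaner indexing, and the paper's character computation, avoid.
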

	\begin{proof}
		It is easily verified that for any right $\C W$-module $U$, the left module $U_L^*$ has the same character as $U$.
		This is because $U_L\simeq U$ as vector spaces, and $w\in W$ acts on $U_L$ as $w\inv$ does on $U$. Thus they have 
		complex conjugate traces. But the character of $w$ on $U_L^*$ is the conjugate of its character on $U_L$, and hence coincides with 
		its charcater on $U$.
		The stated decomposition is now standard.
		
		Note that each of the tensor factors in each summand is referred to as the 
		multiplicity module of the other factor in that summand.
	\end{proof}

	\begin{corollary}\label{cor.cw}
		In the above notation, we have $\Hom_{\C W}(U_L^*,\C W)\cong U$ as right $W$-module. Here the left side indicates
		homomorphisms of left $\C W$-modules.
	\end{corollary}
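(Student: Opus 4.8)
The statement is an essentially formal consequence of Lemma~\ref{lem:cwdec}, and the plan is to compute $\Hom_{\C W}(U_L^*,\C W)$ (homomorphisms of \emph{left} $\C W$-modules) one summand at a time in the bimodule decomposition $\C W\cong\bigoplus_V (V_L^*\ot_\C V)$, where $V$ runs over a set of representatives of the simple objects of $\Mod_R(\C W)$. First I would record the simplicity facts we need: since $U$ is simple in $\Mod_R(\C W)$, the module $U_L$ is simple in $\Mod_L(\C W)$, and as we work over $\C$ its contragredient $U_L^*$ is again a simple left $\C W$-module; the same applies to each $V_L^*$.

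Since $\Hom_{\C W}(U_L^*,-)$ is additive, $\Hom_{\C W}(U_L^*,\C W)\cong\bigoplus_V\Hom_{\C W}(U_L^*,V_L^*\ot_\C V)$. For a fixed $V$, the \emph{left} $\C W$-structure on $V_L^*\ot_\C V$ is carried entirely by the first tensor factor, so $\Hom_{\C W}(U_L^*,V_L^*\ot_\C V)\cong\Hom_{\C W}(U_L^*,V_L^*)\ot_\C V$ via $\psi\ot v\mapsto(x\mapsto\psi(x)\ot v)$. By Schur's lemma this space vanishes unless $V\cong U$, in which case $\Hom_{\C W}(U_L^*,U_L^*)=\C\,\id$ and the space is canonically $\C\,\id\ot_\C U\cong U$.

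It remains to check that this identification respects the right $\C W$-actions. On $\C W$ the relevant right action is right multiplication, which under the isomorphism of Lemma~\ref{lem:cwdec} acts on the \emph{second} tensor factor $V$ of $V_L^*\ot_\C V$; since it commutes with the left action it passes through the functor $\Hom_{\C W}(U_L^*,-)$ and, on the surviving summand, becomes exactly the given right action on $U$. Hence $\Hom_{\C W}(U_L^*,\C W)\cong U$ in $\Mod_R(\C W)$. There is no genuine difficulty here; the only point requiring a moment's care is verifying that the vector-space isomorphism of the previous paragraph introduces no twist on the right $W$-action, which is immediate from the definitions of the bimodule structures.
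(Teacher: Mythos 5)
Your proof is correct and follows the same route as the paper: decompose $\C W$ via Lemma~\ref{lem:cwdec}, use additivity of $\Hom_{\C W}(U_L^*,-)$, and apply Schur's lemma to isolate the summand $U_L^*\ot_\C U$. Your write-up is more explicit than the paper's (which compresses the Schur-plus-multiplicity step into a single sentence and does not spell out the right-module compatibility), but the underlying argument is identical.
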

	\begin{proof}
		By Lemma \ref{lem:cwdec}, 
		$\Hom_{\C W} (U_L^*, \C W)\cong \oplus_{U\in Irr(\Mod_R(\C W))}\Hom_{\C W}(U_L^*, U_L^*\ot_\C U)$
		But for fixed $U$, $U_L^*\ot_\C U$ is isomorphic to a sum of simple left modules $U_L^*$. The result follows.
	\end{proof}
	
\subsubsection{A new pair of complexes}	We next define ``relative'' versions of the complexes \eqref{eq:cxc} and \eqref{eq:cxm}.

	Let $U\in\Mod_R(\C W)$. Define complexes $\CC(U)$ and $\CK(U)$ as follows.
	\[
	\CC(U):= 0\lr U\ot\CA_n\overset{\partial_{n}}{\lr}\dots\lr U\ot\CA_1\overset{\partial_1}{\lr}U\ot \CA_0\lr 0,
	\]
	where the boundary maps are given (for $u\in U$) by
	\be\label{eq:eq:bdycu}
	\begin{aligned}
		\partial(u\ot a_{t_1}a_{t_2}\dots a_{t_k})=&\sum_{i=1}^k(-1)^{i-1}ut_i\ot a_{t_1^{t_i}}\dots a_{t_{i-1}^{t_i}}a_{t_{i+1}}\dots a_{t_k}\\
		&-\sum_{i=1}^k(-1)^{i-1}u\ot a_{t_1}\cdots \widehat{a_{t_i}}\dots a_{t_k}.\\
	\end{aligned}
	\ee
	\[
	\CK(U):=0\lr U\ot d(\CA_n)\overset{\partial_{n-1}}{\lr}\cdots \lr U\ot d(\CA_2)\overset{\partial_1}{\lr}U\ot d(\CA_1)\lr 0,
	\]
	where the boundary homomorphisms are defined for $u\in U$ by
	\be\label{eq:eq:bdyku}
	\partial(u\ot d(a_{t_1}a_{t_2}\dots a_{t_k}))=\sum_{i=1}^k(-1)^{i-1}ut_i\ot d(a_{t_1^{t_i}}\dots a_{t_{i-1}^{t_i}}a_{t_{i+1}}\dots a_{t_k}).
	\ee
	
	Now as observed in \ref{sub:wact}, the complexes $\CC$ and $\CK$ admit a (left) $W$-action. This is not generally the case for $\CC(U)$ or $\CK(U)$.  
	Hence for each integer $k$, $H_k(\CC)$ and $H_k(\CK)$
	are left $\C W$-modules. For left $\C W$-modules $U_1,U_2$ we write $\langle U_1,U_2\rangle_W$ for the multiplicity $\dim \Hom_{\C W}(U_1,U_2)$ as usual.
	
	Recall that for any simple module $U\in\Mod_R(\C W)$, we have a ``corresponding'' module $U_L^*\in\Mod_L(\C W)$, whose character coincides with that of $U$.
	
	\begin{theorem}\label{thm: mult} 
		For any right $W$-module $U$ and for each integer $k\geq 0$, we have
		\be\label{eq:conj1}
		\dim H_k(\CC(U))=\langle U_L^*, H_k(M)\rangle
		\ee
		and 
		\be\label{eq:conj2}
		\dim H_k(\CK(U))=\langle U_L^*, H_k(F)\rangle.
		\ee
	\end{theorem}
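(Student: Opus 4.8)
The plan is to exhibit $\CC$ and $\CK$ as direct sums, indexed by the simple right $\C W$-modules $V$, of the relative complexes $\CC(V)$, $\CK(V)$ tensored with multiplicity spaces, and then to read off the homology multiplicities using Theorems~\ref{thm: homoM} and \ref{thm: fullcom}. First I would reduce to simple $U$: both sides of \eqref{eq:conj1} and \eqref{eq:conj2} are additive in $U$, since $U\mapsto\CC(U)$ and $U\mapsto\CK(U)$ are additive functors to complexes of $\C$-vector spaces (as $U\mapsto U\ot\CA_k$ is additive and the boundary maps \eqref{eq:eq:bdycu}, \eqref{eq:eq:bdyku} are defined uniformly in $u\in U$), so $\dim H_k(\CC(U))$ and $\dim H_k(\CK(U))$ are additive, while $\langle U_L^*, H_k(M)\rangle$ and $\langle U_L^*, H_k(F)\rangle$ are additive because $U\mapsto U_L^*$ and $\langle-,-\rangle$ are; since $\C W$ is semisimple, the simple case suffices.

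Next I would note that, with $\C W$ viewed as the right regular module, $\CC=\CC(\C W)$ and $\CK=\CK(\C W)$ (compare the boundary maps \eqref{eq:eq:bdycu}, \eqref{eq:eq:bdyku} with those of Theorems~\ref{thm: homoM}, \ref{thm: fullcom}, taking $u=w$). By Lemma~\ref{lem:cwdec}, as a $W$-bimodule $\C W\cong\bigoplus_{V\in Irr(\Mod_R(\C W))}V_L^*\ot_\C V$, with the left action on the factors $V_L^*$ and the right action on the factors $V$; tensoring with $\CA_k$ over $\C$ gives $\C W\ot\CA_k\cong\bigoplus_V V_L^*\ot_\C(V\ot\CA_k)$. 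The key observation is that the $\C W$-factor enters the boundary maps of $\CC$ and $\CK$ only through the right-multiplication operations $w\mapsto wt_i$ (and $w\mapsto w$), each of which respects the above bimodule decomposition; consequently, in degree $k$ the boundary map becomes $\bigoplus_V\id_{V_L^*}\ot\partial_k^{(V)}$, where $\partial_k^{(V)}$ is precisely the boundary of $\CC(V)$, respectively $\CK(V)$. This yields isomorphisms of complexes $\CC\cong\bigoplus_V V_L^*\ot_\C\CC(V)$ and $\CK\cong\bigoplus_V V_L^*\ot_\C\CK(V)$, in which each $V_L^*$ is a degree-independent factor; moreover these isomorphisms intertwine the left $W$-action of \ref{sub:wact} (the left regular action on the $\C W$-factor) with the action on the factors $V_L^*$ alone.

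Finally, since homology commutes with finite direct sums and with $\ot_\C$ by a fixed vector space, I would obtain $H_k(M)\cong H_k(\CC)\cong\bigoplus_V V_L^*\ot_\C H_k(\CC(V))$ and $H_k(F)\cong H_k(\CK)\cong\bigoplus_V V_L^*\ot_\C H_k(\CK(V))$ as left $\C W$-modules (using Theorems~\ref{thm: homoM} and \ref{thm: fullcom}), with the $W$-action carried by the $V_L^*$. Since the $V_L^*$ form a complete set of pairwise non-isomorphic simple left $\C W$-modules as $V$ ranges over the simple right modules (Corollary~\ref{cor.cw} provides the companion identification $\Hom_{\C W}(V_L^*,\C W)\cong V$), comparing the multiplicity of $U_L^*$ on each side gives \eqref{eq:conj1} and \eqref{eq:conj2}. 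The part that needs care — the only genuinely non-formal step — is the middle paragraph: keeping the left and right $\C W$-actions on $\C W$ rigorously separate, verifying that the boundary maps are assembled from right-multiplications (so respect the bimodule splitting) yet are equivariant for the left action, and hence that the multiplicity spaces $V_L^*$ may legitimately be pulled outside the homology.
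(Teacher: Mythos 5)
Your proof is correct, and while it rests on the same two pillars as the paper's argument (the bimodule decomposition of Lemma~\ref{lem:cwdec} and semisimplicity of $\C W$), the execution is genuinely different. The paper fixes $U$ and applies the functor $\Hom_{\C W}(U_L^*,-)$ to the whole complex $\CC$: semisimplicity makes $\Hom_\C(U_L^*,-)$ and the fixed-point functor $(-)^W$ exact, so this $\Hom$ commutes with taking homology, and Corollary~\ref{cor.cw} then identifies the resulting complex $\Hom_{\C W}(U_L^*,\CC)$ degree by degree with $\CC(U)$. You instead decompose the complex itself: writing $\C W\cong\bigoplus_V V_L^*\ot_\C V$ as a bimodule and observing that the boundary maps touch the $\C W$-factor only through right multiplications, which commute with the left action and hence preserve each left-isotypic summand, acting there as $\id_{V_L^*}\ot\partial^{(V)}$. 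This yields $\CC\cong\bigoplus_V V_L^*\ot_\C\CC(V)$ and $\CK\cong\bigoplus_V V_L^*\ot_\C\CK(V)$ as complexes of left $W$-modules. Your route buys slightly more than the statement asks for, namely the explicit $W$-module decompositions $H_k(M)\cong\bigoplus_V V_L^*\ot_\C H_k(\CC(V))$ and $H_k(F)\cong\bigoplus_V V_L^*\ot_\C H_k(\CK(V))$, from which all multiplicities are read off at once. The price is the preliminary reduction to simple $U$ by additivity (which you handle correctly) and the one non-formal verification you rightly flag: that right multiplication by $t_i$ restricts on each summand to $\id_{V_L^*}\ot(\cdot\, t_i)$, which holds exactly because the decomposition of Lemma~\ref{lem:cwdec} is one of bimodules, so the left and right structures live on separate tensor factors. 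The paper's functorial argument sidesteps both points and applies to arbitrary $U$ without decomposing it first.
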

	\begin{proof}
		We prove  the first equation \eqref{eq:conj1}; the second equation \eqref{eq:conj2} has a similar proof. Since $H_k(M)=H_k(\mathcal{C})$, we have 
		\[
		\begin{aligned}
			\langle U_L^*, H_k(M) \rangle &=  \langle U, H_k(\mathcal{C})  \rangle={\rm dim}\,  {\rm Hom}_W(U_L^*, H_k(\mathcal{C})) 
		\end{aligned}    
		\]
		
		Now by semisimplicity, $U_L^*$ is a flat module, whence the functor $\Hom_\C(U_L^*,-)$ is exact. Moreover the $k^{th}$ homology functor commutes with 
		any exact functor. It follows that we have the following isomorphism of left $W$-modules.
		\be\label{eq:fl}
		H_k(\Hom_\C(U_L^*,\CC))\cong \Hom_\C(U_L^*,H_k(\CC)).
		\ee
		
		Further, the fixed point functor $(-)^W: M \mapsto M^W$ from the left $W$-module $M$ to the $\mathbb{C}$-module $M^W$ is representable. It is represented by the trivial 
		$W$-module $\mathbb{C}$, so that $(-)^W$ is naturally isomorphic to the functor ${\rm Hom}_W(\mathbb{C}, -)$. Since $\mathbb{C}W$ is a semisimple algebra, the trivial $W$-module $\mathbb{C}$ is projective. 
		Therefore, ${\rm Hom}_W(\mathbb{C}, -)\cong (-)^W$ is an exact functor. It follows that, upon taking $W$-fixed points in \eqref{eq:fl}, we obtain 
		\[
		\begin{aligned}
			H_k(\Hom_\C(U_L^*,\CC))^W\cong & H_k((\Hom_\C(U_L^*,\CC))^W)\\
			\cong & H_k(\Hom_{\C W}(U_L^*,\CC) )\cong \Hom_{\C W}(U_L^*,H_k(\CC)).
		\end{aligned}
		\]
		
		It now remains only to relate the complex $\Hom_{\C W}(U_L^*,\CC)$ to $\CC(U)$. for this, observe that 
		\[
		\Hom_{\C W}(U_L^*,\CC)_k\cong \Hom_{\C W}(U_L^*,\CC_k)\cong \Hom_{\C W}(U_L^*,\C W\ot_\C \CA_k).
		\]
		
		Moreover since $W$ acts trivially on $\CA$, we have 
		\[
		\Hom_{\C W}(U_L^*,\CC)_k\cong \Hom_{\C W}(U_L^*,\C W)\ot_\C \CA_k.
		\]
		But by Corollary \ref{cor.cw}, the right side of the above equation is equal to $U\ot_\C\CA_k=\CC(U)_k$, and the proof is complete.
	\end{proof}
	
	\subsection{Applications}\label{applmult} We give some special cases and applications of Theorem \ref{thm: mult}.
	
	First, consider the case $U=1_W$, the trivial $\C W$-module. Then $U_L^*=1_W$ and we have 
	\[
	\langle U_L^*,H_k(M)\rangle=\langle 1_W,H_k(M)\rangle=\dim H_k(M)^W.
	\]
	
	Moreover by the transfer theorem for homology with coefficients in $\C$ (cf. \cite[Theorem III.2.4]{GB72}), $H_k(M)^W\cong H_k(M/W)$.
	It follows from the first statement in Theorem \ref{thm: mult} that 
	\[	H_k(M/W)\cong H_k(\CC(1_W)).
	\]
	
	It is readily checked that the complex $\CC(1_W)$ coincides with the (complexification of the) complex in Theorem \ref{thm: homoMquo}, and in this way we recover that theorem for complex homology. Note that Theorem 3.7 is stronger, in that it computes the integral homology. 
	
	Next, using precisely the same arguments, we deduce that if $F$ is the Milnor fibre as defined above, then
	\[
	H_k(F/W)\cong H_k(\CK(1_W)).
	\]
	
	In this case, one again checks readily that $\CK(1_W)$ may be identified with the complexification of the complex in Theorem \ref{thm: disfibre}, whence in this case
	we recover Theorem \ref{thm: disfibre}, again with coefficients in $\C$,  by applying Theorem \ref{thm: mult} in the case of $\CK(U)$ with $U=1_W$.
	
	Consider next the case $U=\ve$, the alternating representation of $W$. Then $U_L^*\simeq \ve$ and it follows from
	\cite[(1.2)]{Leh96} that
	\[
	\langle H_k(M),\ve\rangle=0\text{ for all }k.
	\]
	
	It follows immediately from the first part of  Theorem \ref{thm: mult} that
	\[
	\text{The complex }\CC(\ve)\text{ is  acyclic}.
	\]

	Now we may think of $\CC(U)=\CC(\ve)$
	as having chain groups with bases $\{b\ot a_{\bf t}\mid {\bf t}\in \bigcup_{w\in \mathcal{L}} \mathcal{D}_w\}$, where $b$ is the basis element of $\ve$.
	Using the fact that $bt=-b$ for all reflections $t$, it follows from \eqref{eq:eq:bdycu}    that $\CC(\ve)$ may be identified with the chain complex
	\be\label{eq:cve}
	0\lr \CA_n\overset{D_n}{\lr}\dots\CA_1\overset{D_1}{\lr} \CA_0\lr 0,
	\ee
	where the boundary homomorphism $D_k:\CA_k\lr\CA_{k-1}$ is given by
	\[
	\begin{aligned}
		D_k(b\ot a_{t_1}\dots a_{t_k})=&\sum_{i=1}^k(-1)^{i-1}(-b)\ot a_{t_1^{t_i}}\dots a_{t_{i-1}^{t_i}}a_{t_{i+1}}\dots a_{t_{k}}\\
		-& \sum_{i=1}^k(-1)^{i-1}b\ot a_{t_1}\dots \widehat{a_{t_i}}\dots a_{t_k}.\\
	\end{aligned}
	\]
	
	It follows that we may identify $\CC(\ve)$ with the complex \eqref{eq:cve}, where 
	\[
	D_k=-{\delta_k}+\tilde{d_k},
	\]
	where $\delta_k$ is the restriction to $\CA_k$ of $\delta$, which is defined in \eqref{eq: del} and $\tilde{d_k}=(-1)^{k}d_k$, where $d=\oplus_{k=1}^nd_k$
	is defined in \eqref{eq: d}.
	
	It follows from Proposition \ref{prop: delta} (3) that $\tilde d \delta=-\delta\tilde d$, and hence that $D$ is a differential. The acyclicity of $(\CA,D)$ is not 
	evident from these arguments.

 Finally, observe that for any right $W$-module $U$ we have
	\begin{equation}\label{eq: homK}
		H_k(\mathcal{K}(U))=H_k(\mathcal{K}(U\otimes \epsilon)), \quad 0\leq k\leq n-1,
	\end{equation}
	 as the boundary maps \eqnref{eq:eq:bdyku} of  $\mathcal{K}(U)$ and $\mathcal{K}(U\otimes \epsilon)$ differ by $-1$.

	\section{Dual complexes} \label{sec: dualcomp}
	
	The principal purpose of this section is to obtain sharper results on the integral homology and cohomology of both $M$, the hyperplane complement,
	and $F$, the non-reduced Milnor fibre. With this in mind we begin by defining a $\Z$-bilinear form on $\CA$, which will later become a tool for moving
	between homology and cohomology.
	
	\subsection{A bilinear form on $\mathcal{A}$}
	We will make frequent use of the following  $\mathbb{Z}$-linear maps in later sections. For each $t\in T$, we define
	\[
	\begin{aligned}
		&d_t: \mathcal{A}_{k}\rightarrow \mathcal{A}_{k-1}, &\quad &a_{t_1}a_{t_2}\dots a_{t_k} \mapsto \sum_{i=1}^{k}(-1)^{k-i}\delta_{t,t_i^{t_{i+1}\dots t_{k}}} a_{t_1}\dots \hat{a}_{t_i}\dots a_{t_k},\\
		&\delta_t: \mathcal{A}_{k}\rightarrow \mathcal{A}_{k-1}, &\quad &a_{t_1}a_{t_2}\dots a_{t_k} \mapsto \sum_{i=1}^{k}(-1)^{i-1}\delta_{t,t_i} a_{t_1^{t_i}}\dots a_{t_{i-1}^{t_i}} \hat{a}_{t_i}\dots a_{t_k},
	\end{aligned}
	\] 
	where  $\delta_{t,t_i}= 1$ if $t=t_i$ and 0 otherwise. It is clear that in the notation of  \eqref{eq: d} and \eqref{eq: del},
	$d=\sum_{t\in T}d_t$ and $\delta=\sum_{t\in T} \delta_t$. Note that $\delta_{t}$ and $d_{t}$ also satisfy the Leibniz rule, i.e.
	\[
	\begin{aligned}
		d_t (a_{t_1}a_{t_2}\dots a_{t_k})&=-d_{t^{t_k}}(a_{t_1}a_{t_2}\dots a_{t_{k-1}})a_{t_k}+ \delta_{t,t_k} a_{t_1}a_{t_2}\dots a_{t_{k-1}},\\
		\delta_t (a_{t_1}a_{t_2}\dots a_{t_k})&=\delta_t(a_{t_1}a_{t_2}\dots a_{t_{k-1}})a_{t_k}+ (-1)^{k-1}\delta_{t,t_k}a_{t_1^{t_k}}a_{t_2^{t_k}}\dots a_{t_{k-1}^{t_k}}. 
	\end{aligned} 
	\]
	for any $(t_1,t_2,\dots, t_k)\in {\rm Rex}_{T}(w)$ with $w\in \mathcal{L}_k$. We call $d_{t}$ and $\delta_t$ \emph{skew derivations}.

	The linear maps $d_t$ and $\delta_t$ are well-defined, for the same reason as in \rmkref{rmk: welldef}.

	\begin{lemma}\label{lem: deld}
		For any $t,t'\in T$, we have $d_{t}\delta_{t'} =\delta_{t'}d_{t}$. 
	\end{lemma}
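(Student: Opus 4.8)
The plan is to verify the commutation $d_t\delta_{t'}=\delta_{t'}d_t$ by induction on the length $k=\ell_T(w)$, using the two Leibniz rules stated just before the lemma as the main tool. The base cases $k=0,1$ are immediate since both sides then act as $0$ or map into $\CA_0$ trivially, and the first interesting case $k=2$ can be checked directly from the explicit formulas for $d_t$ and $\delta_{t'}$ on $a_{t_1}a_{t_2}$. For the inductive step I would fix ${\bf t}=(t_1,\dots,t_k)\in{\rm Rex}_T(w)$, write $a_{\bf t}=a_{t_1}\dots a_{t_{k-1}}\cdot a_{t_k}$, and expand $d_t\delta_{t'}(a_{\bf t})$ and $\delta_{t'}d_t(a_{\bf t})$ by applying the Leibniz rule for whichever derivation is applied first, then the Leibniz rule for the second one on each resulting term.

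Concretely, set ${\bf t}'=(t_1,\dots,t_{k-1})$ and $u=t_1\cdots t_{k-1}$. Applying $\delta_{t'}$ first gives
\[
\delta_{t'}(a_{{\bf t}'}a_{t_k})=\delta_{t'}(a_{{\bf t}'})a_{t_k}+(-1)^{k-1}\delta_{t',t_k}\,a_{t_1^{t_k}}\cdots a_{t_{k-1}^{t_k}},
\]
and then applying $d_t$ to each summand, using $d_t(xy)=-d_{t^{t_k}}(x)y+\delta_{t,t_k}x$ on the first (with $x=\delta_{t'}(a_{{\bf t}'})$, $y=a_{t_k}$) and the length-$(k-1)$ inductive hypothesis inside. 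Applying $d_t$ first gives
\[
d_t(a_{{\bf t}'}a_{t_k})=-d_{t^{t_k}}(a_{{\bf t}'})a_{t_k}+\delta_{t,t_k}a_{{\bf t}'},
\]
and then applying $\delta_{t'}$ using its Leibniz rule. Expanding both orders produces a sum of four groups of terms: the ``purely internal'' terms $d_{t^{t_k}}\delta_{t'}(a_{{\bf t}'})a_{t_k}$ versus $\delta_{t'}d_{t^{t_k}}(a_{{\bf t}'})a_{t_k}$, which match by the inductive hypothesis applied with the reflection $t^{t_k}$ in place of $t$; and ``boundary'' terms coming from the $\delta_{t',t_k}$ and $\delta_{t,t_k}$ contributions, plus cross terms where one derivation hits the conjugated tail $a_{t_1^{t_k}}\cdots a_{t_{k-1}^{t_k}}$. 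I expect these remaining terms to cancel or coincide after carefully bookkeeping the signs $(-1)^{k-1}$ and the Kronecker deltas, together with the observation that $\delta_{t',t_k}$ forces $t'=t_k$ (so conjugation by $t_k$ acts as conjugation by $t'$) and similarly for the $d$-side.

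The main obstacle will be the sign- and index-bookkeeping in matching the cross terms: when $\delta_{t'}$ acts on the conjugated tail $a_{t_1^{t_k}}\cdots a_{t_{k-1}^{t_k}}$ one must recognize this as $\pm\,\kappa$- or conjugation-twisted version of $\delta_{t'}$ acting on $a_{{\bf t}'}$ and track how the remaining $d_t$ interacts with it; getting the relabelling of conjugates consistent (e.g.\ $(t_i^{t_k})^{t_j^{t_k}}=(t_i^{t_j})^{t_k}$) and the powers of $-1$ to line up is where errors are easy to make. A clean way to organize this is to note that both $d$ and $\delta$, hence $d_t$ and $\delta_{t'}$, are well-defined by \rmkref{rmk: welldef} and extend the already-established identity $d\delta=\delta d$ from \propref{prop: delta}(3); one can then argue that $\sum_{t}d_t\delta_{t'}$ and $\sum_t\delta_{t'}d_t$ are both the ``$\delta_{t'}$-graded piece'' of $d\delta=\delta d$, and extract the individual identity $d_t\delta_{t'}=\delta_{t'}d_t$ by further decomposing according to the reflection removed by $d_t$. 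I would present the induction as the primary argument and mention the decomposition of $d\delta=\delta d$ as the conceptual reason, filling in the routine verification of the $k=2$ case and the term-matching explicitly.
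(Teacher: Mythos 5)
Your proposal follows essentially the same route as the paper: induction on $k$, expanding $d_t\delta_{t'}(a_{t_1}\cdots a_{t_k})$ and $\delta_{t'}d_t(a_{t_1}\cdots a_{t_k})$ via the two Leibniz rules in the two possible orders, matching the internal terms $-d_{t^{t_k}}(\delta_{t'}(a_{{\bf t}'}))a_{t_k}$ and $-\delta_{t'}(d_{t^{t_k}}(a_{{\bf t}'}))a_{t_k}$ by the inductive hypothesis with $t^{t_k}$ in place of $t$, and observing that the remaining Kronecker-delta terms coincide on the nose once one uses the conjugation equivariance $(d_{t^{t_k}}x)^{t_k}=d_t(x^{t_k})$. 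The paper's proof is exactly this computation, so your plan is correct and the bookkeeping you defer works out as you expect.
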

	\begin{proof}
		We evaluate both sides on $a_{t_1}\dots a_{t_k}\in\CA$ and use induction on $k$. For any nonzero element $a_{t_1}\dots a_{t_k}\in \mathcal{A}$, we  have 
		\[
		\begin{aligned}
			d_{t} \delta_{t'}(a_{t_1}\dots a_{t_k}) =& d_t( \delta_{t'}(a_{t_1} \dots a_{t_{k-1}} )a_{t_k} +(-1)^{k-1} \delta_{t', t_k} a_{t_1^{t_k}}a_{t_2^{t_k}}\dots a_{t_{k-1}^{t_k}}  ) \\
			=& -d_{t^{t_k}} ( \delta_{t'}(a_{t_1}\dots a_{t_{k-1}} ) a_{t_k} + \delta_{t,t_{k}}  \delta_{t'}(a_{t_1}\dots a_{t_{k-1}} ) \\
			&+ (-1)^{k-1} \delta_{t', t_k} d_{t}(a_{t_1^{t_k}}a_{t_2^{t_k}}\dots a_{t_{k-1}^{t_k}}),
		\end{aligned}
		\]
		while on the other hand, 
		\[
		\begin{aligned}
			\delta_{t'}	d_{t} (a_{t_1}\dots a_{t_k}) =& \delta_{t'}(-d_{t^{t_k}}(a_{t_1}a_{t_2}\dots a_{t_{k-1}})a_{t_k}+ \delta_{t,t_k} a_{t_1}a_{t_2}\dots a_{t_{k-1}} )\\
			=& -\delta_{t'}( d_{t^{t_k}} (a_{t_1} \dots a_{t_{k-1}} ) )a_{t_k} +(-1)^{k-1} \delta_{t', t_k} d_{t}(a_{t_1^{t_k}}a_{t_2^{t_k}}\dots a_{t_{k-1}^{t_k}})\\
			&+\delta_{t,t_{k}}  \delta_{t'}(a_{t_1} \dots a_{t_{k-1}} ). 
		\end{aligned}
		\]
		The result is trivial  if $k=1$. For $k>1$, by the induction hypothesis and the equations above we have $ d_{t} \delta_{t'}(a_{t_1}\dots a_{t_k})=\delta_{t'}	d_{t} (a_{t_1}\dots a_{t_k})$. This proves that $d_{t}\delta_{t'} =\delta_{t'}d_{t}$.
	\end{proof}

	\begin{lemma}\label{lem: delrel}
		The skew derivations $\delta_t, t\in T$ satisfy the following relations:
		\[
		\begin{aligned}
			\delta_t^2=\delta_{t_2}\delta_{t_1}&=0, \quad \forall t\in T, t_1t_2\not\leq \gamma, \\
			\sum_{(t_1,t_2)\in {\rm Rex}_T(w)} \delta_{t_2}\delta_{t_1}&=0, \quad  \forall w\in \mathcal{L}_2. 
		\end{aligned}
		\] 
		Therefore, they describe an action of the opposite algebra $\mathcal{A}^{op}$ on $\mathcal{A}$. 
	\end{lemma}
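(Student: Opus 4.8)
The statement is equivalent to the assertion that $a_t\mapsto\delta_t$ extends to a homomorphism of algebras $\mathcal{A}^{\mathrm{op}}\to\mathrm{End}_{\mathbb Z}(\mathcal{A})$; since $\mathcal{A}^{\mathrm{op}}$ is presented by the generators $a_t$ subject to the relations of Definition~\ref{def: A} with every product reversed, this is exactly the content of the three displayed identities. The plan is to verify each of them by evaluating the relevant operator on an arbitrary nonzero monomial $a_{s_1}a_{s_2}\cdots a_{s_k}$ (these span $\mathcal{A}$ by Lemma~\ref{coro: nonzero}) and inducting on the degree $k$ (the quadratic operators being identically $0$ for $k\le 1$).

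Two ingredients drive the induction. The first is a ``left'' Leibniz rule for the skew derivations,
\[
\delta_t(a_u x)=\delta_{t,u}\,x-a_{u^t}\,\delta_t(x)\qquad(t,u\in T,\ x\in\mathcal{A}),
\]
obtained by the same computation that gives the Leibniz rule for $\delta_t$ stated above, but peeling off the first factor rather than the last. The second is the combinatorial fact that \emph{if $t_1t_2\not\le\gamma$ then no nonzero monomial of $\mathcal{A}$ involves both $a_{t_1}$ and $a_{t_2}$}: a reflection occurs at most once in a nonzero monomial, since a repeated root violates $T$-reducedness (Lemma~\ref{lem: Carter}); and if $a_{t_1}$ and $a_{t_2}$ both occurred in $a_{s_1}\cdots a_{s_k}$ with, say, $t_1$ to the left of $t_2$, then Lemma~\ref{lem: BW} applied to the $T$-reduced word $s_1\cdots s_k=w\le\gamma$ gives $(\vartheta(\rho),\rho')=0$ for the roots $\rho,\rho'$ of $t_1,t_2$, whence $t_1t_2\le\gamma$ by Lemma~\ref{lem: BW} again, a contradiction.

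For the inductive step, write $M=a_u x$ with $u=s_1$ and apply the left Leibniz rule twice: for $P=\delta_{t_2}\delta_{t_1}$ this yields
\[
P(M)=\delta_{t_1,u}\,\delta_{t_2}(x)-\delta_{t_2,u^{t_1}}\,\delta_{t_1}(x)+a_{u^{t_1t_2}}\,P(x).
\]
The cases $k\le2$ follow from the explicit degree-$\le2$ formula for $\delta_{t_2}\delta_{t_1}$ (a direct computation on monomials $a_{s_1}a_{s_2}$), whose terms either vanish or cancel in pairs. In the inductive step the last term above is killed by the induction hypothesis --- for $\Omega_w:=\sum_{(t_1,t_2)\in{\rm Rex}_T(w)}\delta_{t_2}\delta_{t_1}$ ($w\in\mathcal{L}_2$) one uses that $u^{t_1t_2}=u^{w}$ is independent of the factorisation, so these terms sum to $a_{u^w}\,\Omega_w(x)=0$. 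For the two remaining terms: when $P=\delta_t^2$ they cancel because $\delta_{t,u}=\delta_{t,u^t}$ and $u^{tt}=u$; when $P=\delta_{t_2}\delta_{t_1}$ with $t_1t_2\not\le\gamma$, a nonvanishing term forces $u=t_1$ or $u=t_1t_2t_1$, and the co-occurrence fact applied to $M$ (using in the second case that $(t_1t_2t_1)t_1=t_1t_2$ is $T$-reduced, so Lemma~\ref{lem: BW} still applies) makes the residual $\delta_{t_2}(x)$, respectively $\delta_{t_1}(x)$, vanish; when $P=\Omega_w$, the two sums $\sum\delta_{t_1,u}\delta_{t_2}(x)$ and $\sum\delta_{t_2,u^{t_1}}\delta_{t_1}(x)$ both collapse to $\delta_{uw}(x)$ (or to $0$ if $uw$ is not a reflection) and cancel. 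The closing sentence of the lemma is then immediate.

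The step that will demand the most care is the joint use of the Leibniz identity and the co-occurrence fact in the case $t_1t_2\not\le\gamma$: one must follow precisely which sub-monomials of $M$ sit under the residual skew derivations after the two Leibniz steps, and invoke Lemma~\ref{lem: BW} at exactly the right moment, in particular for the conjugated reflection $t_1t_2t_1$. A smaller but genuine point is to confirm that the left Leibniz rule is a choice-free identity on all of $\mathcal{A}$ --- routine, but not automatic, since ``conjugating every index of a monomial'' is not itself an endomorphism of $\mathcal{A}$.
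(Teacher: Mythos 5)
Your argument is correct and is essentially the mirror image of the paper's: the same induction on degree, the same use of a Leibniz identity applied twice followed by Lemma~\ref{lem: BW}, except that you strip off the \emph{first} factor $a_u$ where the paper's Leibniz rule for $\delta_t$ strips off the \emph{last} factor $a_{t_k}$. One imprecision worth flagging: your standalone ``co-occurrence fact'' is stated symmetrically (``if $t_1t_2\not\le\gamma$ then no nonzero monomial involves both $a_{t_1}$ and $a_{t_2}$''), but that assertion is false because $\le\gamma$ is sensitive to the order of the product — e.g.\ in $I_2(m)$ one has $t_1t_m=\gamma\le\gamma$ while $t_mt_1=\gamma^{-1}\not\le\gamma$, yet $a_{t_1}a_{t_m}\neq 0$. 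What Lemma~\ref{lem: BW} actually gives, and what you in fact use correctly, is the ordered statement: if $s$ occurs strictly to the left of $s'$ in a nonzero monomial then $ss'\le\gamma$. In both branches of your case analysis ($u=t_1$, and $u=t_1t_2t_1$ with the observation that $u\cdot t_1=t_1t_2$) you apply it with the correct ordering, so the proof stands; just restate the combinatorial lemma in its order-sensitive form.
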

	\begin{proof}  
		For any reflections $r_1,r_2\in T$, we have 
		\[
		\begin{aligned}
			\delta_{r_2}\delta_{r_1}(a_{t_1}a_{t_2}\dots a_{t_k}) &= \delta_{r_2}( \delta_{r_1}(a_{t_1}a_{t_2}\dots a_{t_{k-1}})a_{t_k}+ (-1)^{k-1} \delta_{r_1,t_k} a_{t_1^{t_k}}a_{t_2^{t_k}}\dots a_{t_{k-1}^{t_k}} ) \\
			&=  \delta_{r_2}( \delta_{r_1}(a_{t_1}a_{t_2}\dots a_{t_{k-1}})) a_{t_k} + (-1)^{k-2} \delta_{r_2,t_k} \delta_{r_1^{t_k}}(a_{t_1^{t_k}}a_{t_2^{t_k}}\dots a_{t_{k-1}^{t_k}} ) \\
			&+ (-1)^{k-1} \delta_{r_1,t_k} \delta_{r_2}( a_{t_1^{t_k}}a_{t_2^{t_k}}\dots a_{t_{k-1}^{t_k}} ),
		\end{aligned}
		\]
		where $(t_1, t_2, \dots, t_k)\in {\rm Rex}_T(w)$ for some $w\in \mathcal{L}$.
		We use induction on $k$ to prove the stated relations among $\delta_t,t\in T$ by evaluating the relevant expressions on $a_{t_1}\dots a_{t_k}$. The base case $k=1$ is obvious. For $k>1$, we prove each relation as follows.

		By the induction hypothesis it is easy to see that $\delta_t^2=0$ if $t=r_1=r_2$. 
		
		If $r_1r_2\not \leq \gamma$, then there are two cases. Case 1: neither of $r_1,r_2$ is $t_k$. By the induction hypothesis, $ \delta_{r_2}\delta_{r_1}(a_{t_1}a_{t_2}\dots a_{t_k})= \delta_{r_2}( \delta_{r_1}(a_{t_1}a_{t_2}\dots a_{t_{k-1}})) a_{t_k} =0$. Case 2: exactly one of $r_1,r_2$ equals $t_k$. If $r_1=t_k$, then we have
		\[
		\delta_{r_2}\delta_{r_1}(a_{t_1}a_{t_2}\dots a_{t_k})= \delta_{r_2}( \delta_{r_1}(a_{t_1}a_{t_2}\dots a_{t_{k-1}})) a_{t_k}+ (-1)^{k-1}  \delta_{r_2}( a_{t_1^{t_k}}a_{t_2^{t_k}}\dots a_{t_{k-1}^{t_k}} ).
		\]
		We claim that  $ \delta_{r_2}( a_{t_1^{t_k}}a_{t_2^{t_k}}\dots a_{t_{k-1}^{t_k}} )=0$. Otherwise, by the definition of $\delta_{r_2}$, we have $r_2= t_i^{t_k}$ for some $1\leq i\leq k-1$. However, this leads to $r_1r_2=t_kt_i^{t_k}= t_it_k$, which precedes $\gamma$ by \lemref{lem: BW} and hence violates our assumption that $r_1r_2\not \leq \gamma$. Therefore,  	$ \delta_{r_2}\delta_{r_1}(a_{t_1}a_{t_2}\dots a_{t_k})= \delta_{r_2}( \delta_{r_1}(a_{t_1}a_{t_2}\dots a_{t_{k-1}})) a_{t_k}=0$ by the induction hypothesis. If $r_2=t_k$, the proof is similar. 
		
		For the last relation, by the induction hypothesis this is equivalent to showing that 
		\[
		\sum_{(r_1,r_2)\in {\rm Rex}_T(w)}  \delta_{r_2,t_k} \delta_{r_1^{t_k}}-\delta_{r_1,t_k} \delta_{r_2}=0.
		\]  
		We may assume $w=s_1s_m=s_2s_1=\dots =s_ms_{m-1}$ has $m$ $T$-reduced factorisations. If none of  the $s_i$ is equal to $t_k$, then the above equation holds true. Otherwise, $t_k=s_i$ for some $i=1,\dots, m$. In that case we have 
		\[  \sum_{(r_1,r_2)\in {\rm Rex}_T(w)}  \delta_{r_2,t_k} \delta_{r_1^{t_k}}-\delta_{r_1,t_k} \delta_{r_2}= \sum_{j=1}^m (\delta_{s_{j-1},s_i} \delta_{s_j^{s_i}}- \delta_{s_j,s_i}\delta_{s_{j-1}})=\delta_{s_{i+1}^{s_i}}- \delta_{s_{i-1}}=0, \]
		where  $s_0:=s_m$ for notational convenience. This completes the proof.
	\end{proof}

	\begin{definition}
		Define the bilinear pairing
		\begin{equation*}\label{eq: pairing}
			\langle-,-\rangle: \mathcal{A}\times \mathcal{A} \longrightarrow \mathbb{Z}
		\end{equation*}
		by $\langle 1,1\rangle=1$ and 
		\begin{enumerate}
			\item $\langle \mathcal{A}_k,\mathcal{A}_{\ell}\rangle =0$ for any $0\leq k\neq \ell\leq n$;
			\item For any $x\in \mathcal{A}_k$ and $t_i\in T, i=1,\dots,k$, 
			\[\langle a_{t_1}a_{t_2}\dots a_{t_k}, x\rangle:= \delta_{t_k}\dots \delta_{t_1}(x). \]
		\end{enumerate}
	\end{definition}
	In view of \lemref{lem: delrel}, this bilinear form is well-defined. 
	
	\begin{lemma}\label{lem: adj}
		For any $t\in T$, and $x,y \in \mathcal{A}$ we have 
		\[
		\langle a_{t}x, y \rangle = \langle x, \delta_t(y) \rangle, \quad  \text{and} \quad 	\langle x a_{t}, y \rangle = \langle x, d_t(y) \rangle.
		\]
		Thus with respect to the bilinear form the skew-derivations $\delta_{t}$ and $d_{t}$ are right adjoint to left and right multiplication by $a_{t}$, respectively.
	\end{lemma}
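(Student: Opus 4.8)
The plan is to prove the first identity $\langle a_t x, y\rangle = \langle x, \delta_t(y)\rangle$ directly from the definition of the pairing, and then deduce the second identity by a symmetry argument using the Kreweras-type anti-isomorphism $\theta$ together with the intertwining properties of $\kappa$, $d$ and $\delta$. For the first identity, it suffices by bilinearity to take $x = a_{t_1}\cdots a_{t_k}$ a basis monomial (so that $t_1\cdots t_k \in \mathcal{L}$ is $T$-reduced) and $y\in\mathcal{A}_{k+1}$ (the pairing vanishes unless the degrees match). Then $a_t x = a_t a_{t_1}\cdots a_{t_k}$, and by definition
\[
\langle a_t x, y\rangle = \delta_{t_k}\cdots \delta_{t_1}\delta_t(y) = \delta_{t_k}\cdots\delta_{t_1}\big(\delta_t(y)\big) = \langle a_{t_1}\cdots a_{t_k}, \delta_t(y)\rangle = \langle x, \delta_t(y)\rangle,
\]
which is essentially immediate once one unwinds the definition; the only subtlety is that $a_t x$ may be zero in $\mathcal{A}$ while the right-hand side should then also vanish, but this is guaranteed by \lemref{lem: delrel}, which says precisely that the $\delta_t$ satisfy the defining relations of $\mathcal{A}^{op}$, so the composite operator $\delta_{t_k}\cdots\delta_{t_1}\delta_t$ is well-defined and respects those relations. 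In other words, the well-definedness of the pairing already encodes the compatibility we need.

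For the second identity $\langle x a_t, y\rangle = \langle x, d_t(y)\rangle$, I would argue as follows. Introduce the $\mathbb{Z}$-linear anti-automorphism $\theta$ of $\mathcal{A}$ given on monomials by $\theta(a_{t_1}\cdots a_{t_k}) = a_{t_k}\cdots a_{t_1}$ (this is well-defined since reversing a $T$-reduced factorisation of $w\in\mathcal{L}$ gives a $T$-reduced factorisation of $w^{-1}\in\mathcal{L}$, and it visibly preserves the quadratic relations). One checks that $\theta$ intertwines $d_t$ and $\delta_t$ up to sign—more precisely, using the two Leibniz rules displayed just before \lemref{lem: deld}, one shows $\theta \circ d_t = \delta_t\circ\theta$ on each graded piece (possibly after twisting $t$ by conjugation, which will wash out because we will eventually sum or because the relevant $t$ is fixed). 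Granting this, and noting that the pairing is $\theta$-symmetric in the sense $\langle \theta(a), \theta(b)\rangle = \langle a, b\rangle$ (which follows from the first identity applied iteratively, reducing to $\langle 1,1\rangle = 1$), we obtain
\[
\langle x a_t, y\rangle = \langle \theta(\theta(x) a_t \cdot \text{---}) , \text{---}\rangle \;=\; \langle \theta(y), \theta(x a_t)\rangle^{-1}\text{-flip} \;=\; \langle a_{t'}\,\theta(x), \theta(y)\rangle = \langle \theta(x), \delta_{t'}\theta(y)\rangle = \langle x, \theta^{-1}\delta_{t'}\theta(y)\rangle = \langle x, d_t(y)\rangle,
\]
where $t'$ is the appropriate conjugate of $t$ arising from $\theta(x a_t) = a_{t'}\theta(x)$; the middle steps use the first identity and $\theta$-symmetry. (In the write-up I would present this cleanly rather than with the schematic flips above.)

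Alternatively—and this may be the cleaner route to actually commit to paper—I would prove the second identity by the same induction on $k$ used to prove \lemref{lem: deld} and \lemref{lem: delrel}: write $x = a_{t_1}\cdots a_{t_k}$, split off $a_{t_k}$ using the Leibniz rule for $d_t$, namely $d_t(a_{t_1}\cdots a_{t_k}) = -\,d_{t^{t_k}}(a_{t_1}\cdots a_{t_{k-1}})\,a_{t_k} + \delta_{t,t_k}\,a_{t_1}\cdots a_{t_{k-1}}$, and match this against the recursive definition $\langle x a_t, y\rangle = \langle a_{t_1}\cdots a_{t_{k-1}}\cdot(a_{t_k}a_t), y\rangle$, reducing the degree. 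The inductive bookkeeping is routine but needs care with the conjugations $t^{t_k}$ and the signs; this is the step I expect to be the main (though not deep) obstacle. Either way, the conceptual content is light: the pairing was constructed precisely so that $\delta_t$ is left-adjoint-compatible with $a_t\cdot$, and the $d_t$ statement is the mirror image under the order-reversing anti-automorphism.
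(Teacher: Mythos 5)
Your treatment of the first identity $\langle a_t x, y\rangle = \langle x, \delta_t(y)\rangle$ is correct and matches the paper: it is indeed immediate from the definition $\langle a_{t_1}\cdots a_{t_k}, y\rangle = \delta_{t_k}\cdots\delta_{t_1}(y)$, with well-definedness supplied by Lemma~\ref{lem: delrel}.

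For the second identity, your Route~A has a genuine gap. The anti-automorphism $\theta$ does \emph{not} intertwine $d_t$ and $\delta_t$ up to a fixed conjugate $t'$; the twist is position-dependent and does not ``wash out.''  Computing directly, on a monomial $a_{t_1}\cdots a_{t_k}$ one finds
\[
\theta\,\delta_t\,\theta(a_{t_1}\cdots a_{t_k}) \;=\; \sum_{i=1}^k(-1)^{k-i}\,\delta_{t,\,t_i}\;a_{t_1}\cdots a_{t_{i-1}}\,a_{t_{i+1}^{t_i}}\cdots a_{t_k^{t_i}},
\]
whereas
\[
d_t(a_{t_1}\cdots a_{t_k}) \;=\; \sum_{i=1}^k(-1)^{k-i}\,\delta_{t,\;t_i^{t_{i+1}\cdots t_k}}\;a_{t_1}\cdots a_{t_{i-1}}\,a_{t_{i+1}}\cdots a_{t_k}.
\]
The Kronecker deltas test $t$ against different reflections and the surviving monomials differ term by term; the parenthetical ``which will wash out'' is not a fix. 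Route~B is pointed in a workable direction but is not carried out, and it omits the one ingredient that actually makes it go through: Lemma~\ref{lem: deld}, the commutativity $d_t\delta_{t'} = \delta_{t'}d_t$.  The paper's proof is an induction that peels $a_{t_1}$ off the \emph{left} of $x$ via the already-established first adjunction, invokes the induction hypothesis, commutes $d_t$ past $\delta_{t_1}$ by Lemma~\ref{lem: deld}, and then un-peels.  An equivalent short version, which you might prefer to commit to paper, is: by definition $\langle xa_t, y\rangle = \delta_t\,\delta_{t_k}\cdots\delta_{t_1}(y)$; since $\delta_{t_k}\cdots\delta_{t_1}(y)\in\CA_1$ and $\delta_t = d_t$ on $\CA_1$, this equals $d_t\,\delta_{t_k}\cdots\delta_{t_1}(y)$, and repeated use of Lemma~\ref{lem: deld} moves $d_t$ innermost, giving $\delta_{t_k}\cdots\delta_{t_1}\,d_t(y) = \langle x, d_t(y)\rangle$.
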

	\begin{proof}
		We assume $x\in \mathcal{A}_{k-1}$ and $y\in \mathcal{A}_k$ for $1\leq k\leq n$. The first adjunction follows immediately from the definition. For the second one, we use induction on $k$. If $k=1$, then $\langle \lambda a_t, \mu a_{t'} \rangle= \lambda\mu \delta_{t,t'}= \langle \lambda , \mu d_{t}(a_{t'}) \rangle$ for any $\lambda,\mu\in \mathbb{Z}$. For $k>1$, we may assume  $x= a_{t_1}\dots a_{t_{k-1}}$. Then 
		\[
		\begin{aligned}
			\langle  a_{t_1}\dots a_{t_{k-1}} a_{t}, y\rangle & = \langle  a_{t_2}\dots a_{t_{k-1}} a_{t}, \delta_{t_1}(y)\rangle = \langle  a_{t_2}\dots a_{t_{k-1}}, d_{t} \delta_{t_1}(y)\rangle\\
			&= \langle  a_{t_2}\dots a_{t_{k-1}},\delta_{t_1} d_{t} (y)\rangle = \langle  a_{t_1}a_{t_2}\dots a_{t_{k-1}}, d_{t} (y)\rangle,
		\end{aligned}
		\]
		where the first and the last equation follow from the adjunction between $\delta_t$ and left multiplication by $a_{t}$, the second equation follows from the induction hypothesis, and the third equation follows from \lemref{lem: deld}. The proof is complete.
	\end{proof}

	\begin{proposition}\label{prop: bilinear}
		The bilinear form $\langle-,-\rangle$ is unimodular, i.e., it induces an isomorphism $\mathcal{A}\cong \mathcal{A}^*=\Hom_\Z(\CA,\Z)$, given by $x \mapsto \langle x,-\rangle$ for any $x\in \mathcal{A}$.
	\end{proposition}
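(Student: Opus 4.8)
The plan is to compute the Gram matrix of $\langle-,-\rangle$ in the $\mathbb Z$-basis $\{a_{\bf t}\mid {\bf t}\in\bigcup_{w\in\mathcal L}\mathcal D_w\}$ of $\mathcal A$ supplied by Corollary~\ref{coro: basis} and to show it has determinant $\pm1$; this is exactly the assertion that $x\mapsto\langle x,-\rangle$ is an isomorphism $\mathcal A\xrightarrow{\ \sim\ }\mathcal A^*$. Since part (1) of the definition makes the form vanish on $\mathcal A_k\times\mathcal A_\ell$ for $k\ne\ell$, I may work one graded piece $\mathcal A_k$ at a time. First I would refine this to a ``block diagonality'' over the decomposition $\mathcal A_k=\bigoplus_{w\in\mathcal L_k}\mathcal A_w$ of Lemma~\ref{lem: property}: for ${\bf t}\in{\rm Rex}_T(w)$ and ${\bf s}\in{\rm Rex}_T(w')$ one has $\langle a_{\bf t},a_{\bf s}\rangle=\delta_{t_k}\cdots\delta_{t_1}(a_{\bf s})$, and the defining formula for $\delta_t$ together with Lemma~\ref{coro: nonzero} shows that $\delta_{t_i}$ either kills a homogeneous element $a_{\bf v}$, ${\bf v}\in{\rm Rex}_T(u)$, or maps it into $\mathcal A_{t_iu}$ with $\ell_T(t_iu)=\ell_T(u)-1$. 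Iterating, $\delta_{t_k}\cdots\delta_{t_1}(a_{\bf s})$ lies in $\mathcal A_e=\mathbb Z$ only if $t_k\cdots t_1w'=e$, that is, $w'=w$. Hence $\langle\mathcal A_w,\mathcal A_{w'}\rangle=0$ for $w\ne w'$, and it remains to show that for each fixed $w$ the restriction of $\langle-,-\rangle$ to $\mathcal A_w\times\mathcal A_w$, in the basis $\{a_{\bf t}\mid{\bf t}\in\mathcal D_w\}$, is unimodular.

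The heart of the argument is the following triangularity statement, which I would prove by induction on $k=\ell_T(w)$: for ${\bf t},{\bf s}\in\mathcal D_w$ one has $\langle a_{\bf t},a_{\bf s}\rangle=1$ if ${\bf t}={\bf s}$, and $\langle a_{\bf t},a_{\bf s}\rangle=0$ whenever ${\bf t}$ is lexicographically larger than ${\bf s}$ (with respect to the total order~\eqref{eq: total ordering} on $T$). To prove it one peels off the first letter using the adjunction $\langle a_{t_1}x,y\rangle=\langle x,\delta_{t_1}(y)\rangle$ of Lemma~\ref{lem: adj}. Because ${\bf s}$ is strictly decreasing, $s_1$ is the $\prec$-maximal reflection occurring in ${\bf s}$; so if $t_1\succ s_1$ then $t_1\notin\{s_1,\dots,s_k\}$, whence $\delta_{t_1}(a_{\bf s})=0$ and $\langle a_{\bf t},a_{\bf s}\rangle=0$. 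If $t_1=s_1$, then distinctness of the $s_i$ gives $\delta_{t_1}(a_{\bf s})=a_{s_2}\cdots a_{s_k}$, so $\langle a_{\bf t},a_{\bf s}\rangle=\langle a_{t_2}\cdots a_{t_k},\,a_{s_2}\cdots a_{s_k}\rangle$; here $(t_2,\dots,t_k)$ and $(s_2,\dots,s_k)$ are decreasing $T$-reduced factorisations of the common element $t_1w=s_1w$, which lies in $\mathcal L$ because $t_1w\le w\le\gamma$, and they are lexicographically ordered the same way, so the inductive hypothesis applies (the diagonal case iterates down to $\langle1,1\rangle=1$). The remaining possibility $t_1\prec s_1$ is precisely the case ${\bf t}\prec_{\rm lex}{\bf s}$, where nothing is asserted --- and indeed these entries need not vanish (already in type $A_2$ the block is $\left(\begin{smallmatrix}1&0\\-1&1\end{smallmatrix}\right)$), so it is genuinely triangularity, not diagonality, that we exploit.

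Granting the triangularity statement, ordering $\mathcal D_w$ by decreasing lexicographic order makes each block of the Gram matrix lower unitriangular, hence of determinant $1$; so the full Gram matrix is block diagonal with determinant $\pm1$, and the Proposition follows. The step I expect to need the most care is the inductive reduction: one must confirm that peeling off $a_{t_1}$ returns the very same bilinear form evaluated on genuine decreasing factorisations inside the smaller interval $[e,t_1w]$ --- i.e.\ that $t_1w\le\gamma$, that the shortened sequences still belong to $\mathcal D_{t_1w}$, and that it is the identity $s_1=\max_\prec{\bf s}$ (valid only because ${\bf s}$ is decreasing) that forces $\delta_{t_1}(a_{\bf s})=0$ in the off-diagonal case. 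The remaining verifications --- the behaviour of the $\delta_t$ on the $\mathcal A_w$-grading used for block diagonality, and the elementary fact that suffixes of $T$-reduced words are $T$-reduced --- are routine computations with the skew derivations.
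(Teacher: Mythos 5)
Your proof is correct and follows essentially the same strategy as the paper: reduce to each graded block $\mathcal{A}_w$, impose a lexicographic order on the decreasing basis of Corollary~\ref{coro: basis}, and show the Gram matrix is unitriangular by peeling letters with the adjunction of Lemma~\ref{lem: adj}. The one small divergence is in establishing $\langle\mathcal{A}_v,\mathcal{A}_w\rangle=0$ for $v\neq w$ in $\mathcal{L}_k$: the paper peels from the right using the $d_t$-adjunction $\langle x'a_t,y\rangle=\langle x',d_t(y)\rangle$ and an induction, while you observe directly that the defining formula for $\delta_t$ shifts the $\mathcal{L}$-grading from $u$ to $t u$, so $\delta_{t_k}\cdots\delta_{t_1}$ can land in $\mathcal{A}_e=\Z$ only when $t_k\cdots t_1 w'=e$; both are sound, and your grading observation is arguably the slicker of the two.
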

	\begin{proof}
		We only need to  show that the bilinear form induces an  isomorphism $\mathcal{A}_k\cong \mathcal{A}_k^*$ for $0\leq k\leq n$. Recall that $\mathcal{A}_k= \bigoplus_{w\in \mathcal{L}_k} \mathcal{A}_w$.  It suffices to prove the following claims:
		\begin{enumerate}
			\item $\langle \mathcal{A}_v, \mathcal{A}_w\rangle =0$ for any two elements $v\neq w$ of $\mathcal{L}_k$;
			\item For any $w\in \mathcal{L}$, the bilinear form induces an  isomorphism $\mathcal{A}_w\cong \mathcal{A}_w^*$. 
		\end{enumerate}
		
		To prove claim (1), we use induction on $\ell_{T}(v)=\ell_{T}(w)=k$. It is trivial if $k=1$. For $k>1$, assume that $x= x'a_{t}\in \mathcal{A}_v$. Then for any $y\in \mathcal{A}_w$ with $w\neq v$ we have 
		\[ \langle x,y\rangle = \langle x'a_t,y\rangle =\langle x', d_t(y)\rangle. \]
		Suppose that $y= a_{t_1}a_{t_2}\dots a_{t_k}$ with $w=t_1t_2\dots t_k$ being a $T$-reduced expression. Then by the definition,
		\[ d_t(y)= \sum_{i=1}^k(-1)^{k-i} \delta_{t, r_i } a_{t_1}\dots a_{t_{i-1}}a_{t_{i+1}}\dots a_{t_k}, \]
		where $r_i=t_i^{t_{i+1}\dots t_k}$. If $t\neq r_i$ for any $i$, then $d_t(y)=0$ and hence $\langle x,y\rangle=0$. Otherwise, there exists a unique $i_0$ such that $t=r_{i_0}$. Assume that $x'=a_{t'_1} a_{t'_2}\dots a_{t'_{k-1}}$ with $v= t'_1t'_2\dots t'_{k-1}t$ being a $T$-reduced expression. We have 
		\[ \langle x,y\rangle =\langle x', d_t(y)\rangle = (-1)^{k-i_0}\langle a_{t'_1} a_{t'_2}\dots a_{t'_{k-1}},  a_{t_1}\dots a_{t_{i_0-1}}a_{t_{i_0+1}}\dots a_{t_k}\rangle.  \]
		Assume with a view to obtaining a contradiction that $\langle x,y\rangle \neq 0$. By the induction hypothesis we have $ t'_1t'_2\dots t'_{k-1}=t_1\dots t_{i_0-1}t_{i_0+1}\dots t_k$. It follows that 
		\[ v= (t'_1t'_2\dots t'_{k-1})t=t_1\dots t_{i_0-1}t_{i_0+1}\dots t_k r_{i_0}=t_1t_2\dots t_k=w, \]
		which leads to a contradiction. Therefore, we have $\langle x,y\rangle =0$ for any $x\in \mathcal{A}_v$ and $y \in \mathcal{A}_w$ with $w\neq v$. 
		
		We proceed next to prove claim (2). To this end, we need to introduce a lexicographical order on the  basis of $\mathcal{A}_w$, and then show that the corresponding matrix of the bilinear form is upper triangular with diagonal entries being 1.

		Recall from \propref{prop: Abasis} that $\mathcal{A}_w$ has a basis consisting of elements $a_{t_1}a_{t_2}\dots a_{t_k}$, where $w=t_1t_2\dots t_k$ is a $T$-reduced expression and $t_1\succ t_2\succ \dots \succ t_k$ with respect to the total order of $T$. We define the lexicographical order on the  basis by
		\begin{equation}\label{eq: lexorder}
					a_{t_1}a_{t_2}\dots a_{t_k}< a_{t'_1}a_{t'_2}\dots a_{t'_k}   \iff t_i\prec  t'_i, 
		\end{equation}
		where $i$ is the first place where the two monomials differ. Let $\{m_1<m_2<\dots <m_p\}$ be the totally ordered set of the basis of $\mathcal{A}_{w}$, and let $M$ be the matrix of the bilinear form with the $(i,j)$-th entry being $\langle m_i,m_j\rangle $. Next we show that $M$ is a upper-triangular matrix with all diagonal entires equal to $1$. 
		
		To show that $M$ is upper-triangular, we need to prove that 
		\begin{equation}\label{eq: upptri}
			\langle a_{t'_1}a_{t'_2}\dots a_{t'_k}, a_{t_1}a_{t_2}\dots a_{t_k}\rangle =0 
		\end{equation}
		for any two basis elements $a_{t_1}a_{t_2}\dots a_{t_k}< a_{t'_1}a_{t'_2}\dots a_{t'_k}$ of $\mathcal{A}_w$. Note that $t_1\succ t_2\succ \dots \succ t_k$ and $t'_1\succ t'_2\succ \dots\succ t'_k$ with respect to the total order on $T$. Assuming that $t_1=t'_1, \dots, t_{i-1}=t'_{i-1}$ and $t_{i}\prec t'_{i}$, we have 
		\[ 
		\begin{aligned}
			\langle a_{t'_1}a_{t'_2}\dots a_{t'_k}, a_{t_1}a_{t_2}\dots a_{t_k}\rangle&=\langle  a_{t'_2}\dots a_{t'_k}, \delta_{t'_1}(a_{t_1}a_{t_2}\dots a_{t_k}) \rangle  \\
			&=\langle a_{t'_2}\dots a_{t'_k}, a_{t_2}\dots a_{t_k}\rangle,\\
			&\cdots\\
			&=\langle a_{t'_i}a_{t'_{i+1}}\dots a_{t'_k},  a_{t_i}a_{t_{i+1}}\dots a_{t_k}\rangle,
		\end{aligned}  
		\]
		where in the first line we have used the adjoint property from \lemref{lem: adj}, and in the third line we repeat the process until we obtain the last equation. 
		Using the adjoint property again, we have
		\[ \langle a_{t'_i}a_{t'_{i+1}}\dots a_{t'_k},  a_{t_i}a_{t_{i+1}}\dots a_{t_k}\rangle = \langle a_{t'_{i+1}}\dots a_{t'_k}, \delta_{t'_i}(a_{t_i}a_{t_{i+1}}\dots a_{t_k}) \rangle. \]
		Since $t'_i\succ t_i \succ t_{i+1}\succ \dots \succ t_k$, by the definition of $\delta_{t'_i}$  we have 
		$\delta_{t'_i}(a_{t_i}a_{t_{i+1}}a_{t_2}\dots a_{t_k})=0$.
		Hence we have $\langle a_{t'_1}a_{t'_2}\dots a_{t'_k}, a_{t_1}a_{t_2}\dots a_{t_k}\rangle =0$.
		
		Applying the adjunction between $\delta_t$ and left multiplication by $a_t$ repeatedly,  we have 
		\begin{equation}\label{eq: diagone}
				\begin{aligned}
			\langle a_{t_1}a_{t_2}\dots a_{t_k}, a_{t_1}a_{t_2}\dots a_{t_k}\rangle &= \langle a_{t_2}\dots a_{t_k}, \delta_{t_1}(a_{t_1}a_{t_2}\dots a_{t_k})\rangle \\
			&= \langle a_{t_2}\dots a_{t_k}, a_{t_2}\dots a_{t_k}\rangle \\
			&=\dots =1
		\end{aligned}
		\end{equation}
		for any basis element $a_{t_1}a_{t_2}\dots a_{t_k}$ of $\mathcal{A}_w$. Hence the diagonal entries of $M$ are all equal to $1$. 
		Therefore, the bilinear form is unimodular on $\mathcal{A}_w$. Combing this with claim (1), we complete the proof. 
	\end{proof}
	
	\subsection{Acyclic dual complexes}
	In this subsection, we introduce some acyclic cochain complexes which are dual to the previous acyclic chain complexes with respect to suitable bilinear forms. 

	We recall the universal coefficient theorem for cohomology, which will be used later.
	
	\begin{theorem}\label{thm: uct} \cite[Section 3.1]{Hat02}
		Let $G$ be an abelian group, and $\mathcal{C}$ be the following chain complex of free abelian groups
		\[
		0 \lr C_n \overset{\partial}{\lr} C_{n-1} \lr \cdots \lr C_1 \overset{\partial}{\lr} C_0\lr 0. 
		\]
		Let $C_k^*={\rm Hom}_{\mathbb{Z}}(C_k,G)$ be the dual of the chain group $C_k$ and $\partial^*: C_{k-1}^*\rightarrow C^*_{k}$ be the dual coboundary map for $1\leq k\leq n$.  Then the cohomology groups $H^k(\mathcal{C}; G)$ of the cochain complex 
		\[
		0 \lr C^*_0 \overset{\partial^*}{\lr} C^*_{1} \lr \cdots \lr C^*_{n-1} \overset{\partial^*}{\lr} C^*_n \lr 0.    
		\] 
		are determined by split exact sequences
		\[ 0 \longrightarrow {\rm Ext}(H_{k-1}(\mathcal{C}), G) \longrightarrow  H^k(\mathcal{C}; G) \longrightarrow  {\rm Hom}_{\mathbb{Z}}(H_k(\mathcal{C}), G )\longrightarrow 0.  \]
	\end{theorem}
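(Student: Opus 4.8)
The plan is to run the classical argument that derives the universal coefficient sequence from the decomposition of a free chain complex into its cycles and boundaries. Write $Z_k=\Ker(\partial\colon C_k\to C_{k-1})$ and $B_k=\im(\partial\colon C_{k+1}\to C_k)$, so that $B_k\subseteq Z_k\subseteq C_k$ and $H_k(\mathcal C)=Z_k/B_k$. Since $\Z$ is a principal ideal domain and each $C_k$ is free, the subgroups $Z_k$ and $B_k$ are themselves free. First I would record the short exact sequence of chain complexes
\[
0\lr Z_\bullet\lr C_\bullet\overset{\partial}{\lr}B_{\bullet-1}\lr 0,
\]
where $Z_\bullet$ and the shifted complex $B_{\bullet-1}$ both carry the zero differential; because each $B_{k-1}$ is free this sequence splits in every degree, so applying $\Hom_\Z(-,G)$ produces a short exact sequence of cochain complexes
\[
0\lr \Hom_\Z(B_{\bullet-1},G)\lr \Hom_\Z(C_\bullet,G)\lr \Hom_\Z(Z_\bullet,G)\lr 0 .
\]

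Next I would pass to the long exact cohomology sequence. As the outer two complexes have zero differential, their cohomology in degree $k$ is simply $\Hom_\Z(B_{k-1},G)$ and $\Hom_\Z(Z_k,G)$, so the long exact sequence decomposes into segments
\[
\Hom_\Z(Z_{k-1},G)\overset{\delta}{\lr}\Hom_\Z(B_{k-1},G)\lr H^k(\mathcal C;G)\lr\Hom_\Z(Z_k,G)\overset{\delta}{\lr}\Hom_\Z(B_k,G).
\]
The key point, and the step I expect to need the most care, is to identify the connecting map $\delta\colon\Hom_\Z(Z_k,G)\to\Hom_\Z(B_k,G)$ with the map $i_k^{*}$ dual to the inclusion $i_k\colon B_k\hookrightarrow Z_k$: this is a direct but slightly fiddly chase through the zig-zag defining $\delta$, together with keeping the degree shift in $B_{\bullet-1}$ straight. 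Granting it, the segment above becomes
\[
0\lr\on{coker}(i_{k-1}^{*})\lr H^k(\mathcal C;G)\lr\Ker(i_k^{*})\lr 0 .
\]

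To conclude I would feed in the short exact sequence $0\to B_k\to Z_k\to H_k(\mathcal C)\to 0$, which, since $Z_k$ and $B_k$ are free, is a free resolution of $H_k(\mathcal C)$ of length one. Applying $\Hom_\Z(-,G)$ and using $\Ext_\Z(Z_k,G)=0$ gives $\Ker(i_k^{*})\cong\Hom_\Z(H_k(\mathcal C),G)$ and $\on{coker}(i_k^{*})\cong\Ext_\Z(H_k(\mathcal C),G)$; substituting these, the latter with $k$ replaced by $k-1$, into the displayed four-term sequence yields the asserted short exact sequence. For the splitting, I would choose in each degree a retraction $p_k\colon C_k\to Z_k$ of the inclusion $Z_k\hookrightarrow C_k$ (available for the same freeness reason) and compose with the quotient $Z_k\to H_k(\mathcal C)$ to obtain $\bar p_k\colon C_k\to H_k(\mathcal C)$. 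For $\psi\in\Hom_\Z(H_k(\mathcal C),G)$ the cochain $\psi\circ\bar p_k$ is automatically a cocycle, since it vanishes on $B_k=\partial C_{k+1}$, and $\psi\mapsto[\psi\circ\bar p_k]$ visibly splits the surjection $H^k(\mathcal C;G)\to\Hom_\Z(H_k(\mathcal C),G)$. Apart from the identification of $\delta$ with $i_k^{*}$, every remaining step is routine bookkeeping with free abelian groups over a principal ideal domain.
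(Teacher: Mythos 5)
Your proposal is correct and is exactly the standard textbook argument. The paper does not prove this result itself — it cites it from Hatcher, Section 3.1 — and your proof reproduces Hatcher's derivation faithfully: split the free complex via $0\to Z_\bullet\to C_\bullet\to B_{\bullet-1}\to 0$, dualize, take the long exact sequence, identify the connecting map with the dual of $B_k\hookrightarrow Z_k$, and read off $\Hom$ and $\Ext$ from the length-one free resolution $0\to B_k\to Z_k\to H_k\to 0$, with the splitting furnished by a degreewise retraction onto cycles.
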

	
	We are only concerned with the case $G=\mathbb{Z}$. If the homology groups $H_k(\mathcal{C})$ are finitely generated abelian groups with torsion subgroups $T_k\subseteq H_k(\mathcal{C})$, then the homology of the chain complex $(\mathcal{C}, \partial)$ and  the cohomology of the dualised chain complex  $(\mathcal{C}^*, \partial^*)$ are related by
	\[H^k(\mathcal{C}^*; \mathbb{Z})\cong  H_k(\mathcal{C})/T_k \oplus T_{k-1}. \] 
	In particular, if $(\mathcal{C}, \partial)$ is acyclic then so is $(\mathcal{C}^*, \partial^*)$. If $\mathcal{C}$ consists of finitely generated free abelian groups, then $(\mathcal{C}^{**}, \partial^{**})\cong (\mathcal{C}, \partial)$. 
	
	\subsubsection{A pair of acyclic complexes}
	Using the bilinear form on $\mathcal{A}$, we define acyclic cochain complexes which are dual to the acyclic chain complexes $(\mathcal{A}, d)$ and $(\mathcal{A},\delta)$.
	
	Define the element
	$$\omega:= \sum_{t\in T} a_t \in \mathcal{A}_1.$$ 
	It is easily verified that $\omega^2=0$. Hence it gives rise to a cochain complex whose coboundary maps are  right multiplication by $\omega$:  
	\begin{equation*}\label{eq: cochainA}
		0\lr \CA_0\overset{r_\omega}{\lr}\CA_1\lr \cdots \overset{r_\omega}{\lr} \CA_n\lr 0
	\end{equation*} 
	We denote this complex by $(\mathcal{A}, r_\omega)$. Similarly, we define the complex $(\mathcal{A},\ell_{\omega})$, where $\ell_{\omega}$ denotes left multiplication  by $\omega$.

 Recall from \propref{prop: bilinear} that the  bilinear form  induces an  isomorphism of graded free abelian groups:
	\begin{equation} \label{eq: psi}
		\psi: \mathcal{A} \rightarrow \mathcal{A}^{\ast}, \quad x \mapsto \psi(x):=\langle-,x\rangle,
	\end{equation}
	where $\psi(x)(x')= \langle x',x\rangle$  for any $x,x'\in \mathcal{A}_k$. Therefore,  for any linear form $\lambda\in \mathcal{A}^{\ast}$, there exists a unique $x\in \mathcal{A}$ such that $\lambda=\psi(x)$.  In what follows,  a linear map on $\mathcal{A}^*$ will be defined by its action on elements of the form $\psi(x), x\in \mathcal{A}$.
	

	\begin{lemma}\label{lem: isocpl}
		The linear map $\psi$ induces an isomorphism between chain complexes $(\mathcal{A}, d)$ and $(\mathcal{A}^{\ast}, r_{\omega}^{\ast})$, where  $r_{\omega}^{\ast}$ is the adjoint map of $r_{\omega}$ defined by 
		\[
		r_{\omega}^{\ast}(\psi(y))(x): = \psi(y)(r_{\omega}(x))=\langle x\omega,y\rangle, \quad \forall x\in \mathcal{A}_{k-1},y \in \mathcal{A}_{k}. 
		\]
		Similarly, $\psi$ induces an isomorphism between chain complexes $(\mathcal{A}, \delta)$ and $(\mathcal{A}^{\ast}, \ell_{\omega}^{\ast})$
	\end{lemma}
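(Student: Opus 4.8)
The plan is to reduce the statement to the adjunction recorded in \lemref{lem: adj} together with the unimodularity of the pairing from \propref{prop: bilinear}. Since $\psi$ is already known to be an isomorphism of graded free abelian groups, the only thing left to prove is that it intertwines the differentials, i.e. that $r_\omega^*\circ\psi=\psi\circ d$ and $\ell_\omega^*\circ\psi=\psi\circ\delta$.

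First I would record the key adjunction identities. Summing the relation $\langle xa_t,y\rangle=\langle x,d_t(y)\rangle$ of \lemref{lem: adj} over all $t\in T$ and using $d=\sum_{t\in T}d_t$ gives, for $x\in\mathcal{A}_{k-1}$ and $y\in\mathcal{A}_k$,
\[
\langle x\omega,y\rangle=\sum_{t\in T}\langle xa_t,y\rangle=\sum_{t\in T}\langle x,d_t(y)\rangle=\langle x,d(y)\rangle;
\]
thus $r_\omega$ is right adjoint to $d$ with respect to $\langle-,-\rangle$, with no intervening sign. The mirror computation, summing $\langle a_tx,y\rangle=\langle x,\delta_t(y)\rangle$ over $t\in T$, yields $\langle\omega x,y\rangle=\langle x,\delta(y)\rangle$, so $\ell_\omega$ is right adjoint to $\delta$. (That $(\mathcal{A},r_\omega)$ and $(\mathcal{A},\ell_\omega)$ really are complexes, i.e. $\omega^2=0$, follows directly from \defref{def: A}: the terms $a_{t_1}a_{t_2}$ appearing in $\omega^2$ vanish unless $t_1t_2\leq\gamma$, and for each $w\in\mathcal{L}_2$ the surviving terms regroup into $\sum_{(t_1,t_2)\in{\rm Rex}_T(w)}a_{t_1}a_{t_2}=0$.)

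Next I would pass to the dual picture. By definition $\psi(y)=\langle-,y\rangle$, so for $x\in\mathcal{A}_{k-1}$ and $y\in\mathcal{A}_k$,
\[
r_\omega^*(\psi(y))(x)=\psi(y)(x\omega)=\langle x\omega,y\rangle=\langle x,d(y)\rangle=\psi(d(y))(x),
\]
whence $r_\omega^*\circ\psi=\psi\circ d$ as maps $\mathcal{A}_k\to\mathcal{A}_{k-1}^*$. As $\psi$ is bijective and degree-preserving by \propref{prop: bilinear}, this exhibits it as an isomorphism of chain complexes from $(\mathcal{A},d)$ to $(\mathcal{A}^*,r_\omega^*)$ (and in particular $(r_\omega^*)^2=\psi\, d^2\,\psi^{-1}=0$, recovering that the dual object is a complex). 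Replacing $x\omega$ by $\omega x$ throughout gives $\ell_\omega^*\circ\psi=\psi\circ\delta$, so $\psi$ is likewise an isomorphism from $(\mathcal{A},\delta)$ to $(\mathcal{A}^*,\ell_\omega^*)$.

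There is essentially no obstacle here: the substance is all contained in \lemref{lem: adj} and \propref{prop: bilinear}, which are already available. The only point demanding a little care is the bookkeeping --- verifying that assembling the skew derivations $d_t$ (resp. $\delta_t$) over $t\in T$ reproduces $d$ (resp. $\delta$) with exactly the signs of \eqref{eq: d} and \eqref{eq: del}, and that the grading conventions line up so that the degree-raising map $r_\omega$ dualizes to a degree-lowering map agreeing with $d$ (and similarly for $\ell_\omega$ and $\delta$).
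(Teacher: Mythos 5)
Your proposal is correct and follows essentially the same route as the paper: both reduce the claim to the adjunction $\langle x\omega,y\rangle=\langle x,d(y)\rangle$ (obtained from \lemref{lem: adj} by summing over $t\in T$) together with the bijectivity of $\psi$ from \propref{prop: bilinear}, and then verify $r_\omega^*\circ\psi=\psi\circ d$ by evaluating both sides on an arbitrary element. The extra remarks you include (that $\omega^2=0$ and that $(r_\omega^*)^2=0$ follows by transport of structure) are harmless additions not needed for the argument.
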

	
	\begin{proof}
		For any integer $k$, it suffices to prove that the following diagram commutes: 
		\begin{center}
			\begin{tikzcd}
				\mathcal{A}_k \arrow[r, "d"] \arrow[d, "\psi"]
				&  \mathcal{A}_{k-1} \arrow[d, "\psi"] \\
				\mathcal{A}^*_k  \arrow[r,  "r_{\omega}^*" ]
				&  \mathcal{A}^*_{k-1} \end{tikzcd}
		\end{center}
		We verify this directly. For any $x\in \mathcal{A}_k, y \in \mathcal{A}_{k-1}$, by the definition of $r_{\omega}^{\ast}$ we have 
		\[  ((r_{\omega}^*\psi)(x))(y)=  r_{\omega}^{\ast}(\psi(x))(y)=\langle y\omega,x\rangle.   \]
		On the other hand, we have 
		\[
		((\psi d)(x))(y)= \psi(dx)(y)=\langle y,dx\rangle.  
		\]
		Using the adjointness property from \lemref{lem: adj}, we have $\langle y\omega,x\rangle=\langle y,dx\rangle$, whence $r_{\omega}^*\psi=\psi d$. Therefore,  $\psi$ is a chain map between  $(\mathcal{A}, d)$ and $(\mathcal{A}^{\ast}, r_{\omega}^{\ast})$. As $\psi$ is a graded isomorphism,  $(\mathcal{A}, d)$ and $(\mathcal{A}^{\ast}, r_{\omega}^{\ast})$ are isomorphic. The isomorphism between  $(\mathcal{A}, \delta)$ and $(\mathcal{A}^{\ast}, \ell_{\omega}^{\ast})$ can be proved similarly.
	\end{proof}

\begin{proposition}\label{prop: acyccocx}
		The cochain complexes $(\mathcal{A}, r_\omega)$ and $(\mathcal{A},\ell_{\omega})$ are both acyclic. 
\end{proposition}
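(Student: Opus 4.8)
\emph{Plan of proof.} The statement is a formal consequence of \lemref{lem: isocpl}, the acyclicity of $(\mathcal{A},d)$ and $(\mathcal{A},\delta)$, and the universal coefficient theorem \thmref{thm: uct}. First I would record the bookkeeping of gradings. Since $\omega\in\mathcal{A}_1$, the map $r_\omega$ raises degree by one, so $(\mathcal{A},r_\omega)$ is genuinely a cochain complex
\[
0\lr \mathcal{A}_0\overset{r_\omega}{\lr}\mathcal{A}_1\lr\cdots\overset{r_\omega}{\lr}\mathcal{A}_n\lr 0,
\]
and its $\mathbb{Z}$-dual is the chain complex $0\lr\mathcal{A}_n^{*}\lr\cdots\lr\mathcal{A}_0^{*}\lr 0$ with differential $r_\omega^{*}$; this is precisely the complex $(\mathcal{A}^{*},r_\omega^{*})$ of \lemref{lem: isocpl}, and likewise $(\mathcal{A},\ell_\omega)$ has $\mathbb{Z}$-dual $(\mathcal{A}^{*},\ell_\omega^{*})$. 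By \corref{coro: basis} each $\mathcal{A}_k$ is a finitely generated free abelian group, hence so is each $\mathcal{A}_k^{*}$, and the canonical map identifies $(\mathcal{A},r_\omega)$ with the $\mathbb{Z}$-dual of $(\mathcal{A}^{*},r_\omega^{*})$ (and $(\mathcal{A},\ell_\omega)$ with the $\mathbb{Z}$-dual of $(\mathcal{A}^{*},\ell_\omega^{*})$).

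Next I would invoke \lemref{lem: isocpl}: the isomorphism $\psi$ identifies the chain complex $(\mathcal{A},d)$ with $(\mathcal{A}^{*},r_\omega^{*})$ and $(\mathcal{A},\delta)$ with $(\mathcal{A}^{*},\ell_\omega^{*})$. By \propref{prop: propB}(2), transported from $\mathcal{B}$ to $\mathcal{A}$ via the isomorphism $\phi$ of \thmref{thm: isothm}, the complex $(\mathcal{A},d)$ is acyclic, and by \propref{prop: delta}(2) so is $(\mathcal{A},\delta)$. Consequently $(\mathcal{A}^{*},r_\omega^{*})$ and $(\mathcal{A}^{*},\ell_\omega^{*})$ are acyclic chain complexes of finitely generated free abelian groups.

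Finally I would apply \thmref{thm: uct} with $G=\mathbb{Z}$ to the acyclic chain complex $\mathcal{C}:=(\mathcal{A}^{*},r_\omega^{*})$, whose dual cochain complex is $(\mathcal{A},r_\omega)$. The split exact sequence gives, for each $k$,
\[
0\lr\Ext\!\big(H_{k-1}(\mathcal{C}),\mathbb{Z}\big)\lr H^{k}(\mathcal{A},r_\omega)\lr\Hom\!\big(H_{k}(\mathcal{C}),\mathbb{Z}\big)\lr 0,
\]
and since $H_{k}(\mathcal{C})=0$ for all $k$ the outer terms vanish, whence $H^{k}(\mathcal{A},r_\omega)=0$ for all $k$; the same argument with $\ell_\omega$ in place of $r_\omega$ gives acyclicity of $(\mathcal{A},\ell_\omega)$. (Alternatively, one can skip the universal coefficient computation and simply quote the remark following \thmref{thm: uct}, that the $\mathbb{Z}$-dual of an acyclic complex of finitely generated free abelian groups is again acyclic.)

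There is no real obstacle here: the result drops out of earlier statements. The only point I would be careful to make explicit is the grading bookkeeping of the first paragraph, i.e.\ that $r_\omega$ and $\ell_\omega$ raise degree while their adjoints $r_\omega^{*}$, $\ell_\omega^{*}$ lower it, so that the complex $(\mathcal{A}^{*},r_\omega^{*})$ appearing in \lemref{lem: isocpl} really is the $\mathbb{Z}$-dual of the cochain complex $(\mathcal{A},r_\omega)$ and \thmref{thm: uct} is being applied in the correct direction.
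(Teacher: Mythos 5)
Your proof is correct and follows essentially the same route as the paper's: invoke \lemref{lem: isocpl} to identify $(\mathcal{A}^*,r_\omega^*)$ with the acyclic complex $(\mathcal{A},d)$ (resp.\ $(\mathcal{A}^*,\ell_\omega^*)$ with $(\mathcal{A},\delta)$), then apply the universal coefficient theorem to transfer acyclicity to $(\mathcal{A},r_\omega)$ and $(\mathcal{A},\ell_\omega)$. The extra grading bookkeeping you include is sound but not a point of divergence from the paper's argument.
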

\begin{proof}
		Recall from \propref{prop: propB} that the chain complex $(\mathcal{A},d)$ is acyclic. It follows from \lemref{lem: isocpl} that the chain complex $(\mathcal{A}^*,r_{\omega}^*)$ is acyclic.  Since $(\mathcal{A}^*,r_{\omega}^*)$  is the dual complex of $(\mathcal{A}, r_{\omega})$ induced by the isomorphism $\psi$, the cochain complex $(\mathcal{A}, r_{\omega})$ is acyclic by the universal coefficient theorem for cohomology. Similarly, one can prove that $(\mathcal{A}, \ell_{\omega})$ is acyclic, using the acyclicity of $(\mathcal{A}, \delta)$ from \propref{prop: delta}.
\end{proof}

	\subsubsection{Another pair of acyclic complexes}
	
	Recall from \thmref{thm: homoM} that the boundary maps of the complex $(\Z W\ot\CA_k, \partial_k)$ are given by $\partial_k=\partial'_k+(-1)^k\partial''_k$, where the linear maps $\partial', \partial'': \mathbb{Z}W\otimes \mathcal{A}_k \rightarrow  \mathbb{Z}W\otimes \mathcal{A}_{k-1}$ are defined  by
	\begin{equation}\label{eq: defpartials}
		\begin{aligned}
			\partial'(w\ot a_{t_1}a_{t_2}\dots a_{t_k})=&\sum_{i=1}^k(-1)^{i-1}wt_i\ot a_{t_1^{t_i}}\dots a_{t_{i-1}^{t_i}}\widehat{a}_{t_i} a_{t_{i+1}}\dots a_{t_k}, \\
			\partial''(w\ot a_{t_1}a_{t_2}\dots a_{t_k})=&\sum_{i=1}^k(-1)^{k-i}w\ot a_{t_1}\cdots \widehat{a}_{t_i}\dots a_{t_k}.\\
		\end{aligned}
	\end{equation}
	In what follows,  whenever dealing with (co)boundary maps,  $\mathbb{Z}W$ is viewed as a right $\mathbb{Z}W$-module with any $t\in T$ acts on the right side.

	\begin{lemma}\label{lem: pars}
		Let $ \partial',  \partial''$ be as in \eqref{eq: defpartials}. Then  we have 
		\[\partial'= \sum_{t\in T} t\otimes \delta_t, \quad \partial''=\sum_{t\in T} 1\otimes d_t=1\otimes d. \]
		Therefore, $\partial', \partial''$ satisfy  $(\partial')^2=(\partial'')^2=0 $ and  $ \partial'\partial''=\partial''\partial'.$
	\end{lemma}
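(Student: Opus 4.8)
The plan is to prove the two displayed identities by a direct expansion on the spanning set $\{\,w\ot a_{t_1}a_{t_2}\dots a_{t_k}\,\}$ of $\mathbb{Z}W\ot\mathcal{A}_k$, and then to recognise the three quadratic relations $(\partial')^2=0$, $(\partial'')^2=0$, $\partial'\partial''=\partial''\partial'$ as term-by-term reformulations of \lemref{lem: delrel}, of the identity $d^2=0$, and of \lemref{lem: deld} respectively.

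First I would unwind the conventions. With $\mathbb{Z}W$ a right module, $t\ot d_t$ denotes the operator $w\ot x\mapsto w\ot d_t(x)$. Hence $\big(\sum_{t\in T}1\ot d_t\big)(w\ot a_{t_1}\dots a_{t_k})=w\ot\sum_{t\in T}\sum_{i=1}^k(-1)^{k-i}\delta_{t,\, t_i^{t_{i+1}\dots t_k}}\,a_{t_1}\dots\widehat{a}_{t_i}\dots a_{t_k}$, and for each fixed $i$ the Kronecker symbol collapses the sum over $t$ to the single term $t=t_i^{t_{i+1}\dots t_k}$; this leaves $w\ot\sum_{i=1}^k(-1)^{k-i}a_{t_1}\dots\widehat{a}_{t_i}\dots a_{t_k}=w\ot d(a_{t_1}\dots a_{t_k})=\partial''(w\ot a_{t_1}\dots a_{t_k})$ by \eqref{eq: d} and \eqref{eq: defpartials}. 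Since $1\ot d$ is well-defined on $\mathbb{Z}W\ot\mathcal{A}_k$ (each $d_t$ is, as noted in \rmkref{rmk: welldef} and the remark preceding \lemref{lem: deld}), this proves $\partial''=\sum_{t}1\ot d_t=1\ot d$. The identity $\partial'=\sum_t t\ot\delta_t$ is obtained the same way: $\big(\sum_t t\ot\delta_t\big)(w\ot a_{t_1}\dots a_{t_k})=\sum_t\sum_{i=1}^k(-1)^{i-1}\delta_{t,t_i}\,wt\ot a_{t_1^{t_i}}\dots a_{t_{i-1}^{t_i}}\widehat{a}_{t_i}\dots a_{t_k}$, and $\delta_{t,t_i}$ collapses the $t$-sum to $t=t_i$, reproducing $\partial'$ of \eqref{eq: defpartials}.

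Next I would deduce the three relations. Since $(\mathcal{A},d)$ is a chain complex (\propref{prop: propB}(1), transported to $\mathcal{A}$ via \thmref{thm: isothm}), $(\partial'')^2=1\ot d^2=0$. For the commutation, $\partial'\partial''=\big(\sum_t t\ot\delta_t\big)\big(\sum_{t'}1\ot d_{t'}\big)=\sum_{t,t'\in T}t\ot\delta_t d_{t'}$ while $\partial''\partial'=\sum_{t,t'\in T}t'\ot d_{t'}\delta_t$; by \lemref{lem: deld} we have $\delta_t d_{t'}=d_{t'}\delta_t$ for all $t,t'\in T$, and relabelling the two summation indices gives $\partial'\partial''=\partial''\partial'$. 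Finally, $(\partial')^2(w\ot x)=\sum_{t,t'\in T}w\,t't\ot\delta_t\delta_{t'}(x)$, and I would group the pairs $(t',t)$ by $v:=t't\in W$. Pairs with $t'=t$ drop out since $\delta_t^2=0$. For $t'\neq t$ we have $\det\varrho(v)=1$, so $v\neq e$ and $v$ is not a reflection (reflections have determinant $-1$); as $v$ is a product of two reflections this forces $\ell_T(v)=2$. If $v\not\leq\gamma$ then $\delta_t\delta_{t'}=0$ by the first line of \lemref{lem: delrel}; if $v\leq\gamma$ then $v\in\mathcal{L}_2$ and the pairs $(t',t)$ with $t't=v$ are exactly the elements of ${\rm Rex}_T(v)$, whose contribution is $wv\ot\big(\sum_{(s_1,s_2)\in{\rm Rex}_T(v)}\delta_{s_2}\delta_{s_1}\big)(x)=0$ by the second line of \lemref{lem: delrel}. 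Hence $(\partial')^2=0$.

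The computations are routine; the two points requiring care are the module conventions — keeping the twists $t_i^{t_{i+1}\dots t_k}$ occurring in $d_t$ (as opposed to the untwisted $t_i$ in $\delta_t$) on the correct side of the tensor — and, in the $(\partial')^2$ calculation, the observation that $\delta_t\delta_{t'}$ already vanishes unless $t't\leq\gamma$, so that every surviving summand is indexed by a genuine element of $\mathcal{L}_2$ together with a genuine $T$-reduced factorisation, which is precisely the shape in which \lemref{lem: delrel} applies. This bookkeeping is the only real obstacle. (An alternative derivation of $(\partial')^2=0$ avoids the grouping: use $\partial^2=0$ for the total differential $\partial=\partial'+(-1)^k\partial''$ of \thmref{thm: homoM} together with $(\partial'')^2=0$ and $\partial'\partial''=\partial''\partial'$ just established; but the direct argument is preferable, as it makes the role of the defining relations of $\mathcal{A}$ transparent.)
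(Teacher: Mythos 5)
Your proof is correct and follows the same outline as the paper's: expand $\partial'$ and $\partial''$ on the spanning set to identify them with $\sum_t t\ot\delta_t$ and $1\ot d$, then deduce the three quadratic identities from properties of the skew-derivations. In fact you are more careful than the paper at one point: the paper justifies $(\partial')^2=(\partial'')^2=0$ by citing only ``$\delta_t^2=d_t^2=0$'', which does not by itself dispose of the cross terms $\delta_t\delta_{t'}$ with $t\neq t'$; your argument — grouping by $v=t't$, observing $\det\varrho(v)=1$ forces $\ell_T(v)=2$ when $t'\neq t$, and then invoking both lines of Lemma~\ref{lem: delrel} — supplies exactly the missing details, and for $(\partial'')^2=0$ you correctly appeal to $d^2=0$ from Proposition~\ref{prop: propB} rather than to $d_t^2=0$ alone.
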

	\begin{proof}
		For any nonzero element $a_{t_1}\dots a_{t_k}$ we have 
		\[
		\begin{aligned}
			\sum_{t\in T} t\otimes \delta_t(w\otimes a_{t_1}a_{t_2}\dots a_{t_k})=& \sum_{t\in T}\sum_{i=1}^k (-1)^{i-1} wt\otimes \delta_{t,t_i} a_{t_1^{t_i}}\dots a_{t_{i-1}^{t_i}}\widehat{a}_{t_i} a_{t_{i+1}}\dots a_{t_k}\\
			=& \sum_{i=1}^k (-1)^{i-1} wt_i\otimes a_{t_1^{t_i}}\dots a_{t_{i-1}^{t_i}}\widehat{a}_{t_i} a_{t_{i+1}}\dots a_{t_k}.
		\end{aligned}
		\]
		It follows that $\partial'= \sum_{t\in T} t\otimes \delta_t$, and similarly, $\partial''=\sum_{t\in T} 1\otimes d_t$. Using the fact that $\delta_t^2=d_t^2=0$ for any $t\in T$, we obtain  $(\partial')^2=(\partial'')^2=0 $. By \lemref{lem: deld} $d_{t}$ and $\delta_{t'}$ commute with each other, we have 
		\[
		\partial'\partial''= \sum_{t,t'\in T} t\otimes \delta_t d_{t'}= \sum_{t,t'\in T} t\otimes d_{t'} \delta_t = \partial''\partial'.
		\]  \end{proof}

	By \lemref{lem: pars}, we have a pair of chain complexes $(\mathbb{Z}W\otimes \mathcal{A}, \partial')$ and $(\mathbb{Z}W\otimes \mathcal{A}, \partial'')$. We proceed next to define a bilinear form on $\mathbb{Z}W\otimes \mathcal{A}$, which enables us to dualise the chain complexes.

	We will always denote by $\langle-,-\rangle$ the bilinear form on a linear space whenever no confusion arises. Recall that there is a standard $\mathbb{Z}$-bilinear form on $\mathbb{Z}W$ defined by 
	\[
	\langle-,-\rangle: \mathbb{Z}W\times \mathbb{Z}W\rightarrow \mathbb{Z}, \quad  \langle v,w \rangle:= \delta_{v,w}, \quad \forall v,w\in W,
	\]
	where $\delta$ is the Kronecker delta. This together with the bilinear form on $\mathcal{A}$ which we have already defined, gives rise to  a $\mathbb{Z}$-bilinear form $\langle-,-\rangle$ on $\mathbb{Z}W\otimes \mathcal{A}$, defined by
	\[
	\langle v\otimes x, w\otimes y\rangle:= \langle v,w \rangle\,\langle x,y \rangle, \quad \forall v,w\in W, x,y\in \mathcal{A}.
	\] 
	Note that $\mathbb{Z}W\otimes \mathcal{A}$  is a $\mathbb{Z}$-graded algebra, with the grading inherited from $\mathcal{A}$ and multiplication given by $( v\otimes x) (w\otimes y) =vw\otimes xy$. By definition, we have  $\langle \mathbb{Z}W\otimes  \mathcal{A}_k, \mathbb{Z}W\otimes  \mathcal{A}_{\ell} \rangle=0 $ for any integers $k\ne \ell$.

	It follows from \propref{prop: bilinear} that the bilinear form on $\mathbb{Z}W\otimes \mathcal{A}$ is also unimodular. Hence we have the following graded  isomorphism of free abelian groups:
	\begin{equation}\label{eq: psiW}
		\psi_W: \mathbb{Z}W\otimes \mathcal{A} \rightarrow (\mathbb{Z}W\otimes \mathcal{A})^*, \quad w\otimes x \mapsto  \psi_W(w\otimes x):= \langle -, w\otimes x\rangle,	
	\end{equation}
	where $w\in W$ and $x\in \mathcal{A}_k$ for $0\leq k\leq n$.

	We introduce the following elements:
	\begin{equation*}\label{eq: sigs}
		\sigma=\sum_{t\in T} t\otimes a_{t}, \quad \varsigma= \sum_{t\in T}1\otimes a_t= 1\otimes \omega.
	\end{equation*}
	It is easily verified that $\sigma^2=\varsigma^2=0$ in $\mathbb{Z}W\otimes \mathcal{A}$. Hence $\sigma$ and $\varsigma$ give rise to two cochain complexes $(\mathbb{Z}W\otimes \mathcal{A}, \ell_\sigma)$ and $(\mathbb{Z}W\otimes \mathcal{A}, r_\varsigma)$, where the coboundary maps are given by 
	\begin{equation*}\label{eq: sigtau}
		\ell_\sigma(w\otimes x)= \sum_{t\in T} wt\otimes a_{t}x,  
		\quad 	r_\varsigma(w\otimes x) =  \sum_{t\in T} w\otimes xa_{t}.   
	\end{equation*}
	for any $w\otimes x \in \mathbb{Z}W\otimes \mathcal{A}_k$. 
	
		\begin{lemma}\label{lem: adjW}
		For any $v, w\in W$ and  $x,y\in \mathcal{A}$, we have
		\[
		\begin{aligned}
			\langle \ell_\sigma( v\otimes x), w\otimes y\rangle&=\langle v\otimes x, \partial'(w\otimes y)\rangle,    \\
			\langle r_\varsigma(v\otimes x), w\otimes y\rangle&=\langle v\otimes x, \partial''(w\otimes y)\rangle.    
		\end{aligned}
		\]
		Therefore, with respect to the bilinear form on $\mathbb{Z}W\otimes \mathcal{A}$,  $\partial'$ and $\partial''$ are right adjoint to the linear operators $\ell_{\sigma}$ and $r_{\varsigma}$, respectively. 
	\end{lemma}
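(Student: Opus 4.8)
The plan is to prove both adjunctions by a direct computation, pushing the bilinear form on $\mathbb{Z}W\otimes\mathcal{A}$ onto its two tensor factors and then quoting the adjointness already established on $\mathcal{A}$. The two ingredients I would use are \lemref{lem: pars} (so that $\partial'=\sum_{t\in T}t\otimes\delta_t$ and $\partial''=1\otimes d$) and \lemref{lem: adj} (so that $\langle a_t x,y\rangle=\langle x,\delta_t(y)\rangle$ and $\langle x a_t,y\rangle=\langle x,d_t(y)\rangle$).

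For the first identity, fix $v,w\in W$ and homogeneous $x,y\in\mathcal{A}$; we may assume $\deg y=\deg x+1$, since otherwise both sides vanish by the grading of the form. Expanding the definitions gives
\[
\langle \ell_\sigma(v\otimes x),\,w\otimes y\rangle=\sum_{t\in T}\langle vt,w\rangle\,\langle a_t x,\,y\rangle,\qquad
\langle v\otimes x,\,\partial'(w\otimes y)\rangle=\sum_{t\in T}\langle v,wt\rangle\,\langle x,\,\delta_t(y)\rangle .
\]
The key observation is that every reflection satisfies $t=t^{-1}$, so $\langle vt,w\rangle=\delta_{vt,w}=\delta_{v,wt}=\langle v,wt\rangle$; hence both sums are concentrated on the single index $t=v^{-1}w$ (and are zero unless $v^{-1}w\in T$). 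For that $t$, \lemref{lem: adj} converts $\langle a_t x,y\rangle$ into $\langle x,\delta_t(y)\rangle$, and the two sides coincide. I would note in passing that here $\mathbb{Z}W$ is regarded as a right module, as stipulated just before the lemma, so that $\ell_\sigma$ genuinely multiplies by reflections on the right.

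The second identity is easier, because neither $r_\varsigma$ nor $\partial''$ touches the group factor: one obtains $\langle r_\varsigma(v\otimes x),w\otimes y\rangle=\langle v,w\rangle\sum_{t\in T}\langle x a_t,y\rangle$ and $\langle v\otimes x,\partial''(w\otimes y)\rangle=\langle v,w\rangle\,\langle x,d(y)\rangle$, and these agree because $\sum_{t\in T}\langle x a_t,y\rangle=\sum_{t\in T}\langle x,d_t(y)\rangle=\langle x,d(y)\rangle$ by \lemref{lem: adj} and $d=\sum_{t\in T}d_t$. There is no serious obstacle here; the only point needing care is the involution identity $t=t^{-1}$, which forces the two group-theoretic sums in the first identity to have the same support, together with the bookkeeping of which variable each reflection multiplies.
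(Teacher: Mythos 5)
Your proof is correct and follows essentially the same route as the paper: expand $\ell_\sigma$ and $\partial'$ via \lemref{lem: pars}, use $\langle vt,w\rangle=\langle v,wt\rangle$ (reflections being involutions) to match the group factors, and invoke \lemref{lem: adj} to convert $\langle a_tx,y\rangle$ into $\langle x,\delta_t(y)\rangle$, with the second adjunction handled analogously via $d=\sum_{t\in T}d_t$. No gaps.
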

	\begin{proof}
		We only prove the first adjunction; the second one can be treated similarly. 
		Assume that $x\in \mathcal{A}_{k-1}$ and $y\in \mathcal{A}_k$ for some $k=0,\dots n$. Then we have
		\[
		\begin{aligned}
			\langle \ell_\sigma(v\otimes x), w \rangle=&\sum_{t\in T}\langle  vt\otimes a_tx, w\otimes y\rangle = \sum_{t\in T} \langle vt, w\rangle \,\langle a_tx,y) \\
			=& \sum_{t\in T} \langle v, wt\rangle\,\langle x, \delta_t(y)) 
			= \sum_{t\in T}\langle v\otimes x, (t\otimes \delta_t)(w\otimes y)\rangle \\
			=&\langle v\otimes x, \partial'(w\otimes y)\rangle,
		\end{aligned}  
		\]
		where the third equation follows from the adjoint property in \lemref{lem: adj}, and last equation is a consequence of \lemref{lem: pars}. 
	\end{proof}

\begin{proposition}\label{prop: aycpar}
	The chain complexes $(\mathbb{Z}W\otimes \mathcal{A}, \partial')$ and $(\mathbb{Z}W\otimes \mathcal{A}, \partial'')$ are both acyclic.
\end{proposition}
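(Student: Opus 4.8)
The plan is to realise each of $(\mathbb{Z}W\otimes\mathcal{A},\partial')$ and $(\mathbb{Z}W\otimes\mathcal{A},\partial'')$ as a direct sum of $|W|$ copies of one of the acyclic complexes $(\mathcal{A},\delta)$ and $(\mathcal{A},d)$, via an explicit graded isomorphism of complexes.

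The complex $(\mathbb{Z}W\otimes\mathcal{A},\partial'')$ is the easy case. By \lemref{lem: pars} we have $\partial''=1\otimes d$, so each subspace $w\otimes\mathcal{A}$ with $w\in W$ is a subcomplex isomorphic to $(\mathcal{A},d)$, whence $(\mathbb{Z}W\otimes\mathcal{A},\partial'')\cong\bigoplus_{w\in W}(\mathcal{A},d)$. Since $(\mathcal{A},d)$ is acyclic (\propref{prop: propB}), so is $(\mathbb{Z}W\otimes\mathcal{A},\partial'')$.

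For $\partial'$ the $\mathbb{Z}W$-component does move, but it can be untwisted. For a nonzero monomial $a_{\bf t}=a_{t_1}a_{t_2}\cdots a_{t_k}$ of $\mathcal{A}$, let $u({\bf t}):=t_1t_2\cdots t_k\in\mathcal{L}$ be the lattice element it represents (\lemref{coro: nonzero}). Because the relations of $\mathcal{A}$ are homogeneous for the refined grading $\mathcal{A}_k=\bigoplus_{u\in\mathcal{L}_k}\mathcal{A}_u$ (\lemref{lem: property}; note also that all monomials in a given $\mathcal{L}_2$-relation represent the same element, cf. \rmkref{rmk: welldef}), the assignment $w\otimes a_{\bf t}\mapsto w\,u({\bf t})\otimes a_{\bf t}$ (for $w\in W$ and any nonzero monomial $a_{\bf t}$) extends to a well-defined graded $\mathbb{Z}$-linear automorphism $\Psi$ of $\mathbb{Z}W\otimes\mathcal{A}$, with inverse $w\otimes a_{\bf t}\mapsto w\,u({\bf t})^{-1}\otimes a_{\bf t}$, equivariant for the left $W$-action on $\mathbb{Z}W$. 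The key computation is the identity
\[
t_1^{t_i}t_2^{t_i}\cdots t_{i-1}^{t_i}\,t_{i+1}\cdots t_k\;=\;t_i\,(t_1t_2\cdots t_{i-1})\,t_i\,t_{i+1}\cdots t_k\;=\;t_i\,u({\bf t}),
\]
valid since the $t_j$ are involutions: it shows that in $\partial'(w\,u({\bf t})^{-1}\otimes a_{\bf t})$ the $i$-th term has $\mathbb{Z}W$-part $w\,u({\bf t})^{-1}t_i$ and $\mathcal{A}$-part a monomial with lattice element $t_i\,u({\bf t})$, so that applying $\Psi$ restores the $\mathbb{Z}W$-part to $w\,u({\bf t})^{-1}t_i\cdot t_i\,u({\bf t})=w$. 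Comparing with the formula \eqref{eq: del} for $\delta$ then gives $\Psi\,\partial'\,\Psi^{-1}=1\otimes\delta$. Hence $(\mathbb{Z}W\otimes\mathcal{A},\partial')\cong\bigoplus_{w\in W}(\mathcal{A},\delta)$, which is acyclic because $(\mathcal{A},\delta)$ is (\propref{prop: delta}).

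The only delicate point is checking that $\Psi$ is well-defined, i.e. that multiplying the $\mathbb{Z}W$-factor by $u({\bf t})$ is compatible with the linear relations of $\mathcal{A}$; this is exactly where the decomposition $\mathcal{A}_k=\bigoplus_{u\in\mathcal{L}_k}\mathcal{A}_u$ is needed, and everything else reduces to the short manipulation of \eqref{eq: defpartials} and \eqref{eq: del} above. As a cross-check, one can instead derive the acyclicity of $(\mathbb{Z}W\otimes\mathcal{A},\partial')$ from \lemref{lem: adjW} — which exhibits $\partial'$ as the adjoint of $\ell_\sigma$ for the unimodular form on $\mathbb{Z}W\otimes\mathcal{A}$ — together with the universal coefficient theorem and the corresponding acyclicity statement for $(\mathbb{Z}W\otimes\mathcal{A},\ell_\sigma)$, which follows from \propref{prop: acyccocx} by the same twisting map $\Psi$; but the direct argument via $\Psi$ seems cleanest.
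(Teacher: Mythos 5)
Your proposal is correct, and for $\partial'$ it is genuinely different from, and markedly shorter than, the paper's proof. The paper handles $\partial''=1\otimes d$ the same way you do (flatness of $\mathbb{Z}W$ over $\mathbb{Z}$), but for $\partial'$ it applies $1\otimes\kappa^{-1}$ to convert $\partial'$ into a twisted differential $\widetilde{\partial}'(w\otimes a_{t_1}\cdots a_{t_k})=\sum_i(-1)^{k-i}wt_i^{t_{i-1}\cdots t_1}\otimes a_{t_1}\cdots\hat{a}_{t_i}\cdots a_{t_k}$, and then runs a page-long argument: it identifies a free abelian subgroup $B_k\subseteq\operatorname{Im}\widetilde{\partial}'_k$ spanned by images of the basis $\mathcal{D}_{[k-1]}$, proves linear independence by exploiting the upper-triangularity \eqref{eq: upptri} of the bilinear form under a total order, compares ranks via the split exact sequence \eqref{eq: exseq} to get $\operatorname{rank}\operatorname{Ker}\widetilde{\partial}'_k=\operatorname{rank}\operatorname{Im}\widetilde{\partial}'_{k+1}$, and finally shows the quotient is torsion-free by a divisibility argument with the form. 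Your untwisting map $\Psi(w\otimes a_{\bf t})=w\,u({\bf t})\otimes a_{\bf t}$ bypasses all of this: the identity $t_1^{t_i}\cdots t_{i-1}^{t_i}t_{i+1}\cdots t_k=t_i\,u({\bf t})$ gives $\Psi\,\partial'\,\Psi^{-1}=1\otimes\delta$ directly, reducing the claim to the already-known acyclicity of $(\mathcal{A},\delta)$. The well-definedness of $\Psi$ is exactly as you say: on the summand $\mathbb{Z}W\otimes\mathcal{A}_u$ of the decomposition $\mathbb{Z}W\otimes\mathcal{A}_k=\bigoplus_{u\in\mathcal{L}_k}\mathbb{Z}W\otimes\mathcal{A}_u$ (\lemref{lem: property}), $\Psi$ is just $r_u\otimes\operatorname{id}$, and the global map is the direct sum. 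One minor caution on your cross-check: the paper proves \propref{prop: ayccomp} (acyclicity of $\ell_\sigma$) \emph{from} \propref{prop: aycpar}, so to avoid circularity your alternative route must, as you indeed indicate, derive the acyclicity of $\ell_\sigma$ independently via $\Psi\,\ell_\sigma\,\Psi^{-1}=1\otimes\ell_\omega$ and \propref{prop: acyccocx}, rather than by citing \propref{prop: ayccomp} itself. With that reading, both of your arguments are sound.
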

\begin{proof}{}
By \lemref{lem: pars}, we have $\partial''=1\otimes d$. As $\mathbb{Z}W$ is a flat $\mathbb{Z}$-module, the acyclicity of $(\mathbb{Z}W\otimes \mathcal{A}, \partial'')$ follows from that of $(\mathcal{A}, d)$, which is given in \propref{prop: dprop}. 

It remains to prove that  $(\mathbb{Z}W\otimes \mathcal{A}, \partial')$ is acyclic. Recall from \eqnref{eq: kap} the Kereweras automorphism $\kappa: \mathcal{A}\rightarrow \mathcal{A}$ whose inverse is given by  $\kappa^{-1}(a_{t_1}a_{t_2}\dots a_{t_k})= a_{t_k} a_{t_{k-1}}^{t_k}\dots a_{t_1}^{t_2\dots t_k}$. Note that this is a graded automorphism.  Applying $1\otimes \kappa^{-1}$ to the chain complex $(\mathbb{Z}W\otimes \mathcal{A}, \partial')$, we obtain a new chain complex  $(\mathbb{Z}W\otimes \mathcal{A}, \widetilde{\partial}')$ whose boundary map satisfies $  \widetilde{\partial}'(1\otimes \kappa^{-1})= (1\otimes \kappa^{-1}) \partial'$, defined explicitly by 
\begin{equation}\label{eq: wtdpar}
	 \widetilde{\partial}'(w \otimes a_{t_1}a_{t_2}\dots a_{t_k})= \sum_{i=1}^k(-1)^{k-i} wt_i^{t_{i-1}\dots t_1}\otimes a_{t_1}\dots \hat{a}_{t_i}\dots a_{t_k} 
\end{equation}
for any $w\in W$ and nonzero element $a_{t_1}a_{t_2}\dots a_{t_k}\in \mathcal{A}_k$. Hence we are reduced to proving the acyclicity of $(\mathbb{Z}W\otimes \mathcal{A}, \widetilde{\partial}')$. 

For each $k=0,1, \dots n$, we have the following split exact sequence
\begin{equation}\label{eq: exseq}
   0\rightarrow {\rm Ker}\, \widetilde{\partial}'_k  \rightarrow \mathbb{Z}W\otimes \mathcal{A}_k 
   \rightarrow {\rm Im}\, \widetilde{\partial}'_k  \rightarrow 0.
\end{equation}
To prove that the homology $H_k= {\rm Ker}\, \widetilde{\partial}'_k /{\rm Im}\, \widetilde{\partial}'_{k+1}$ is trivial, we shall determine each image ${\rm Im}\, \widetilde{\partial}'_k$, and then use the exact sequence above to show that ${\rm Im}\, \widetilde{\partial}'_{k+1}$ and ${\rm Ker}\, \widetilde{\partial}'_k $ have equal ranks for all $k$. Finally, we show that the  homology is torsion free and hence is trivial. 

Let us start with a combinatorial description of the chain group $\mathbb{Z}W\otimes \mathcal{A}_k$. As the chain complex $(\mathcal{A}, d)$ is acyclic, we have  decompositions of free abelian groups $\mathcal{A}_k=d(\mathcal{A}_k)\oplus d(\mathcal{A}_{k+1})$ for $0\leq k\leq n-1$. It is proved in \cite[Theorem 4.8]{Zha22} that $d(\mathcal{A}_k)$ has a $\mathbb{Z}$-basis consisting of elements $d(a_{t_1}\dots a_{t_k})$ for $1\leq k\leq n$, where $(t_1, \dots ,t_k)\in \mathcal{D}_{[k-1]}$ with $\mathcal{D}_{[k-1]}$ defined by 
\[
   \mathcal{D}_{[k-1]}:=\Big\{ (t_1,\dots, t_{k}) \Bigm|
     \begin{matrix}
       &  \text{\, $\gamma=t_1t_2\dots t_n$ is $T$-reduced and \, } \\
       &t_1\succ \dots \succ t_{k-1}\succ t_{k}\prec t_{k+1}\prec \dots\prec t_{n}
     \end{matrix}
      \Big\}. 
\]
Then we have 
\[ {\rm rank}(\mathbb{Z}W\otimes \mathcal{A}_k)= |W|(\mathcal{D}_{[k]}+ \mathcal{D}_{[k-1]}), \quad 0\leq k\leq n. \]
Moreover, recall from \corref{coro: basis} that $\mathcal{A}_k$ has a $\mathbb{Z}$-basis $\{ a_{{\bf t}}= a_{t_1}\dots a_{t_k}| {\bf t}=(t_1, \dots t_k)\in  \bigcup_{u\in \mathcal{L}_k} \mathcal{D}_u\}$, where $\mathcal{D}_{u}$ is defined as in \eqnref{eq: decw}. Therefore,  $\{ w\otimes a_{{\bf t}}| w\in W, {\bf t}\in \bigcup_{u\in \mathcal{L}_k} \mathcal{D}_u\}$ is a $\mathbb{Z}$-basis for $\mathbb{Z}W\otimes \mathcal{A}_k$. 

Now we consider the image ${\rm Im}\, \widetilde{\partial}'_k$. 
Denote by  $B_k\subseteq \mathbb{Z}W\otimes \mathcal{A}_{k-1}$ the free abelian group spanned by the set $S_{[k-1]}:=\{ \widetilde{\partial}'(w\otimes a_{{\bf t}})\mid w\in W, {\bf t}\in \mathcal{D}_{[k-1]} \}$. Then we have $B_k \subseteq {\rm Im}\, \widetilde{\partial}'_k$.  We shall prove that the set $S_{[k-1]}$ is $\mathbb{Z}$-linearly independent. To this end, we choose a total order on the basis of $\mathbb{Z}W\otimes \mathcal{A}_k$ for each $k$ such that
\begin{equation}\label{eq: totalord}
	  w\otimes a_{{\bf t}} <  v \otimes a_{{\bf t}'} \text{\quad whenever \quad} a_{{\bf t}}< a_{{\bf t}'},
\end{equation}
for any $v,w\in W$ and ${\bf t}, {\bf t}'\in \bigcup_{u\in \mathcal{L}_k} \mathcal{D}_u$, where $a_{{\bf t}}< a_{{\bf t}'}$ is the total order defined in \eqref{eq: lexorder}. Then it follows from \eqnref{eq: upptri} that for any  pair of basis elements $w\otimes a_{{\bf t}} <  v \otimes a_{{\bf t}'}$
\begin{equation}\label{eq: lower}
	\langle v \otimes a_{{\bf t}'}, w\otimes a_{{\bf t}}\rangle =0. 
\end{equation}
Now assume that there exist nonzero $
  \lambda_{w_0,{\bf t}_0} , \lambda_{w, {\bf t}}\in \mathbb{Z}$ such that 
\[
  \lambda_{w_0,{\bf t}_0}  \widetilde{\partial}'(w_0\otimes a_{{\bf t}_0})= \sum_{ w_0\otimes a_{{\bf t}_0} > w\otimes a_{{\bf t}}} \lambda_{w,{\bf t}} \widetilde{\partial}'(w\otimes a_{{\bf t}}),
\]
for some elements $\widetilde{\partial}'(w\otimes a_{{\bf t}})$ and $\widetilde{\partial}'(w_0\otimes a_{{\bf t}_0})$ of the set $S_{[k-1]}$. In view of the definition \eqref{eq: wtdpar}, for any element $w\otimes a_{{\bf t}}=w\otimes a_{t_1}\dots a_{t_k}$ with $t_1\succ \dots \succ t_k$, the element 
\[
w\otimes a_{{\bf t}}(\hat{k}):= wt_k^{t_{k-1}\dots t_1}\otimes a_{t_1}\dots a_{t_{k-1}}
\]
 is the unique maximal term under the total order \eqnref{eq: totalord} in the expression of $\widetilde{\partial}'(w\otimes a_{t_1}\dots a_{t_k})$. Hence by \eqnref{eq: lower} and \eqnref{eq: diagone} we have $\langle  w\otimes a_{{\bf t}}(\hat{k}), \widetilde{\partial}'(w\otimes a_{{\bf t}})\rangle=1$.  Moreover, observe that if $w_0\otimes a_{{\bf t}_0} > w\otimes a_{{\bf t}}$ for ${\bf t}, {\bf t}_0\in \mathcal{D}_{[k-1]}$,  then $w_0\otimes a_{{\bf t}_0}(\hat{k})> w\otimes a_{{\bf t}}(\hat{k})$ (cf. \cite[Proposition 4.7]{Zha22}). This further implies that $\langle w_0\otimes a_{{\bf t}_0}(\hat{k}),  \widetilde{\partial}'(w\otimes a_{{\bf t}})\rangle=0$ by \eqnref{eq: lower}. Therefore, we obtain 
 \[ 
 \begin{aligned}
   \lambda_{w_0,{\bf t}_0}&=\langle w_0\otimes a_{{\bf t}_0}(\hat{k}),  \lambda_{w_0,{\bf t}_0}\widetilde{\partial}'(w_0\otimes a_{{\bf t}_0})\rangle \\
   &= \sum_{ w_0\otimes a_{{\bf t}_0} > w\otimes a_{{\bf t}}} \langle w_0\otimes a_{{\bf t}_0}(\hat{k}), \lambda_{w,{\bf t}}  \widetilde{\partial}'(w\otimes a_{{\bf t}})\rangle\\
   &=0,
 \end{aligned}
  \]
  which contradicts our assumption that $\lambda_{w, {\bf t}_0}\neq 0$. Therefore, $S_{[k-1]}$ is a $\mathbb{Z}$-linearly independent set, which spans the free abelian subgroup $B_k\subseteq {\rm Im}\, \widetilde{\partial}'_k$. 

Now by the exact sequence \eqnref{eq: exseq} we obtain 
\[
\begin{aligned}
  {\rm rank}\, {\rm Ker}\, \widetilde{\partial}'_k &= {\rm rank}\, \mathbb{Z}W\otimes \mathcal{A}_k- {\rm rank}\, {\rm Im}\, \widetilde{\partial}'_k \leq {\rm rank}\, \mathbb{Z}W\otimes \mathcal{A}_k- {\rm rank}\,B_k\\
  &=|W|(\mathcal{D}_{[k]}+ \mathcal{D}_{[k-1]})- |W||\mathcal{D}_{[k-1]}|\\
  &=  |W||\mathcal{D}_{[k]}|. 
\end{aligned}
\]
On the other hand, since $B_{k+1}\subseteq {\rm Im}\, \widetilde{\partial}'_{k+1} \subseteq {\rm Ker}\, \widetilde{\partial}'_k$, we have
\[
  {\rm rank}\, {\rm Ker}\, \widetilde{\partial}'_k \geq {\rm rank}\,{\rm Im}\, \widetilde{\partial}'_{k+1} \geq {\rm rank}\,B_{k+1}= |W||\mathcal{D}_{[k]}|.
\]
Combing the above two inequalities, for each $k$ we have $ {\rm rank}\, {\rm Ker}\, \widetilde{\partial}'_k=|W||\mathcal{D}_{[k]}|$, and 
\[{\rm rank}\, {\rm Im}\, \widetilde{\partial}'_k={\rm rank}\, \mathbb{Z}W\otimes \mathcal{A}_k-{\rm rank}\, {\rm Ker}\, \widetilde{\partial}'_k= |W||\mathcal{D}_{[k-1]}|.\]
 It follows that ${\rm Im}\, \widetilde{\partial}'_{k+1}$ and ${\rm Ker}\, \widetilde{\partial}'_k $ have equal rank. 


It remains to show that the homology $H_k={\rm Ker}\, \widetilde{\partial}'_k/{\rm Im}\, \widetilde{\partial}'_{k+1}$ is trivial.
We aim to prove that $B_{k+1}={\rm Ker}\, \widetilde{\partial}'_k={\rm Im}\, \widetilde{\partial}'_{k+1}$. 
By the above arguments on ranks,  $B_{k+1}\subseteq {\rm Ker}\, \widetilde{\partial}'_{k}$ is  a free abelian subgroup of maximal rank. Therefore, every element of the quotient group ${\rm Ker}\, \widetilde{\partial}'_{k}/B_{k+1}$ has finite order. For any $\beta\in {\rm Ker}\, \widetilde{\partial}'_{k}$, there exists a nonzero integer $m$ such that 
\begin{equation}\label{eq: mbeta}
	 m\beta = \sum_{w\in W, {\bf t}\in \mathcal{D}_{[k]}} \lambda_{w, {\bf t}} \widetilde{\partial}'_{k+1}(w\otimes a_{{\bf t}})\in B_{k+1}, \quad \lambda_{w, {\bf t}}\in \mathbb{Z}.
\end{equation}
Suppose that $w_0\otimes {\bf t}_0$ is the biggest element under the total order \eqref{eq: totalord} such that $m \nmid \lambda_{w_0, {\bf t}_0}$. Then 
\[
 m\beta- \sum_{w\otimes {\bf t}> w_0\otimes {\bf t}_0 }\lambda_{w, {\bf t}} \widetilde{\partial}'_{k+1}(w\otimes a_{{\bf t}})  = \lambda_{w_0, {\bf t}_0} \widetilde{\partial}'_{k+1}(w_0\otimes a_{{\bf t}_0})+ \sum_{w\otimes {\bf t}< w_0\otimes {\bf t}_0 } \lambda_{w, {\bf t}} \widetilde{\partial}'_{k+1}(w\otimes a_{{\bf t}}). 
\]
Recalling that $\langle  w_0\otimes a_{{\bf t}_0}(\hat{k}), \widetilde{\partial}'_{k+1}(w\otimes a_{{\bf t}})\rangle=0 $ for any $w\otimes {\bf t}< w_0\otimes {\bf t}_0 $ with ${\bf t}, {\bf t}_0\in \mathcal{D}_{[k]}$ and $\langle  w_0\otimes a_{{\bf t}_0}(\hat{k}), \widetilde{\partial}'_{k+1}(w_0\otimes a_{{\bf t}_0})\rangle=1$, we have 
\[
 \langle w_0\otimes a_{{\bf t}_0}(\hat{k}), m\beta- \sum_{w\otimes {\bf t}> w_0\otimes {\bf t}_0 }\lambda_{w, {\bf t}} \widetilde{\partial}'_{k+1}(w\otimes a_{{\bf t}}) \rangle =\lambda_{w_0, {\bf t}_0}.
\]
By the choice of $w_0\otimes {\bf t}_0 $, we have $m| \lambda_{w, {\bf t}}$ for any $w\otimes {\bf t}> w_0\otimes {\bf t}_0$. Thus the left hand side is divisible by $m$, so is $\lambda_{w_0, {\bf t}_0}$ on the right hand side. This contradicts our assumption for $\lambda_{w_0, {\bf t}_0}$. Therefore, all coefficients $\lambda_{w, {\bf t}}$ in \eqnref{eq: mbeta} are divisible by $m$, and hence $\beta\in B_{k+1}$. This proves that $B_{k+1}= {\rm Ker}\, \widetilde{\partial}'_{k}$. Similarly, one can prove that $B_{k+1}= {\rm Im}\, \widetilde{\partial}'_{k+1}$. Thus the homology group $H_k={\rm Ker}\, \widetilde{\partial}'_k/{\rm Im}\, \widetilde{\partial}'_{k+1}$ is trivial. 
\end{proof}
	
	\begin{proposition}\label{prop: ayccomp}
		The cochain complexes $(\mathbb{Z}W\otimes \mathcal{A}, \ell_\sigma)$ and $(\mathbb{Z}W\otimes \mathcal{A}, r_\varsigma)$ are both acyclic. 
	\end{proposition}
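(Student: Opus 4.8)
The plan is to run the argument of Proposition~\ref{prop: acyccocx} verbatim, with the pair of chain complexes $(\mathbb{Z}W\otimes\mathcal{A},\partial')$ and $(\mathbb{Z}W\otimes\mathcal{A},\partial'')$ playing the role of $(\mathcal{A},d)$ and $(\mathcal{A},\delta)$, the unimodular pairing on $\mathbb{Z}W\otimes\mathcal{A}$ playing the role of the pairing on $\mathcal{A}$, and the isomorphism $\psi_W$ of \eqref{eq: psiW} playing the role of $\psi$. The three inputs are already available: the adjunctions of Lemma~\ref{lem: adjW}, the acyclicity of $(\mathbb{Z}W\otimes\mathcal{A},\partial')$ and $(\mathbb{Z}W\otimes\mathcal{A},\partial'')$ from Proposition~\ref{prop: aycpar}, and the universal coefficient formalism of Theorem~\ref{thm: uct}.

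The first step is the analogue of Lemma~\ref{lem: isocpl}: I would show that $\psi_W$ is an isomorphism of complexes $(\mathbb{Z}W\otimes\mathcal{A},\partial')\cong\big((\mathbb{Z}W\otimes\mathcal{A})^{\ast},\ell_\sigma^{\ast}\big)$ and $(\mathbb{Z}W\otimes\mathcal{A},\partial'')\cong\big((\mathbb{Z}W\otimes\mathcal{A})^{\ast},r_\varsigma^{\ast}\big)$, where $\ell_\sigma^{\ast}$ and $r_\varsigma^{\ast}$ denote the adjoints of $\ell_\sigma$ and $r_\varsigma$, defined on $(\mathbb{Z}W\otimes\mathcal{A})^{\ast}$ by $\ell_\sigma^{\ast}(\psi_W(y))(x):=\psi_W(y)(\ell_\sigma(x))=\langle \ell_\sigma(x),y\rangle$ and similarly for $r_\varsigma^{\ast}$. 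By Lemma~\ref{lem: adjW} one has $\langle \ell_\sigma(x),y\rangle=\langle x,\partial'(y)\rangle=\psi_W(\partial'(y))(x)$, so that $\ell_\sigma^{\ast}\circ\psi_W=\psi_W\circ\partial'$, and likewise $r_\varsigma^{\ast}\circ\psi_W=\psi_W\circ\partial''$; since $\psi_W$ is a graded isomorphism of free abelian groups, these are isomorphisms of complexes.

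The second step is to conclude. By Proposition~\ref{prop: aycpar} the complexes $(\mathbb{Z}W\otimes\mathcal{A},\partial')$ and $(\mathbb{Z}W\otimes\mathcal{A},\partial'')$ are acyclic, hence so are $\big((\mathbb{Z}W\otimes\mathcal{A})^{\ast},\ell_\sigma^{\ast}\big)$ and $\big((\mathbb{Z}W\otimes\mathcal{A})^{\ast},r_\varsigma^{\ast}\big)$. These are precisely the $\mathbb{Z}$-duals of the cochain complexes $(\mathbb{Z}W\otimes\mathcal{A},\ell_\sigma)$ and $(\mathbb{Z}W\otimes\mathcal{A},r_\varsigma)$; since $\mathbb{Z}W\otimes\mathcal{A}$ is a finitely generated free abelian group in each degree, the remark following Theorem~\ref{thm: uct} (acyclicity of a complex forces acyclicity of its dual) together with the canonical isomorphism $\mathcal{C}^{\ast\ast}\cong\mathcal{C}$ shows that $(\mathbb{Z}W\otimes\mathcal{A},\ell_\sigma)$ and $(\mathbb{Z}W\otimes\mathcal{A},r_\varsigma)$ are acyclic, which is the assertion.

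I do not anticipate a genuine obstacle, as every ingredient is already in place and the argument is formally identical to that of Proposition~\ref{prop: acyccocx}; the only point needing a little care is the grading bookkeeping, since $\ell_\sigma,r_\varsigma$ raise degree while $\partial',\partial''$ lower it, so one should state the duality with the grading reversed (equivalently, regard a cochain complex as a chain complex in the opposite grading) before invoking Theorem~\ref{thm: uct}.
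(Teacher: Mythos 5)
Your proof is correct and follows precisely the paper's argument: you make explicit the chain isomorphism $\psi_W$ between $(\mathbb{Z}W\otimes\mathcal{A},\partial')$ and $\big((\mathbb{Z}W\otimes\mathcal{A})^{\ast},\ell_\sigma^{\ast}\big)$ via Lemma~\ref{lem: adjW} (which the paper invokes by pointing back to the method of Lemma~\ref{lem: isocpl}), then apply Proposition~\ref{prop: aycpar} and the universal coefficient theorem, exactly as in the paper. The only addition is your explicit spelling-out of the grading bookkeeping and double-dual step, which the paper leaves implicit.
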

	\begin{proof}
		By \lemref{lem: adjW} $\partial'$ is right adjoint to $\ell_{\sigma}$. Using the same method as in \lemref{lem: isocpl}, one can show that the isomorphism $\psi_W$ given in \eqnref{eq: psiW} induces an isomorphism between chain complexes $(\mathbb{Z}W\otimes \mathcal{A}, \partial')$ and   $((\mathbb{Z}W\otimes \mathcal{A})^*, \ell_\sigma^*)$. The former is acyclic by \propref{prop: aycpar}, so is the latter. Hence $(\mathbb{Z}W\otimes \mathcal{A}, \ell_\sigma)$  is acyclic by the universal coefficient theorem for cohomology. Similarly, one can prove the acyclicity of $(\mathbb{Z}W\otimes \mathcal{A}, r_\varsigma)$.
	\end{proof}
	
	\subsection{Dual complexes for Milnor fibres and hyperplane complements}
	In terms of the bilinear forms on $\mathcal{A}$ and $\mathbb{Z}W\otimes \mathcal{A}$, we shall give complexes which are dual to the chain complexes introduced in \secref{sec: comp}. These dual complexes have the same integral cohomology as that of the Milnor fibres and hyperplane complements.

	\subsubsection{Dual complexes for $M$ and $M/W$}
	Recall from \thmref{thm: homoM}  the chain complex which computes the integral homology of the hyperplane complement $M$. The following gives a cochain complex dual to the chain complex.

	\begin{theorem}\label{thm: dualM}
		The integral cohomology of the hyperplane complement $M$ is isomorphic to the cohomology of the following cochain  complex of free abelian groups:
		\[
		0\lr \mathbb{Z}W\ot\CA_0\overset{\partial^0}{\lr} \mathbb{Z}W\ot\CA_{1}\overset{\partial^1}{\lr}  \dots\lr \mathbb{Z}W\ot \CA_n\lr 0,
		\]
		where the coboundary maps are given by $\partial^k:=\ell_\sigma-(-1)^k r_\varsigma$ for $0\leq k\leq n$, i.e.
		\[
		\partial^{k}(w \otimes x) =\sum_{t\in T} wt \otimes a_{t}x- (-1)^k \sum_{t\in T}w\otimes  x a_t, \quad \forall x\in \mathcal{A}_k, w\in W.
		\] 
	\end{theorem}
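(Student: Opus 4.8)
The plan is to recognise the displayed cochain complex as the $\mathbb{Z}$-linear dual of the integral chain complex of \thmref{thm: homoM}, transported back onto $\mathbb{Z}W\ot\mathcal{A}$ via the unimodular pairing, and then to read off its cohomology from the universal coefficient theorem. Write $\mathcal{C}$ for the chain complex $(\mathbb{Z}W\ot\mathcal{A},\partial)$ of \thmref{thm: homoM}, so that $H_k(\mathcal{C})\cong H_k(M;\mathbb{Z})$ for every $k$, and recall (see \eqnref{eq: defpartials}) that $\partial_k=\partial'_k+(-1)^k\partial''_k$. Each chain group $\mathbb{Z}W\ot\mathcal{A}_k$ is a finitely generated free abelian group, so by \thmref{thm: uct} and the remarks following it the dual cochain complex $\mathcal{C}^{\ast}=((\mathbb{Z}W\ot\mathcal{A})^{\ast},\partial^{\ast})$ satisfies $H^k(\mathcal{C}^{\ast};\mathbb{Z})\cong H_k(\mathcal{C})/T_k\oplus T_{k-1}$, where $T_j$ is the torsion subgroup of $H_j(\mathcal{C})$. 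The universal coefficient theorem for $M$ expresses $H^k(M;\mathbb{Z})$ by exactly the same formula in terms of $H_\ast(M;\mathbb{Z})$; since $H_\ast(\mathcal{C})\cong H_\ast(M;\mathbb{Z})$, we get $H^k(\mathcal{C}^{\ast};\mathbb{Z})\cong H^k(M;\mathbb{Z})$. (Equivalently: $\mathcal{C}$ is chain homotopy equivalent to the singular chain complex of $M$, both being bounded-below complexes of free abelian groups with the same homology, so their duals are cochain homotopy equivalent.)

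Next I would identify $\mathcal{C}^{\ast}$, as a cochain complex, with the complex $(\mathbb{Z}W\ot\mathcal{A},\partial^{\bullet})$ in which $\partial^{\bullet}$ acts on $\mathbb{Z}W\ot\mathcal{A}_k$ by $\partial^k:=\ell_\sigma-(-1)^k r_\varsigma$. By \propref{prop: bilinear}, applied to the unimodular form on $\mathbb{Z}W\ot\mathcal{A}$ (as recorded before \eqnref{eq: psiW}), the pairing gives a graded isomorphism $\mathbb{Z}W\ot\mathcal{A}\xrightarrow{\ \sim\ }(\mathbb{Z}W\ot\mathcal{A})^{\ast}$, say $a\mapsto\langle a,-\rangle$. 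Arguing as in the proof of \lemref{lem: isocpl}, to check that this isomorphism carries $\partial^{\bullet}$ to $\partial^{\ast}$ it suffices to verify the adjointness identity $\langle a,\partial_{k+1}(b)\rangle=\langle\partial^k(a),b\rangle$ for $a\in\mathbb{Z}W\ot\mathcal{A}_k$ and $b\in\mathbb{Z}W\ot\mathcal{A}_{k+1}$. This is immediate from \lemref{lem: adjW}, which gives $\langle a,\partial'(b)\rangle=\langle\ell_\sigma(a),b\rangle$ and $\langle a,\partial''(b)\rangle=\langle r_\varsigma(a),b\rangle$: expanding $\partial_{k+1}=\partial'+(-1)^{k+1}\partial''$ and using $(-1)^{k+1}=-(-1)^k$ gives exactly $\langle a,\partial_{k+1}(b)\rangle=\langle(\ell_\sigma-(-1)^k r_\varsigma)(a),b\rangle$. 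That $(\partial^{\bullet})^2=0$ follows either from this identification or directly from $\ell_\sigma^2=r_\varsigma^2=0$ together with the commutativity of left and right multiplication on the algebra $\mathbb{Z}W\ot\mathcal{A}$, as already used in \propref{prop: ayccomp}. Unwinding the definitions of $\ell_\sigma$ and $r_\varsigma$ then yields the explicit formula $\partial^k(w\ot x)=\sum_{t\in T}wt\ot a_t x-(-1)^k\sum_{t\in T}w\ot x a_t$.

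The only genuinely delicate point is the bookkeeping around the pairing. The form on $\mathcal{A}$, and hence on $\mathbb{Z}W\ot\mathcal{A}$, is not symmetric — in the decreasing basis its matrix is merely upper unitriangular (see \eqnref{eq: upptri}, \eqnref{eq: diagone}) — so one must be careful to use the appropriate one of the two pairing isomorphisms $x\mapsto\langle x,-\rangle$ and $x\mapsto\langle-,x\rangle$, and to keep track of the side on which the adjunctions of \lemref{lem: adj} and \lemref{lem: adjW} are stated, so that the transported differential comes out precisely as $\ell_\sigma-(-1)^k r_\varsigma$ rather than a sign- or side-variant. This is the same verification carried out in \lemref{lem: isocpl} and \propref{prop: ayccomp} for the auxiliary complexes, and is otherwise purely mechanical; every other ingredient is a direct appeal to \thmref{thm: homoM}, \thmref{thm: uct}, and \lemref{lem: adjW}.
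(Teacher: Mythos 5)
Your proof is correct and follows essentially the same route as the paper: dualise the chain complex of Theorem~\ref{thm: homoM}, invoke the universal coefficient theorem (Theorem~\ref{thm: uct}) to preserve cohomology, and then transport $\partial^{\ast}$ back onto $\mathbb{Z}W\ot\mathcal{A}$ via the unimodular pairing $\psi'_W\colon a\mapsto\langle a,-\rangle$ of~\eqref{eq: psiWp}, using the adjunctions of Lemma~\ref{lem: adjW} to identify the transported differential with $\ell_\sigma-(-1)^k r_\varsigma$. Your parenthetical alternative justification (both $\mathcal{C}$ and the singular chain complex of $M$ are bounded-below complexes of free abelian groups with the same homology, hence chain homotopy equivalent) and your sign/side cautions are all sound, though they do not change the substance of the argument from the paper's own proof.
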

	
	\begin{proof}
		Recall that the chain complex given in \thmref{thm: homoM} computes the integral homology of $M$. We dualise this chain complex by replacing the $k^{\rm th}$ chain group with $(\mathbb{Z}W \otimes \mathcal{A}_k)^*\cong{\rm Hom}_{\mathbb{Z}}( \mathbb{Z}W\otimes \mathcal{A}_k, \mathbb{Z})$ and the  $k^{\rm th}$ boundary map by its dual coboundary map $\partial^{*}: (\mathbb{Z}W \otimes \mathcal{A}_{k-1})^*\rightarrow  (\mathbb{Z}W \otimes \mathcal{A}_{k})^*$. Then we obtain the following cochain complex:
		\[
		\mathcal{C}(W)^*:  0\lr( \mathbb{Z}W\ot\CA_0)^*\overset{\partial^{*0}}{\lr} (\mathbb{Z}W\ot\CA_{1})^*\overset{\partial^{*1}}{\lr}  \dots\lr (\mathbb{Z}W\ot \CA_n)^*\lr 0.
		\]
		By the universal coefficient theorem for cohomology, we have $H^k(\mathcal{C}(W)^*)\cong H^k(M;\mathbb{Z})$ for $0\leq k\leq n$. It remains to show that $\mathcal{C}(W)^*$ and the complex given in the theorem have the same cohomology. 
		
		Recall the the bilinear form $\langle-,-\rangle$ on $\mathbb{Z}W\otimes \mathcal{A}$ is unimodular. This gives rises to the isomorphism:
		\begin{equation}\label{eq: psiWp}
			\psi'_W: \mathbb{Z}W\otimes \mathcal{A} \rightarrow (\mathbb{Z}W\otimes \mathcal{A})^*, \quad v\otimes x \mapsto  \psi'_W(v\otimes x):= \langle  v\otimes x,-\rangle
		\end{equation}
		for any $v\in W$ and $x\in \mathcal{A}$.  Now consider the following diagram:
		\begin{center}
			\begin{tikzcd}
				\mathbb{Z}W\otimes \mathcal{A}_k \arrow[r, "\partial^k"] \arrow[d, "\psi'_W"]
				& \mathbb{Z}W\otimes \mathcal{A}_{k+1} \arrow[d, "\psi'_W"] \\
				(\mathbb{Z}W\otimes \mathcal{A}_k)^*  \arrow[r,  "\partial^{*k}" ]
				&  (\mathbb{Z}W\otimes \mathcal{A}_{k+1})^*. \end{tikzcd}
		\end{center}
		For any $v, w\in W$, $x\in \mathcal{A}_k$ and $y\in \mathcal{A}_{k+1}$, we have 
		\[
		\begin{aligned}
			\psi'_W(\partial^k(v\otimes x))(w\otimes y) &= \langle \partial^k(v\otimes x), w\otimes y\rangle  = \langle \ell_{\sigma}(v\otimes x)-(-1)^{k} r_{\varsigma}(v\otimes x) , w\otimes y\rangle \\
			&=\langle v\otimes x, (\partial_k'+(-1)^{k+1} \partial_k'')(w\otimes y)\rangle,\\
			&= \partial^{*k}(\psi'_W(v\otimes x))(w\otimes y).
		\end{aligned}
		\]
		where the third equation follows from \lemref{lem: adjW} and the last equation follows from the fact that $\partial_k= \partial_k'+(-1)^{k+1} \partial_k''$. Therefore, $\psi'_W$ is a chain isomorphism between $(\mathbb{Z}W\otimes \mathcal{A}, \partial^k)$ and $\mathcal{C}(W)^*$, and hence these two cochain complexes have the same cohomology.
	\end{proof}

	\begin{theorem}\label{thm: dualMW}
		The integral cohomology of  $M/W$ or the Artin group $A(W)$ is isomorphic to the cohomology of the following cochain  complex of free abelian groups:
		\[
		0\lr \CA_0\overset{\partial^0}{\lr} \CA_{1}\overset{\partial^1}{\lr}  \dots \overset{}{\lr}  \CA_n\lr 0,
		\]
		where  $\partial^k=\ell_{\omega}-(-1)^k r_{\omega}$  for $0\leq k\leq n$ with $\omega=\sum_{t\in T}a_t$, i.e.
		\[
		\partial^{k}( x) =\omega x- (-1)^k  x \omega, \quad \forall x\in \mathcal{A}_k.
		\] 
	\end{theorem}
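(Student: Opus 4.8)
The plan is to mimic the proof of Theorem~\ref{thm: dualM}: I would dualize the chain complex of Theorem~\ref{thm: homoMquo}, which computes $H_*(M/W;\Z)$, and then transport the resulting cochain complex back onto $\CA$ itself by means of the unimodular bilinear form of Proposition~\ref{prop: bilinear}.

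First I would dualize: replace the $k$-th term $\CA_k$ of the complex in Theorem~\ref{thm: homoMquo} by $\CA_k^{*}=\Hom_\Z(\CA_k,\Z)$ and each boundary map $\partial_{k+1}\colon\CA_{k+1}\to\CA_k$ by its transpose $\partial^{* k}\colon\CA_k^{*}\to\CA_{k+1}^{*}$, $\partial^{* k}(\lambda)(y)=\lambda(\partial_{k+1}(y))$, obtaining a cochain complex of free abelian groups. Since the $\CA_k$ are finitely generated, the universal coefficient theorem (Theorem~\ref{thm: uct}) identifies the cohomology of this dualized complex with $H^*(M/W;\Z)=H^*(A(W);\Z)$. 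It then remains only to show that $(\CA,\partial^\bullet)$ as in the statement is isomorphic, as a cochain complex, to this dualized complex.

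For that I would use the graded isomorphism $\psi'\colon\CA\to\CA^{*}$, $x\mapsto\langle x,-\rangle$, of Proposition~\ref{prop: bilinear}. Summing the adjunctions of Lemma~\ref{lem: adj} over all $t\in T$ gives $\langle\omega x,y\rangle=\langle x,\delta(y)\rangle$ and $\langle x\omega,y\rangle=\langle x,d(y)\rangle$ for all $x,y\in\CA$, i.e.\ $\ell_\omega$ and $r_\omega$ are right adjoint to $\delta$ and $d$ respectively. Since the boundary map in Theorem~\ref{thm: homoMquo} is $\partial_{k+1}=\delta_{k+1}+(-1)^{k+1}d_{k+1}$, for $x\in\CA_k$ and $y\in\CA_{k+1}$ one then computes
\begin{align*}
\psi'(\partial^k x)(y)&=\langle(\ell_\omega-(-1)^k r_\omega)(x),y\rangle=\langle x,\delta(y)\rangle+(-1)^{k+1}\langle x,d(y)\rangle\\
&=\langle x,\partial_{k+1}(y)\rangle=\partial^{* k}(\psi'(x))(y),
\end{align*}
so that $\psi'$ intertwines $\partial^k$ with $\partial^{* k}$; in diagram form, the square
\begin{center}
\begin{tikzcd}
\CA_k \arrow[r,"\partial^k"] \arrow[d,"\psi'"] & \CA_{k+1}\arrow[d,"\psi'"]\\
\CA_k^{*} \arrow[r,"\partial^{* k}"] & \CA_{k+1}^{*}
\end{tikzcd}
\end{center}
commutes. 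Hence $\psi'$ is an isomorphism of cochain complexes, which yields $H^k(\CA,\partial^\bullet)\cong H^k(M/W;\Z)$. I would also remark that $(\CA,\partial^\bullet)$ really is a complex: this is automatic from the isomorphism, but it also follows directly from $\omega^2=0$ together with $\ell_\omega r_\omega=r_\omega\ell_\omega$ (both equal $x\mapsto\omega x\omega$) by a short sign check. I expect the only delicate point to be the bookkeeping of the signs $(-1)^k$ through the dualization and the adjunctions; there is no genuine structural obstacle, since unimodularity of the form, the adjointness properties of $d_t$ and $\delta_t$, and the acyclicity inputs are all already in place.
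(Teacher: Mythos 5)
Your proposal is correct and follows essentially the same route the paper takes: the paper's proof of Theorem~\ref{thm: dualMW} is a one-line appeal to "the same method as in the proof of Theorem~\ref{thm: dualM}," which is precisely what you spell out — dualize the chain complex of Theorem~\ref{thm: homoMquo}, invoke the universal coefficient theorem, and transport back via the unimodular form using the adjunctions $\langle\omega x,y\rangle=\langle x,\delta(y)\rangle$, $\langle x\omega,y\rangle=\langle x,d(y)\rangle$ summed from Lemma~\ref{lem: adj}. Your sign bookkeeping ($\partial_{k+1}=\delta+(-1)^{k+1}d$ versus $\partial^k=\ell_\omega-(-1)^kr_\omega$) checks out.
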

	\begin{proof}
		Recall from \thmref{thm:  homoMquo} the chain complex which computes the integral homology of $M/W$ or $A(W)$. The theorem can be proved using the same method as in the proof of \thmref{thm: dualM}.
	\end{proof}

	\subsubsection{Dual complexes for $F$ and $F/W$}
	Recall from \thmref{thm: fullcom} and \thmref{thm: disfibre} the chain complexes which realise the integral homology of the Milnor fibres $F$ and $F/W$, respectively. Using the bilinear form on $\mathbb{Z}W\otimes \mathcal{A}$, we will construct the cochain complexes which are dual to these chain complexes. 
	
	We begin with the following proposition, which identifies $d(\mathcal{A}_{k+1})$ with $\mathcal{A}_{k}\omega$. The latter is the $k^{{\rm th}}$ homogeneous component of the left ideal $\mathcal{A}\omega$ of $\mathcal{A}$ generated by $\omega$. 
	
	\begin{proposition}\label{prop: Adec}
		For each $k=0,\dots, n$, the free abelian group $\mathcal{A}_k$ decomposes as
		\[ \mathcal{A}_k = d(\mathcal{A}_{k+1}) \oplus \mathcal{A}_{k-1}\omega.  \]
		Moreover, we have an isomorphism $\mathcal{A}_{k-1}\omega \cong d(\mathcal{A}_k)$,  given by the linear map $d$. 
	\end{proposition}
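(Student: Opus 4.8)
The plan is to identify the two summands on the right with kernels of the two acyclic differentials on $\mathcal{A}$, and then to use the unimodular pairing together with the Leibniz rule to pin down how they sit inside $\mathcal{A}_k$. First I would record what acyclicity gives: since $(\mathcal{A},d)$ is acyclic (\propref{prop: propB}, transported to $\mathcal{A}$ by \thmref{thm: isothm}), we have $d(\mathcal{A}_{k+1})=\operatorname{im}(d_{k+1})=\ker(d_k)$, a $\mathbb{Z}$-direct summand of $\mathcal{A}_k$ since $\mathcal{A}_{k-1}$ is free; and since the cochain complex $(\mathcal{A},r_\omega)$ is acyclic (\propref{prop: acyccocx}), the homogeneous component $\mathcal{A}_{k-1}\omega=(\mathcal{A}\omega)_k=\operatorname{im}(r_\omega\colon\mathcal{A}_{k-1}\to\mathcal{A}_k)$ equals $\ker(r_\omega\colon\mathcal{A}_k\to\mathcal{A}_{k+1})$, hence is also a direct summand. (The extreme cases $k=0$ and $k=n$ reduce respectively to surjectivity of $d_1$, which holds since $d_1(a_t)=1$, and to surjectivity of $r_\omega$ in top degree, part of the acyclicity of $(\mathcal{A},r_\omega)$.) So it remains to prove $\mathcal{A}_{k-1}\omega\cap d(\mathcal{A}_{k+1})=0$ and $\mathcal{A}_{k-1}\omega+d(\mathcal{A}_{k+1})=\mathcal{A}_k$; granting these, $d$ annihilates $d(\mathcal{A}_{k+1})=\ker d_k$ and is injective on $\mathcal{A}_{k-1}\omega$ with image $d(\mathcal{A}_{k-1}\omega)=d(\mathcal{A}_k)$, which is the asserted isomorphism.

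Next I would extract the numerical half from the bilinear form. By \lemref{lem: adj}, $\langle x\omega,y\rangle=\langle x,d y\rangle$ for $x\in\mathcal{A}_{k-1}$ and $y\in\mathcal{A}_k$, so by unimodularity of $\langle-,-\rangle$ (\propref{prop: bilinear}) the left annihilator of $\mathcal{A}_{k-1}\omega$ in $\mathcal{A}_k$ is exactly $\{y:d y=0\}=\ker d_k=d(\mathcal{A}_{k+1})$. Hence $\operatorname{rank}\mathcal{A}_{k-1}\omega+\operatorname{rank}d(\mathcal{A}_{k+1})=\operatorname{rank}\mathcal{A}_k$ and, dualizing, $\operatorname{rank}\mathcal{A}_{k-1}\omega=\operatorname{rank}d(\mathcal{A}_k)$; thus $\mathcal{A}_{k-1}\omega$ and $d(\mathcal{A}_{k+1})$ have complementary ranks in $\mathcal{A}_k$. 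This already settles the decomposition rationally once the intersection is known to vanish; the remaining work is to get it on the nose over $\mathbb{Z}$.

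The heart of the argument is to show that $d$ carries $\mathcal{A}_{k-1}\omega$ isomorphically onto $d(\mathcal{A}_k)$, which I would do by a leading-term computation. Fix the decreasing-monomial $\mathbb{Z}$-basis $\{a_{\mathbf t}:\mathbf t\in\mathcal{D}_w\}$ of the $\mathcal{A}_w$ from \propref{prop: Abasis}, ordered lexicographically as in the proof of \propref{prop: bilinear}, together with the explicit $\mathbb{Z}$-basis of $d(\mathcal{A}_k)$ of the form $\{d(a_{\mathbf t})\}$ given in \cite{Zha22}. For a basis monomial $x=a_{t_1}\cdots a_{t_{k-1}}$ and $t\in T$ with $xa_t\neq 0$, the Leibniz rule (\propref{prop: propB}(3), in the form $d(xa_t)=x-(dx)a_t$) gives $d(x\omega)=\sum_t\bigl(x-(dx)a_t\bigr)$ over the relevant reflections; expanding $x$ and $dx$ in the respective bases and invoking the unitriangularity already established in \eqref{eq: upptri}--\eqref{eq: diagone}, one checks that pairing $d(x\omega)$ against the dual basis of $d(\mathcal{A}_k)$ recovers the leading monomial of $x$ with coefficient $1$, with strictly lower terms otherwise. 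Hence the matrix of $d|_{\mathcal{A}_{k-1}\omega}$ is unitriangular, so $d|_{\mathcal{A}_{k-1}\omega}\colon\mathcal{A}_{k-1}\omega\xrightarrow{\ \sim\ }d(\mathcal{A}_k)$. Injectivity yields $\mathcal{A}_{k-1}\omega\cap\ker d_k=0$, and surjectivity onto $d(\mathcal{A}_k)$ yields $\mathcal{A}_k=\ker d_k+\mathcal{A}_{k-1}\omega$ (any $y\in\mathcal{A}_k$ has $d y=d(x\omega)$ for some $x$, so $y-x\omega\in\ker d_k$); combined with the first step this gives $\mathcal{A}_k=d(\mathcal{A}_{k+1})\oplus\mathcal{A}_{k-1}\omega$.

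The main obstacle is precisely this unitriangularity: the subtle point is to control $d(x\omega)=\sum_t d(xa_t)$, i.e. to identify correctly which reflection $t$ supplies the leading term after the cancellations forced by the quadratic relations of $\mathcal{A}$, which is where the EL-labelling of $\mathcal{L}$ and the combinatorics of the decreasing sequences $\mathcal{D}_w$ enter decisively. Everything else — the two reductions via acyclicity and the rank bookkeeping from the pairing — is formal.
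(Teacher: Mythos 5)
Your argument diverges from the paper's at the final step. The paper, after showing that $d(\mathcal{A}_{k+1})=\ker d_k$ and $\mathcal{A}_{k-1}\omega=\ker(r_\omega|_{\mathcal{A}_k})$ have complementary ranks, deduces the decomposition from the orthogonality $\langle x\omega,d(y)\rangle=\langle x,d^2(y)\rangle=0$; you instead aim to show directly that $d\colon\mathcal{A}_{k-1}\omega\to d(\mathcal{A}_k)$ is unitriangular in suitable bases, hence a $\mathbb{Z}$-isomorphism.

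The unitriangularity claim does not hold, and the failure is already visible at $k=1$. Since $d(a_t)=1$ for each $t\in T$, we get $d(\omega)=|T|$, so $d|_{\mathcal{A}_0\omega}\colon\mathbb{Z}\omega\to d(\mathcal{A}_1)=\mathbb{Z}$ is multiplication by $|T|$ rather than an isomorphism; correspondingly $d(\mathcal{A}_2)+\mathcal{A}_0\omega=\{\sum_t c_t a_t : \sum_t c_t\equiv 0 \bmod |T|\}$ has index $|T|$ in $\mathcal{A}_1$ (index $3$ already for $W=\Sym_3$). Your own Leibniz step exposes the issue in general: summing $d(xa_t)=x-(dx)a_t$ over the $t\in T$ with $xa_t\neq 0$ yields $d(x\omega)=m(x)\,x-(dx)\sum_{t : xa_t\neq 0}a_t$ with $m(x)=|\{t\in T : xa_t\neq 0\}|$, and $m(x)$ is not $1$ in general (for $x\in\mathcal{A}_0$ it equals $|T|$), so the coefficient on the leading term is $m(x)$ and there is no unitriangularity. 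I would also flag that the paper's own argument is incomplete over $\mathbb{Z}$: orthogonality plus complementary ranks gives the direct sum after tensoring with $\mathbb{Q}$, but to conclude over $\mathbb{Z}$ one needs in addition that the pairing restricted to $\mathcal{A}_{k-1}\omega$ be unimodular, and already $\langle\omega,\omega\rangle=|T|$ shows it is not. There is therefore a genuine gap here; the proposition as stated over $\mathbb{Z}$ should be treated with caution.
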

	
	\begin{proof}
		First, we prove the isomorphism $\mathcal{A}_{k-1}\omega \cong d(\mathcal{A}_k)$.  
		Since the cochain complex $(\mathcal{A}, r_{\omega})$ is  acyclic,
		we have the following short exact sequences
		\[
		0\rightarrow \mathcal{A}_{k-1}\omega \rightarrow \mathcal{A}_{k} \rightarrow  \mathcal{A}_{k}\omega \rightarrow 0, \quad 0\leq k\leq n
		\]
		Then each short exact sequence splits since $\mathcal{A}_{k}\omega$ is free, being a subgroup of the free abelian group $\mathcal{A}_{k+1}$ .  Therefore, we  obtain that 
		\[ \mathcal{A}_k \cong \mathcal{A}_{k-1}\omega \oplus \mathcal{A}_k\omega, \quad   0\leq k\leq n.\]
		Similarly, using the acyclicity of $(\mathcal{A},d)$ we have 
		\[ 
		\mathcal{A}_k\cong d(\mathcal{A}_k)\oplus  d(\mathcal{A}_{k+1}), \quad  0\leq k\leq n.
		\] 
		Note that $\mathcal{A}_n\cong d(\mathcal{A}_n)\cong \mathcal{A}_{n-1}\omega$.  By comparing the isomoprhisms above  we have  $\mathcal{A}_{k-1}\omega \cong d(\mathcal{A}_k)$ for $0\leq k\leq n$. 
		
		Using the adjoint property in \lemref{lem: adj}, we have $\langle x\omega, d(y)\rangle= \langle x, d^2(y)\rangle=0$  for any $x\in \mathcal{A}_{k-1}$ and $y\in \mathcal{A}_{k+1}$. 
		Hence $\mathcal{A}_{k-1}\omega$ is orthogonal to $d(\mathcal{A}_{k+1})$ as subgroups of $\mathcal{A}_k$. Moreover, using the above isomorphisms we obtain
		\[{\rm rank}\, \mathcal{A}_k = {\rm rank}\, d(\mathcal{A}_k) + {\rm rank}\, d(\mathcal{A}_{k+1})={\rm rank}\, \mathcal{A}_{k-1}\omega + {\rm rank}\, d(\mathcal{A}_{k+1}). \]
		Therefore,	 we have the decomposition $\mathcal{A}_k=\mathcal{A}_{k-1}\omega \oplus  d(\mathcal{A}_{k+1})$ for all $k$.  
	\end{proof}

	\begin{theorem}\label{thm: dualF}
		The integral cohomology of the Milnor fibre $F$ is isomorphic to the cohomology of the following cochain complex of free abelian groups:
		\[
		0\lr \mathbb{Z}W\ot \CA_{0}\omega\overset{\ell_{\sigma}}{\lr} \mathbb{Z}W\ot \CA_1\omega\overset{\ell_{\sigma}}{\lr} \dots\lr \mathbb{Z}W\ot \CA_{n-1}\omega\lr 0,
		\]
		where $\omega=\sum_{t\in T}a_t$ and the coboundary maps are given by $\ell_{\sigma}$, i.e.
		\[
		\ell_{\sigma}(w \otimes x\omega)= \sum_{t\in T} wt \otimes a_{t}x\omega, \quad \forall x\in \mathcal{A}_k, w\in W.
		\] 
	\end{theorem}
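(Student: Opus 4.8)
The plan is to dualise the chain complex of \thmref{thm: fullcom} with respect to the unimodular bilinear form on $\Z W\ot\CA$, exactly in the spirit of the proof of \thmref{thm: dualM}, and then to transport the resulting dual complex onto $\Z W\ot\CA_\bullet\omega$ via \propref{prop: Adec}. First I would observe that the complex $\CK$ of \thmref{thm: fullcom} is the subcomplex $(\Z W\ot d(\CA_\bullet),\partial')$ of $(\Z W\ot\CA,\partial')$: by \lemref{lem: pars} one has $\partial'=\sum_{t\in T}t\ot\delta_t$, by \lemref{lem: deld} each $\delta_t$ commutes with $d$, so $\partial'$ commutes with $1\ot d$ and hence preserves $\Z W\ot d(\CA_\bullet)$; comparing formulae (using $\delta_td=d\delta_t$ once more) shows the restriction of $\partial'$ is the boundary map of \thmref{thm: fullcom}. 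Since all chain groups are free abelian and $H_*(\CK)=H_*(F;\Z)$, the universal coefficient theorem \thmref{thm: uct} gives $H^*(F;\Z)\cong H^*(\CK^*)$, where $\CK^*=\Hom_\Z(\CK,\Z)$ carries the transposed coboundary.

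Next I would build an isomorphism of cochain complexes $\Theta\colon(\Z W\ot\CA_\bullet\omega,\ell_\sigma)\to\CK^*$. In degree $k$ define, for $x\in\CA_k$ and $y\in\CA_{k+1}$,
\[
\Theta(w\ot x\omega)\colon\ v\ot d(y)\ \longmapsto\ \langle w,v\rangle\,\langle d(x\omega),\,d(y)\rangle .
\]
By \propref{prop: Adec} the map $d\colon\CA_k\omega\to d(\CA_{k+1})$ is an isomorphism, and since $\CA_k=d(\CA_{k+1})\oplus\CA_{k-1}\omega$ is an orthogonal decomposition with the form unimodular on $\CA_k$ (\propref{prop: bilinear}), the restricted form on $d(\CA_{k+1})$ is again unimodular; together with the standard self-duality of $\Z W$ this makes $\Theta$ a graded isomorphism of free abelian groups. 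To check $\Theta$ is a chain map, i.e.\ $\Theta\circ\ell_\sigma=\partial'^{*}\circ\Theta$, one evaluates both sides on $v\ot d(y)\in\Z W\ot d(\CA_{k+2})$ and, using $\langle wt,v\rangle=\langle w,vt\rangle$ in $\Z W$, reduces to the single identity
\[
\langle d(a_t x\omega),\,d(y)\rangle\ =\ \langle d(x\omega),\,d(\delta_t y)\rangle,\qquad t\in T .
\]
Here the Leibniz rule for $d$ (\propref{prop: propB}(3), applied to $\CA$) together with $d(a_t)=1$ gives $d(a_t x\omega)=a_t\,d(x\omega)+(-1)^{k+1}x\omega$; the correction term $x\omega$ lies in $\CA_k\omega$, orthogonal to $d(\CA_{k+2})$ by \propref{prop: Adec}, so it drops out, and $\langle a_t\,d(x\omega),d(y)\rangle=\langle d(x\omega),\delta_t(d(y))\rangle=\langle d(x\omega),d(\delta_t y)\rangle$ by \lemref{lem: adj} and \lemref{lem: deld}. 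Thus $H^*(F;\Z)\cong H^*(\Z W\ot\CA_\bullet\omega,\ell_\sigma)$, and unwinding $\Theta$ shows the coboundary really is $\ell_\sigma(w\ot x\omega)=\sum_{t\in T}wt\ot a_t x\omega$.

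The main obstacle is precisely this last verification. A priori, transporting $\partial'^{*}$ through $1\ot d$ yields $\ell_\sigma$ followed by the orthogonal projection of $\Z W\ot\CA_{k+1}$ onto $\Z W\ot d(\CA_{k+2})$ along $\Z W\ot\CA_k\omega$; the content of the identity above is that this projection is harmless, which is exactly where the internal structure of $\CA$ — the Leibniz rule, $d(a_t)=1$, and the orthogonality built into \propref{prop: Adec} — must be used, and where sign and degree bookkeeping are easiest to get wrong. Everything else (freeness of the chain groups, the universal coefficient theorem, and recognising $\CK$ as the subcomplex $(\Z W\ot d(\CA_\bullet),\partial')$) is routine given the results already in hand.
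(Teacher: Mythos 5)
The crux of your verification---the ``identity'' $\langle d(a_t x\omega),d(y)\rangle=\langle d(x\omega),d(\delta_t y)\rangle$---is false, and the step that fails is your application of the Leibniz rule of \propref{prop: propB}(3) to $d(a_t\cdot(x\omega))$. That rule is proved for a \emph{single nonzero monomial} $\beta_{\bf t}$ with ${\bf t}\in{\rm Rex}_T(w)$; it does not transport to a product $a_t\cdot y$ for general $y\in\CA$. Expanding $y$ in nonzero monomials $m_j$, some of the products $a_t m_j$ vanish in $\CA$ (the concatenated word ceases to be $T$-reduced, or drops out of $[e,\gamma]$), so $d(a_t m_j)=d(0)=0$, while the formal Leibniz expansion assigns those terms a nonzero contribution. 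The paper is explicit that $d$ is \emph{not} a derivation in this naive sense (see the remark following \propref{prop: delta}). Concretely, in type $A_2$ ($W=\Sym_3$, $t_1=(12)$, $t_2=(13)$, $t_3=(23)$, $\gamma=t_1t_3=t_2t_1=t_3t_2$): one has $a_{t_1}\omega=a_{t_1}a_{t_3}$ because $a_{t_1}^2=0$ and $a_{t_1}a_{t_2}=0$ (since $t_1t_2=\gamma^{-1}\not\le\gamma$), hence
\[
d(a_{t_1}\omega)=d(a_{t_1}a_{t_3})=a_{t_1}-a_{t_3},
\]
whereas your formula $a_{t_1}d(\omega)-\omega$ gives $3a_{t_1}-\omega=2a_{t_1}-a_{t_2}-a_{t_3}$, which is wrong. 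Taking $x=1$, $t=t_1$, $y=a_{t_2}a_{t_1}$ (so $\delta_{t_1}y=-a_{t_3}$), the two sides of your reduced identity become
\[
\langle d(a_{t_1}\omega),d(y)\rangle=\langle a_{t_1}-a_{t_3},\,-a_{t_1}+a_{t_2}\rangle=-1,
\qquad
\langle d(\omega),d(\delta_{t_1}y)\rangle=\langle 3,\,-1\rangle=-3 .
\]
So $\Theta$ is not a cochain map and the argument, as written, does not go through.

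The paper's proof avoids the Leibniz rule entirely. It defines the chain map on the other side, $\xi_k(v\otimes d(z)):=\langle -,\,v\otimes z\rangle$ for $z\in\CA_k\omega$, shows this is well defined using the orthogonality $\langle\CA_{k-1}\omega,\,d(\CA_{k+1})\rangle=0$ from \propref{prop: Adec}, and verifies the chain-map property using the global adjunction $\langle\ell_\sigma(v\otimes x),\,w\otimes y\rangle=\langle v\otimes x,\,\partial'(w\otimes y)\rangle$ of \lemref{lem: adjW} together with $\partial'\partial''=\partial''\partial'$ from \lemref{lem: pars}. Your strategy would succeed if you replaced $\Theta$ by the genuine dual of $\xi_k$, namely $\Theta'(w\otimes x\omega)(v\otimes d(y)):=\langle w,v\rangle\,\langle x\omega,\,y\rangle$ with $y\in\CA_k\omega$ the preimage of $d(y)$ under the isomorphism of \propref{prop: Adec}; the chain-map identity then reduces via \lemref{lem: adj} to $\langle a_t(x\omega),\,y\rangle=\langle x\omega,\,\delta_t y\rangle$, which is a direct application of the adjunction and requires no Leibniz manipulation at all.
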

	\begin{proof}
		For each $k=0,1,\dots, n-1$, let $(\mathbb{Z}W\otimes \mathcal{A}_k\omega)^*= {\rm Hom}_{\mathbb{Z}}(\mathbb{Z}W\otimes \mathcal{A}_k\omega, \mathbb{Z} )$. Consider the following chain complex $((\mathbb{Z}W\ot \CA\omega)^*, \ell^*_{\sigma}) $:
		\[
		0\lr (\mathbb{Z}W\ot \CA_{n-1}\omega)^* \overset{\ell_{\sigma}^*}{\lr} (\mathbb{Z}W\ot \CA_{n-2}\omega)^*\overset{\ell^*_{\sigma}}{\lr} \dots\lr (\mathbb{Z}W\ot \CA_{0}\omega)^*\lr 0.
		\]
		We will show that this chain complex is isomorphic to the chain complex $(\mathbb{Z}W \otimes d(\mathcal{A}_k), \partial_{k-1})$ given in \thmref{thm: fullcom}. Thus, these two chain complexes both compute the integral homology of $F$. By the universal coefficient theorem for cohomology (cf. \thmref{thm: uct}),  the cochain complex $(\mathbb{Z}W\ot \CA\omega, \ell_{\sigma})$ given in the theorem, which is the dual complex of $((\mathbb{Z}W\ot \CA\omega)^*, \ell^*_{\sigma}) $,   has the integral cohomology of $F$. 
		
		Now we only need  to prove that $(\mathbb{Z}W \otimes d(\mathcal{A}_k), \partial_{k-1})$ and $((\mathbb{Z}W\ot \CA\omega)^*, \ell^*_{\sigma}) $ are isomorphic. 
		We start by constructing an isomorphism between $\mathbb{Z}W \otimes d(\mathcal{A}_k)$ and $(\mathbb{Z}W\ot \CA_{k-1}\omega)^*$ for each $k=1, \dots ,n$. By \propref{prop: Adec}, we have 
		\[ \mathbb{Z}W\otimes \mathcal{A}_k\, \omega \cong  \mathbb{Z}W\otimes d(\mathcal{A}_{k+1}),\quad  w\otimes x \mapsto \partial''(w\otimes x)=u\otimes d(x) 
		\]
		for any $w\in W$ and $x\in \mathcal{A}_k\omega$. Moreover, the linear map $\psi_W$ defined in \eqref{eq: psiW} restricts to the following isomorphism:
		\[
		\mathbb{Z}W\otimes \mathcal{A}_k\, \omega  \cong (\mathbb{Z}W \otimes \mathcal{A}_k\, \omega)^*, \quad w\otimes x \mapsto \psi_W(w\otimes x)=\langle-, w\otimes x\rangle.
		\]
		Combining the above two isomorphisms, we obtain that
		\[ \xi_k:  \mathbb{Z}W\otimes d(\mathcal{A}_{k+1}) \rightarrow  (\mathbb{Z}W\otimes \mathcal{A}_k\, \omega))^*\]
		is an isomorphism given by 
		\[
		\xi_k(w\otimes d(x)) :=\psi_W(w\otimes x)=  \langle-, w\otimes x\rangle
		\]
		for any $w\in W$ and $x\in \mathcal{A}_k\, \omega$. This is well-defined, since if $d(x)=d(y)$ for some $x,y\in \mathcal{A}_k\, \omega$, then we have $w\otimes (x-y) \in {\rm Ker}(1\otimes d)={\rm Ker}\, \partial''$ and 
		\[
		\xi_k(w\otimes d(x-y) ) (v\otimes z\omega)= \langle v\otimes z\omega,  w\otimes (x-y) \rangle= \langle v\otimes z, \partial''(w\otimes (x-y))\rangle =0,
		\]
		for any $w,v\in W$ and $z\in \mathcal{A}_k$, where the last equation follows from \lemref{lem: adjW}. 
		
		We now prove that $\xi_k$ is an isomorphism between chain complexes. It suffices to show that  the following diagram commutes for each $k$:
		\begin{center}
			\begin{tikzcd}
				\mathbb{Z}W\otimes d(\mathcal{A}_{k+1}) \arrow[r, "\partial_k"] \arrow[d, "\xi_k"]
				& \mathbb{Z}W\otimes d(\mathcal{A}_{k}) \arrow[d, "\xi_{k-1}"] \\
				(\mathbb{Z}W\otimes \mathcal{A}_k \omega)^* \arrow[r,  "\ell_\sigma^*" ]
				& (\mathbb{Z}W\otimes \mathcal{A}_{k-1}\omega)^*. \end{tikzcd}
		\end{center}
		Recalling that in terms of notation \eqref{eq: defpartials}, we have $\partial_k=\partial'_k$ and $1\otimes d_k= \partial''_k$. 
		For any $v,w\in W$ and $y\in \mathcal{A}_{k-1}\, \omega$, we have
		\[
		\begin{aligned}
			\xi_{k-1}\partial_k( v \otimes d(x))(w\otimes y)&= \xi_{k-1}(\partial' \partial'' (v \otimes x))(w\otimes y) = \xi_{k-1}( \partial''\partial'  (v \otimes x))(w\otimes y) \\
			&= \langle w\otimes y, \partial'(v\otimes x)\rangle, 
		\end{aligned} 
		\]
		where the second equation follows from \lemref{lem: pars}. On the other hand, 
		\[
		\ell_\sigma^*\xi_k(v\otimes d(x)) (w\otimes y)=\xi_k(v\otimes d(x))(\ell_\sigma(w\otimes y))  =\langle \ell_\sigma(w\otimes y), v\otimes x\rangle.  
		\]
		By the adjoint property in \lemref{lem: adjW}, we have $\xi_{k-1}\partial_k= \ell_\sigma^*\xi_k$. Therefore, $\xi_k$ is an isomorphism between the chain complexes $(\mathbb{Z}W \otimes d(\mathcal{A}_k), \partial_k)$ and $((\mathbb{Z}W\ot \CA\omega)^*, \ell^*_{\sigma}) $. This completes the proof.
	\end{proof}

	\begin{corollary}
		Let $\mathscr{A}:= \mathbb{Z}W \otimes \mathcal{A}$ be the tensor product equipped with the usual multiplicative structure. Then we have the following $\mathbb{Z}$-graded isomorphism of  abelian groups:
		\[
		H^*(F;\mathbb{Z})[-1] = (\mathscr{A}\varsigma \cap \sigma \mathscr{A})/ \sigma \mathscr{A}\varsigma,  
		\]
		where  $\sigma=\sum_{t\in T} t\otimes a_{t}$ and $\varsigma=\sum_{t\in T}1\otimes a_{t}$, and $A[-1]_n:=A_{n-1}$ for the $\mathbb{Z}$-graded abelian group $A$.
	\end{corollary}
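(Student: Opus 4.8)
The plan is to read this statement off \thmref{thm: dualF} once the cochain complex appearing there is rewritten inside the graded algebra $\mathscr{A}=\mathbb{Z}W\ot\CA$, and then to invoke the acyclicity of $(\mathscr{A},\ell_\sigma)$ established in \propref{prop: ayccomp}. Write $\mathscr{A}_j=\mathbb{Z}W\ot\CA_j$ for the homogeneous components, so that both $\sigma=\sum_{t\in T}t\ot a_t$ and $\varsigma=1\ot\omega$ lie in $\mathscr{A}_1$. First I would record the identification $\mathbb{Z}W\ot\CA_k\omega=(\mathbb{Z}W\ot\CA_k)(1\ot\omega)=\mathscr{A}_k\varsigma$, a homogeneous subgroup of $\mathscr{A}$ concentrated in internal degree $k+1$, and observe that the coboundary $\ell_\sigma$ of \thmref{thm: dualF} is nothing but left multiplication by $\sigma$ restricted to $\mathscr{A}_k\varsigma$ (which lands in $\mathscr{A}_{k+1}\varsigma$). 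Consequently $H^k(F;\Z)$ is the cohomology at $\mathscr{A}_k\varsigma$ of the complex $\cdots\to\mathscr{A}_{k-1}\varsigma\overset{\ell_\sigma}{\to}\mathscr{A}_k\varsigma\overset{\ell_\sigma}{\to}\mathscr{A}_{k+1}\varsigma\to\cdots$.

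The second step is to compute the numerator and denominator of this subquotient. The image of $\ell_\sigma$ into the $k$-th term is $\sigma\cdot\mathscr{A}_{k-1}\varsigma$, which is by definition the internal-degree-$(k+1)$ component of $\sigma\mathscr{A}\varsigma$. For the kernel, note that $\ell_\sigma(y)=\sigma y$ computed in $\mathscr{A}$, so for $y\in\mathscr{A}_k\varsigma$ one has $\ell_\sigma(y)=0$ iff $\sigma y=0$; since $(\mathscr{A},\ell_\sigma)$ is acyclic and $y$ is homogeneous of internal degree $k+1$, the condition $\sigma y=0$ is equivalent to $y\in\sigma\mathscr{A}_k=(\sigma\mathscr{A})_{k+1}$. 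Hence $\ker(\ell_\sigma|_{\mathscr{A}_k\varsigma})=\mathscr{A}_k\varsigma\cap\sigma\mathscr{A}_k=(\mathscr{A}\varsigma\cap\sigma\mathscr{A})_{k+1}$, using that the intersection of the homogeneous subgroups $\mathscr{A}\varsigma$ and $\sigma\mathscr{A}$ is taken degree by degree. Combining the two computations yields $H^k(F;\Z)=(\mathscr{A}\varsigma\cap\sigma\mathscr{A})_{k+1}/(\sigma\mathscr{A}\varsigma)_{k+1}$.

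To finish, observe that $\sigma\mathscr{A}\varsigma=(\sigma\mathscr{A})\varsigma=\sigma(\mathscr{A}\varsigma)$ is contained in $\mathscr{A}\varsigma\cap\sigma\mathscr{A}$, and that all three subgroups are homogeneous, so that the right-hand side above is the internal-degree-$(k+1)$ component of the graded abelian group $(\mathscr{A}\varsigma\cap\sigma\mathscr{A})/\sigma\mathscr{A}\varsigma$; by the convention $A[-1]_{k+1}=A_k$ this is exactly the degree-$(k+1)$ part of $H^*(F;\Z)[-1]$, and since $k$ is arbitrary the two graded groups coincide. (One may note for consistency that $\mathscr{A}_n\varsigma=\mathbb{Z}W\ot\CA_n\omega=0$ because $\CA_{n+1}=0$, matching $H^n(F;\Z)=0$.) I do not expect a real obstacle here: the corollary is essentially a graded-module repackaging of \thmref{thm: dualF}, and the only delicate points are the degree bookkeeping — that the $k$-th term of the $F$-complex corresponds to internal degree $k+1$, so that $\mathscr{A}_k\varsigma$ and $\sigma\mathscr{A}_k$ line up — and the routine verification that intersections and quotients of these homogeneous one-sided ideals commute with extraction of graded pieces.
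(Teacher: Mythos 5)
Your proof is correct and follows essentially the same route as the paper: both identify the cochain complex of \thmref{thm: dualF} with left multiplication by $\sigma$ on the homogeneous subgroups $\mathscr{A}_k\varsigma$, invoke the acyclicity of $(\mathscr{A},\ell_\sigma)$ from \propref{prop: ayccomp} to recognise the kernel as $\sigma\mathscr{A}_k\cap\mathscr{A}_k\varsigma$, and assemble the graded pieces with the shift by $-1$.
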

	\begin{proof}
     Recall from \propref{prop:  ayccomp} that the complex $(\mathbb{Z}W \otimes \mathcal{A}, \ell_{\sigma})$ is acyclic. Denote $\mathscr{A}_{k}:=\mathbb{Z}W \otimes \mathcal{A}_k$ for $0\leq k\leq n$. Then the coboundary map $\ell_{\sigma}: \mathscr{A}_{k+1} \rightarrow  \mathscr{A}_{k+2}$ has the kernel $\sigma \mathscr{A}_{k}$. Using \thmref{thm: dualF}, we obtain  
     	\[
		H^{k}(F; \mathbb{Z}) \cong {\rm Ker}(\mathscr{A}_{k}\varsigma \overset{\ell_{\sigma}}{\lr} \mathscr{A}_{k+1}\varsigma)/ \sigma \mathscr{A}_{k-1}\varsigma = (\sigma \mathscr{A}_{k}\cap \mathscr{A}_{k}\varsigma)/\sigma \mathscr{A}_{k-1}\varsigma. 
		\]
		This completes the proof.
	\end{proof}
	
	\begin{theorem}\label{thm: dualFW}
		The integral cohomology of the Milnor fibre $F/W$ is isomorphic to the cohomology of the following cochain complex of free abelian groups:
		\[
		0\lr \CA_{0}\omega\overset{\ell_{\omega}}{\lr} \CA_1\omega\overset{\ell_{\omega}}{\lr} \dots \overset{\ell_{\omega}}{\lr} \CA_{n-1}\omega\lr 0,
		\]
		where the coboundary maps are given by left multiplication by $\omega=\sum_{t\in T}a_t$. 
	\end{theorem}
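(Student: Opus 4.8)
The plan is to mimic, \emph{mutatis mutandis}, the proof of \thmref{thm: dualF}, but working directly over the algebra $\CA$ rather than over $\Z W\ot\CA$. The starting point is the chain complex $(d(\CA_\bullet),\partial)$ of \thmref{thm: disfibre}, whose boundary map $\partial\colon d(\CA_k)\to d(\CA_{k-1})$ is the map induced by the $\delta$ of \eqref{eq: del} (comparing the two formulas one sees $\partial(d(\xi))=d(\delta(\xi))$ for $\xi\in\CA_k$), and whose homology is $H_*(F/W;\Z)$. Since this is a complex of finitely generated free abelian groups, the universal coefficient theorem \thmref{thm: uct} says its dual cochain complex computes $H^*(F/W;\Z)$; the task is to identify that dual complex with $(\CA_\bullet\omega,\ell_\omega)$.

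First I would form the chain complex $\big((\CA_\bullet\omega)^*,\ell_\omega^*\big)$, namely
\[
0\lr(\CA_{n-1}\omega)^*\overset{\ell_\omega^*}{\lr}(\CA_{n-2}\omega)^*\lr\cdots\lr(\CA_0\omega)^*\lr0,
\]
and show it is isomorphic, as a chain complex, to $(d(\CA_\bullet),\partial)$. In degree $k$ the isomorphism is the map $\xi_k\colon d(\CA_{k+1})\to(\CA_k\omega)^*$ sending $d(x)\mapsto\langle-,x\rangle|_{\CA_k\omega}$ for $x\in\CA_k\omega$; exactly as with the maps $\xi_k$ in the proof of \thmref{thm: dualF}, this is well defined and bijective because \propref{prop: Adec} gives $\CA_{k+1}=d(\CA_{k+2})\oplus\CA_k\omega$ as an orthogonal direct sum with $d$ restricting to an isomorphism $\CA_k\omega\cong d(\CA_{k+1})$, and the unimodular form of \propref{prop: bilinear} therefore restricts to a unimodular pairing on the summand $\CA_k\omega$. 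That $\xi=(\xi_k)$ intertwines the differentials, i.e. $\xi_{k-1}\partial_k=\ell_\omega^*\xi_k$, is then a direct computation: for $x\in\CA_k\omega$ and $y\in\CA_{k-1}\omega$ one gets, using the adjointness of \lemref{lem: adj} ($\langle\omega z,x\rangle=\langle z,\delta(x)\rangle$),
\[
\ell_\omega^*\xi_k(d(x))(y)=\xi_k(d(x))(\omega y)=\langle\omega y,x\rangle=\langle y,\delta(x)\rangle,
\]
while $\xi_{k-1}\partial_k(d(x))(y)=\xi_{k-1}(d(\delta(x)))(y)=\langle y,\delta(x)\rangle$ as well, where $\partial_k(d(x))=d(\delta(x))$ is rewritten as $\delta(d(x))$ via \propref{prop: delta}(3), and the last equality uses $d^2=0$ together with the orthogonality $\langle\CA_{k-1}\omega,d(\CA_{k+1})\rangle=0$ of \propref{prop: Adec}.

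Granting the chain isomorphism $\xi$, the complex $\big((\CA_\bullet\omega)^*,\ell_\omega^*\big)$ computes $H_*(F/W;\Z)$ by \thmref{thm: disfibre}, and hence its dual — which, all the groups being finitely generated and free, is canonically $(\CA_\bullet\omega,\ell_\omega)$ — computes $H^*(F/W;\Z)$ by \thmref{thm: uct}, as claimed. The only real work is the set-up: matching the degree conventions and verifying that $\xi_k$ is well defined, which rests entirely on the decomposition $\CA_k=d(\CA_{k+1})\oplus\CA_{k-1}\omega$ of \propref{prop: Adec}; the chain-map check and the passage to cohomology are routine. I would also remark that one cannot shortcut this by deducing the theorem from \thmref{thm: dualF} via a transfer/$W$-invariants argument, since transfer for the $W$-action requires inverting $|W|$ and would yield the statement only with rational coefficients — hence the direct argument above is needed for the integral result.
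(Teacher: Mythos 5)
Your proposal is correct and is precisely the \emph{mutatis mutandis} transcription of the proof of \thmref{thm: dualF} that the paper's (one-line) proof of \thmref{thm: dualFW} indicates: identify $d(\CA_{k+1})$ with $(\CA_k\omega)^*$ via the form of \propref{prop: bilinear} and the decomposition of \propref{prop: Adec}, check the chain-map identity $\xi_{k-1}\partial_k=\ell_\omega^*\xi_k$ by the adjunction of \lemref{lem: adj}, and conclude with \thmref{thm: uct}. One small point: the intermediate rewriting of $d(\delta(x))$ as $\delta(d(x))$ via \propref{prop: delta}(3) is not actually used in your computation — the step $\xi_{k-1}(d(\delta(x)))(y)=\langle y,\delta(x)\rangle$ is justified directly by $d^2=0$ and the orthogonality $\langle\CA_{k-1}\omega,d(\CA_{k+1})\rangle=0$, exactly as you say immediately afterward; you could simply drop the commutation remark.
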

	\begin{proof}
		The proof is similar to that of \thmref{thm: dualF}. Recall from \thmref{thm:  disfibre} the chain complex which computes the integral homology of $F/W$. Using the bilinear form \eqnref{eq: psi}, one can identify $d(\mathcal{A}_{k+1})$ and $(\mathcal{A}_k\omega)^*$ and prove that the complexes $(d(\mathcal{A}_{k+1}), \partial_k)$ and $((\mathcal{A}\omega)^*, \ell_{\omega}^*)$ are isomorphic. Thus the complex $(\mathcal{A}\omega, \ell_{\omega})$ computes the integral cohomology of $F/W$.
	\end{proof}

	\begin{corollary}\label{coro: cohomFW}
		Let $\mathcal{A}\omega$ (resp. $\omega\mathcal{A}$) be the left (resp. right) ideal of $\mathcal{A}$ generated by $\omega$. Then we have the following $\mathbb{Z}$-graded isomorphism of  abelian groups:
		\[H^*(F/W; \mathbb{Z})[-1]\cong (\mathcal{A}\omega \cap \omega \mathcal{A})/ \omega \mathcal{A}\omega, \]
		where $A[-1]_n:=A_{n-1}$ for the $\mathbb{Z}$-graded abelian group $A$. 
	\end{corollary}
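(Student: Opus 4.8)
The plan is to repeat the argument of the preceding corollary (the one for $F$ itself), replacing Theorem~\ref{thm: dualF} and Proposition~\ref{prop: ayccomp} by Theorem~\ref{thm: dualFW} and the acyclicity of $(\CA,\ell_\omega)$. By Theorem~\ref{thm: dualFW}, $H^k(F/W;\Z)$ is the $k$-th cohomology of the cochain complex $0\lr\CA_0\omega\overset{\ell_\omega}{\lr}\CA_1\omega\lr\cdots\lr\CA_{n-1}\omega\lr 0$, where $\ell_\omega$ denotes left multiplication by $\omega=\sum_{t\in T}a_t$; recall $\omega^2=0$, so this is indeed a differential. Thus
\[
H^k(F/W;\Z)\cong\frac{\ker\!\bigl(\ell_\omega\colon\CA_k\omega\to\CA_{k+1}\omega\bigr)}{\ell_\omega(\CA_{k-1}\omega)}
=\frac{\ker\!\bigl(\ell_\omega\colon\CA_k\omega\to\CA_{k+1}\omega\bigr)}{\omega\CA_{k-1}\omega}.
\]

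The first step is bookkeeping with the internal grading. Since $\omega\in\CA_1$, multiplication is degree-additive, and $\CA\omega$, $\omega\CA$, $\omega\CA\omega$ are graded subgroups of $\CA$, one has the identifications $\CA_k\omega=(\CA\omega)_{k+1}$, $\omega\CA_k=(\omega\CA)_{k+1}$, and $\omega\CA_{k-1}\omega=(\omega\CA\omega)_{k+1}$; note also $\omega\CA\omega\subseteq\CA\omega\cap\omega\CA$, so the quotient appearing in the statement is a well-defined graded abelian group. This explains the appearance of the shift: the cochain term $\CA_k\omega$ lives in internal degree $k+1$.

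The key step is to compute the kernel. Here I would invoke the acyclicity of the cochain complex $(\CA,\ell_\omega)$ from Proposition~\ref{prop: acyccocx}: exactness at $\CA_{k+1}$ gives $\ker\!\bigl(\ell_\omega\colon\CA_{k+1}\to\CA_{k+2}\bigr)={\rm im}\!\bigl(\ell_\omega\colon\CA_k\to\CA_{k+1}\bigr)=\omega\CA_k=(\omega\CA)_{k+1}$. In other words, for $y\in\CA_{k+1}$ we have $\omega y=0$ if and only if $y\in\omega\CA$. Intersecting with $(\CA\omega)_{k+1}=\CA_k\omega$ yields
\[
\ker\!\bigl(\ell_\omega\colon\CA_k\omega\to\CA_{k+1}\omega\bigr)=\{\,y\in(\CA\omega)_{k+1}\mid\omega y=0\,\}=(\CA\omega\cap\omega\CA)_{k+1}.
\]
Substituting this, together with $\ell_\omega(\CA_{k-1}\omega)=(\omega\CA\omega)_{k+1}$, into the displayed formula for $H^k(F/W;\Z)$ gives $H^k(F/W;\Z)\cong(\CA\omega\cap\omega\CA)_{k+1}/(\omega\CA\omega)_{k+1}$. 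Assembling over all $k$ and using $A[-1]_{k+1}=A_k$, this is precisely the asserted $\Z$-graded isomorphism.

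I do not expect any serious obstacle: the entire content is carried by Theorem~\ref{thm: dualFW} and by the acyclicity of $(\CA,\ell_\omega)$ (Proposition~\ref{prop: acyccocx}). The only points requiring care are the degree shift and the routine verification that the graded pieces $\CA_k\omega$, $\omega\CA_k$, $\omega\CA_{k-1}\omega$ sit in the claimed internal degrees; once those are in place the identification is immediate.
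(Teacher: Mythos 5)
Your proposal is correct and follows essentially the same route as the paper: both use Theorem~\ref{thm: dualFW} to identify $H^k(F/W;\Z)$ with the cohomology of $(\CA\omega,\ell_\omega)$ at $\CA_k\omega$, and both use the acyclicity of $(\CA,\ell_\omega)$ from Proposition~\ref{prop: acyccocx} to identify the kernel of $\ell_\omega$ on $\CA_{k+1}$ with $\omega\CA_k$, yielding $(\CA_k\omega\cap\omega\CA_k)/\omega\CA_{k-1}\omega$. Your extra bookkeeping on the internal grading and the degree shift is a correct elaboration of what the paper leaves implicit.
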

	\begin{proof}
		By \propref{prop: acyccocx}, the cochain complex $(\mathcal{A}, \ell_{\omega})$ is acyclic. It follows that the coboundary map $\ell_{\omega}: \mathcal{A}_{k+1} \rightarrow \mathcal{A}_{k+2}$ has the kernel $\omega\mathcal{A}_{k}$. Using \thmref{thm: dualFW}, we have 
		\[
		H^{k}(F/W; \mathbb{Z}) \cong {\rm Ker}(\mathcal{A}_{k}\omega \overset{\ell_{\omega}}{\lr} \mathcal{A}_{k+1}\omega )/ \omega \mathcal{A}_{k-1}\omega = (\omega\mathcal{A}_{k}\cap \mathcal{A}_{k}\omega)/\omega \mathcal{A}_{k-1}\omega. 
		\]
		This completes the proof.
	\end{proof}
	
	\subsection{A pair of dual complexes with complex coefficients}
	We shall introduce a pair of cochain complexes which are dual to the complexes $\mathcal{C}(U)$ and $\mathcal{K}(U)$. In this subsection, we work over $\mathbb{C}$.
	
	Let $U$ be any finite dimensional right $\mathbb{C}W$-module. We define the dual complex of  $\mathcal{C}(U)$ by  
	\[
	\CC^*(U):= 0\lr U\ot\CA_0\overset{\partial^*}{\lr}\dots\lr U\ot\CA_{n-1}\overset{\partial^*}{\lr}U\ot \CA_n\lr 0,
    \]
	where the coboundary maps are given by 
	\begin{equation*}\label{eq: cobdycu}
		\partial^{\ast}(u \otimes x) =\sum_{t\in T} ut \otimes a_{t}x- (-1)^ku\otimes  x\omega, \quad \forall x\in \mathcal{A}_k, u\in U.
	\end{equation*}
	It is straightforward to verify that $(\partial^*)^2=0$. Similarly, define  the dual complex of $\mathcal{K}(U)$ by 
	\[
	\CK^*(U):= 0\lr U\ot\CA_0\,\omega \overset{\partial^*_{0}}{\lr}\dots\lr U\ot\CA_{n-2}\,\omega \overset{\partial^*_{n-2}}{\lr}U\ot \CA_{n-1}\,\omega \lr 0,
	\]
	where $\mathcal{A}_k\,\omega$ is the subspace of $\mathcal{A}_{k+1}$ linearly spanned by elements of the form $a_{t_1}\dots a_{t_k}\omega$, and the coboundary maps are defined by
	\begin{equation*}\label{eq: cobdyku}
		\partial^{\ast}(u \otimes x\omega)= \sum_{t\in T} ut \otimes a_{t}x\omega, \quad \forall x\in \mathcal{A}_k, u\in U.
	\end{equation*}

	The following is a cohomology version of \thmref{thm: mult}.
	
	\begin{theorem}\label{thm: multcohom}
		For any right $W$-module $U$ and for each integer $k\geq 0$, we have:
		\[
		\dim H^k(\CC^{\ast}(U))= 	 \langle U_L^*, H^k(M)\rangle
	    \]
		and 
		\[
		\dim H^k(\CK^*(U))= \langle U_L^*, H^k(F)\rangle.
		\]
	\end{theorem}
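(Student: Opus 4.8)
The plan is to mimic the proof of Theorem~\ref{thm: mult} almost verbatim, replacing homology with cohomology throughout and using the dual complexes $\CC^*(U)$ and $\CK^*(U)$ in place of $\CC(U)$ and $\CK(U)$. First I would record the cohomology analogues of the input facts: by Theorem~\ref{thm: dualM} (resp.\ the cochain complex built from Theorem~\ref{thm: fullcom} via Theorem~\ref{thm: dualF}), the cohomology $H^k(M)$ (resp.\ $H^k(F)$) with complex coefficients is computed by the cochain complex $\C W\ot\CA$ with coboundary $\partial^k=\ell_\sigma-(-1)^kr_\varsigma$ (resp.\ $\C W\ot\CA\omega$ with coboundary $\ell_\sigma$), and that each of these carries a left $W$-action by left multiplication on the $\C W$-tensor factor which commutes with the coboundary maps. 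Thus $H^k$ of these big complexes is a left $\C W$-module, canonically isomorphic to $H^k(M;\C)$ (resp.\ $H^k(F;\C)$) as a $W$-module, since the $W$-action on the topological side is the deck-transformation action and the chain-level identification in \cite{Zha22} is $W$-equivariant.

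The core of the argument is then the flat-base-change/fixed-point manipulation. I would proceed exactly as in the proof of Theorem~\ref{thm: mult}:
\begin{enumerate}
	\item Since $\C W$ is semisimple, $U_L^*$ is flat over $\C$, so $\Hom_\C(U_L^*,-)$ is exact and commutes with $H^k$; hence $H^k(\Hom_\C(U_L^*,\CC^*))\cong\Hom_\C(U_L^*,H^k(\CC^*))$ as left $W$-modules, where $\CC^*$ denotes the big dual complex $\C W\ot\CA$ with coboundary $\partial^k$.
	\item The fixed-point functor $(-)^W\cong\Hom_{\C W}(\C,-)$ is exact (trivial module projective), so taking $W$-invariants commutes with $H^k$, giving
	\[
	\Hom_{\C W}(U_L^*,H^k(\CC^*))\cong H^k(\Hom_{\C W}(U_L^*,\CC^*)).
	\]
	\item Finally identify the complex $\Hom_{\C W}(U_L^*,\CC^*)$ with $\CC^*(U)$: using $\Hom_{\C W}(U_L^*,\C W\ot_\C\CA_k)\cong\Hom_{\C W}(U_L^*,\C W)\ot_\C\CA_k$ (as $W$ acts trivially on $\CA$) and Corollary~\ref{cor.cw}, which gives $\Hom_{\C W}(U_L^*,\C W)\cong U$ as a right $W$-module, the $k$th term becomes $U\ot_\C\CA_k=\CC^*(U)_k$. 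One then checks that under this identification the coboundary induced by $\partial^k=\ell_\sigma-(-1)^kr_\varsigma$ on $\C W\ot\CA$ becomes precisely $\partial^*(u\ot x)=\sum_{t\in T}ut\ot a_tx-(-1)^k u\ot x\omega$, the coboundary of $\CC^*(U)$.
\end{enumerate}
Combining the three displayed isomorphisms with $\langle U_L^*,H^k(M)\rangle=\dim\Hom_{\C W}(U_L^*,H^k(M))=\dim\Hom_{\C W}(U_L^*,H^k(\CC^*))$ yields $\dim H^k(\CC^*(U))=\langle U_L^*,H^k(M)\rangle$. For the Milnor fibre statement, I would run the identical argument with $\CC^*$ replaced by the big dual complex $\C W\ot\CA\omega$ with coboundary $\ell_\sigma$ (whose cohomology is $H^*(F;\C)$ by Theorem~\ref{thm: dualF}), with $\CA_k$ replaced by $\CA_k\omega$ throughout, obtaining $\dim H^k(\CK^*(U))=\langle U_L^*,H^k(F)\rangle$.

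I expect the main obstacle to be bookkeeping rather than conceptual: verifying carefully that the left $W$-action on the big dual complexes $\C W\ot\CA$ and $\C W\ot\CA\omega$ does commute with the dual coboundary maps $\partial^k=\ell_\sigma-(-1)^kr_\varsigma$ and $\ell_\sigma$ (the subtlety is that $\ell_\sigma$ involves right multiplication $t\ot a_t$ on $\C W$, so one must confirm left multiplication by $x\in W$ is still a chain map, which it is since left and right multiplications on $\C W$ commute), and that the identifications of $H^k$ of these big complexes with $H^k(M;\C)$ and $H^k(F;\C)$ respect the $W$-module structure. A secondary point worth stating explicitly is that $H^k(F)\cong H_k(F)^*$ as $W$-modules over $\C$ by the universal coefficient theorem, so that "$\langle U_L^*,H^k(F)\rangle$" is unambiguous and consistent with the homological version. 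All of these are routine once set up, so the proof can be written as "the proof is entirely parallel to that of Theorem~\ref{thm: mult}, replacing chain complexes by the dual cochain complexes of Theorems~\ref{thm: dualM} and \ref{thm: dualF}", with the above three steps spelled out for $\CC^*(U)$ and the remark that the $\CK^*(U)$ case is identical mutatis mutandis.
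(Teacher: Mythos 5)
Your proposal is correct and follows exactly the route the paper takes: the paper's own proof is the one-line remark ``Use Theorem~\ref{thm: dualM} and Theorem~\ref{thm: dualF}; the proof is similar to that of Theorem~\ref{thm: mult}'', and your three steps (exactness of $\Hom_\C(U_L^*,-)$, exactness of $(-)^W$, and the identification of $\Hom_{\C W}(U_L^*,\C W\ot\CA_k)$ with $U\ot\CA_k$ via Corollary~\ref{cor.cw}) are precisely the details being invoked. Your write-up simply spells out what the paper leaves implicit.
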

	\begin{proof}
		Use \thmref{thm: dualM} and \thmref{thm: dualF}.  The proof is similar to that of \thmref{thm: mult}. 
	\end{proof}

\section{Complements on a covering algebra $\wt\CA$ of $\CA$.}


In this section we define an algebra $\widetilde{\mathcal{A}}$, whose presentation is simpler than that of $\CA$, and which in fact has $\CA$ as a homomorphic image. The algebra  $\widetilde{\mathcal{A}}$ has some remarkable similarities with the Fomin-Kirillov algebra $\CE_n$ (in type $A_{n-1}$) (cf. \cite{FK99}, and we conjecture that the two algebras have the same Hilbert-Poincar\'e series (always in type $A_n$). They are in some sense ``dual'' to each other, but are not Koszul.

In \secref{sec: newalg} we  define a braided Hopf algebra and show that there exists a  surjective algebra homomorphism from the braided Hopf algebra to the noncrossing algebra. We show also that $\widetilde{\mathcal{A}}$ has the structure of a $W$-graded Hopf algebra, that it has a braiding, and more generally belongs to a category of Yetter-Drinfeld modules over $\C W$. In \secref{sec: diff} we define some differential operators on the  braided Hopf algebra, and determine some of their adjoint properties, which are similar to those of $\CA$. 
We also  prove that there is a bilinear form on $\widetilde{\mathcal{A}}$, and that this form descends to the one we already have on $\CA$. Throughout this section, we work over the complex field $\mathbb{C}$.


\subsection{A new braided Hopf algebra} \label{sec: newalg}
We introduce a cover of the noncrossing algebra, which is analogous to the Fomin-Kirillov algebra \cite{FK99}. This new algebra is a Yetter-Drinfeld module over the group algebra $\mathbb{C}W$, and has a braided Hopf algebra structure. We refer to \cite{AS02} for background concerning Yetter-Drinfeld modules. 
\subsubsection{Definition and Examples}
\begin{definition}\label{def: newalg}
	Let $W$ be any finite Coxeter group and $T$ be the set of reflections of $W$. Define  $\widetilde{\mathcal{A}}=\widetilde{\mathcal{A}}(W)$ to be the associative algebra over $\mathbb{C}$ generated by $\alpha_t, t\in T$, subject to the following quadratic relations:
	\begin{align}
		\alpha_{t}^2=0, \quad & \text{for any $t\in T$ }, \label{eq: newquad1}\\
		\sum_{(t_1,t_2)\in {\rm Rex}_T(w)} \alpha_{t_1}\alpha_{t_2}=0, \quad & \text{for any  $ w\in W$ with $\ell_{T}(w)=2$}    \label{eq: newquad2}.
	\end{align}
\end{definition}


The algebra $\widetilde{\mathcal{A}}$ is a $\mathbb{Z}$-graded algebra with the $\mathbb{Z}$-grading ${\rm deg}(\alpha_t)=1$ for all $t\in T$. In this section, we denote by $\mathcal{A}$ the noncrossing algebra over the complex field $\mathbb{C}$. Theses two algebras are related by the following lemma. 

\begin{lemma}\label{lem: surj}
	We have  a surjective algebra homomorphism $\pi: \widetilde{\mathcal{A}} \rightarrow \mathcal{A}$,  given by $\alpha_t \mapsto a_{t}$ for all $t\in T$. 
\end{lemma}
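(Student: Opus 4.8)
The statement is an instance of the universal property of an algebra given by generators and relations: to produce $\pi$ it suffices to check that the images $a_t\in\CA$ of the generators $\alpha_t$ satisfy the defining relations \eqref{eq: newquad1} and \eqref{eq: newquad2} of $\widetilde{\CA}$; surjectivity is then automatic since $\CA$ is generated by the $a_t$, $t\in T$.

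\textbf{Step 1: the relations \eqref{eq: newquad1}.} Relation \eqref{eq: newquad1}, namely $\alpha_t^2=0$, maps to $a_t^2=0$, which is precisely the first clause of relation (1) in \defref{def: A}. So these are satisfied.

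\textbf{Step 2: the relations \eqref{eq: newquad2}.} Here is the only point requiring a small argument. Fix $w\in W$ with $\ell_T(w)=2$; I must show $\sum_{(t_1,t_2)\in{\rm Rex}_T(w)}a_{t_1}a_{t_2}=0$ in $\CA$. Note first that every factorisation $w=t_1t_2$ with $t_1,t_2\in T$ is automatically $T$-reduced (a shorter expression would force $w=e$). Now distinguish two cases. If $w\leq\gamma$, i.e. $w\in\mathcal{L}_2$, the required identity is exactly the defining relation (2) of \defref{def: A}. If $w\not\leq\gamma$, then for \emph{every} $(t_1,t_2)\in{\rm Rex}_T(w)$ we have $t_1t_2=w\not\leq\gamma$, so by the second clause of relation (1) in \defref{def: A} each summand $a_{t_1}a_{t_2}$ already vanishes, whence the whole sum is $0$. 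Thus \eqref{eq: newquad2} holds in $\CA$ in all cases. (This is precisely where the presentation of $\CA$ is ``larger'' than that of $\widetilde{\CA}$: $\CA$ trades the single family of length-$2$ relations of $\widetilde{\CA}$ for the pair of families in \defref{def: A}, and the extra vanishings $a_{t_1}a_{t_2}=0$ for $t_1t_2\not\leq\gamma$ are exactly what is needed to absorb the relations \eqref{eq: newquad2} indexed by $w\not\leq\gamma$.)

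\textbf{Step 3: conclusion.} By Steps 1 and 2 the assignment $\alpha_t\mapsto a_t$ respects all defining relations of $\widetilde{\CA}$, hence extends uniquely to an algebra homomorphism $\pi\colon\widetilde{\CA}\to\CA$; it is graded since it sends degree-$1$ generators to degree-$1$ generators. Since the $a_t$ generate $\CA$, $\pi$ is surjective. I expect no real obstacle here: the content is entirely the bookkeeping in Step 2 comparing the two presentations.
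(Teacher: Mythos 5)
Your proof is correct and takes essentially the same route as the paper: check relation \eqref{eq: newquad1} is literally a defining relation of $\CA$, and split the check of \eqref{eq: newquad2} into the cases $w\leq\gamma$ (where it is again a defining relation) and $w\not\leq\gamma$ (where each summand $a_{t_1}a_{t_2}$ already vanishes in $\CA$). The paper states the same case split somewhat more tersely, but the content is identical.
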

\begin{proof}
	We just need to check that the relations  \eqref{eq: newquad2} are preserved in $\mathcal{A}$. If $w\leq \gamma$, then  relation \eqref{eq: newquad2} is sent to the defining relation of $\mathcal{A}$ under the map $\pi$. Otherwise, for any two reflections $t_1, t_2$ such that $t_1t_2\not\leq \gamma$, we have $\pi(\alpha_{t_1} \alpha_{t_2})= a_{t_1}a_{t_2}=0$,  and hence for any  $w\not \leq \gamma$ we have 
	$\sum_{(t_1,t_2)\in {\rm Rex}_T(w)} \pi(\alpha_{t_1}\alpha_{t_2})=0$.
\end{proof}


\begin{example}
	The new algebra $\widetilde{\mathcal{A}}({\rm Sym}_{n})$ of type $A_{n-1}$ is generated by $\alpha_{ij}= \alpha_{ji}$ for $1\leq i<j\leq n$ with the following relations: 
	\begin{equation}\label{eq: typeA}
		\begin{aligned}
			&\alpha_{ij}^2=0, \\
			\alpha_{ij}\alpha_{kl}&+\alpha_{kl}\alpha_{ij}=0, \quad \text{for distinct $i,j,k,l$}\\
			\alpha_{ij}\alpha_{jk}+ \alpha_{jk}&\alpha_{ki}+\alpha_{ki}\alpha_{ij}=0, \quad \text{for distinct $i,j,k$}
		\end{aligned}	
	\end{equation}
	In type $A_2$, these  relations read:
	\[ 
	\begin{aligned}
		&\alpha_{12}^2=\alpha_{13}^2=\alpha_{23}^2=0,\\
		\alpha_{12}\alpha_{23}&+ \alpha_{23}\alpha_{13}+ \alpha_{13}\alpha_{12}=0,\\
		\alpha_{23} \alpha_{12}&+ \alpha_{12}\alpha_{13}+ \alpha_{13} \alpha_{23}=0.
	\end{aligned}
	\]
	This algebra looks similar to the Fomin-Kirillov algebra $\mathcal{E}_n$ \cite{FK99}, which is generated by $x_{ij}=-x_{ji}$ for $1\leq i<j\leq n$ with relations:
	\[
	\begin{aligned}
		&x_{ij}^2=0, \\
		x_{ij}x_{kl}&-x_{kl}x_{ij}=0, \quad \text{for distinct $i,j,k,l$}\\
		x_{ij}x_{jk}+ x_{jk}&x_{ki}+x_{ki}x_{ij}=0, \quad \text{for distinct $i,j,k$}.
	\end{aligned}	
	\]
	
	For any $\mathbb{Z}$-graded algebra $A=\bigoplus_{k\in \mathbb{Z}}A_k$, we denote by $H_{A}(t)= \sum_{k\in \mathbb{Z}} {\rm dim}A_k \,t^k$ the Hilbert-Poincar\'e series of $A$. Using computational software, we obtain that   $\widetilde{\mathcal{A}}({\rm Sym}_{n})$ and $\mathcal{E}_n$ have the same Hilbert-Poincar\'e series  for $n\leq 5$:  
	\[
	\begin{aligned}
		&n=1: H_{\widetilde{\mathcal{A}}({\rm Sym}_{n})}(t)=H_{\mathcal{E}_n}(t)= 1,\\
		&n=2: H_{\widetilde{\mathcal{A}}({\rm Sym}_{n})}(t)=H_{\mathcal{E}_n}(t)=[2]=1+t,\\
		&n=3: H_{\widetilde{\mathcal{A}}({\rm Sym}_{n})}(t)=H_{\mathcal{E}_n}(t)=[2]^2[3]=1+3t+4t^2+3t^3+t^4,\\
		&n=4: H_{\widetilde{\mathcal{A}}({\rm Sym}_{n})}(t)=H_{\mathcal{E}_n}(t)=[2]^2[3]^2[4]^2,\\
		&n=5: H_{\widetilde{\mathcal{A}}({\rm Sym}_{n})}(t)=H_{\mathcal{E}_n}(t)=[4]^4[5]^2[6]^4,
	\end{aligned}
	\]
	where we have used the notation $[k]:=1+t+\dots +t^{k-1}$.  These Hilbert-Poincar\'e series  have symmetric coefficients. In particular, the top homogeneous component has dimension 1.  It is unknown whether $\mathcal{E}_n$ is finite-dimensional for $n\geq 6$. 
\end{example}

\begin{conjecture}
	The algebra $\widetilde{\mathcal{A}}({\rm Sym}_n)$ and the Fomin-Kirillov algebra $\mathcal{E}_n$  have the same Hilbert-Poincar\'e series.  
\end{conjecture}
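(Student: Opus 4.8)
The plan is to exhibit $\widetilde{\mathcal{A}}(\mathrm{Sym}_n)$ as a \emph{$2$-cocycle twist} of the Fomin--Kirillov algebra $\mathcal{E}_n$ with respect to the natural $\mathrm{Sym}_n$-grading; since such a twist only rescales homogeneous components, this yields the equality of Hilbert--Poincar\'e series at once. First I would record that both algebras carry a grading by the group $\mathrm{Sym}_n$ itself: declare $x_{ij}$ (respectively $\alpha_{ij}$) to be homogeneous of degree $(i\,j)\in\mathrm{Sym}_n$. The defining relations are homogeneous for this grading: $x_{ij}^2$ has degree $e$, and $x_{ij}x_{kl}\mp x_{kl}x_{ij}$ has degree $(i\,j)(k\,l)$ in the disjoint case. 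The only non-obvious point is that all three terms $x_{ij}x_{jk}$, $x_{jk}x_{ki}$, $x_{ki}x_{ij}$ of the hexagon relation share the \emph{same} degree, namely the $3$-cycle $(i\,j\,k)$, because $(ij)(jk)=(jk)(ki)=(ki)(ij)=(i\,j\,k)$ by a direct check --- equivalently, ${\rm Rex}_T\big((i\,j\,k)\big)$ consists of exactly those three factorisations, so relation (2) of \defref{def: newalg} for a $3$-cycle is literally the hexagon. Hence $\mathcal{E}_n$ and $\widetilde{\mathcal{A}}(\mathrm{Sym}_n)$ are both $\mathbb{Z}\times\mathrm{Sym}_n$-graded, the series in question being the $\mathrm{Sym}_n$-degree-summed version of the fine one.

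Next, for a $2$-cocycle $\sigma\in Z^2(\mathrm{Sym}_n,\mathbb{C}^{\times})$ and any $\mathrm{Sym}_n$-graded algebra $B=\bigoplus_{g}B_g$, set $B^{\sigma}$ to be the same graded vector space with product $b\cdot_{\sigma}b':=\sigma(g,h)\,bb'$ for $b\in B_g$, $b'\in B_h$. The cocycle identity makes $\cdot_{\sigma}$ associative, each homogeneous piece is merely rescaled, so $\dim B^{\sigma}_g=\dim B_g$ and $H_{B^{\sigma}}(t)=H_B(t)$ for \emph{every} $\sigma$; moreover the $\mathrm{Sym}_n$-homogeneous relation ideal of $\mathcal{E}_n$ remains two-sided for $\cdot_{\sigma}$, so $(\mathcal{E}_n)^{\sigma}$ is the algebra presented by the $x_{ij}$ together with the $\cdot_{\sigma}$-rewrites of the original relations. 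Reading these rewrites off, $(\mathcal{E}_n)^{\sigma}\cong\widetilde{\mathcal{A}}(\mathrm{Sym}_n)$ as graded algebras precisely when $\sigma$ satisfies (i) $\sigma\big((i\,j),(k\,l)\big)=-\sigma\big((k\,l),(i\,j)\big)$ for disjoint transpositions, and (ii) $\sigma\big((i\,j),(j\,k)\big)=\sigma\big((j\,k),(k\,i)\big)=\sigma\big((k\,i),(i\,j)\big)$ for all distinct $i,j,k$. For $n\leq 3$ there are no disjoint transpositions, so (i) is vacuous, $\sigma\equiv 1$ suffices, and the two presentations coincide outright; thus assume $n\geq 4$.

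The cocycle I would use is the one attached to the spin double cover $\widetilde{\mathrm{Sym}}_n$ of $\mathrm{Sym}_n$ --- the nontrivial class in $H^2(\mathrm{Sym}_n,\mathbb{C}^{\times})\cong\mathbb{Z}/2$ for $n\geq 4$ --- in the normalisation where the lifts $\tilde s_1,\dots,\tilde s_{n-1}$ of the adjacent transpositions satisfy $\tilde s_i\tilde s_j=-\tilde s_j\tilde s_i$ for $|i-j|\geq 2$. Condition (i) is then essentially immediate: the quantity $\sigma(g,h)/\sigma(h,g)$ on a commuting pair depends only on the cohomology class and is conjugation-invariant, all disjoint pairs of transpositions are conjugate, and for the pair $(1\,2),(3\,4)$ one has $\tilde s_1\tilde s_3=-\tilde s_3\tilde s_1$. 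The real work is condition (ii), which is a statement about the chosen cocycle representative rather than its class; since replacing $\sigma$ by $\sigma\cdot d\phi$ sends $\sigma(g,h)$ to $\sigma(g,h)\phi(g)\phi(h)/\phi(gh)$ and the three products in (ii) all equal $(i\,j\,k)$, one has exactly the freedom to renormalise by a function $\phi$ on the transpositions, and I would pin down a single such $\phi$, uniform in $i,j,k$, making the three values in (ii) agree, by explicit computation of the relevant entries of the spin cocycle in $\widetilde{\mathrm{Sym}}_n$. I expect this normalisation --- showing that one globally consistent choice works for \emph{all} triples simultaneously --- to be the main obstacle and the combinatorial heart of the argument; everything else is bookkeeping, and once (i) and (ii) hold the matching of presentations forces $(\mathcal{E}_n)^{\sigma}\cong\widetilde{\mathcal{A}}(\mathrm{Sym}_n)$ and hence $H_{\widetilde{\mathcal{A}}(\mathrm{Sym}_n)}(t)=H_{\mathcal{E}_n}(t)$.

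Finally, a remark on an alternative route and a caveat. One can equivalently phrase the above by identifying, as in the construction of $\widetilde{\mathcal{A}}$ given here, both algebras as braided Hopf algebras built from two Yetter--Drinfeld modules over $\mathbb{C}\mathrm{Sym}_n$ on the transposition class that differ by a cocycle twist of $\mathbb{C}\mathrm{Sym}_n$, and invoking the ensuing braided-monoidal equivalence of the two categories of Yetter--Drinfeld modules; this makes the hexagon relation automatic, since it becomes the canonical degree-$2$ relation of the twisted braided vector space, but it requires knowing that each algebra actually coincides with the corresponding Nichols algebra $\mathfrak{B}$, which is only established in low rank. For the conjecture as stated --- a comparison of the two explicitly presented quadratic algebras --- the direct graded-twist argument of the preceding paragraphs is self-contained and sidesteps that difficulty.
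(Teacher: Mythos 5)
First, a point of framing: this statement is a \emph{conjecture} in the paper. The authors give no proof; they only verify the equality of Hilbert--Poincar\'e series by machine computation for $n\le 5$. So there is no argument of theirs to compare yours against, and your proposal must stand on its own. Unfortunately, its central mechanism is obstructed. You correctly observe that condition (i) --- $\sigma(t,t')/\sigma(t',t)=-1$ for disjoint transpositions --- pins down the cohomology class: that ratio on a commuting pair is a class invariant, so $\sigma$ must represent the nontrivial (spin) class in $H^2({\rm Sym}_n,\C^\times)\cong\Z/2$. But the spin class provably violates condition (ii), even after the coboundary/generator-rescaling normalisation you defer to. Realise the double cover in the Clifford algebra by lifting $(i\,j)$, $i<j$, to $\tilde t_{ij}=\tfrac{1}{\sqrt2}(e_i-e_j)$ with $e_i^2=1$. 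For $i<j<k$, $u=(i\,j)$, $v=(j\,k)$, $w=(i\,k)$, a two-line computation gives $\tilde v\tilde w=\tilde w\tilde u=-\,\tilde u\tilde v$, hence $\sigma(v,w)=\sigma(w,u)=-\sigma(u,v)$. Rescaling generators by scalars $c_t$ (which absorbs all remaining coboundary freedom, since the three products equal the same $3$-cycle) turns (ii) into the constraints $c_{(ij)}=c_{(jk)}=-c_{(ik)}$ for every $i<j<k$. These are inconsistent for $n\ge 4$: the triple $\{1,2,3\}$ gives $c_{23}=c_{12}$, then $\{2,3,4\}$ gives $c_{24}=-c_{23}=-c_{12}$, while $\{1,2,4\}$ gives $c_{24}=c_{12}$, forcing $c_{12}=0$. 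Hence no group $2$-cocycle satisfies (i) and (ii) simultaneously, and $(\mathcal{E}_n)^\sigma\not\cong\widetilde{\mathcal{A}}({\rm Sym}_n)$ by this mechanism for any $\sigma$; the ``main obstacle'' you flag is not a computation to be carried out but a genuine obstruction.

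The viable route is the one you relegate to a caveat, and your reason for setting it aside is not the real difficulty. The two algebras are the quadratic covers $T(V)/\langle\ker(1+c)\rangle$ of the braided vector spaces supported on the transposition class with, respectively, the constant cocycle $-1$ and the Fomin--Kirillov cocycle $\chi$; Vendramin (Proc.\ Amer.\ Math.\ Soc.\ 140 (2012)) proved these two braided vector spaces are twist-equivalent via a Hopf $2$-cocycle of $\C W$. A braided monoidal equivalence of Yetter--Drinfeld categories identifies $V^{\otimes k}$ with $V'^{\otimes k}$ compatibly with the braidings, hence carries $\ker(1+c)$ to $\ker(1+c')$ and the ideals they generate degree by degree; so it identifies the \emph{quadratic covers} as graded vector spaces, with no need to know that either algebra coincides with its Nichols algebra. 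The essential point is that a Hopf-cocycle (Drinfeld) twist of the non-abelian group algebra $\C W$ genuinely alters the braiding rather than merely rescaling the $(g,h)$-components of the multiplication by a scalar $\sigma(g,h)$ --- which is exactly how it evades the sign obstruction above. Your main argument cannot be repaired within the class of graded $2$-cocycle twists; if you want a proof, work out the Hopf-cocycle version carefully.
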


\begin{remark}\label{rmk: quo}
	Recall that the Orlik-Solomon algebra associated to the reflection arrangement of ${\rm Sym}_n$ is generated by elements $e_{ij}=e_{ji}$ for $1\leq i<j\leq n$, subject to the following relations:
	\[
	\begin{aligned}
		e_{ij}e_{kl}&= -e_{kl}e_{ij}, &\quad& 1\leq i<j\leq n, 1\leq k\leq l\leq n,\\
		e_{ij}e_{jk}+e_{jk}e_{ki}&+ e_{ki}e_{ij}=0, &\quad& 1\leq i,j,k\leq n.
	\end{aligned}
	\]
	This appears as a quotient of $\widetilde{\mathcal{A}}({\rm Sym}_n)$ by imposing the anti-commutative relations $\alpha_{ij} \alpha_{kl}= -\alpha_{kl} \alpha_{ij}$ for $i<j$ and $k<l$, that is, we allow $\{i,j\} \cap \{k,l\} \neq \emptyset$ in the second relation of \eqref{eq: typeA}.
\end{remark}

\subsubsection{Braided Hopf algebra structure}
We now take a Hopf-theoretic point of view to the covering algebra $\widetilde{\mathcal{A}}$ \cite{AG99,MS00,AS02}. 

Let us recall relevant definitions. The group algebra $\mathbb{C}W$ has a  Hopf algebra structure with the comultiplication $\Delta(w)= w\otimes w$, counit $\epsilon(w)=1$ and 
antipode $S(w)=w^{-1}$ for any $w\in W$. A \emph{Yetter-Drinfeld module} $A$ over  $\mathbb{C}W$ is a $W$-graded vector space $A=\bigoplus_{w\in W} A_w$, which is a $W$-module such that $w. A_{u}\subseteq A_{wuw^{-1}} $ for all $u,w\in W$.

The algebra $\widetilde{\mathcal{A}}$ is a Yetter-Drinfeld module over $\mathbb{C}W$, as we now describe. In addition to the natural $\mathbb{Z}$-grading, $\widetilde{\mathcal{A}}$ has a grading with respect to $W$ such that the $W$-degree of the generator $\alpha_{t}$ is $t\in T$ and this is extended to  all monomials by multiplication. As the defining relations of $\widetilde{\mathcal{A}}$ are homogeneous with respect to the $W$-degree, this gives a  $W$-grading of $\widetilde{\mathcal{A}}= \bigoplus_{w\in W} \widetilde{\mathcal{A}}_w$, where $\widetilde{\mathcal{A}}_w$ is spanned by monomials $\alpha_{t_1}\dots \alpha_{t_k}$ such that $t_1t_2\dots t_k=w$. Note that $w=t_1t_2\dots t_k$ is not necessarily a reduced expression with respect to reflections of $T$ or simple reflections of $S$. 

The  $W$-module structure  on $\widetilde{\mathcal{A}}$ is defined by 
\begin{equation}\label{eq: Wact}
	w.\alpha_{t}: =(-1)^{\ell(w)}  \alpha_{wtw^{-1}}, \quad , \forall w\in W,
\end{equation}
where $\ell(w)$ is the usual length of $w$ with respect to the generating set $S$ of $W$. Clearly, this action preserves the defining relations \eqref{eq: newquad1} and \eqref{eq: newquad2} of $\widetilde{\mathcal{A}}$, and is compatible with the $W$-grading, i.e. $w. \widetilde{\mathcal{A}}_{u}\subseteq \widetilde{\mathcal{A}}_{wuw^{-1}} $ for all $u,w\in W$. Therefore, $\widetilde{\mathcal{A}}$ is a Yetter-Drinfeld module over $\mathbb{C}W$.

We denote by $^{W}_{W}\mathcal{YD}$ the category of Yetter-Drinfeld modules over $\mathbb{C}W$. A morphism  $f: A\rightarrow B$ of the category $^{W}_{W}\mathcal{YD}$ is a homomorphism of $W$-modules which preserves the $W$-grading. 

An important ingredient of the Yetter-Drinfeld category is the canonical braiding. For any $A, B\in\,  ^{W}_{W}\mathcal{YD}$, the  canonical braiding  $c: A\otimes B \rightarrow B \otimes A$ is defined by  
\begin{equation}\label{eq: braiding}
	c(a \otimes b)= b\otimes (w^{-1}. a), \quad \forall a \in A, b\in B_w.
\end{equation}
The tensor product $A\otimes B$ is an object of $^{W}_{W}\mathcal{YD}$, with the $W$-grading $(A\otimes B)_w= \bigoplus_{ab=w} A_a\otimes B_b$ and the $W$-action $w. (a\otimes b)= w.a\otimes w.b$ for any $w\in W$, $a\in A$ and $b\in B$. 
In particular, we have  $\widetilde{\mathcal{A}}\otimes \widetilde{\mathcal{A}} \in {}^{W}_{W}\mathcal{YD}$. Moreover, the tensor product $\widetilde{\mathcal{A}}\otimes \widetilde{\mathcal{A}}$ is still an algebra, with multiplication  defined via the canonical braiding:
\begin{equation}\label{eq: mult}
	(x_1 \otimes y_1)(x_2\otimes y_2)=  x_1x_2 \otimes (w^{-1}.y_1)y_2, \quad \forall  x_2\in \widetilde{\mathcal{A}}_w, \, x_1,y_1, y_2\in \widetilde{\mathcal{A}}.
\end{equation}
More concisely, $\mu_{ \widetilde{\mathcal{A}}\otimes \widetilde{\mathcal{A}}}= (\mu_{\widetilde{\mathcal{A}}} \otimes \mu_{\widetilde{\mathcal{A}}})(1\otimes c \otimes 1)$, where $\mu_A: A\otimes A \rightarrow A $ denotes the multiplication map of the algebra $A$.

Recall that a \emph{braided bialgebra} $A$ in $^{W}_{W}\mathcal{YD}$ is a collection $(A, \mu, \eta, \Delta, \epsilon)$ such that $(A, \mu, \eta)$ is an algebra in $^{W}_{W}\mathcal{YD}$, $(A, \Delta, \epsilon)$ is a coalgebra in $^{W}_{W}\mathcal{YD}$ and $\Delta: A\rightarrow A\otimes A$ 
and $\epsilon: A\rightarrow \mathbb{C}$ are morphisms of algebras (here $A\otimes A$ 
is an algebra in $^{W}_{W}\mathcal{YD}$ with multiplication defined via the  braiding $c$). 
We call $A$  a \emph{braided Hopf algebra} if in addition there is an antipode $S: A\rightarrow A$ in $^{W}_{W}\mathcal{YD}$ such that $(1\otimes S)\Delta=(S\otimes 1)\Delta=\eta \epsilon$.

\begin{proposition}\label{prop: Hopfstr}
	The algebra $\widetilde{\mathcal{A}}$ is a braided Hopf algebra in $^{W}_{W}\mathcal{YD}$ with the coproduct $\Delta$, the counit $\epsilon$ and the antipode $S$ defined on the generators $\alpha_t, t\in T$ by 
	\begin{equation}\label{eq: coalg}
		\begin{aligned}
			&\Delta(\alpha_{t})= \alpha_t\otimes 1 + 1\otimes \alpha_t,\\
			&\epsilon(\alpha_{t})=0, \quad S(\alpha_{t})=-\alpha_t.
		\end{aligned}
	\end{equation}
	
\end{proposition}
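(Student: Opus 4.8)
The plan is to verify the braided Hopf algebra axioms for $\widetilde{\mathcal{A}}$ directly on generators, using the fact that $\widetilde{\mathcal{A}}$ is generated in degree one and that the maps $\Delta$, $\epsilon$, $S$ are forced to be (braided) algebra (anti-)morphisms. First I would check that the prescribed $\Delta$ extends to a well-defined algebra morphism $\widetilde{\mathcal{A}}\to \widetilde{\mathcal{A}}\otimes\widetilde{\mathcal{A}}$, where the target carries the braided multiplication \eqref{eq: mult}. Since $\widetilde{\mathcal{A}}$ is presented by the quadratic relations \eqref{eq: newquad1} and \eqref{eq: newquad2}, it suffices to show that the elements $\Delta(\alpha_t)=\alpha_t\otimes 1+1\otimes\alpha_t$ (which all have $W$-degree $t\in T$, hence are ``primitive-like'' elements living in the right graded pieces) satisfy those same relations in $\widetilde{\mathcal{A}}\otimes\widetilde{\mathcal{A}}$. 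Concretely, using \eqref{eq: mult} and the sign action \eqref{eq: Wact}, one computes
\[
\Delta(\alpha_t)^2 = \alpha_t^2\otimes 1 + \alpha_t\otimes(t.\alpha_t) + (\text{cross terms})\, ,
\]
and since $t.\alpha_t = (-1)^{\ell(t)}\alpha_{ttt^{-1}} = -\alpha_t$ while $\alpha_t^2=0$, the degree-$(1,1)$ middle terms cancel: $\alpha_t\otimes(t.\alpha_t)+\alpha_t\otimes\alpha_t = -\alpha_t\otimes\alpha_t + \alpha_t\otimes\alpha_t = 0$. A parallel but slightly longer computation, summing over $(t_1,t_2)\in\mathrm{Rex}_T(w)$ and tracking the braiding twist by the $W$-degree of the right-hand factor, shows relation \eqref{eq: newquad2} is preserved; here I would use that $\mathrm{Rex}_T(w)$ is stable under the map $(t_1,t_2)\mapsto(t_2, t_2t_1t_2)$ coming from the braiding, and that the sign $(-1)^{\ell(t_2)}$ combines correctly with relation \eqref{eq: newquad2} itself. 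Coassociativity $(\Delta\otimes 1)\Delta=(1\otimes\Delta)\Delta$ and counitality $(\epsilon\otimes 1)\Delta=(1\otimes\epsilon)\Delta=\mathrm{id}$ then hold because they hold on the generators $\alpha_t$ (where both sides are manifestly $\alpha_t\otimes 1\otimes 1+1\otimes\alpha_t\otimes 1+1\otimes 1\otimes\alpha_t$, resp. $\alpha_t$) and all maps in sight are braided algebra morphisms, so the identities propagate multiplicatively.

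Next I would check $\Delta$, $\epsilon$ are morphisms in $^{W}_{W}\mathcal{YD}$: they preserve the $W$-grading (clear on generators, hence everywhere) and commute with the $W$-action \eqref{eq: Wact} — again it suffices to check on $\alpha_t$, where $w.\Delta(\alpha_t) = w.(\alpha_t\otimes 1 + 1\otimes\alpha_t) = (-1)^{\ell(w)}(\alpha_{wtw^{-1}}\otimes 1 + 1\otimes\alpha_{wtw^{-1}}) = \Delta(w.\alpha_t)$, using that $w.1 = 1$ and the diagonal $W$-action on the tensor product. For the antipode, $S$ is by construction the braided algebra anti-morphism (i.e. $S\circ\mu = \mu\circ c\circ(S\otimes S)$) determined by $S(\alpha_t)=-\alpha_t$; I must confirm this is consistent with the relations, which follows since $S$ applied to \eqref{eq: newquad1}, \eqref{eq: newquad2} again produces (up to the braiding sign) the same relations — the key point being $S(\alpha_{t_1}\alpha_{t_2}) = (t_1.S(\alpha_{t_2}))S(\alpha_{t_1}) = \alpha_{t_1^{t_2}}\alpha_{t_1}$ or similar, matched against the $\mathrm{Rex}_T$-sum. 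Finally, the antipode axiom $(1\otimes S)\Delta = (S\otimes 1)\Delta = \eta\epsilon$ is verified on generators: $(S\otimes 1)\Delta(\alpha_t) = S(\alpha_t)\otimes 1 \cdot(\text{mult}) = -\alpha_t + \alpha_t = 0 = \eta\epsilon(\alpha_t)$ in degree one, and then it extends to all of $\widetilde{\mathcal{A}}$ by the standard convolution argument in a braided category, since both $(1\otimes S)\Delta$ and $(S\otimes 1)\Delta$ are convolution inverses of the identity in $\mathrm{End}_{^{W}_{W}\mathcal{YD}}(\widetilde{\mathcal{A}})$ and such an inverse is unique.

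The main obstacle, and the step deserving the most care, is the well-definedness of $\Delta$ on relation \eqref{eq: newquad2} in the braided tensor square. Unlike the symmetric (classical) case, the multiplication on $\widetilde{\mathcal{A}}\otimes\widetilde{\mathcal{A}}$ twists the second tensor factor by the inverse of the $W$-degree of whatever sits to its left, and the sign $(-1)^{\ell(\cdot)}$ in \eqref{eq: Wact} must be bookkept through each crossing. So I would set up, for $w\in W$ with $\ell_T(w)=2$, the expansion
\[
\sum_{(t_1,t_2)\in\mathrm{Rex}_T(w)}\!\!\!\Delta(\alpha_{t_1})\Delta(\alpha_{t_2}) \;=\; \Big(\!\!\sum \alpha_{t_1}\alpha_{t_2}\Big)\otimes 1 \;+\; 1\otimes\Big(\!\!\sum\alpha_{t_1}\alpha_{t_2}\Big) \;+\; (\text{degree }(1,1)\text{ terms}),
\]
where the first two bracketed sums vanish by \eqref{eq: newquad2}, and show the remaining degree-$(1,1)$ part, which is $\sum_{(t_1,t_2)}\big(\alpha_{t_1}\otimes(t_1.\alpha_{t_2}) + \alpha_{t_2}\otimes\alpha_{t_1}\big) = \sum_{(t_1,t_2)}\big(-\alpha_{t_1}\otimes\alpha_{t_1 t_2 t_1} + \alpha_{t_2}\otimes\alpha_{t_1}\big)$, also vanishes. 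This last cancellation is exactly the content of the Hurwitz action: as $(t_1,t_2)$ runs over $\mathrm{Rex}_T(w)$, the pair $(t_2, t_1^{t_2})=(t_2,t_2t_1t_2)$ runs over the same set (this is the $\sigma_1$ action preserving $\mathrm{Rex}_T(w)$ used throughout the paper), so reindexing the second term against the first produces a telescoping/pairing that kills it. Once this is nailed down, everything else is the routine ``check on generators, extend by functoriality'' machinery of braided Hopf algebras.
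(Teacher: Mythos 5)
Your proposal is correct and follows the same approach as the paper: verify that $\Delta$, $\epsilon$, $S$ preserve the defining relations \eqref{eq: newquad1}--\eqref{eq: newquad2} (the one nontrivial step being the degree-$(1,1)$ cross terms in $\Delta(R_w)$, which cancel upon reindexing by the Hurwitz action $(t_1,t_2)\mapsto(t_2,t_2t_1t_2)$ on $\mathrm{Rex}_T(w)$), and then check the coassociativity, counit, and antipode axioms on generators. One small correction: with the paper's convention \eqref{eq: mult} the cross terms actually read $\alpha_{t_1}\otimes\alpha_{t_2}-\alpha_{t_2}\otimes\alpha_{t_2t_1t_2}$ (the braiding twists the second slot of the left factor by the $W$-degree of the first slot of the right factor) rather than the expression you displayed, and similarly the correct value of $S(\alpha_{t_1}\alpha_{t_2})$ is $-\alpha_{t_2}\alpha_{t_2t_1t_2}$ as in Proposition~\ref{lem: antipode}; but in both cases your reindexing argument applies unchanged, so the proof strategy is sound.
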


\begin{proof}
	We need to check that $\Delta$, $\epsilon$ and $S$ are well-defined, and then check that they satisfy Hopf algebra axioms. Straightforward calculations show that:
	\[
	\begin{aligned}
		&\Delta(\alpha_t^2)= \alpha_t^2\otimes 1+ 1\otimes \alpha_{t}^2, \quad 
		S(\alpha_t^2)= \alpha_t^2,\quad 
		\epsilon(\alpha_{t}^2)=0,\\
		&\Delta(R_w)= R_w\otimes 1+ 1\otimes R_w,\\
		&S(R_w)= R_w,\quad 
		\epsilon(R_w)=0,
	\end{aligned}
	\]
	where we have used the notation $R_w:= \sum_{(t_1,t_2)\in {\rm Rex}_T(w)} \alpha_{t_1}\alpha_{t_2}$ for any  $w\in W$ with $\ell_{T}(w)=2$. Therefore, $\Delta$, $\epsilon$ and $S$ are all well-defined.
	
	Next we check the Hopf algebra axioms on the generators of $\widetilde{\mathcal{A}}$: (1) Coassociativity: 
	\[
	\begin{aligned}
		(\Delta\otimes 1)(\Delta(\alpha_t))&=(\Delta\otimes 1)(\alpha_t\otimes 1+ 1\otimes \alpha_t )\\
		&= \alpha_t\otimes 1\otimes 1+  1\otimes  \alpha_t \otimes 1+ 1\otimes 1 \otimes \alpha_t \\
		&= (1\otimes \Delta)(\Delta(\alpha_t)).  
	\end{aligned}
	\]
	(2) The counit axiom:
	\[
	\begin{aligned}
		(\epsilon \otimes 1)(\Delta (\alpha_t))= (\epsilon \otimes 1)(\alpha_t\otimes 1+ 1\otimes \alpha_t )= \alpha_t,\\
		(1 \otimes \epsilon)(\Delta (\alpha_t))= (1 \otimes \epsilon)(\alpha_t\otimes 1+ 1\otimes \alpha_t )= \alpha_t.
	\end{aligned}
	\]
	(3) The antipode axiom:
	\[
	\begin{aligned}
		\mu(1\otimes S)(\Delta(\alpha_t))&=\mu(1\otimes S)(\alpha_t\otimes 1+ 1\otimes \alpha_t)
		=\alpha_t- \alpha_t=0= \epsilon(\alpha_t),\\
		\mu(S\otimes 1)(\Delta(\alpha_t))&=\mu(S\otimes 1)(\alpha_t\otimes 1+ 1\otimes \alpha_t)
		=-\alpha_t+ \alpha_t=0= \epsilon(\alpha_t).
	\end{aligned}
	\]
\end{proof}

We can extend \eqref{eq: coalg} to any monomials of $\widetilde{\mathcal{A}}$. Clearly, $\epsilon(\alpha_{t_1}\dots \alpha_{t_k})=0$ for $k\geq 1$. In the following we give explicit formulae for $S$ and $\Delta$.

\begin{proposition}\label{lem: antipode}
	The antipode $S$ is given  explicitly by 
	\begin{equation}\label{eq: antipode}
		S(\alpha_{t_1}\dots \alpha_{t_k})= \varepsilon(t_1, \dots ,t_k)\, \alpha_{t_k} \alpha_{t_{k-1}^{t_k}} \dots \alpha_{t_1^{t_{2}\dots t_k}}  ,
	\end{equation}
	where $\varepsilon(t_1, \dots ,t_k)= (-1)^k\prod_{i=2}^k (-1)^{\ell(t_i\dots t_{k})}$.
\end{proposition}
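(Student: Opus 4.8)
The plan is to prove formula \eqref{eq: antipode} by induction on $k$, using the recursive characterization of the antipode in a braided Hopf algebra. Recall that in a braided Hopf algebra the antipode is the convolution inverse of the identity, and it is an \emph{anti-homomorphism with respect to the braided multiplication}; more precisely, from $\mu(S\otimes 1)\Delta=\eta\epsilon$ one derives $S(xy)=\mu_{\wt\CA\ot\wt\CA}\circ(S\ot S)\circ c(x\ot y)$, i.e. if $x\in\wt\CA_w$ then $S(xy)=S(y)\,S(w^{-1}.x)$ (up to the sign coming from the $W$-action \eqref{eq: Wact}). First I would record this braided anti-multiplicativity as a lemma, deriving it from the Hopf axioms in Proposition \ref{prop: Hopfstr} together with the description \eqref{eq: mult} of the multiplication on $\wt\CA\ot\wt\CA$ and the braiding \eqref{eq: braiding}. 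This is the conceptual heart of the argument; once it is in place the formula follows by a bookkeeping induction.

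For the base case $k=1$, \eqref{eq: antipode} reads $S(\alpha_t)=-\alpha_t$, which is the definition \eqref{eq: coalg}, and indeed $\varepsilon(t)=(-1)^1=-1$ (the empty product over $i=2,\dots,1$ being $1$). For the inductive step, write $\alpha_{t_1}\cdots\alpha_{t_k}=\alpha_{t_1}\cdot(\alpha_{t_2}\cdots\alpha_{t_k})$, where $\alpha_{t_1}\in\wt\CA_{t_1}$. Applying the braided anti-multiplicativity with $x=\alpha_{t_1}$ (so $w=t_1$) and $y=\alpha_{t_2}\cdots\alpha_{t_k}$, I get
\[
S(\alpha_{t_1}\cdots\alpha_{t_k})=S(\alpha_{t_2}\cdots\alpha_{t_k})\,S(t_1^{-1}.\alpha_{t_1}).
\]
Now $t_1^{-1}.\alpha_{t_1}=(-1)^{\ell(t_1)}\alpha_{t_1}$ by \eqref{eq: Wact}, and $\ell(t_1)$ is odd since reflections have odd length, so $t_1^{-1}.\alpha_{t_1}=-\alpha_{t_1}$, whence $S(t_1^{-1}.\alpha_{t_1})=S(-\alpha_{t_1})=\alpha_{t_1}$. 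Thus $S(\alpha_{t_1}\cdots\alpha_{t_k})=S(\alpha_{t_2}\cdots\alpha_{t_k})\,\alpha_{t_1}$. Wait — this does not directly match the claimed right-hand side, which conjugates $t_1$ by $t_2\cdots t_k$; so in fact I would instead split off the \emph{last} factor, writing $\alpha_{t_1}\cdots\alpha_{t_k}=(\alpha_{t_1}\cdots\alpha_{t_{k-1}})\cdot\alpha_{t_k}$ with $\alpha_{t_1}\cdots\alpha_{t_{k-1}}\in\wt\CA_{u}$, $u=t_1\cdots t_{k-1}$. Then
\[
S(\alpha_{t_1}\cdots\alpha_{t_k})=S(\alpha_{t_k})\,S\bigl(u^{-1}.(\alpha_{t_1}\cdots\alpha_{t_{k-1}})\bigr)=-\alpha_{t_k}\,S\bigl(u^{-1}.(\alpha_{t_1}\cdots\alpha_{t_{k-1}})\bigr).
\]
Here $u^{-1}.(\alpha_{t_1}\cdots\alpha_{t_{k-1}})=(-1)^{\ell(u)}\alpha_{ut_1u^{-1}}\cdots\alpha_{ut_{k-1}u^{-1}}$ by \eqref{eq: Wact} applied to each factor, and since $u=t_1\cdots t_{k-1}$ one checks $ut_iu^{-1}=t_i^{t_{i+1}\cdots t_{k-1}}$ — this is a short computation with conjugation that I would isolate. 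Then I apply the induction hypothesis to the monomial of length $k-1$ obtained this way, and collect the accumulated signs into $\varepsilon(t_1,\dots,t_k)$, using the relation $\ell(u)\equiv\ell(t_1\cdots t_{k-1})$ and the recursion $\varepsilon(t_1,\dots,t_k)=-\varepsilon(\text{shorter tuple})\cdot(\text{sign from }u^{-1}.)$ together with the identity $\ell(t_i\cdots t_k)=\ell((t_i\cdots t_{k-1})\cdot t_k)$; keeping the sign factors straight is the only delicate part.

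The main obstacle I anticipate is purely the sign and index bookkeeping: matching the exponent $\prod_{i=2}^k(-1)^{\ell(t_i\cdots t_k)}$ against the signs that accumulate from the $W$-action \eqref{eq: Wact} at each step of the induction, and confirming that conjugating the block $\alpha_{t_1}\cdots\alpha_{t_{k-1}}$ by $u=t_1\cdots t_{k-1}$ produces precisely the conjugated generators $\alpha_{t_i^{t_{i+1}\cdots t_k}}$ appearing in the final answer (note $t_i^{t_{i+1}\cdots t_{k-1}}=t_i^{t_{i+1}\cdots t_k}$ because $t_i^{t_{i+1}\cdots t_{k-1}}$ commutes appropriately — actually the exponent $t_k$ must be handled, and I would verify $t_i^{t_{i+1}\cdots t_{k-1}}$ conjugated further by $t_k$ gives $t_i^{t_{i+1}\cdots t_k}$, so in fact one should peel $t_k$ first and conjugate the remaining block, which is exactly the split chosen above). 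A sanity check against the explicit type $A_2$ relations, or against the Kreweras-type map $\kappa$ of \eqref{eq: kap} whose formula $\kappa(a_{t_1}\cdots a_{t_k})=a_{t_k^{t_{k-1}\cdots t_1}}\cdots a_{t_1}$ has a similar shape, would confirm the final sign normalization; note that $S$ on $\wt\CA$ should be compatible under $\pi$ with the corresponding structure on $\CA$, which provides an additional consistency test.
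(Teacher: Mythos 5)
Your plan — use the braided anti-multiplicativity $S\mu=\mu(S\otimes S)c$ and induct on $k$ — is exactly the paper's approach. But you misapply the braiding and this derails the whole computation. In the paper's convention \eqref{eq: braiding}, $c(a\otimes b)=b\otimes(w^{-1}.a)$ where $w$ is the $W$-degree of the \emph{second} tensor factor $b$; so $S(xy)=S(y)\,S(|y|^{-1}.x)$. You twice substitute $|x|$ for $|y|$: in your first attempt you take $w=t_1=|x|$ (getting the spurious $S(t_1^{-1}.\alpha_{t_1})$), and in your second attempt you take $u=t_1\cdots t_{k-1}=|x|$ rather than $|y|=t_k$. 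Because of the first mistake you conclude that the front split ``does not match'' the claimed answer and abandon it — but with the correct degree the front split matches immediately: $S(\alpha_{t_1}\cdots\alpha_{t_k})=S(\alpha_{t_2}\cdots\alpha_{t_k})\,S((t_2\cdots t_k)^{-1}.\alpha_{t_1})=-(-1)^{\ell(t_2\cdots t_k)}S(\alpha_{t_2}\cdots\alpha_{t_k})\,\alpha_{t_1^{t_2\cdots t_k}}$, which hands you the rightmost generator of \eqref{eq: antipode} directly and makes the induction one line. This is what the paper does.

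Your fallback (peel off $\alpha_{t_k}$) would also work if carried out correctly, but as written it contains a second independent error: the conjugation identity ``$ut_iu^{-1}=t_i^{t_{i+1}\cdots t_{k-1}}$'' with $u=t_1\cdots t_{k-1}$ is false. For example in $W=\Sym_4$ with $t_1=(12)$, $t_2=(23)$, $t_3=(34)$ and $u=t_1t_2t_3=(1234)$, one has $u^{-1}t_2u=(12)$ and $ut_2u^{-1}=(34)$, whereas $t_2^{t_3}=(24)$; none of these agree. (The correct computation with $|y|=t_k$ yields conjugation of \emph{every} $t_i$ by the single reflection $t_k$, giving $\alpha_{t_1^{t_k}}\cdots\alpha_{t_{k-1}^{t_k}}$ inside $S(\ldots)$, and then the induction hypothesis combined with $(t_i^{t_k})^{(t_{i+1}^{t_k})\cdots(t_{k-1}^{t_k})}=t_i^{t_{i+1}\cdots t_k}$ and the parity-invariance of $\ell$ under conjugation closes the argument; but that is not what you wrote.) So the proposal has a genuine gap: fix the braiding degree, and the front-peeling split you abandoned is the clean route.
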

\begin{proof}
	Recall that in $^{W}_{W}\mathcal{YD}$ we have $S\mu= \mu(S\otimes S)c$. This follows from the fact that both $S\mu$ and $\mu(S\otimes S)c$ are the inverse of $\mu$ under the convolution product in the algebra 
	${\rm Hom}(\widetilde{\mathcal{A}} \otimes \widetilde{\mathcal{A}}, \widetilde{\mathcal{A}})$;  refer to \cite[Lemma 1.2.2]{AG99}. Using this equation and induction on $k$, we have 
	\[ 
	\begin{aligned}
		S(\alpha_{t_1} \alpha_{t_{2}}\dots  \alpha_{t_k})
		&= S(\alpha_{t_2}\dots \alpha_{t_{k}}) S((t_k\dots t_{2}).\alpha_{t_1})\\
		&=- (-1)^{\ell(t_2\dots t_{k})}  S(\alpha_{t_2}\dots \alpha_{t_{k}}) \alpha_{t_1^{t_{2}\dots t_k}}.
	\end{aligned}
	\]
	The formula follows by induction hypothesis.
\end{proof}

\begin{proposition}\label{prop: comult}
	The comultiplication of $\widetilde{\mathcal{A}}$ is given explicitly by 
	\[ \Delta(\alpha_{t_1} \dots \alpha_{t_k} )=\sum_{j=0}^k
	\sum_{1\leq i_1<i_2<\dots <i_j\leq k}  \alpha_{t_{i_1}}\dots \alpha_{t_{i_j}}\otimes E_{t_{i_j}}\dots E_{t_{i_1}}(\alpha_{t_1} \dots \alpha_{t_k}),\]
	where 
	$E_{t_{r_i}}(\alpha_{t_{r_1}} \dots \alpha_{t_{r_s}}):= t_{r_i}.(\alpha_{t_{r_1}} \dots \alpha_{t_{r_{i-1}}} )\alpha_{t_{r_{i+1}}} \dots \alpha_{t_{r_s}}$,
	for any monomial $\alpha_{t_{r_1}} \dots \alpha_{t_{r_s}}\in \widetilde{\mathcal{A}}$ and $1\leq i\leq s$.
\end{proposition}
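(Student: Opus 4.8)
The plan is to prove the formula by induction on $k$, exploiting the fact, established in \propref{prop: Hopfstr}, that $\Delta\colon\widetilde{\mathcal{A}}\to\widetilde{\mathcal{A}}\otimes\widetilde{\mathcal{A}}$ is a morphism of algebras, where $\widetilde{\mathcal{A}}\otimes\widetilde{\mathcal{A}}$ carries the braided multiplication \eqref{eq: mult}. The cases $k=0$ and $k=1$ will be checked directly against $\Delta(1)=1\otimes 1$ and $\Delta(\alpha_{t_1})=\alpha_{t_1}\otimes 1+1\otimes\alpha_{t_1}$ (these are the $j=0$ and $j=1$ terms, with the empty product of $E$-operators and the empty product of $\alpha$'s read as the identity and as $1$ respectively). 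For $k>1$ I would write $\Delta(\alpha_{t_1}\cdots\alpha_{t_k})=\Delta(\alpha_{t_1})\cdot\Delta(\alpha_{t_2}\cdots\alpha_{t_k})$, substitute the inductive expression for $\Delta(\alpha_{t_2}\cdots\alpha_{t_k})$, which is a sum over index sets $2\le i_1<\cdots<i_j\le k$ of terms $\alpha_{t_{i_1}}\cdots\alpha_{t_{i_j}}\otimes y$ with $y=E_{t_{i_j}}\cdots E_{t_{i_1}}(\alpha_{t_2}\cdots\alpha_{t_k})$, and organise the resulting double expansion according to whether or not the index $1$ is adjoined to the subword.

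First I would handle the contribution of $\alpha_{t_1}\otimes 1$. Since $1$ is the unit of an algebra in ${}^{W}_{W}\mathcal{YD}$ it is $W$-invariant, so the braided product \eqref{eq: mult} gives $(\alpha_{t_1}\otimes 1)(\alpha_{t_{i_1}}\cdots\alpha_{t_{i_j}}\otimes y)=\alpha_{t_1}\alpha_{t_{i_1}}\cdots\alpha_{t_{i_j}}\otimes y$; and because $E_{t_1}$ applied to $\alpha_{t_1}\cdots\alpha_{t_k}$ merely deletes the first letter (its twisted prefix being empty), one has $y=E_{t_{i_j}}\cdots E_{t_{i_1}}E_{t_1}(\alpha_{t_1}\cdots\alpha_{t_k})$. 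These are exactly the terms of the asserted identity for which $1$ belongs to the chosen index set $\{1<i_1<\cdots<i_j\}$.

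Second — and this is where the real content lies — I would analyse the contribution of $1\otimes\alpha_{t_1}$. Setting $w=t_{i_1}\cdots t_{i_j}$ for the $W$-degree of $\alpha_{t_{i_1}}\cdots\alpha_{t_{i_j}}$, \eqref{eq: mult} yields $(1\otimes\alpha_{t_1})(\alpha_{t_{i_1}}\cdots\alpha_{t_{i_j}}\otimes y)=\alpha_{t_{i_1}}\cdots\alpha_{t_{i_j}}\otimes(w^{-1}.\alpha_{t_1})\,y$, so the goal reduces to the identity
\[
E_{t_{i_j}}\cdots E_{t_{i_1}}(\alpha_{t_1}\cdots\alpha_{t_k})=(w^{-1}.\alpha_{t_1})\;E_{t_{i_j}}\cdots E_{t_{i_1}}(\alpha_{t_2}\cdots\alpha_{t_k}),
\]
which then matches these terms with those of the asserted identity for which $1$ is \emph{not} chosen. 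To prove it I would note that since $1<i_1<\cdots<i_j$, the letter $\alpha_{t_1}$ stays in the prefix strictly to the left of the deleted position at every stage, hence is never deleted, and the twisting of positions $\ge 2$ is unaffected by its presence; moreover the operator applied at the $m$-th stage acts on $\alpha_{t_1}$ by the $W$-action of $t_{i_m}$. Applying the operators in the order $E_{t_{i_1}},E_{t_{i_2}},\dots,E_{t_{i_j}}$ (rightmost first) thus leaves $(t_{i_j}t_{i_{j-1}}\cdots t_{i_1}).\alpha_{t_1}$ as the surviving first letter, while the remaining letters form precisely $E_{t_{i_j}}\cdots E_{t_{i_1}}(\alpha_{t_2}\cdots\alpha_{t_k})$. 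Because $(-1)^{\ell(-)}$ is a group homomorphism, \eqref{eq: Wact} defines a genuine left $W$-action, so $(t_{i_j}\cdots t_{i_1}).\alpha_{t_1}=(w^{-1}).\alpha_{t_1}$ using $(t_{i_j}\cdots t_{i_1})^{-1}=t_{i_1}\cdots t_{i_j}=w$. Summing the two families, indexed by all subsets of $\{1,\dots,k\}$ according to whether $1$ is chosen, reproduces the formula, and the induction closes. I expect the main obstacle to be exactly this last bookkeeping: tracking correctly the accumulated $W$-twist on the surviving prefix (including the length-parity signs concealed in \eqref{eq: Wact}) under the iterated $E$-operators, and handling the position shifts in the composition $E_{t_{i_j}}\cdots E_{t_{i_1}}$ consistently.
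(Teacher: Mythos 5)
Your argument is correct, and it is essentially the mirror image of the paper's. You split off the leftmost letter, writing $\Delta(\alpha_{t_1}\cdots\alpha_{t_k})=\Delta(\alpha_{t_1})\Delta(\alpha_{t_2}\cdots\alpha_{t_k})$, whereas the paper splits off the rightmost, $\Delta(\alpha_{t_1}\cdots\alpha_{t_{k-1}})\Delta(\alpha_{t_k})$. This seemingly cosmetic choice shifts the workload. With the paper's orientation, the two summands of $\Delta(\alpha_{t_k})=1\otimes\alpha_{t_k}+\alpha_{t_k}\otimes 1$ are absorbed by two nearly definitional identities: appending $\alpha_{t_k}$ commutes with the $E$-operators on earlier positions, and $t_k.\bigl(E_{t_{i_j}}\cdots E_{t_{i_1}}(\alpha_{t_1}\cdots\alpha_{t_{k-1}})\bigr)=E_{t_k}E_{t_{i_j}}\cdots E_{t_{i_1}}(\alpha_{t_1}\cdots\alpha_{t_k})$, the latter because $E_{t_k}$ deletes the final letter and twists the entire remaining prefix by $t_k$. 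In your version the $\alpha_{t_1}\otimes 1$ half is equally trivial (empty prefix for $E_{t_1}$, and $w^{-1}.1=1$ in the braiding), but the $1\otimes\alpha_{t_1}$ half requires tracking the twist accumulated on the surviving first letter through the whole chain $E_{t_{i_j}}\cdots E_{t_{i_1}}$ and identifying it with $w^{-1}.\alpha_{t_1}$, where $w=t_{i_1}\cdots t_{i_j}$. Your treatment of this is right: position $1$ is never deleted since all $i_m\geq 2$; processing of the suffix is unaffected by the presence of the prefix letter; and because $(-1)^{\ell(-)}$ is a homomorphism, \eqref{eq: Wact} is a genuine left action so the twists compose as $(t_{i_j}\cdots t_{i_1}).\alpha_{t_1}=w^{-1}.\alpha_{t_1}$. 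That accumulated-twist lemma is the one place where your route carries heavier bookkeeping than the paper's, as you yourself note; otherwise the two proofs are interchangeable.
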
 
\begin{proof}
	Use induction on $k$. The formula is trivial if $k=1$. For $k>1$, by induction hypothesis we have 
	\[\begin{aligned}
		&\Delta(\alpha_{t_1} \dots \alpha_{t_k})= \Delta(\alpha_{t_1} \dots \alpha_{t_{k-1}})\Delta(\alpha_{t_k})\\
		=&\sum_{j=0}^{k-1}
		\sum_{1\leq i_1<i_2<\dots <i_j\leq k-1}  (\alpha_{t_{i_1}}\dots \alpha_{t_{i_j}}\otimes E_{t_{i_j}}\dots E_{t_{i_1}}(\alpha_{t_1} \dots \alpha_{t_{k-1}}))( 1\otimes \alpha_{t_k}+ \alpha_{t_k}\otimes 1)\\ 
		=& \sum_{j=0}^{k-1}
		\sum_{1\leq i_1<i_2<\dots <i_j\leq k-1}  \alpha_{t_{i_1}}\dots \alpha_{t_{i_j}}\otimes (E_{t_{i_j}}\dots E_{t_{i_1}}(\alpha_{t_1} \dots \alpha_{t_{k-1}})) \alpha_{t_k}\\
		&+ \sum_{j=0}^{k-1}
		\sum_{1\leq i_1<i_2<\dots <i_j\leq k-1}  \alpha_{t_{i_1}}\dots \alpha_{t_{i_j}}\alpha_{t_k}\otimes t_k.(E_{t_{i_j}}\dots E_{t_{i_1}}(\alpha_{t_1} \dots \alpha_{t_{k-1}})). 
	\end{aligned}
	\]
	Note that in the above equation, we have
	\[
	\begin{aligned}
		(E_{t_{i_j}}\dots E_{t_{i_1}}(\alpha_{t_1} \dots \alpha_{t_{k-1}})) \alpha_{t_k}&=E_{t_{i_j}}\dots E_{t_{i_1}}(\alpha_{t_1} \dots \alpha_{t_{k-1}}\alpha_{t_k}) \\
		t_k.(E_{t_{i_j}}\dots E_{t_{i_1}}(\alpha_{t_1} \dots \alpha_{t_{k-1}}))&= E_{t_{k}}E_{t_{i_j}}\dots E_{t_{i_1}}(\alpha_{t_1} \dots \alpha_{t_{k}}).
	\end{aligned}   
	\]
	Then we obtain the formula of $\Delta(\alpha_{t_1} \dots \alpha_{t_k})$ as desired.
\end{proof}

\begin{example}
	Using \propref{prop: comult},  we have 
	\[ 
	\begin{aligned}
		\Delta(\alpha_{t_1} \alpha_{t_2})&= 1\otimes \alpha_{t_1}\alpha_{t_2}+ \alpha_{t_1}\otimes E_{t_1}(\alpha_{t_1}\alpha_{t_2}) + \alpha_{t_2}\otimes E_{t_2}(\alpha_{t_1}\alpha_{t_2}) + \alpha_{t_1}\alpha_{t_2} \otimes E_{t_{2}}E_{t_1}(\alpha_{t_1}\alpha_{t_2})\\
		&= 1\otimes \alpha_{t_1}\alpha_{t_2}+ \alpha_{t_1}\otimes \alpha_{t_2} - \alpha_{t_2}\otimes \alpha_{t_2t_1t_2} + \alpha_{t_1}\alpha_{t_2} \otimes 1.
	\end{aligned}
	\]
	It follows that $\widetilde{\mathcal{A}}$ is not  cocommutative. 
\end{example}

\begin{proposition}
	The noncrossing algebra $\mathcal{A}$  is a subcoalgebra of $\widetilde{\mathcal{A}}$.  
\end{proposition}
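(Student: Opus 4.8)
The plan is to realise $\mathcal A$ as a concrete linear subspace $V\subseteq\widetilde{\mathcal A}$ and then to check directly that $V$ is closed under the coproduct $\Delta$ of \propref{prop: Hopfstr}. Call a monomial $\alpha_{t_1}\cdots\alpha_{t_k}\in\widetilde{\mathcal A}$ \emph{admissible} if $t_1t_2\cdots t_k$ is a $T$-reduced expression with $t_1\cdots t_k\le\gamma$; by \lemref{coro: nonzero} these are exactly the monomials sent by $\pi$ to a nonzero element of $\mathcal A$. Let $V$ be the span of all admissible monomials. The first task is to show $\pi|_V\colon V\to\mathcal A$ is a linear isomorphism. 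Surjectivity is clear, since $\mathcal A$ is spanned by the $a_{t_1}\cdots a_{t_k}$ with $t_1\cdots t_k\in\mathcal L$ a $T$-reduced expression. For the rest I would re-run the straightening argument of \propref{prop: Abasis} inside $\widetilde{\mathcal A}$: if an admissible monomial $\alpha_{t_1}\cdots\alpha_{t_k}$ is not decreasing, pick $p$ with $t_p\prec t_{p+1}$; then $u:=t_pt_{p+1}$ has $\ell_T(u)=2$ and $u\le\gamma$, and $(t_p,t_{p+1})$ is the unique increasing factorisation of $u$ in the dihedral lattice $[e,u]$ (\exref{exam: dih}), so the defining relation $\sum_{(r_1,r_2)\in{\rm Rex}_T(u)}\alpha_{r_1}\alpha_{r_2}=0$ of $\widetilde{\mathcal A}$ rewrites $\alpha_{t_p}\alpha_{t_{p+1}}$ as a $\mathbb Z$-combination of $\alpha_{r_1}\alpha_{r_2}$ with $r_1\succ r_2$, $r_1r_2=u$. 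Substituting this into the word leaves the product equal to $t_1\cdots t_k$, hence keeps every resulting monomial admissible; iterating exactly as in \propref{prop: Abasis} expresses $\alpha_{t_1}\cdots\alpha_{t_k}$ as a $\mathbb Z$-combination of decreasing admissible monomials $\alpha_{\mathbf t}$, $\mathbf t\in\bigcup_{w\le\gamma}\mathcal D_w$. Since $\pi$ sends these to the $\mathbb Z$-basis $\{a_{\mathbf t}\}$ of $\mathcal A$ (\corref{coro: basis}) they are linearly independent, so they form a basis of $V$ and $\pi|_V$ is an isomorphism. Once $\Delta(V)\subseteq V\otimes V$ is established, $\mathcal A$ acquires a coalgebra structure (namely $(\pi|_V\otimes\pi|_V)\circ\Delta\circ(\pi|_V)^{-1}$) making $(\pi|_V)^{-1}\colon\mathcal A\to\widetilde{\mathcal A}$ an injective coalgebra map with image $V$, which is the assertion.

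It remains to prove $\Delta(V)\subseteq V\otimes V$; by linearity it suffices to treat an admissible monomial $\alpha_{\mathbf t}=\alpha_{t_1}\cdots\alpha_{t_k}$, with $v_0:=t_1\cdots t_k\le\gamma$ a $T$-reduced expression. By \propref{prop: comult},
\[\Delta(\alpha_{\mathbf t})=\sum_{j=0}^k\ \sum_{1\le i_1<\cdots<i_j\le k}\alpha_{t_{i_1}}\cdots\alpha_{t_{i_j}}\ \otimes\ E_{t_{i_j}}\cdots E_{t_{i_1}}(\alpha_{t_1}\cdots\alpha_{t_k}),\]
so I must show that every first and every second tensor factor lies in $V$. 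For this I would first record the elementary fact that \emph{any subword $t_{i_1}\cdots t_{i_j}$ of a $T$-reduced expression $t_1\cdots t_k$ satisfies $t_{i_1}\cdots t_{i_j}\le t_1\cdots t_k$ in $(W,\le)$}: deleting a single letter $t_p$ from a $T$-reduced expression $w$ yields $u'$ with $(u')^{-1}w=(t_{p+1}\cdots t_k)^{-1}t_p(t_{p+1}\cdots t_k)$ a reflection, so $\ell_T(w)\le\ell_T(u')+1\le(k-1)+1$ forces $\ell_T(u')=k-1$, $u'\le w$, and $u'$ again $T$-reduced; one then iterates and uses transitivity of $\le$.

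For the \textbf{first factors}: $(t_{i_1},\dots,t_{i_j})$ is a subword of the $T$-reduced expression $\mathbf t$, so its roots are linearly independent and hence (\lemref{lem: Carter}) $u:=t_{i_1}\cdots t_{i_j}$ is $T$-reduced of length $j$; by the fact just proved $u\le v_0\le\gamma$, so $\alpha_{t_{i_1}}\cdots\alpha_{t_{i_j}}$ is admissible. For the \textbf{second factors}, the operator $E_{t_{i_j}}\cdots E_{t_{i_1}}$ applied to a monomial produces, by its definition via the $W$-action $w.\alpha_t=(-1)^{\ell(w)}\alpha_{wtw^{-1}}$ and multiplication, again a scalar multiple $\pm\alpha_{\mathbf r}$ of a single monomial, whose underlying word $\mathbf r$ has $k-j$ letters. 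Since $\widetilde{\mathcal A}$ lies in ${}^{W}_{W}\mathcal{YD}$ and $\Delta$ is a morphism there (\propref{prop: Hopfstr}), $\Delta$ preserves the $W$-grading; as $\alpha_{t_{i_1}}\cdots\alpha_{t_{i_j}}$ has $W$-degree $u$ and $\alpha_{\mathbf t}$ has $W$-degree $v_0$, the $W$-degree of $\alpha_{\mathbf r}$ — that is, the product of its $k-j$ reflections — equals $v:=u^{-1}v_0$. Since $u\le v_0$, we get $\ell_T(v)=\ell_T(v_0)-\ell_T(u)=k-j$ and $v\le v_0\le\gamma$. Thus $\mathbf r$ is a $(k-j)$-letter factorisation of an element $v$ with $\ell_T(v)=k-j$, hence a $T$-reduced expression, and $v\le\gamma$; so $\alpha_{\mathbf r}$ is admissible, i.e. lies in $V$. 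Therefore $\Delta(\alpha_{\mathbf t})\in V\otimes V$, $V$ is a subcoalgebra, and $\pi|_V$ identifies $\mathcal A$ with it.

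I expect the main obstacle to be the first step: carefully checking that the straightening of \propref{prop: Abasis} can be carried out verbatim inside $\widetilde{\mathcal A}$ — i.e. that every relation invoked there is already a defining relation of $\widetilde{\mathcal A}$ and that no inadmissible monomials are ever created — so that the span of admissible monomials has exactly the dimension of $\mathcal A$ and $\pi|_V$ is injective. The two tensor-factor verifications are then routine, granted the small subword lemma and the $W$-grading of $\Delta$.
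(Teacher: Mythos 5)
Your proof is correct and follows the same overall strategy as the paper's: realise $\mathcal{A}$ inside $\widetilde{\mathcal{A}}$ as a linear subspace $V$ and then check via \propref{prop: comult} that $\Delta(V)\subseteq V\otimes V$. What you add is rigour at two points that the paper treats very briefly. First, the paper asserts, citing only the surjection $\pi$ of \lemref{lem: surj}, that ``$\mathcal{A}$ can be lifted as a subspace of $\widetilde{\mathcal{A}}$'' without checking that the natural candidate -- your span $V$ of admissible monomials -- maps isomorphically under $\pi$. Your straightening argument supplies exactly that, and the key observation you make (that the rank-two relations needed are defining relations of $\widetilde{\mathcal{A}}$, not merely of $\mathcal{A}$, and that the rewriting never leaves the admissible monomials) is precisely what is required. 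Second, the paper only records that a \emph{single} $E_{t_i}$ carries a nonzero monomial of $\mathcal{A}$ to another nonzero monomial, saying nothing about the first tensor factors $\alpha_{t_{i_1}}\cdots\alpha_{t_{i_j}}$ nor about the composition $E_{t_{i_j}}\cdots E_{t_{i_1}}$. Your subword lemma disposes of the first factors, and your $W$-grading argument for the second factors is a clean alternative to iterating the paper's one-step observation.

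One small inaccuracy worth flagging: the ``rewrite the first ascent $t_p\prec t_{p+1}$'' procedure you describe is not literally the argument of \propref{prop: Abasis}, which filters by the last letter and runs a double induction. But your naive version terminates on its own merits: every decreasing factorisation $(r_1,r_2)$ of $u=t_pt_{p+1}$ appearing in the relation has $r_2\prec t_{p+1}$, since $t_{p+1}$ is the $\preceq$-largest reflection in the dihedral interval $[e,u]$ and cannot occur as the smaller of a decreasing pair; hence each rewrite strictly decreases the letter in position $p+1$ while fixing all letters to its right, and the right-to-left lexicographic order on letter sequences is a well-founded termination measure. So the argument is sound, even though the attribution to \propref{prop: Abasis} is not quite exact.
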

\begin{proof}
	By \lemref{lem: surj} $\mathcal{A}$ can be lifted as a subspace of $\widetilde{\mathcal{A}}$.  Note that if $a_{t_1}\dots a_{t_k}$ is a nonzero element of $\mathcal{A}$, that is, $w=t_1t_2\dots t_k\in \mathcal{L}$ is a $T$-reduced expression, then  $E_{t_i}(a_{t_1}\dots a_{t_k})= (-1)^{i-1} a_{t_1^{t_i}} \dots. a_{t_{i-1}^{t_i}} a_{t_{i+1} } \dots a_{t_k}$ is still a nonzero element of $\mathcal{A}$. 
	In view of \propref{prop: comult}, $\mathcal{A}$ is closed under  the  comultiplication $\Delta$ of $\widetilde{\mathcal{A}}$. In addition, $\mathcal{A}$ is clearly closed under the  counit $\epsilon$ of $\widetilde{\mathcal{A}}$. Therefore,  $\mathcal{A}$  is a subcoalgebra of $\widetilde{\mathcal{A}}$.
\end{proof}

\subsection{Skew-derivations on $\widetilde{\mathcal{A}}$}\label{sec: diff}
Recall that the noncrossing algebra $\mathcal{A}$ has skew-derivations $\delta_t$ and $d_t$ for any $t\in T$. We shall show that these skew-derivations can be lifted to the algebra $\widetilde{\mathcal{A}}$ with similar properties. The difference is that these  skew-derivations are defined using the braided Hopf algebra structure of $\widetilde{\mathcal{A}}$.

For any integer $k\geq 0$ let $\pi_k: \widetilde{\mathcal{A}}\rightarrow \widetilde{\mathcal{A}}_k$ be the projection of $\widetilde{\mathcal{A}}$ onto its $k$-th homogeneous component $\widetilde{\mathcal{A}}_k$. We denote by 
\[ \Delta_{i,j}: \widetilde{\mathcal{A}}_{i+j} \overset{\Delta}{\longrightarrow} \widetilde{\mathcal{A}} \otimes \widetilde{\mathcal{A}} \overset{\pi_i\otimes \pi_j}{\longrightarrow}  \widetilde{\mathcal{A}}_i \otimes \widetilde{\mathcal{A}}_j   \]
the $(i,j)$-th component of the comultiplication $\Delta$. 

For any $t\in T$, we define the linear map $\nabla_t: \widetilde{\mathcal{A}} \rightarrow \widetilde{\mathcal{A}}$ as follows: let $\nabla_{t}(1)=0$, and for any $x\in \widetilde{\mathcal{A}}_k$ define $\nabla_t(x) \in \widetilde{\mathcal{A}}_{k-1}$ by 
\begin{equation}\label{def: nabla}
	\Delta_{1,n-1}(x)= \sum_{t\in T} \alpha_t\otimes \nabla_t(x).
\end{equation}
Similarly, we define $D_t: \widetilde{\mathcal{A}} \rightarrow \widetilde{\mathcal{A}}$ by $D_t(1)=0$ and 
$\Delta_{n-1,1}(x)= \sum_{t\in T}  D_t(x)\otimes \alpha_t$. It is clear that $\nabla_{t_1}(\alpha_{t_2})= D_{t_1}(\alpha_{t_2})= \delta_{t_1,t_2}$ (Kronecker delta). 


\begin{proposition}\label{prop: D}
	For any $t\in T$, let $\nabla_t, D_t$ be as above. 
	\begin{enumerate}
		\item  For any $x,y\in \widetilde{\mathcal{A}}$, we have
		\[
		\begin{aligned}
			\nabla_t(xy)&= \nabla_t(x)y+ (t.x) \nabla_t(y),\\
			D_t(xy)&= (|y|.D_t)(x)y+ x D_t(y),
		\end{aligned}
		\] 
		where $|y|\in W$ denotes the $W$-grading of $y$, i.e. $y\in \widetilde{\mathcal{A}}_{|y|}$, and  $|y|.D_t:=(-1)^{\ell(|y|)} D_{|y|t|y|^{-1} }$. 
		
		\item For any $\alpha_{t_1}\alpha_{t_2}\dots \alpha_{t_k}\in \widetilde{\mathcal{A}}$, we have 
		\[
		\begin{aligned}
			\nabla_{t}(\alpha_{t_1}\alpha_{t_2}\dots \alpha_{t_k})&= \sum_{i=1}^k (-1)^{i-1} \delta_{t,t_i} \alpha_{t_1^{t_i}} \dots \alpha_{t_{i-1}^{t_i}} \alpha_{t_{i+1} } \dots \alpha_{t_k},\\
			D_{t}(\alpha_{t_1}\alpha_{t_2}\dots \alpha_{t_k})&= \sum_{i=1}^k (-1)^{k-i} 
			\delta_{t,r_i} 
			\alpha_{t_1} \dots. \alpha_{t_{i-1}} \alpha_{t_{i+1} } \dots \alpha_{t_k},
		\end{aligned}    
		\] 
		where $r_i= t_i^{ t_{i+1}\dots t_{k} }$, and $\delta$ is the Kronecker delta. 
		\item The linear operators $\nabla_t, D_t$ preserve the defining relations of $\widetilde{\mathcal{A}}$.
	\end{enumerate}
\end{proposition}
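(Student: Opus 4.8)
The plan is to deduce all three parts from the single fact, recorded in \propref{prop: Hopfstr}, that the coproduct $\Delta$ is a well-defined morphism of algebras $\widetilde{\mathcal{A}}\to\widetilde{\mathcal{A}}\otimes\widetilde{\mathcal{A}}$, the target carrying the braided multiplication \eqref{eq: mult}. For part (1), fix homogeneous $x\in\widetilde{\mathcal{A}}_p$, $y\in\widetilde{\mathcal{A}}_q$ and decompose $\Delta$ by the $\mathbb{Z}$-degree in the first tensor slot. The counit axioms force $\Delta_{p,0}(x)=x\otimes1$ and $\Delta_{0,p}(x)=1\otimes x$, while $\Delta_{1,p-1}(x)=\sum_t\alpha_t\otimes\nabla_t(x)$ and $\Delta_{p-1,1}(x)=\sum_tD_t(x)\otimes\alpha_t$ by \eqref{def: nabla}. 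I would then read off $\Delta_{1,p+q-1}(xy)$ from $\Delta(xy)=\Delta(x)\Delta(y)$: only the products $\Delta_{1,p-1}(x)\Delta_{0,q}(y)$ and $\Delta_{0,p}(x)\Delta_{1,q-1}(y)$ contribute, and, evaluated via \eqref{eq: mult} and the $W$-action \eqref{eq: Wact} (using $t^{-1}=t$), they equal $\sum_t\alpha_t\otimes\nabla_t(x)y$ and $\sum_t\alpha_t\otimes(t.x)\nabla_t(y)$. Since $\{\alpha_t:t\in T\}$ is linearly independent in $\widetilde{\mathcal{A}}_1$ (the relations of $\widetilde{\mathcal{A}}$ begin in degree $2$), comparing coefficients of $\alpha_t$ gives $\nabla_t(xy)=\nabla_t(x)y+(t.x)\nabla_t(y)$. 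The same bookkeeping applied to $\Delta_{p+q-1,1}(xy)$ — where the braiding now twists the \emph{first} factor, forcing a re-indexing $t\mapsto v^{-1}tv$ of the sum over $T$ with $v=|y|$ and producing the sign $(-1)^{\ell(v)}$ — yields $D_t(xy)=(|y|.D_t)(x)\,y+xD_t(y)$.

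Part (2) follows by induction on $k$. The base case $\nabla_t(\alpha_s)=D_t(\alpha_s)=\delta_{t,s}$ is immediate from $\Delta(\alpha_s)=\alpha_s\otimes1+1\otimes\alpha_s$. For the inductive step I would write $\alpha_{t_1}\cdots\alpha_{t_k}=\alpha_{t_1}\cdot(\alpha_{t_2}\cdots\alpha_{t_k})$ and apply the first Leibniz rule; using $w.\alpha_t=(-1)^{\ell(w)}\alpha_{wtw^{-1}}$ together with the elementary fact $(-1)^{\ell(t)}=\det(t)=-1$ for every reflection $t$, the sign $(-1)^{i-1}$ and the conjugates $t_j^{t_i}$ in the asserted formula for $\nabla_t$ fall out. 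Stripping off the \emph{last} factor instead, $\alpha_{t_1}\cdots\alpha_{t_k}=(\alpha_{t_1}\cdots\alpha_{t_{k-1}})\cdot\alpha_{t_k}$, and applying the second Leibniz rule, the twists $|y|.D$ compose into exactly the conjugates $r_i=t_i^{t_{i+1}\cdots t_k}$ appearing in the formula for $D_t$.

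For part (3), I would first observe that $\nabla_t$ and $D_t$ are manufactured from the coproduct of $\widetilde{\mathcal{A}}$ itself, well-defined by \propref{prop: Hopfstr}, hence are automatically well-defined endomorphisms of $\widetilde{\mathcal{A}}$; the substantive point — the analogue of \rmkref{rmk: welldef} for $\mathcal{A}$ — is that the monomial formulas of part (2) are consistent with the relations \eqref{eq: newquad1}, \eqref{eq: newquad2}, equivalently that $\nabla_t$ and $D_t$ carry the defining ideal of $\widetilde{\mathcal{A}}$ into itself. By the Leibniz rules of (1) it suffices to check this on the generators $\alpha_s^2$ and $R_w:=\sum_{(t_1,t_2)\in{\rm Rex}_T(w)}\alpha_{t_1}\alpha_{t_2}$ with $\ell_T(w)=2$. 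For $\alpha_s^2$ one computes $\nabla_t(\alpha_s\alpha_s)=\delta_{t,s}\bigl(\alpha_s+(t.\alpha_s)\bigr)=\delta_{t,s}\bigl(1+(-1)^{\ell(s)}\bigr)\alpha_s=0$, and likewise $D_t(\alpha_s^2)=0$. For $R_w$, the degree-two instance of part (2) gives $\nabla_t(R_w)=\sum_{(t_1,t_2)}\bigl(\delta_{t,t_1}\alpha_{t_2}-\delta_{t,t_2}\alpha_{t_1^{t_2}}\bigr)$ (and a similar two-term expression for $D_t(R_w)$); since the Hurwitz move $\sigma_1$ sends $(t_1,t_2)\mapsto(t_2,t_1^{t_2})$ and permutes ${\rm Rex}_T(w)$, this substitution identifies the two sums, whence $\nabla_t(R_w)=D_t(R_w)=0$. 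Thus both operators preserve the defining ideal.

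I expect the only genuine work to be (i) the sign-and-conjugation bookkeeping in the braided computation of part (1) — the braiding feeds the $W$-action into the tensor slot opposite the derivation, so one must track carefully which $\ell(\cdot)$ signs and which conjugates $wtw^{-1}$ occur — and (ii) the cancellation $\nabla_t(R_w)=D_t(R_w)=0$ in part (3); everything else is a routine induction or a one-line check.
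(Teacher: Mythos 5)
Your proposal is correct and follows essentially the same path as the paper: part~(1) is obtained by expanding $\Delta(xy)=\Delta(x)\Delta(y)$ in the braided tensor product and isolating the $(1,p+q-1)$ and $(p+q-1,1)$ components (including the re-indexing $t\mapsto |y|t|y|^{-1}$ and the sign $(-1)^{\ell(|y|)}$), part~(2) by induction via the Leibniz rules, and part~(3) by checking the explicit formulas against $\alpha_s^2$ and $R_w$. You supply more detail than the paper's terse ``Part~(2) is a consequence of part~(1), and part~(3) follows immediately from the formulae in part~(2),'' but the argument is the same one; all the computations (including the Hurwitz-move re-indexing showing $\nabla_t(R_w)=D_t(R_w)=0$, and the use of $(-1)^{\ell(t)}=-1$ for reflections) check out.
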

\begin{proof}
	For part (1), note that 
	\[
	\begin{aligned}
		\Delta(xy)&=\Delta(x)\Delta(y)=(1\otimes x+ \sum_{t\in T}\alpha_t\otimes \nabla_t(x)+\cdots )(1\otimes y+ \sum_{t\in T}\alpha_t\otimes \nabla_t(y)+\cdots )\\
		&=1\otimes xy+ \sum_{t\in T} \alpha_t\otimes (t.x) \nabla_t(y)+ \sum_{t\in T} \alpha_t\otimes  \nabla_t(x) y+\cdots.
	\end{aligned}
	\]
	It follows that $\nabla_t(xy)= \nabla_t(x)y+ (t.x) \nabla_t(y)$. Similarly, using the expression $\Delta(x)= x\otimes 1+ \sum_{t\in T}D_t(x) \otimes \alpha_t+ \cdots$, we have 
	\[ \begin{aligned}
		\Delta(xy)&=\Delta(x)\Delta(y)=(x\otimes 1 + \sum_{t\in T} D_t(x)\otimes \alpha_t +\cdots )(y\otimes 1+ \sum_{t\in T} D_t(y)\otimes \alpha_t +\cdots )\\
		&=xy \otimes 1+ \sum_{t\in T} x D_t(y) \otimes \alpha_t + \sum_{t\in T} D_t(x) y\otimes |y|^{-1}.\alpha_t +\cdots.
	\end{aligned}
	\]
	Note that   \[
	\begin{aligned}
		\sum_{t\in T} D_t(x) y\otimes |y|^{-1}.\alpha_t  &= \sum_{t\in T} (-1)^{\ell(|y|)} D_{t}(x)y \otimes \alpha_{|y|^{-1}t |y|} \\
		&  = \sum_{t\in T} (-1)^{\ell(|y|)} D_{|y|t |y|^{-1}}(x)y \otimes \alpha_{t}
	\end{aligned}  
	\]
	Therefore,  we have $D_t(xy)= (|y|.D_t)(x)y+ x D_t(y)$. Part (2) is a consequence of part (1), and part (3) follows immediately from the formulae in part (2). 
\end{proof}

\begin{remark}
	The linear operators $\nabla_{t},D_t$ are called skew-derivations of the braided Hopf algebra $\widetilde{\mathcal{A}}$ \cite{AG99, AS02}. 
\end{remark}

\begin{proposition}\label{prop: AtildeDel}
The skew-derivations $\nabla_{t}, t\in T$ satisfy the following relations: 
	\[
	\begin{aligned}
		\nabla_{t}^2=0, \quad &\text{$\forall t\in T$}, \\
		\sum_{(t_1,t_2)\in {\rm Rex}_T(w)} \nabla_{t_1}\nabla_{t_2}=0, \quad &   \text{$\forall w\in W$ with $\ell_{T}(w)=2$},
	\end{aligned}
	\]
	Therefore, they describe an action of $\widetilde{\mathcal{A}}$ on itself. 
\end{proposition}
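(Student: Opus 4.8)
The plan is to prove the two stated families of relations by direct computation with the twisted Leibniz rules of \propref{prop: D}(1), and then to observe that they express exactly the statement that $\alpha_t\mapsto\nabla_t$ respects the defining relations \eqref{eq: newquad1}--\eqref{eq: newquad2}, which will yield the asserted algebra homomorphism $\widetilde{\mathcal{A}}\to\End_{\mathbb{C}}(\widetilde{\mathcal{A}})$, i.e.\ an action of $\widetilde{\mathcal{A}}$ on itself. Since $\widetilde{\mathcal{A}}$ is $\mathbb{N}$-graded and generated in degree $1$, to show that an operator $R$ built from the $\nabla_t$ vanishes it suffices to check $R(1)=0$, $R(\alpha_s)=0$ for all $s\in T$, together with a Leibniz-type identity expressing $R(xy)$ in terms of $R(x)$ and $R(y)$; an induction on the $\mathbb{Z}$-degree then finishes. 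I will use two auxiliary facts throughout: first, \eqref{eq: Wact} defines a genuine $W$-action by algebra automorphisms (the length parity being a homomorphism $W\to\mathbb{Z}/2$), so $t.(t.x)=x$ for $t\in T$; and second, the equivariance $\nabla_s(w.x)=(-1)^{\ell(w)}\,w.\nabla_{w^{-1}sw}(x)$, which follows from the explicit formula \propref{prop: D}(2) (or from $\Delta$ being a morphism in $^{W}_{W}\mathcal{YD}$), and which in particular gives $\nabla_t(t.x)=-\,t.\nabla_t(x)$.

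First I would treat $\nabla_t^2=0$. Applying \propref{prop: D}(1) twice to $\nabla_t^2(xy)$ and substituting $t.(t.x)=x$ and $\nabla_t(t.x)=-t.\nabla_t(x)$, the two mixed terms cancel, leaving $\nabla_t^2(xy)=\nabla_t^2(x)\,y+x\,\nabla_t^2(y)$; thus $\nabla_t^2$ is an ordinary derivation. Since $\nabla_t^2(1)=0$ and $\nabla_t^2(\alpha_s)=\nabla_t(\delta_{t,s}\cdot 1)=0$, induction on degree yields $\nabla_t^2=0$.

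Next I would treat $N_w:=\sum_{(t_1,t_2)\in{\rm Rex}_{T}(w)}\nabla_{t_1}\nabla_{t_2}$ for $w$ with $\ell_T(w)=2$. Expanding $N_w(xy)$ by \propref{prop: D}(1) produces four groups of terms: the diagonal group sums to $N_w(x)\,y$; the group in which both derivations land on $y$ sums to $(w.x)\,N_w(y)$, using $a.(b.x)=(ab).x=w.x$ for $(a,b)\in{\rm Rex}_{T}(w)$; and the two remaining cross groups are
\[
\sum_{(a,b)\in{\rm Rex}_{T}(w)}\bigl(a.\nabla_b(x)\bigr)\nabla_a(y)
\qquad\text{and}\qquad
\sum_{(a,b)\in{\rm Rex}_{T}(w)}\nabla_a(b.x)\,\nabla_b(y),
\]
the second of which equals $-\sum_{(a,b)}\bigl(b.\nabla_{bab}(x)\bigr)\nabla_b(y)$ by the equivariance. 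The crux is that these two cross sums are equal: the substitution $(a,b)\mapsto(b,bab)$ carries each summand of $-\sum_{(a,b)}\bigl(b.\nabla_{bab}(x)\bigr)\nabla_b(y)$ to a summand of the first sum, and it is a bijection of ${\rm Rex}_{T}(w)$. To see the bijectivity, identify ${\rm Rex}_{T}(w)$ with $\{b\in T:wb\in T\}$ via $(a,b)\mapsto b$ (so $a=wb$); the substitution becomes $b\mapsto bw$, which maps this set into itself because $bw=bab$ is a $W$-conjugate of $a=wb\in T$ and $w(bw)=b\in T$, and it is injective on a finite set, hence bijective. Consequently the cross terms cancel, $N_w(xy)=N_w(x)\,y+(w.x)\,N_w(y)$, and as $N_w(1)=0=N_w(\alpha_s)$, induction on degree gives $N_w=0$.

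With $\nabla_t^2=0$ and $N_w=0$ established, the assignment $\alpha_t\mapsto\nabla_t$ respects \eqref{eq: newquad1} and \eqref{eq: newquad2}, so it extends to an algebra homomorphism $\widetilde{\mathcal{A}}\to\End_{\mathbb{C}}(\widetilde{\mathcal{A}})$, which is the claimed action. I expect the main obstacle to be precisely the cancellation of the cross terms in $N_w(xy)$, that is, recognising that $(a,b)\mapsto(b,bab)$, equivalently $b\mapsto bw$, permutes the length-two reflection factorisations of $w$; the remaining parts are routine bookkeeping with the sign character of $W$ and the graded induction.
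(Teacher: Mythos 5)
Your proof is correct and rests on the same mechanism as the paper's: the braided Leibniz rule of \propref{prop: D}(1), induction on the $\mathbb{Z}$-degree, and a cancellation coming from pairing each factorisation $(a,b)\in{\rm Rex}_T(w)$ with $(b,bab)$ --- in the paper this pairing appears only in the reduced form $w=t_kt=(t_ktt_k)t_k$ after peeling off the last generator $\alpha_{t_k}$ of a monomial. Your packaging is somewhat more structural: you show that $\nabla_t^2$ and $N_w$ are themselves (twisted) derivations on arbitrary products $xy$ and conclude from their vanishing on $1$ and on the generators, whereas the paper computes $\nabla_{r_1}\nabla_{r_2}$ directly on monomials and reduces, via the induction hypothesis, to the operator identity $\sum_{(r_1,r_2)\in{\rm Rex}_T(w)}\delta_{r_1,t_k}\nabla_{t_kr_2t_k}-\delta_{r_2,t_k}\nabla_{r_1}=0$, checked on the at most two contributing factorisations. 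One wording slip to fix: the two cross sums are negatives of each other, not equal. The bijection $(a,b)\mapsto(b,bab)$ shows that $\sum_{(a,b)}\bigl(b.\nabla_{bab}(x)\bigr)\nabla_b(y)$ coincides with the first cross sum, and the equivariance supplies the sign $(-1)^{\ell(b)}=-1$ making the second cross sum its negative; your displayed formulas and the final conclusion are consistent with this, so only the sentence ``these two cross sums are equal'' needs correcting.
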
 
\begin{proof}
	We evaluate these relations on $x\in \widetilde{\mathcal{A}}$ and  use induction on the $\mathbb{Z}$-degree of $x$. It is trivial if ${\rm deg}(x)=1$. In general, assume that $x=\alpha_{t_1}\dots \alpha_{t_k}$. For any $r_1,r_2\in T$,  using the formula from part (1) of \propref{prop: D} we have 
  \[
	\begin{aligned}
	\nabla_{r_1}\nabla_{r_2}(\alpha_{t_1}\dots \alpha_{t_k})&= \nabla_{r_1}( \nabla_{r_2}(\alpha_{t_1}\alpha_{t_2}\dots \alpha_{t_{k-1}})\alpha_{t_k}+ (-1)^{k-1} \delta_{r_2,t_k} \alpha_{t_1^{t_k}}\alpha_{t_2^{t_k}}\dots \alpha_{t_{k-1}^{t_k}} ) \\
			&=  \nabla_{r_1}( \nabla_{r_2}(\alpha_{t_1}\dots \alpha_{t_{k-1}})) \alpha_{t_k} + (-1)^{k-2} \delta_{r_1,t_k} \nabla_{r_2^{t_k}}(\alpha_{t_1^{t_k}}\dots \alpha_{t_{k-1}^{t_k}} ) \\
			&+ (-1)^{k-1} \delta_{r_2,t_k} \nabla_{r_1}( \alpha_{t_1^{t_k}}\alpha_{t_2^{t_k}}\dots \alpha_{t_{k-1}^{t_k}} ),
		\end{aligned}
  \]
  If $r_1=r_2=t$, then by the induction hypothesis we have $\nabla_{r_1}\nabla_{r_2}(\alpha_{t_1}\dots \alpha_{t_k})=0$, proving the first relations. 

  For the second relation, by the induction hypothesis   it is equivalent to proving that 
	\[\sum_{(r_1,r_2)\in {\rm Rex}_T(w)} \delta_{r_1,t_k} \nabla_{t_kr_2t_k}- \delta_{r_2,t_k}\nabla_{r_1}=0\]
	for any  $w\in W$ with $\ell_{T}(w)=2$. 
If $t_{k}\not\prec w$, then the above equation holds trivially. Otherwise, we have two $T$-reduced expressions $w=t_kt=(t_ktt_k)t_k$ for $t=t_{k}^{-1}w \in T$, which leads to the above equation. 
\end{proof}

We do not know whether this action of $\wt\CA$ on itself faithful. Compare \cite[\S 9]{FK99}. Next we  define a bilinear form on $\widetilde{\mathcal{A}}$ in terms of the skew-derivations. 
\begin{definition}\label{def: bilinear}
	Define the bilinear pairing
	\begin{equation*}
		\langle-,-\rangle: \widetilde{\mathcal{A}}\times \widetilde{\mathcal{A}} \longrightarrow \mathbb{C}
	\end{equation*}
	by $\langle 1,1\rangle =1$ and 
	\begin{enumerate}
		\item $\langle \widetilde{\mathcal{A}}_k,\widetilde{\mathcal{A}}_{\ell}\rangle =0$ for any $0\leq k\neq \ell$;
		\item For any $x\in \widetilde{\mathcal{A}}_k$ and $t_i\in T, i=1,\dots,k$, 
		\[\langle \alpha_{t_1}\alpha_{t_2}\dots \alpha_{t_k}, x\rangle:= \nabla_{t_1} \nabla_{t_2}\dots \nabla_{t_k} (x). \]
	\end{enumerate}
\end{definition}

The bilinear form on $\widetilde{\mathcal{A}}$ is well-defined in view of \propref{prop: AtildeDel}. Note that we do not reverse the order of $\nabla_{t_i}$ in the above definition. This is different from that in \defref{def: bilinear}. However,  one can  prove the following the properties which are similar to those given for $\mathcal{A}$.

\begin{proposition}\label{prop: nabd}
	We have the following properties.
	\begin{enumerate}
		\item For any $t\in T$ and $x\in \widetilde{\mathcal{A}}$, we have 
		\[
		\begin{aligned}
			\nabla_t(x)= & \sum_{(x)} \langle \alpha_{t}, x_{(1)} \rangle\,  x_{(2)}, \\
			D_t(x) =& \sum_{(x)}  x_{(1)}\, \langle x_{(2)}, \alpha_t \rangle,
		\end{aligned}
		\]
		where we have used the Sweedler's notation $\Delta(x)=\sum_{(x)} x_{(1)}\otimes x_{(2)}$ for the comultiplication of $\widetilde{\mathcal{A}}$.
		\item For any $t,t'\in T$, we have $\nabla_{t}D_{t'}=D_{t'}\nabla_{t}$. 
		\item   For any $x,y\in \widetilde{\mathcal{A}}$, we have 
		\[
		\langle x \alpha_t, y\rangle =  \langle x, \nabla_t(y) \rangle, \quad \text{and} \quad 
		\langle  \alpha_t x , y\rangle =   \langle x, D_t(y) \rangle. 
		\]
		Therefore, with respect to the bilinear form the skew-derivations $\nabla_{t}$ and $D_{t}$ are right adjoint to right and left multiplication by $\alpha_{t}$, respectively. 
	\end{enumerate}
\end{proposition}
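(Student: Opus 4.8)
The plan is to prove the three statements of Proposition~\ref{prop: nabd} in the order (1), (2), (3), since each relies on the previous one, and the whole argument mirrors the development already carried out for $\mathcal{A}$ in Lemmas~\ref{lem: deld} and~\ref{lem: adj}.

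For part (1), I would argue directly from the definition \eqref{def: nabla} of $\nabla_t$ together with the coassociativity of $\Delta$. Writing $\Delta(x)=\sum_{(x)}x_{(1)}\otimes x_{(2)}$ and projecting onto the bidegree-$(1,\bullet)$ component, one has $\Delta_{1,n-1}(x)=\sum_{t\in T}\alpha_t\otimes\nabla_t(x)$. Pairing the first tensor factor against $\alpha_t$ using the Kronecker-$\delta$ relation $\langle\alpha_t,\alpha_{t'}\rangle=\delta_{t,t'}$ (part (1) of the bilinear-form definition, applied to $x_{(1)}$ which is homogeneous of degree $1$ in the relevant summand) extracts exactly $\nabla_t(x)=\sum_{(x)}\langle\alpha_t,x_{(1)}\rangle x_{(2)}$; here I use that $\langle\alpha_t,x_{(1)}\rangle=0$ unless $x_{(1)}$ has $\mathbb{Z}$-degree $1$, by part (1) of Definition~\ref{def: bilinear}. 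The formula for $D_t$ is the symmetric statement using $\Delta_{n-1,1}$. This step is essentially bookkeeping about which graded pieces of $\Delta$ contribute.

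For part (2), I would mimic the proof of Lemma~\ref{lem: deld}: evaluate $\nabla_t D_{t'}$ and $D_{t'}\nabla_t$ on a monomial $\alpha_{t_1}\cdots\alpha_{t_k}$ and induct on $k$, using the two Leibniz rules from part (1) of Proposition~\ref{prop: D}. The base case $k\le 1$ is immediate since both sides vanish. For the inductive step one expands $\nabla_t D_{t'}(\alpha_{t_1}\cdots\alpha_{t_k})$ by first applying the $D_{t'}$-Leibniz rule to split off $\alpha_{t_k}$, then applying the $\nabla_t$-Leibniz rule, and similarly in the other order; the cross terms match after invoking the induction hypothesis and the compatibility of the $W$-action on $D_t$ (the twist $|y|.D_t$) with the $W$-action on $\nabla_t$. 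The only subtlety is correctly tracking the sign factors $(-1)^{i-1}$, $(-1)^{k-i}$ and the conjugation twists; this is the routine calculation I would not spell out in full.

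For part (3), the adjunction $\langle x\alpha_t,y\rangle=\langle x,\nabla_t(y)\rangle$ follows by induction on the degree of $x$, exactly as in Lemma~\ref{lem: adj}: the case $x=1$ (or $\deg x=0$) is the defining property of $\langle-,-\rangle$ together with part (1) of Proposition~\ref{prop: D}, and for $x=\alpha_{t_1}x'$ one writes $\langle\alpha_{t_1}x'\alpha_t,y\rangle=\langle x'\alpha_t,D_{t_1}(y)\rangle$ using the left-multiplication adjunction, then $=\langle x',\nabla_t D_{t_1}(y)\rangle$ by the inductive hypothesis, then $=\langle x',D_{t_1}\nabla_t(y)\rangle$ by part (2), then $=\langle\alpha_{t_1}x',\nabla_t(y)\rangle$ — closing the induction. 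Note that here the order of the $\nabla_{t_i}$ in Definition~\ref{def: bilinear} is \emph{not} reversed, which is why left multiplication by $\alpha_t$ is adjoint to $D_t$ and right multiplication to $\nabla_t$ (opposite to the convention in the $\mathcal{A}$ setting); I would remark on this explicitly. The left-multiplication adjunction $\langle\alpha_t x,y\rangle=\langle x,D_t(y)\rangle$ is proved the same way, using part (2) of Proposition~\ref{prop: D} in the reverse role. The main obstacle, such as it is, is part (2): getting the conjugation twists and signs in the double Leibniz expansion to cancel correctly; everything else is a direct translation of the $\mathcal{A}$-arguments.
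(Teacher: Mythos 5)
Your parts (1) and (2) are sound. Part (1) matches the paper's (terse) proof: it is indeed just bookkeeping from \eqref{def: nabla} together with the observation that only the $\mathbb Z$-degree-$1$ pieces of $x_{(1)}$ can pair nontrivially with $\alpha_t$. For part (2) you propose a genuinely different route from the paper: the paper establishes the commutativity $\nabla_tD_{t'}=D_{t'}\nabla_t$ in one line by writing both sides as $\sum_{(x)}\langle\alpha_t,x_{(1)}\rangle\,x_{(2)}\,\langle x_{(3)},\alpha_{t'}\rangle$ and appealing to coassociativity, whereas you propose to induct on degree via the Leibniz rules of Proposition~\ref{prop: D}(1). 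Your approach works --- specialising those Leibniz rules to $y=\alpha_{t_k}$ reproduces exactly the shape of the Leibniz formulae for $\delta_t$ and $d_t$ used in Lemma~\ref{lem: deld}, so that proof does carry over verbatim --- but the paper's Hopf-theoretic argument is considerably cleaner and avoids the sign/twist bookkeeping entirely.

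Part (3), however, has a structural flaw: your induction is circular. You correctly observe that because the $\nabla_{t_i}$ in Definition~\ref{def: bilinear} are \emph{not} reversed, the roles of the two adjunctions are swapped relative to Lemma~\ref{lem: adj}; but the consequence you should draw is that here it is the \emph{right}-multiplication adjunction $\langle x\alpha_t,y\rangle=\langle x,\nabla_t(y)\rangle$ that is immediate from the definition (for $x=\alpha_{t_1}\cdots\alpha_{t_{k-1}}$ both sides are literally $\nabla_{t_1}\cdots\nabla_{t_{k-1}}\nabla_t(y)$ --- no induction is needed). Instead you try to prove that immediate adjunction by induction, and in the inductive step you peel off the \emph{first} factor $\alpha_{t_1}$ by invoking the left-multiplication adjunction $\langle\alpha_{t_1}z,y\rangle=\langle z,D_{t_1}(y)\rangle$ --- twice --- which is precisely the statement that still requires proof. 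You then say the left-multiplication adjunction ``is proved the same way,'' which would mean invoking the right-multiplication adjunction, closing the circle. The correct structure, mirroring Lemma~\ref{lem: adj} with the roles swapped, is: take the right-multiplication adjunction as read (it is the definition), and prove $\langle\alpha_t x,y\rangle=\langle x,D_t(y)\rangle$ by induction on $\deg x$, peeling off the \emph{last} generator of $x$ via the already-available right-multiplication adjunction, applying the inductive hypothesis, commuting $D_t$ past $\nabla_{t_k}$ by part (2), and then reassembling with the right-multiplication adjunction again.
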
 
\begin{proof}
	Part (1) follows from the definitions of  $\nabla_t$,  $D_t$ (see \eqref{def: nabla}) and the bilinear form. For part (2), for any $x\in \widetilde{\mathcal{A}}$ we have 
	\[
	\nabla_{t}D_{t'}(x)= \nabla_{t}( \sum_{(x)}  x_{(1)}\, \langle x_{(2)}, \alpha_{t'} \rangle)= \sum_{(x)} \langle \alpha_{t}, x_{(1)} \rangle\,  x_{(2)} \langle x_{(3)}, \alpha_{t'} \rangle.
	\]
	Similarly, we can express $D_{t'}\nabla_{t}(x)$ and obtain that $\nabla_{t}D_{t'}=D_{t'}\nabla_{t}$.  The proof of part (3) is similar to that of \lemref{lem: adj}.
\end{proof} 

\begin{remark}
	We do not know whether the bilinear form on $\widetilde{\mathcal{A}}$ is non-degenerate. Note that by \lemref{lem: surj} $\mathcal{A}$ and its opposite $\mathcal{A}^{op}$ can be lifted to $\widetilde{\mathcal{A}}$ as  vector spaces. It follows from \propref{prop: bilinear} that the restriction $\langle -, -\rangle: \mathcal{A}^{op} \times \mathcal{A} \rightarrow \mathbb{C}$ is non-degenerate.
	
\end{remark}

We define
\[\widetilde{\omega}:=\sum_{t\in T} \alpha_t. \]
By the defining relations of $\widetilde{\mathcal{A}}$ we have $\widetilde{\omega}^2=0$.
Hence  $(\widetilde{\mathcal{A}},r_{\widetilde{\omega}})$ (resp. $(\widetilde{\mathcal{A}},\ell_{\widetilde{\omega}})$) is a cochain complex, where $r_{\widetilde{\omega}}$ (resp. $\ell_{\widetilde{\omega}}$) is given by right (resp. left) multiplication by $\widetilde{\omega}$.


\begin{proposition} We have the following:
	\begin{enumerate}
		\item Let $\nabla= \sum_{t\in T}\nabla_t$ and $D=\sum_{t\in T}D_t$. Then we have 
		\[
		\langle x \widetilde{\omega}, y\rangle =  \langle x, \nabla(y) \rangle, \quad \text{and} \quad 
		\langle  \widetilde{\omega}x , y\rangle =   \langle x, D(y) \rangle. 
		\]
		\item The  complexes $(\widetilde{\mathcal{A}}, D)$ and $(\widetilde{\mathcal{A}}, r_{\widetilde{\omega}} )$  are acyclic.
		\item The complexes $(\widetilde{\mathcal{A}}, \nabla)$and $(\widetilde{\mathcal{A}}, \ell_{\widetilde{\omega}})$ are acyclic.
	\end{enumerate}
\end{proposition}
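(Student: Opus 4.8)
The plan is to treat (1) as an immediate corollary of \propref{prop: nabd}(3), and to establish the acyclicity statements in (2) and (3) by producing explicit contracting homotopies built from $r_{\widetilde{\omega}}$ and $\ell_{\widetilde{\omega}}$. This sidesteps the bilinear form entirely, which is necessary because, as noted in the remark just above, we do not know it to be non-degenerate, so the dualisation argument used for $\mathcal{A}$ in \propref{prop: acyccocx} is unavailable here.

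For part (1): summing the two adjunction identities of \propref{prop: nabd}(3) over $t\in T$, and using $\widetilde{\omega}=\sum_{t\in T}\alpha_t$ together with $\nabla=\sum_{t\in T}\nabla_t$ and $D=\sum_{t\in T}D_t$, one gets at once
\[
\langle x\widetilde{\omega},y\rangle=\sum_{t\in T}\langle x\alpha_t,y\rangle=\sum_{t\in T}\langle x,\nabla_t(y)\rangle=\langle x,\nabla(y)\rangle,
\]
and symmetrically $\langle\widetilde{\omega}x,y\rangle=\langle x,D(y)\rangle$.

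The crux of (2) and (3) is the pair of operator identities on $\widetilde{\mathcal{A}}$,
\[
D\circ r_{\widetilde{\omega}}+r_{\widetilde{\omega}}\circ D=|T|\cdot\mathrm{id}_{\widetilde{\mathcal{A}}},\qquad
\nabla\circ\ell_{\widetilde{\omega}}+\ell_{\widetilde{\omega}}\circ\nabla=|T|\cdot\mathrm{id}_{\widetilde{\mathcal{A}}}.
\]
I would prove each by evaluating on a monomial $m=\alpha_{t_1}\cdots\alpha_{t_k}$ using the explicit formulas of \propref{prop: D}(2). For the first identity, $D\bigl(r_{\widetilde{\omega}}(m)\bigr)=\sum_{s\in T}D(m\alpha_s)$; in the expansion of $D$ on the length-$(k+1)$ word $m\alpha_s$, the summand deleting the appended letter $\alpha_s$ carries sign $+1$ and equals $m$, so these contribute $|T|\,m$ after summing over $s$, while the summands deleting an original letter $\alpha_{t_i}$ assemble, after pulling the factor $\widetilde{\omega}=\sum_s\alpha_s$ to the right, into $-\,r_{\widetilde{\omega}}\bigl(D(m)\bigr)$, exactly cancelling $r_{\widetilde{\omega}}D(m)$. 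The second identity is handled the same way with $\nabla$'s ``delete-from-the-left-with-conjugation'' formula; the only extra input is that conjugation by a reflection permutes $T$, so that $\sum_{s\in T}\alpha_{s^{t}}=\widetilde{\omega}$ for every reflection $t$, whence again the self-deletion term gives $|T|\,m$ and the remaining terms telescope against $\ell_{\widetilde{\omega}}\nabla(m)$. Since the monomials span $\widetilde{\mathcal{A}}$ and the four operators $D,\nabla,r_{\widetilde{\omega}},\ell_{\widetilde{\omega}}$ are well-defined on the quotient by \propref{prop: D}(3), the two identities hold on all of $\widetilde{\mathcal{A}}$.

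Finally, $|T|$, being the number of reflections of $W$, is a positive integer, hence invertible in $\mathbb{C}$, so the identities furnish contracting homotopies. From the first identity: if $Dz=0$ then $z=D\bigl(|T|^{-1}z\widetilde{\omega}\bigr)$ is a boundary, and if $z\widetilde{\omega}=0$ then $z=\bigl(|T|^{-1}Dz\bigr)\widetilde{\omega}$ is a coboundary; since these computations also cover degree $0$, both $(\widetilde{\mathcal{A}},D)$ and $(\widetilde{\mathcal{A}},r_{\widetilde{\omega}})$ are acyclic, which is (2). The second identity gives, in the same manner, the acyclicity of $(\widetilde{\mathcal{A}},\nabla)$ and $(\widetilde{\mathcal{A}},\ell_{\widetilde{\omega}})$, which is (3). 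There is no deep obstacle: the argument is essentially bookkeeping, and the only delicate points are sign control in the telescoping sums and, for $\nabla$, tracking the conjugation twists and the identity $\sum_{s}\alpha_{s^{t}}=\widetilde{\omega}$; one should also phrase the monomial computation so that it manifestly takes place inside $\widetilde{\mathcal{A}}$, using only the well-defined operators $D,\nabla,r_{\widetilde{\omega}},\ell_{\widetilde{\omega}}$ and not the individual position-deletion maps, which do not descend to the quotient.
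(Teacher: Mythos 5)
Your proof is correct and uses essentially the same approach as the paper: part (1) is the same linearity argument from Proposition~\ref{prop: nabd}(3), and parts (2) and (3) rest on the same homotopy identities $D r_{\widetilde\omega}+r_{\widetilde\omega}D=|T|\cdot\mathrm{id}$ and $\nabla\ell_{\widetilde\omega}+\ell_{\widetilde\omega}\nabla=|T|\cdot\mathrm{id}$, which the paper obtains via the Leibniz rule $D(x\alpha_t)=-D(x)\alpha_t+x$ and you reprove by direct expansion on monomials; both routes are short and equivalent, and your additional remark that $s\mapsto s^t$ permutes $T$ (hence $\sum_s\alpha_{s^t}=\widetilde\omega$) is exactly what makes the $\nabla$-side telescope.
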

\begin{proof}
	Part (1) is a consequence of \propref{prop: nabd}.
	For part (2), we have 
	\[ Dr_{\widetilde{\omega}}(x)= D(x\widetilde{\omega})=\sum_{t\in T} D(x\alpha_t)=\sum_{t\in T}(-D(x)\alpha_t+x)= -r_{\widetilde{\omega}}D(x)+Nx.\]
	Therefore, we have $Dr_{\widetilde{\omega}}+ r_{\widetilde{\omega}} D= N {\rm id}$, which implies that $(\widetilde{\mathcal{A}}, D)$ and $(\widetilde{\mathcal{A}}, r_{\widetilde{\omega}} )$ are acyclic and part (2) follows. Part (3) can be proved similarly. 
\end{proof}

\appendix 
\section{Computational results on the multiplicity}

In this appendix, we tabulate some computational results on the cohomology $H^k(\mathcal{K}^*(U))$ for the simple $\mathbb{C}W$-module $U$, which by \thmref{thm:  multcohom} counts the multiplicity of the contragredient $U_L^*$  in  the cohomology $H^k(F; \mathbb{C})$ of the Milnor fibre. The homology $H_k(\mathcal{K}(U))$ returns the same result; see \thmref{thm: mult}.  All calculations are done with the computational algebra system Magma.

We only focus on the case of the symmetric group $W={\rm Sym}_{n+1}$. Then the Milnor fibre $F$ is an algebraic variety defined by 
\[
  F:=\{ (x_1, \dots x_{n+1})\in \mathbb{C}^{n+1}\mid \prod_{1\leq i<j\leq n+1}(x_i-x_j)^2 =1\}. 
\]
The reduced Milnor fibre $F_0$ is defined by 
\[  F_0:=\{ (x_1, \dots x_{n+1})\in \mathbb{C}^{n+1}\mid \prod_{1\leq i<j\leq n+1}(x_i-x_j) =1\}. \]
The symmetric group ${\rm Sym}_{n+1}$ acts on $F$ by permuting coordinates. Hence it induces a linear (left) action on the cohomology $H^k(F; \mathbb{C})$ for  $0\leq k\leq n-1$. As vector spaces, $H^{k}(F; \mathbb{C})\cong H^{k}(F_0; \mathbb{C})\oplus H^{k}(F_0; \mathbb{C})$; see \cite{DL16}.

The simple right modules  $S_{\lambda}$ of ${\rm Sym}_{n+1}$ are indexed by partitions $\lambda=(\lambda_1^{m_1}, \dots ,\lambda_{p}^{m_p})$ of $n+1$, where $\lambda_i^{m_i}$ means that $\lambda_i$ repeats $m_i$ times and $\sum_{i=1}^p m_i\lambda_i=n+1$. Let $\lambda'$ be the conjugate partition of $\lambda$. Then we have $S_{\lambda'}\cong S_{\lambda}\otimes \epsilon$, where $\epsilon$ is the alternating representation associated to $(1^{n+1})$. It follows from \eqnref{eq: homK} that 
\[ H^k(\mathcal{K}^*(S_{\lambda}))\cong H^k(\mathcal{K}^*(S_{\lambda'})), \quad 0\leq k\leq n-1  \] 
for any conjugate pair $\lambda, \lambda'$ of partitions.

Let $(S_{\lambda})_L$ denote the simple left module  of ${\rm Sym}_{n+1}$ associated to the partition $\lambda$. It is well known that the contragredient $(S_{\lambda})^*_L$ is isomorphic to $(S_{\lambda})_L$ as left ${\rm Sym}_{n+1}$-module. Therefore, using \thmref{thm:  multcohom} we have 
\begin{equation}\label{eq: multy}
	\langle (S_\lambda)_L, H^k(F, \mathbb{C})\rangle = {\rm dim}\,  H^k(\mathcal{K}^*(S_{\lambda})).
\end{equation}
The Poincar\'e polynomial $P(t)$ of the Milnor fibre $F$ can be computed by 
\begin{equation}\label{eq: poin}
	 P(t)= \sum_{k=0}^{n-1} {\rm dim} H^k(F; \mathbb{C}) t^k= \sum_{k=0}^{n-1} \sum_{\lambda\vdash n+1}{\rm dim} S_{\lambda}\langle (S_\lambda)_L, H^k(F, \mathbb{C})\rangle t^k. 
\end{equation}
Note that the Poincar\'e polynomial $P_0(t)$ of  $F_0$  is $P_0(t)=P(t)/2$.

We tabulate computational results on \eqnref{eq: multy} and \eqnref{eq: poin} for all simple modules of ${\rm Sym}_{n+1}$ for $2\leq n \leq 7 $ in the following tables. Each conjugate pair of partitions is listed in the same row as they produce the same cohomology. 

\begin{remark}
Note that the results tabulated here are consistent with those appearing in \cite{DL16}, up to type $A_4$, where $W=\Sym_5$.
However the cases computed in {\it loc. cit.} include the action of the monodromy group on the cohomology, in the sense that the
 structure of $H^*(F,\C)$is described as a $\Gamma$-module, where $\Gamma=\Sym_{n+1}\times \mu_{n(n+1)}$. In the present work, although the original
Brady-Falk-Watt model of $F$ does come with an action of the monodromy on the CW complex describing $F$ (see \cite{BFW18} or \cite[Section 3.4]{Zha20}), our analysis of the model
has not been able to preserve the monodromy action, except to the following extent. In general, the group $\langle\gamma\rangle$
acts (like any subgroup of $W$) on $F$, and hence on $H^*(F)$. But it is known that $\langle\gamma\rangle$ may be identified with a quotient
(or subgroup) of the monodromy $\mu$, and this action is easily identifiable in our model \cite[Remark 6.4]{Zha22}.
\end {remark}

\begin{remark}
 Settepanella computed the cohomology $H^{k}(PB_{n+1}, \mathbb{Q}[q,q^{-1}])$ of the pure braid group $PB_{n+1}$ with coefficients in the Laurent polynomial ring $\mathbb{Q}[q,q^{-1}]$ for $n\leq 7$ \cite[Table 2]{Set09}. This is related to the Milnor fibre $F_0$ by
\[ H^{k+1}(PB_{n+1}, \mathbb{Q}[q,q^{-1}])\cong H^k(F_0, \mathbb{Q}).\]
Hence  Settepanella's results give rise to the Poincar\'e polynomials $P_0(t)$ in type $A_n$ for $n\leq 7$. 
The Poincar\'e polynomials $P_0(t)=P(t)/2$ given in the tables below coincide with those of  Settepanella up to type $A_6$. However, for type $A_7$ our cohomology groups $H^k(F_0; \mathbb{Q})$ agree with Settepanella's except for $k=5,6$,  but the Euler characteristic remains the same.
\end{remark}

\begin{remark}
Apart from the monodromy action, the major issue untouched by our work is the mixed Hodge structure on each cohomology group
$H^i(F,\C)$. We hope to return to this theme later.
\end{remark}

\begin{table}[h]
\begin{tabular}{c|cc} \hline
         & $H^0$         & $H^1$       \\ \hline
$(3), (1^3) $   & 1       & 2       \\ \hline
(2,1)    & 0        & 2 \\ \hline
\end{tabular}
\vspace*{2mm}
\caption{${\rm Sym}_3$, $P(t)= 2+8t$}
\end{table}

\begin{table}[h]
\begin{tabular}{c|ccc} \hline
         & $H^0$         & $H^1$    & $H^2$    \\ \hline
$(4),(1^4)$     & 1       & 2   & 2    \\ \hline
$(3,1),(2,1,1)$    & 0        & 1 & 4\\ \hline
$(2,2)$  & 0        & 2   &4     \\ \hline
\end{tabular}
\vspace*{2mm}
\caption{${\rm Sym}_4$, $P(t)=2+ 14t+ 36t^2$}
\end{table}

\begin{table}[h]
\begin{tabular}{c|cccc} \hline
         & $H^0$         & $H^1$    & $H^2$ & $H^3$    \\ \hline
$(5), (1^5)$     & 1       & 0   & 2 & 4   \\ \hline
$(4,1), (2,1^3)$    & 0        & 1 & 1 & 4\\ \hline
$(3,2), (2^2,1) $  & 0        & 1   &2 & 6     \\ \hline
$(3,1,1)$ & 0        & 0   &4 & 10     \\ \hline
\end{tabular}
\vspace*{2mm}
\caption{${\rm Sym}_5$, $P(t)=2+ 18t+ 56t^2+160t^3$}
\end{table}

\begin{table}[h]
\begin{tabular}{c|ccccc} \hline
         & $H^0$         & $H^1$    & $H^2$ & $H^3$ & $H^4$    \\ \hline
$(6), (1^6)  $   & 1       & 0   & 2 & 4 & 2  \\ \hline
$(5,1), (2,1^4)$    & 0        & 1 & 1 & 3 & 8 \\ \hline
$(4,2), (2^2,1^2)$   & 0        & 1   &1 & 6  &15   \\ \hline
$(3,3), (2,2,2) $  & 0        & 0   &1 & 7 &11     \\ \hline
$(4,1,1), (3,1^3)$  & 0        & 0   &2 & 5 &13     \\ \hline
$(3,2,1) $ & 0        & 0   &4 & 6 &18     \\ \hline
\end{tabular}
\vspace*{2mm}
\caption{${\rm Sym}_6$, $P(t)= 2+ 28t+ 146t^2+ 412 t^3+  1012t^4$}
\end{table}

\begin{table}[hbt!]
\begin{tabular}{c|cccccc} \hline
         & $H^0$         & $H^1$    & $H^2$ & $H^3$ & $H^4$ & $H^5$    \\ \hline
$(7), (1^7) $    & 1       & 0   & 2 & 0 & 2 & 6  \\ \hline
$(6,1), (2,1^5)$    & 0        & 1 & 1 & 3 & 3 &6 \\ \hline
$(5,2), (2^2,1^3) $  & 0        & 1   &1 & 4  &8 &18   \\ \hline
$(5,1,1), (3,1^4)$   & 0        & 0   &2 & 3 &10 &24     \\ \hline
$(4,3), (2^3,1)$  & 0        & 0   &1 & 4 &7 & 18     \\ \hline
$(4,2,1), (3,2,1^2)$  & 0        & 0   &2 & 8 &17 &46     \\ \hline
$(3,3,1), (3,2^2)$ & 0        & 0   &1 & 5 &11 &28     \\ \hline
$(4,1^3)$  & 0        & 0   &0 & 6 &8 &22     \\ \hline
\end{tabular}
\vspace*{2mm}
\caption{${\rm Sym}_7$, $P(t)= 2+ 40t+314t^2+ 1240 t^3+ 2572 t^4+ 6648 t^5$}
\end{table}

\begin{table}[hhbt!]
\begin{tabular}{c|ccccccc} \hline
         & $H^0$         & $H^1$    & $H^2$ & $H^3$ & $H^4$ & $H^5$ & $H^6$    \\ \hline
$(8), (1^8)$   & 1       & 0   & 0 & 0 & 2 & 6 &4 \\ \hline
$(7,1), (2,1^6) $   & 0        & 1 & 1 & 1 &1 & 3 &10 \\ \hline
$(6,2),(2^2,1^4)$   & 0        & 1   &1 & 2  &4 &10 &28   \\ \hline
$(6,1^2), (3,1^5)$  & 0        & 0   &2 & 3 &3 &7 &26     \\ \hline
$(5,3), (2^3,1^2)$  & 0        & 0   &1 & 3 &6 & 10 &34    \\ \hline
$(5,2,1), (3,2,1^3)$  & 0        & 0   &2 & 5 &16 &25 &76     \\ \hline
$(5,1^3), (4,1^4)$  & 0      &0  & 0    & 3 &10 &22 &50     \\ \hline
$(4,4), (2^4)$  & 0        & 0   &0 & 1 &4 &19& 30     \\ \hline
$(4,3,1), (3,2^2,1)$  & 0        & 0   &1 & 6 &18 &27& 84     \\ \hline
$(4,2^2), (3^2,1^2)$  & 0        & 0   &0 & 3 &16 &31&74     \\ \hline
$(4,2,1^2)$  & 0        & 0   &0 & 8 &22 &36 &112     \\ \hline
$(3,3,2)$  & 0        & 0   &0 & 4 &10 &16& 52     \\ \hline
\end{tabular}
\vspace*{2mm}
\caption{${\rm Sym}_8$, $P(t)= 2+54t+590t^2+ 3330t^3+10212t^4+17744t^5+50644t^6$}
\end{table}


\end{document}